\documentclass[a4paper]{article}
\usepackage{amsmath}
\usepackage{amssymb}
\usepackage{amsthm}
\usepackage[top=30truemm, left=13truemm, right=13truemm]{geometry}

\usepackage{amscd}
\newtheorem{dfn}{Definition}[section]
\newtheorem{thm}[dfn]{Theorem}
\newtheorem{lem}[dfn]{Lemma}
\newtheorem{rem}[dfn]{Remark}
\newtheorem{cor}[dfn]{Corollary}

\newtheorem{prop}[dfn]{Proposition}

\usepackage{amscd}
\usepackage{graphicx}
\usepackage[all]{xy}

\title{The homotopy groups of the automorphism groups of Cuntz-Toeplitz algebras}
\author{Taro Sogabe \\Graduate School of Science \\
Kyoto University \\
Sakyo-ku, Kyoto 606-8502, Japan 
\thanks{staro@math.kyoto-u.ac.jp}}
\begin{document}

\maketitle

\abstract
The Cuntz-Toeplitz algebra $E_{n+1}$ for $n\geq1$ is the universal C*-algebra generated by $n+1$ isometries with mutually orthogonal ranges. In this paper, we determine the homotopy groups of the automorphism group of $E_{n+1}$.

\section{Introduction}
The Cuntz-Toeplitz algebra $E_{n+1}$ for $n\geq1$ is the universal C*-algebra generated by $n+1$ isometries with mutually orthogonal ranges. In this paper, we investigate the automorphism groups of the Cuntz-Toeplitz algebras and determine their homotopy groups.

The homotopy groups of the automorphism groups are necessary to classify the continuous fields of C*-algebras. However, there are only few classes of C*-algebras whose homotopy groups of the automorphism groups are determined. 
To the best knowledge of the author's, the homotopy groups are  known only for Kirchberg algebras \cite{D0, D2, C}, strongly self-absorbing C*-algebras \cite{DP}, and simple AF-algebras \cite{N, T}. 
The rough strategy of computation of the homotopy groups in the previous work is as follows. 
First, we show the weak homotopy equivalence between the automorphism group and the endomorphism semi-group. 
Then we compute the homotopy groups of the endomorphism semi-group from the K-theoretic or KK-theoretic data of the C*-algebra.

We illustrate the strategy in the case of Kirchberg algebras where we have a powerful tool, Kirchberg-Phillips' classification theorem. 
Regarding a continuous map $\rho \colon X \to \operatorname{End}(A\otimes \mathbb{K})$ as an element in $\operatorname{Hom}(A\otimes \mathbb{K}, C(X)\otimes A\otimes \mathbb{K})$, we can associate a KK-class $KK(\rho)\in KK(A, C(X)\otimes A)$ to $\rho$. 
Therefore the homotopical data of $\operatorname{End}(A\otimes \mathbb{K})$ are recovered from the KK-theoretic data, 
 and we can directly compute the general homotopy sets by the map $[X, \operatorname{Aut}A\otimes \mathbb{K}] \to KK(A, C(X)\otimes A)$ (see \cite[Proposition 5.8, Theorem 4.6]{D0}).

In general, there is no such powerful tool and the homotopy groups are computed for only exceptional classes of C*-algebras. 
The strongly self-absorbing C*-algebras are such examples. Dadarlat and Pennig show in \cite[Theorem 2.3]{DP} that the automorphism group $\operatorname{Aut}D$ is contractible for every unital strongly self-absorbing C*-algebra $D$. 
Using a certain fibration, they determine the homotopy groups of $\operatorname{Aut}(D\otimes \mathbb{K})$. 
In this paper, we use similar fibrations in the case mentioned above.

Let $\{T_i\}_{i=1}^{n+1}$ be the canonical generators of $E_{n+1}$ and let $\operatorname{End}_0E_{n+1}$ be the path component of ${\rm id}_{E_{n+1}}$ of the semi-group of unital endomorphisms of $E_{n+1}$. 
We denote by $e$ the minimal projection $1-\sum_{i=1}^{n+1}T_iT_i^*$. 
Then the map $\operatorname{End}_0E_{n+1}\ni \rho \mapsto \sum_{i=1}^{n+1}\rho (T_i)T_i^* \in U_{E_{n+1}}(1-e)$ is a homeomorphism where $U_{E_{n+1}}$ is the unitary group of $E_{n+1}$. 
Our proof of the main result is based on the the fact that the map $U_{E_{n+1}}\to U_{E_{n+1}}(1-e)$ defined by the right multiplication by $1-e$ gives a fibration with a fibre $S^1$.
\begin{thm}\label{MT}
The homotopy groups of $\operatorname{Aut}E_{n+1}$ are as follows $\colon$
\begin{align*}
\pi_1(\operatorname{Aut}E_{n+1})=\mathbb{Z}_n,\quad\pi_{2k+1}(\operatorname{Aut}E_{n+1})=\mathbb{Z},\quad\pi_{2k}(\operatorname{Aut}E_{n+1})=0,\quad k\geq 1.
\end{align*}

\end{thm}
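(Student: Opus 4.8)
The plan is to transport the entire computation to the endomorphism semi-group and then read it off from the fibration $S^1\to U_{E_{n+1}}\to U_{E_{n+1}}(1-e)$. Following the strategy recalled in the introduction, I would first establish that the inclusion $\operatorname{Aut}E_{n+1}\hookrightarrow \operatorname{End}_0E_{n+1}$ is a weak homotopy equivalence, so that $\pi_k(\operatorname{Aut}E_{n+1})\cong \pi_k(\operatorname{End}_0E_{n+1})$ for all $k\ge1$. Via the homeomorphism $\operatorname{End}_0E_{n+1}\cong U_{E_{n+1}}(1-e)$, it then suffices to compute the homotopy groups of $U_{E_{n+1}}(1-e)$ from the long exact sequence of the given fibration.

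The first concrete input is $\pi_*(U_{E_{n+1}})$. Since $E_{n+1}$ contains the $n+1\ge 2$ isometries $T_i$ with mutually orthogonal ranges, it is properly infinite, so $U_{E_{n+1}}$ is weakly homotopy equivalent to its stabilisation and hence $\pi_k(U_{E_{n+1}})\cong K_{k+1}(E_{n+1})$ (indices mod $2$). From the extension $0\to \mathbb{K}\to E_{n+1}\to \mathcal{O}_{n+1}\to 0$, with $\mathcal{O}_{n+1}=E_{n+1}/\mathbb{K}$, and the six-term sequence one obtains $K_0(E_{n+1})=\mathbb{Z}$ generated by $[1]$ and $K_1(E_{n+1})=0$; note in particular that $[e]=[1]-\sum_i[T_iT_i^*]=-n[1]$. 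Consequently $\pi_{2j+1}(U_{E_{n+1}})=\mathbb{Z}$ and $\pi_{2j}(U_{E_{n+1}})=0$, and $U_{E_{n+1}}$ is connected.

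Feeding this into the long exact sequence, the fibre $S^1$ affects only low degrees: for $k\ge 3$ both $\pi_k(S^1)$ and $\pi_{k-1}(S^1)$ vanish, so $\pi_k(U_{E_{n+1}}(1-e))\cong \pi_k(U_{E_{n+1}})$, giving $\mathbb{Z}$ in odd degrees $\ge 3$ and $0$ in even degrees $\ge 4$. The remaining piece is the exact sequence
\begin{align*}
0\to \pi_2(U_{E_{n+1}}(1-e))\to \pi_1(S^1)\xrightarrow{\ \iota_*\ }\pi_1(U_{E_{n+1}})\to \pi_1(U_{E_{n+1}}(1-e))\to 0,
\end{align*}
where $\iota_*$ is induced by the fibre inclusion. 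The fibre is $\{(1-e)+\lambda e:|\lambda|=1\}$, and its generating loop $z\mapsto (1-e)+ze$ represents the class $[e]$ under the identification $\pi_1(U_{E_{n+1}})\cong K_0(E_{n+1})$. Since $[e]=-n[1]$ and $[1]$ is a generator, $\iota_*$ is multiplication by $\pm n$; hence $\pi_2(U_{E_{n+1}}(1-e))=0$ and $\pi_1(U_{E_{n+1}}(1-e))=\mathbb{Z}_n$. This produces exactly the asserted groups for $\operatorname{End}_0E_{n+1}$, and therefore for $\operatorname{Aut}E_{n+1}$.

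The step I expect to be the main obstacle is the weak homotopy equivalence $\operatorname{Aut}E_{n+1}\simeq \operatorname{End}_0E_{n+1}$. An endomorphism in $\operatorname{End}_0E_{n+1}$ corresponding to $u(1-e)$ acts by $T_i\mapsto uT_i$, and such a map need not be surjective, so one must show that every map $S^k\to\operatorname{End}_0E_{n+1}$ can be deformed, compatibly with the basepoint and the relative lifting required for all $k$ at once, into a family of automorphisms. I would prove this by a parametrised approximation argument exploiting the proper infiniteness of $E_{n+1}$ to absorb the defect of a non-surjective endomorphism, in the spirit of the endomorphism-to-automorphism comparisons used for Kirchberg and strongly self-absorbing algebras. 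Once this identification and the computation of $\iota_*$ are in place, the theorem follows directly from the long exact sequence.
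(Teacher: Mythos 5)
Your reduction of the theorem to the two facts (i) the inclusion $\operatorname{Aut}E_{n+1}\hookrightarrow\operatorname{End}_0E_{n+1}$ is a weak homotopy equivalence and (ii) $\operatorname{End}_0E_{n+1}\cong U_{E_{n+1}}(1-e)$ sits in a fibration $S^1\to U_{E_{n+1}}\to U_{E_{n+1}}(1-e)$, together with your evaluation of the long exact sequence --- including the identification of the fibre loop $z\mapsto(1-e)+ze$ with the class $[e]_0=-n[1]_0$, so that the fibre inclusion induces multiplication by $-n$ --- is exactly the paper's computation of $\pi_*(\operatorname{End}_0E_{n+1})$ in Theorem \ref{homotopy}. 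One caveat on this part: ``properly infinite, hence $U_{E_{n+1}}$ is weakly homotopy equivalent to its stabilisation'' is too quick. Proper infiniteness gives only surjectivity of $U_{C(S^k)\otimes E_{n+1}}/\sim_h\to K_1(C(S^k)\otimes E_{n+1})$; injectivity is not automatic for non-stable algebras, and the paper proves it (Proposition \ref{k1}) via the Brown--Pedersen criterion applied to the ideal $C(S^k)\otimes\mathbb{K}$ with quotient $C(S^k)\otimes\mathcal{O}_{n+1}$, using that $\mathcal{O}_{n+1}$ absorbs $\mathcal{O}_\infty$. This is fixable but not a one-liner.

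The genuine gap is step (i), which you correctly flag as the main obstacle and then dispose of in a single sentence (``a parametrised approximation argument exploiting proper infiniteness to absorb the defect''). This is the actual content of the paper --- essentially all of Section 3 --- and no pointwise absorption argument at the level of endomorphisms is available: the difficulty is precisely that $\rho_w\colon T_i\mapsto wT_i$ fails to be surjective for generic $w$, and deforming a \emph{family} of such endomorphisms into automorphisms compatibly over a sphere is a global problem. The paper instead converts a family $\alpha\colon S^k\to\operatorname{Aut}E_{n+1}$ into a locally trivial $C(S^{k+1})$-algebra $B_\alpha$ sitting in an extension of the $\mathcal{O}_{n+1}$-bundle $A_\alpha$ by $C(S^{k+1})\otimes\mathbb{K}$, and then controls the extension class: it needs the Pimsner--Popa--Voiculescu absorption theorem to upgrade equality in $\operatorname{Ext}$ to unitary equivalence, Paschke--Valette duality to show that relative commutants of the relevant extensions have $n$-torsion $K_1$, Dadarlat's results $[S^{2m},\operatorname{Aut}\mathcal{O}_{n+1}]=0$ and path-connectedness of $\operatorname{Aut}\mathcal{O}_{n+1}$, and, for injectivity on odd spheres, an explicit index computation (Theorem \ref{muoo}) showing $-n[w_{\alpha'}]_1=0$, hence $[w_{\alpha'}]_1=0$, so that the intertwining unitary lifts to the multiplier algebra and trivialises the bundle. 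Without some version of this machinery your outline does not yield a proof of the weak homotopy equivalence, and hence not of the theorem.
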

To prove Theorem \ref{MT}, we show that the inclusion map $\operatorname{Aut}E_{n+1} \to \operatorname{End}_0E_{n+1}$ is a weak homotopy equivalence (Theorem \ref{main}).
%
\begin{cor}
Let $X$ be a compact CW complex.
The following sequence is an exact sequence of pointed sets and the first 4 terms give an exact sequence of groups $\colon$
\begin{align*}
H^1(X)\to K^1(X)\to [X, \operatorname{Aut}E_{n+1}]\to H^2(X)\to [X, \operatorname{BAut}_eE_{n+1}]\to [X, \operatorname{BAut}E_{n+1}]\to H^3(X).
\end{align*}
The group $\operatorname{Aut}_eE_{n+1}$ is the subgroup of all automorphisms that fix the minimal projection $e \in E_{n+1}$.
\end{cor}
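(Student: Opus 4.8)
The plan is to realise the whole sequence by applying the functor $[X,-]$ to a single fibre sequence, generated by the restriction of automorphisms to the ideal $\mathbb{K}$ of compact operators. Recall that for $n\ge 1$ the algebra $E_{n+1}$ has $\mathbb{K}\cong\mathbb{K}(\mathcal{H})$ as its unique proper ideal, and that $e$ is the rank-one projection onto the Fock vacuum. Restriction gives a continuous homomorphism $r\colon\operatorname{Aut}E_{n+1}\to\operatorname{Aut}(\mathbb{K})\cong PU(\mathcal{H})$; by Kuiper's theorem $U(\mathcal{H})$ is contractible, so $PU(\mathcal{H})\simeq B\mathbb{T}=\mathbb{CP}^\infty=K(\mathbb{Z},2)$ and $B\operatorname{Aut}(\mathbb{K})\simeq K(\mathbb{Z},3)$. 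First I would check that the evaluation $\operatorname{Aut}(\mathbb{K})\to P$, $\varphi\mapsto\varphi(e)$, onto the space $P$ of rank-one projections is a homotopy equivalence (its fibre is the stabiliser of $e$ in $PU(\mathcal{H})$, again contractible by Kuiper), and that the composite $\operatorname{Aut}E_{n+1}\xrightarrow{r}\operatorname{Aut}(\mathbb{K})\to P$ equals the orbit map $o(\alpha)=\alpha(e)$. Since $\operatorname{Aut}E_{n+1}$ acts transitively on $P$---any two rank-one projections are equivalent, and, $E_{n+1}$ being properly infinite, the equivalence is implemented by a unitary, so inner automorphisms already suffice---the map $o$ is a fibration with fibre $\operatorname{Aut}_eE_{n+1}$; hence the homotopy fibre of $r$ is $\operatorname{Aut}_eE_{n+1}$.

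Because $r$ is a homomorphism of topological groups, I would deloop it to $w:=Br\colon\operatorname{BAut}E_{n+1}\to B\operatorname{Aut}(\mathbb{K})=K(\mathbb{Z},3)$, a class in $H^3(\operatorname{BAut}E_{n+1})$ of Dixmier--Douady type. Its homotopy fibre deloops that of $r$, hence is $\operatorname{BAut}_eE_{n+1}$. Extending the Barratt--Puppe fibre sequence of $w$ to the left, using $\Omega K(\mathbb{Z},3)=K(\mathbb{Z},2)$ and $\Omega K(\mathbb{Z},2)\simeq S^1$, yields
\[
S^1\to\operatorname{Aut}_eE_{n+1}\to\operatorname{Aut}E_{n+1}\xrightarrow{o}K(\mathbb{Z},2)\to\operatorname{BAut}_eE_{n+1}\to\operatorname{BAut}E_{n+1}\xrightarrow{w}K(\mathbb{Z},3),
\]
with $o=\Omega w$. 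Applying $[X,-]$ for a compact CW complex $X$ turns a fibre sequence into an exact sequence of pointed sets, and with $[X,K(\mathbb{Z},m)]=H^m(X)$ this is precisely the asserted sequence, except that the second term is $[X,\operatorname{Aut}_eE_{n+1}]$.

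It then remains to identify $[X,\operatorname{Aut}_eE_{n+1}]$ with $K^1(X)$, i.e. to show $\operatorname{Aut}_eE_{n+1}\simeq U(\infty)$. I would do this by comparing two $S^1$-fibrations over $\operatorname{Aut}E_{n+1}$. The one above, $S^1\to\operatorname{Aut}_eE_{n+1}\to\operatorname{Aut}E_{n+1}$, is classified by $o$. The fibration on which the paper is based, $S^1\to U_{E_{n+1}}\to U_{E_{n+1}}(1-e)\simeq\operatorname{Aut}E_{n+1}$, is classified by the same map: writing $v=u(1-e)$ for an endomorphism $\rho$ one computes $\rho(e)=ueu^*$, so $u\mapsto u\xi_0$ ($\xi_0$ the vacuum vector) is an $S^1$-equivariant lift to the unit sphere $S(\mathcal{H})$ covering $o$, exhibiting this fibration as the pullback of the universal bundle $S(\mathcal{H})\to\mathbb{CP}^\infty$ along $o$. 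Two $S^1$-fibrations over $\operatorname{Aut}E_{n+1}$ with the same classifying map are fibre-homotopy equivalent, so $\operatorname{Aut}_eE_{n+1}\simeq U_{E_{n+1}}$; with the weak equivalence $U(\infty)\simeq U_{E_{n+1}}$ from the proof of Theorem~\ref{MT} this gives $\operatorname{Aut}_eE_{n+1}\simeq U(\infty)$ and $[X,\operatorname{Aut}_eE_{n+1}]\cong K^1(X)$, compatibly with the map to $[X,\operatorname{Aut}E_{n+1}]$.

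For the last assertion, the first four maps are the loopings of the maps $K(\mathbb{Z},2)\to\operatorname{BAut}_eE_{n+1}\to\operatorname{BAut}E_{n+1}\xrightarrow{w}K(\mathbb{Z},3)$ appearing later in the same sequence; being loop maps between H-spaces they induce group homomorphisms on $[X,-]$, so, together with the group structures on $H^1(X),K^1(X),H^2(X)$ and on $[X,\operatorname{Aut}E_{n+1}]$, the first four terms form an exact sequence of groups. I expect the main obstacle to be the third paragraph: verifying that the orbit map is exactly the classifying map of the paper's $S^1$-fibration (the equivariant lift and the pullback identification) and importing $U(\infty)\simeq U_{E_{n+1}}$; once $\operatorname{Aut}_eE_{n+1}\simeq U(\infty)$ is in hand, the rest is formal. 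Secondary technical points will be the continuity and local-section statements needed to make $o$ and $r$ genuine fibrations, and the verification that $\operatorname{ev}_e$ is an equivalence.
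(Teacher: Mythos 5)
Your overall architecture agrees with the paper's (Theorem \ref{seq}): both proofs hinge on the principal bundle $\operatorname{Aut}_eE_{n+1}\to\operatorname{Aut}E_{n+1}\xrightarrow{\eta}\operatorname{B}S^1$ of Lemma \ref{fib}, on the identification $[X,\operatorname{Aut}_eE_{n+1}]\cong K^1(X)$ obtained by comparing that bundle with the $S^1$-bundle $U_{E_{n+1}}\to U_{E_{n+1}}(1-e)$ of Lemma \ref{LE} through the weak equivalence $\operatorname{Aut}E_{n+1}\to\operatorname{End}_0E_{n+1}$ of Theorem \ref{main} (your equivariant lift $u\mapsto u\xi_0$ is exactly Lemma \ref{nemui}, and your conclusion is Remark \ref{Ae}), and on $\operatorname{BAut}\mathbb{K}\simeq K(\mathbb{Z},3)$. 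Where you genuinely diverge is the last segment: the paper does not deloop $r$. For exactness at $[X,\operatorname{BAut}_eE_{n+1}]$ it invokes the classical sequence $[X,G]\to[X,G/H]\to[X,\operatorname{B}H]\to[X,\operatorname{B}G]$ (Lemma \ref{gen}), and for exactness at $[X,\operatorname{BAut}E_{n+1}]$ it proves by hand (Lemma \ref{thr}) that a principal $\operatorname{Aut}E_{n+1}$-bundle whose associated $\operatorname{Aut}\mathbb{K}$-bundle is trivial admits a reduction of structure group to $\operatorname{Aut}_eE_{n+1}$, using $\operatorname{Aut}_e\mathbb{K}\cong U_{\mathcal{M}(\mathbb{K})}$ and an explicit correction of transition functions by polar decompositions. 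Your alternative — that $\operatorname{BAut}_eE_{n+1}$ is the homotopy fibre of $Br$ because ``the homotopy fibre of $Br$ deloops that of $r$'' — is plausible but is the one step you cannot wave at: $r$ is neither surjective nor split, so you must actually produce the comparison map $\operatorname{BAut}_eE_{n+1}\to\mathrm{hofib}(Br)$ (from the null-homotopy of $Br\circ Bi$ through $\operatorname{BAut}_e\mathbb{K}\simeq *$) and run a five-lemma argument against the long exact sequence of Lemma \ref{fib}. The paper's bundle-theoretic route buys you exactness without any delooping formalism; yours buys a single uniform Barratt--Puppe sequence at the cost of that verification.

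There is one concrete gap in the group-theoretic part of the claim. The statement requires the maps $H^1(X)\to K^1(X)\to[X,\operatorname{Aut}E_{n+1}]$ to be homomorphisms for the \emph{K-theoretic} group structure on $K^1(X)$, so you must show that your bijection $[X,\operatorname{Aut}_eE_{n+1}]\to[X,U_{E_{n+1}}]=K^1(X)$ is a group isomorphism. A fibre-homotopy equivalence of $S^1$-bundles gives a bijection but carries no multiplicativity; and if you trace your identification through, it is induced by the same map $\alpha\mapsto e+\sum_i\alpha(T_i)T_i^*$ as in the paper, which is \emph{not} a group homomorphism on the nose. The paper closes this in Lemma \ref{ho} by computing $u_{\alpha\beta}=\alpha(u_\beta)u_\alpha$ and invoking Lemma \ref{tri} to get $[\alpha(u_\beta)]_1=[u_\beta]_1$, hence $[u_{\alpha\beta}]_1=[u_\alpha]_1+[u_\beta]_1$. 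Your appeal to ``loop maps between H-spaces'' handles the three maps interior to the fibre sequence, but not this external identification; you should add the Lemma \ref{ho} computation (or an equivalent argument) to complete the proof.
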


The original motivation of this work is to investigate the structure of continuous fields of the Cuntz algebras beyond Dadarlat's work \cite{D1} using the Cuntz-Toeplitz extensions, and we will hopefully come back to this subject in the near future.
We discuss the group structure of the homotopy sets $[X, \operatorname{Aut}E_{n+1}]$ and $[X, \operatorname{Aut}\mathcal{O}_{n+1}]$ in \cite{r2}.

We organize this paper as follows. 
In section 2, we give some preliminaries to compute the homotopy groups. 
We introduce several fibration sequences with the help of \cite[Lemma 2.8, 2.16 and Corollary 2.9]{DP}.
As a consequence, the homotopy groups of the connected component of the endomorphism semi-group, denoted by $\operatorname{End}_0E_{n+1}$, are obtained.

In section 3, we show the weak homotopy equivalence of $\operatorname{End}_0E_{n+1}$ and $\operatorname{Aut}E_{n+1}$. The main ingredient of the proof is Pimsner--Popa--Voiculescu's non-commutative Weyl--von Neumann type theorem.
\section{Preliminaries}
\subsection{Notation and the basic facts of the theory of C*-algebras}
Let $A$ be a unital C*-algebra and let $U_A$ be the group of unitary elements in $A$.
We denote by ${U_0}_A$ the path component of $1_A$ of $U_A$.
 For a non-unital C*-algebra $B$, we denote its unitization by $B^{\sim}$.
The K-groups of $A$ are denoted by $K_i(A)$, $i=0, 1$.
 We denote by $[p]_0$ the class of the projection $p$ in $K_0(A)$, and denote by $[u]_1$ the class of the unitary $u$ in $K_1(A)$.
 Let $SA$ be the suspension of $A$, the set of $A$-valued functions on $[0, 1]$ that vanish at $0$ and $1$.
 For the K-theory, we refer to \cite{Bl, K}.  
 
For a topological space $Y$ and two elements $y_0$ and $y_1$, we denote $y_0\sim_h y_1$ in $Y$ if there is a continuous path from $y_0$ to $y_1$.
Two unitaries $u,v \in U_A$ are homotopy equivalent if $u\sim_h v$ in $U_A$.
 There is a natural map  $U_A/\sim_h \to K_1(A)$ from the set of homotopy classes of unitaries to the $K_1$-group. 
We say $A$ is $K_1$-injective if the map is injective. For the non-unital C*-algebra $B$, it is $K_1$-injective if the natural map $U_{B^{\sim}}/\sim_h\to K_1(B)$ is injective.
 For example, the algebra $A\otimes \mathbb{K}$ is $K_1$-injective by the definition of the $K_1$-group. 
 
We denote by $\mathbb{K}$ the algebra of compact operators of infinite dimensional separable Hilbert space $H$.
 For $A\otimes \mathbb{K}$, we denote by $\mathcal{M}(A\otimes \mathbb{K})$ the multiplier algebra of $A\otimes \mathbb{K}$, and denote by $\mathcal{Q}(A\otimes \mathbb{K})$ its quotient by $A\otimes \mathbb{K}$.
 We denote the quotient map by $\pi$. 
 We remark that $\mathcal{Q}(A\otimes \mathbb{K})$ is $K_1$-injective,
 (see \cite[Section 1.13]{M}).
 We identify $\mathcal{M}(\mathbb{K})$ with $\mathbb{B}(H)$ where $\mathbb{B}(H)$ is the algebra of the bounded operators on $H$. 
For $A=C(X)$, we denote by  $C^b_{s*}(X, \mathbb{B}(H))$  the set of $\mathbb{B}(H)$-valued bounded continuous functions on $X$ with respect to the strong* operator topology (abbreviated to SOT*).
 This is a realization of the multiplier algebra $\mathcal{M}(C(X)\otimes \mathbb{K})$ 
 (see \cite[Proposition 2.57]{RW}).
  
We refer to \cite[Theorem 1]{Hig} for the K-theory of the multiplier algebra, and a generalization of Kuiper's theorem.
\begin{thm}\label{kuiper}
Let $A$ be a unital C*-algebra. Then $U_{\mathcal{M}(A\otimes \mathbb{K})}$ is contractible with respect to the norm topology, and we have
$K_i(\mathcal{M}(A\otimes \mathbb{K}))=0, \;i=1, 2.$
\end{thm}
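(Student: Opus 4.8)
The plan is to split the statement into its two assertions and to treat the contractibility of $U_{\mathcal{M}(A\otimes\mathbb{K})}$ in the norm topology as the essential input, deducing the vanishing of the $K$-groups from it by formal means. Since $A$ is unital, I would first fix isometries $s_0,s_1,s_2,\dots\in\mathcal{M}(\mathbb{K})=\mathbb{B}(H)$ with mutually orthogonal ranges and $\sum_i s_is_i^*=1$ (strictly), and set $v_i=1_A\otimes s_i\in\mathcal{M}(A\otimes\mathbb{K})$. These implement an infinite ampliation $u\mapsto u^{(\infty)}:=\sum_i v_iuv_i^*$, which converges strictly and carries unitaries to unitaries; equivalently I realize $\mathcal{M}(A\otimes\mathbb{K})$ as the adjointable operators $\mathcal{L}_A(H_A)$ on the standard Hilbert $A$-module $H_A=\ell^2\otimes A$ and use the internal decomposition $H_A\cong\bigoplus_{i\geq0}H_A$. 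The whole argument is then a version of Kuiper's theorem carried out with $A$-coefficients.

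The contraction rests on the rotation lemma: for any unitary $u$ the element $\mathrm{diag}(u,u^*)$ is joined to $1$ by the norm-continuous path $t\mapsto\mathrm{diag}(u,1)\,R_t\,\mathrm{diag}(u^*,1)\,R_t^*$, where $R_t$ is the rotation unitary, and this path depends continuously on $u$. Applying this blockwise, I would consider $w=u\oplus u^*\oplus u\oplus u^*\oplus\cdots$ and exploit two regroupings of the coordinates: pairing $(0,1)(2,3)\cdots$ exhibits $w$ as a sum of blocks $\mathrm{diag}(u,u^*)$ and shows $w\sim_h 1$, while the grouping $0,(1,2),(3,4),\cdots$ exhibits blocks $u^*\oplus u$ and shows $w\sim_h u\oplus1\oplus1\oplus\cdots$, the ampliation placing $u$ in the first coordinate. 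Both homotopies are continuous in $u$, so the ampliation map $\iota\colon u\mapsto u\oplus1\oplus\cdots$ is homotopic to the constant map at $1$. Combining this with the fact that $\iota$ is itself homotopic to the identity of $U_{\mathcal{M}(A\otimes\mathbb{K})}$ (via a further rotation homotopy connecting the full-space embedding to the first-block embedding, using the infinite decomposition) yields a contraction of the entire unitary group.

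For the $K$-groups I would use that $A\otimes\mathbb{K}$ is stable, so $M_n(\mathcal{M}(A\otimes\mathbb{K}))\cong\mathcal{M}(M_n(A\otimes\mathbb{K}))\cong\mathcal{M}(A\otimes\mathbb{K})$; hence every matrix amplification of the unitary group is again homeomorphic to $U_{\mathcal{M}(A\otimes\mathbb{K})}$ and therefore contractible. Passing to the limit, the stable unitary group $\mathcal{U}_\infty(\mathcal{M}(A\otimes\mathbb{K}))$ has all homotopy groups zero, and the standard identifications $K_1(B)\cong\pi_0(\mathcal{U}_\infty(B))$ and $K_2(B)\cong\pi_1(\mathcal{U}_\infty(B))$ give $K_1=K_2=0$ at once.

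The main obstacle is analytic rather than formal: the infinite ampliations converge only strictly, whereas contractibility is asserted in the \emph{norm} topology, so I must verify that the rotation-swindle homotopies are norm-continuous jointly in $(u,t)$. The point is that at each time the homotopy acts diagonally on the blocks, so the relevant norm differences are controlled uniformly in the index and the strictly-convergent sums assemble into genuinely norm-continuous paths; making this estimate precise, together with checking that the regrouping identifications are implemented by honest unitaries of $\mathcal{M}(A\otimes\mathbb{K})$, is where the real work lies.
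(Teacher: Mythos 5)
The paper does not actually prove this statement: it is quoted from Cuntz--Higson (\cite[Theorem 1]{Hig}), so there is no internal argument to compare against and your proposal is in effect an attempt to reprove that theorem. Two of your three steps are sound. The reduction of $K_1=K_2=0$ to contractibility of the unitary groups of all matrix amplifications, via $M_n(\mathcal{M}(A\otimes\mathbb{K}))\cong\mathcal{M}(M_n(A\otimes\mathbb{K}))\cong\mathcal{M}(A\otimes\mathbb{K})$ and the identifications $\pi_i$ of the stable unitary group with $K_{i+1}$, is correct. The Eilenberg swindle is also correct, including the continuity point you worry about at the end: the time-$t$ stage of each homotopy is the image of a single-block homotopy under a block-diagonal $*$-homomorphism, hence jointly norm-continuous, and the two regroupings of $u\oplus u^*\oplus u\oplus\cdots$ do show that the first-block embedding $\iota\colon u\mapsto u\oplus 1\oplus 1\oplus\cdots$ is null-homotopic.

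The gap is the parenthetical claim that $\iota$ is homotopic to the identity of $U_{\mathcal{M}(A\otimes\mathbb{K})}$ ``via a further rotation homotopy.'' Writing $\iota(u)=WuW^*+(1-WW^*)$ for the isometry $W$ onto the first summand, no rotation homotopy joins $u$ to $WuW^*+(1-WW^*)$: rotations act inside a fixed block decomposition and connect $\operatorname{diag}(ab,1)$ to $\operatorname{diag}(a,b)$ or $\operatorname{diag}(b,a)$; they cannot compress a unitary acting on all of $H_A$ into one supported on a complemented copy. Nor can you connect the two embeddings by moving the isometry, since a norm-continuous path of isometries from $1$ to $W$ would give a norm-continuous path of projections from $0$ to $1-WW^*\neq 0$, which is impossible. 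Because you have already shown $\iota\simeq \mathrm{const}_1$, the assertion $\mathrm{id}\simeq\iota$ is equivalent to the theorem itself, and it is exactly where every known proof does its real work: Kuiper's geometric deformation of a compact family of unitaries so that it acts as the identity on a suitable infinite-dimensional summand with infinite-dimensional complement, or the Cuntz--Higson lemma producing, for a given norm-continuous family, an isometry $V$ with $1-VV^*$ ``infinite'' and a homotopy of the family to one fixing the range of $V$. You have therefore misplaced the difficulty: the norm-continuity of the strictly convergent diagonal homotopies is routine, while the deformation of an arbitrary unitary into the image of $\iota$ is the substantive step and is entirely absent. A formal warning sign is that every manipulation you invoke is available whenever a module satisfies $E\cong E^{\oplus\infty}$, and such manipulations by themselves only show that maps already of the form $u\mapsto w(u)\oplus 1$ on a decomposition with infinite complement are null-homotopic; they never reach the identity map.
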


Let $A$, $B$ and $C$ be  C*-algebras. An extension $C$ of $A$ by $B\otimes \mathbb{K}$ is an exact sequence
\begin{align*} 
0\to B\otimes\mathbb{K}\to C\to A\to 0,
\end{align*}
and the Busby invariant of the extension is the induced map $\tau \colon  A\to \mathcal{Q}(B\otimes \mathbb{K})$. We refer to \cite{Bl} for the definition of the Busby invariant.
The extension is called trivial if the above exact sequence splits.
The extension is called essential if $\tau$ is injective, and called unital if $\tau$ is unital. 
We refer to \cite{Bl} for the basic facts of the theory of extensions of C*-algebras. There are two equivalence relations of unital extensions, the strong unitary equivalence and the weak unitary equivalence.
\begin{dfn}
Let $A$ and $B$ be C*-algebras. 
Two Busby invariants $\tau_i : A\to \mathcal{Q}(B\otimes \mathbb{K})$, $i=1, 2$ are said to be strongly unitarily equivalent if there exists a unitary $U \in U_{\mathcal{M}(B\otimes \mathbb{K})}$ satisfying $\tau_1={\rm Ad}\pi(U)\circ\tau_2$. They are said to be weakly unitarily equivalent if there exists a unitary $u \in U_{\mathcal{Q}(B\otimes \mathbb{K})}$ with $\tau_1={\rm Ad}u\circ\tau_2$.
We denote the strong unitary equivalence by $\sim_{s.u.e}$ and denote the weak unitary equivalence by $\sim_{w.u.e}$. 
We denote $\tau_1\sim_s \tau_2$ if there exists two trivial extensions $\rho_1$ and $\rho_2$ satisfying $\tau_1\oplus\rho_1 \sim_{s.u.e}\tau_2\oplus \rho_2$.
We denote by $\operatorname{Ext}(A, B\otimes \mathbb{K})$ the set of the equivalence classes of the Busby invariants with respect to the equivalence relation $\sim_s$. 
\end{dfn}
We note that the weak unitary equivalence, $\sim_{w.u.e}$ induces the equivalence $\sim_s$(see \cite[Proposition 5.6.4]{Bl}).
In this paper, we deal with the extensions of the Cuntz algebras by $C(X)\otimes \mathbb{K}$.
One has a universal coefficient theorem of ${\rm Ext}$-groups.
\begin{thm}[{\cite[Theorem 23.1.1]{Bl}}]\label{uct}
Let $A$ and $B$ be separable C*-algebras, with $A$ in the bootstrap class.
Then there is an unnaturally splitting short exact sequence
\begin{align*}
0\to\bigoplus_{i=0,1}\operatorname{Ext}_{\mathbb{Z}}^1(K_i(A), K_i(B))\to\operatorname{Ext}(A, B\otimes\mathbb{K})\to\bigoplus_{i=0,1}\operatorname{Hom}(K_i(A), K_{i+1}(B))\to 0.
\end{align*}
If $\bigoplus_{i=0,1}\operatorname{Hom}(K_i(A), K_{i+1}(B))=0$, we have an isomorphism
$\operatorname{Ext}(A, B\otimes\mathbb{K})\to \bigoplus_{i=0,1}\operatorname{Ext}_{\mathbb{Z}}^1(K_i(A), K_i(B))$
that sends a class of Busby invariant $[\tau]$ of an extension $0\to B\otimes\mathbb{K}\to C_{\tau}\to A\to 0$ to the class of group extension of the commutative groups $[K_i(B)\to K_i(C_{\tau})\to K_i(A)]\in \operatorname{Ext}_{\mathbb{Z}}^1(K_i(A), K_i(B))$ for $i=0,1$. 
\end{thm}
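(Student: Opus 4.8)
The plan is to reduce the statement to the Universal Coefficient Theorem of Rosenberg and Schochet for $KK$-theory and then run the standard homological-algebra argument. First I would identify the Ext-group with a $KK$-group: since $A$ is separable and nuclear (being in the bootstrap class), Kasparov's theory provides a natural isomorphism $\operatorname{Ext}(A, B\otimes\mathbb{K})\cong KK^1(A,B)=KK(A,SB)$ under which the class $[\tau]$ of a Busby invariant corresponds to the $KK$-class of the associated extension. This converts the desired sequence into the $KK$-theoretic UCT
\begin{align*}
0\to\bigoplus_{i}\operatorname{Ext}_{\mathbb{Z}}^1(K_i(A),K_i(B))\to KK^1(A,B)\to\bigoplus_i\operatorname{Hom}(K_i(A),K_{i+1}(B))\to 0,
\end{align*}
so it suffices to produce this sequence together with an unnatural splitting.

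Next I would set up the natural transformation $\gamma$ realizing the right-hand map, sending a $KK$-class to the homomorphism it induces on $K$-theory via the Kasparov product; the product structure makes $\gamma$ a well-defined graded homomorphism. The heart of the argument is a geometric free resolution. Using the realization of arbitrary countable graded abelian groups by algebras in the bootstrap class, I would choose C*-algebras $P_0,P_1$ with free $K$-theory fitting into a short exact sequence (equivalently, a cofibre sequence in $KK$) $0\to P_1\to P_0\to A\to 0$ whose $K$-theory long exact sequence degenerates into a free resolution $0\to K_*(P_1)\to K_*(P_0)\to K_*(A)\to 0$ of the abelian group $K_*(A)$. Concretely one lifts a surjection onto $K_*(A)$ from the $K$-theory of a direct sum of copies of $\mathbb{C}$ and $C_0(\mathbb{R})$ to a $KK$-morphism and forms its mapping cone.

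Then I would settle the free case: when $K_*(P)$ is free, $P$ is $KK$-equivalent to a direct sum of copies of $\mathbb{C}$ and $C_0(\mathbb{R})$, so the computations $KK_*(\mathbb{C},B)=K_*(B)$ and $KK_*(C_0(\mathbb{R}),B)=K_{*+1}(B)$ show that $\gamma$ is an isomorphism for $P=P_0,P_1$. Applying the exact (Puppe) sequence of the functor $KK_*(-,B)$ to the cofibre sequence and comparing with the free case collapses the six-term sequence into a short exact sequence whose cokernel is $\operatorname{Hom}(K_*(A),K_*(B))$ (in the appropriate degrees, via $\gamma$) and whose kernel is the cokernel of an induced map of $\operatorname{Hom}$-groups, namely $\operatorname{Ext}^1_{\mathbb{Z}}(K_*(A),K_*(B))$ up to a Bott shift; this is exactly the asserted sequence. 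An unnatural splitting is obtained by choosing a lift against an injective resolution of $K_*(B)$. When $\bigoplus_i\operatorname{Hom}(K_i(A),K_{i+1}(B))=0$ the sequence already exhibits $\operatorname{Ext}(A,B\otimes\mathbb{K})$ as $\operatorname{Ext}^1_{\mathbb{Z}}$ of the $K$-theory six-term sequence, which yields the stated identification of $[\tau]$ with the class of the group extension $[K_i(B)\to K_i(C_\tau)\to K_i(A)]$.

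The main obstacle is the geometric free-resolution step: one must genuinely exploit membership of $A$ in the bootstrap class, both to realize free $K$-groups by C*-algebras and to know that $\gamma$ is an isomorphism in the free case, the latter resting on the comparison of $KK$ with ordinary $K$-theory and on the stability and exactness properties of the bootstrap class. By contrast, the reduction to $KK$ and the closing homological bookkeeping are formal once these inputs are in place; the only care needed there is to track the degree shifts and Bott periodicity correctly when identifying the connecting maps with the boundary maps of a free resolution, and hence the kernel with $\operatorname{Ext}^1_{\mathbb{Z}}$.
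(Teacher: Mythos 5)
The paper does not prove this statement: it is quoted verbatim from Blackadar \cite[Theorem 23.1.1]{Bl}, i.e.\ it is the Rosenberg--Schochet Universal Coefficient Theorem, imported as a black box. Your outline (identify $\operatorname{Ext}(A,B\otimes\mathbb{K})$ with $KK^1(A,B)$ using nuclearity of bootstrap-class algebras, settle the case of free $K$-theory via $KK$-equivalence with sums of $\mathbb{C}$ and $C_0(\mathbb{R})$, then run a geometric free resolution through the Puppe sequence and identify the kernel with $\operatorname{Ext}^1_{\mathbb{Z}}$) is essentially the standard proof given in that reference, so it is consistent with the source the paper relies on.
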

Let $E_{n+1}$ be the universal C*-algebra generated by $n+1$ isometries with mutually orthogonal ranges and let $\{T_i\}_{i=1}^{n+1}$ be the canonical generator of $E_{n+1}$.
 It is called the Cuntz-Toeplitz algebra. 
 The closed two-sided ideal generated by the minimal projection $e\colon=1-\sum_{i=1}^{n+1}T_iT_i^*$ is isomorphic to the compact operators $\mathbb{K}$, which is known to be the only closed non-trivial two-sided ideal. 
Consider the full Fock space $\mathcal{F}(\mathbb{C}^{n+1})$ and the left creations $\{T_i\}_{i=1}^{n+1}$ (see \cite[Section 1]{Pim}).
Then one has $\mathbb{K}\subset C^*(\{T_i\}_{i=1}^{n+1})=E_{n+1}\subset\mathbb{B}(\mathcal{F}(\mathbb{C}^{n+1}))$.
In this paper, we frequently identify $\mathbb{K}^{\sim}$ with $\mathbb{K}+\mathbb{C}1_{E_{n+1}}\subset\mathbb{B}(\mathcal{F}(\mathbb{C}^{n+1}))$. 
 Let $\pi \colon E_{n+1}\to \mathcal{O}_{n+1}$ be the quotient map by the ideal $\mathbb{K}$, and let $S_i \colon=\pi(T_i)$. 
 The quotient algebra $\mathcal{O}_{n+1}$ is the universal simple C*-algebra generated by $n+1$ isometries with the relation $ \colon S_j^*S_i=\delta _{ij}, \;1=\sum_{i=1}^{n+1}S_iS_i^*$. 
 We denote by $\mathcal{O}_{\infty}$ the universal C*-algebra generated by the countably infinite isometories with mutually orthogonal ranges. 
 The algebras  $\mathcal{O}_{n+1}$ and $\mathcal{O}_{\infty}$ are called the Cuntz algebras, whose K-groups are the following $\colon$
\begin{align*}
K_0(\mathcal{O}_{n+1})=\mathbb{Z}_n, \quad K_1(\mathcal{O}_{n+1})=0,\quad K_0(\mathcal{O}_{\infty})=\mathbb{Z},\quad K_1(\mathcal{O}_{\infty})=0.
\end{align*}
See \cite[Theorem 3.7, 3.8, Corollary 3.11]{C}.

The Cuntz algebras are the Kirchberg algebras, and they tensorially absorb $\mathcal{O}_{\infty}$, $\mathcal{O}_{n+1}\otimes \mathcal{O}_{\infty}\cong \mathcal{O}_{n+1}$. 
The algebra that tensorially absorbs $\mathcal{O}_{\infty}$ has $K_1$-injectivity by the lemma below.
\begin{lem}[{\cite[Lemma 2.1.7]{Phill}}]\label{pil}
Let $A$ be a unital C*-algebra. Then the natural map $U_{A\otimes \mathcal{O}_{\infty}}/\sim_h\to K_1(A\otimes \mathcal{O}_{\infty})$ is bijective.
 In particular, every unital C*-algebra that tensorially absorbs $\mathcal{O}_{\infty}$ is $K_1$-injective. 
\end{lem}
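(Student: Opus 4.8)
The plan is to show that the natural homomorphism $U_B/\sim_h \to K_1(B)$ is a bijection, where $B=A\otimes\mathcal{O}_\infty$. Note that $U_B/\sim_h\ =\pi_0(U_B)$ is a group and that the map is a group homomorphism, so it suffices to treat surjectivity and injectivity separately. The ``in particular'' clause is then immediate: if $B\cong B\otimes\mathcal{O}_\infty$, applying the main assertion with $A=B$ shows the map is bijective for $B$, hence injective, which is exactly $K_1$-injectivity.

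For surjectivity I would exploit that $B$ is properly infinite: the unital copy $1_A\otimes\mathcal{O}_\infty\subset B$ supplies isometries $s_1,\dots,s_n$ with mutually orthogonal ranges and $p:=\sum_{i=1}^n s_is_i^*<1$. Given a class in $K_1(B)$, represent it by a unitary $u=(u_{ij})\in U_n(B)$. The map $\phi\colon M_n(B)\to pBp$, $\phi((b_{ij}))=\sum_{i,j}s_ib_{ij}s_j^*$, is a $*$-isomorphism, and since $p$ is full (as $\mathcal{O}_\infty$ is simple) the corner inclusion $pBp\hookrightarrow B$ induces an isomorphism on $K$-theory; the composite $K_1(B)=K_1(M_n(B))\to K_1(pBp)\to K_1(B)$ is the identity. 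Hence the single unitary $w:=\phi(u)+(1-p)\in U_B$ satisfies $[w]_1=[u]_1$, and the map is surjective.

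For injectivity it suffices, since the map is a homomorphism, to show $[u]_1=0\Rightarrow u\sim_h 1$ in $U_B$. Unfolding the definition of $K_1$, the hypothesis gives some $k$ with $\mathrm{diag}(u,1_k)\sim_h 1_{k+1}$ in $U_{k+1}(B)$. Transporting this homotopy through the corner isomorphism $\phi\colon M_{k+1}(B)\xrightarrow{\ \cong\ }pBp$, with $p=\sum_{i=0}^k s_is_i^*$, and adding $1-p$, I obtain $\alpha(u)\sim_h 1$ in $U_B$, where $\alpha(u):=s_0us_0^*+(1-s_0s_0^*)$. Thus injectivity reduces to the single homotopy $u\sim_h\alpha(u)$, i.e. to showing that the unital endomorphism $\alpha$ induces the identity on $\pi_0(U_B)$.

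This last homotopy is the \emph{main obstacle}. It cannot be produced by the naive device of connecting the isometry $s_0$ to $1$ through isometries: such a path $w_t$ would give a norm-continuous path of projections $w_tw_t^*$ from $s_0s_0^*$ to $1$, but $1$ is isolated among projections (a projection within distance $<1$ of $1$ equals $1$), so no such path exists. The genuine input is therefore the self-absorbing structure of $\mathcal{O}_\infty$. Writing $B\cong B\otimes\mathcal{O}_\infty$ with $s_0=1\otimes t_1$ and $e_1=t_1t_1^*$, one computes $\alpha(u)=u\otimes e_1+1\otimes(1-e_1)$ while $u=u\otimes 1$, so the task becomes an Eilenberg--swindle/rotation argument driven by the infinitely many orthogonal isometries of the absorbed $\mathcal{O}_\infty$ factor. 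The elementary ingredients are the Whitehead rotation $\mathrm{diag}(a,a^*)\sim_h 1$ transported into corners, which already gives $s_1as_1^*+s_2a^*s_2^*+(1-s_1s_1^*-s_2s_2^*)\sim_h 1$ and hence $\alpha_{s_1}(a)\sim_h\alpha_{s_2}(a)$, together with the multiplicativity $\alpha_{s_1}(a)\alpha_{s_2}(b)=s_1as_1^*+s_2bs_2^*+(1-s_1s_1^*-s_2s_2^*)$. The substantial point, where strong self-absorption of $\mathcal{O}_\infty$ is unavoidable, is to upgrade these corner identities to the equality $\alpha\simeq\mathrm{id}$ on $\pi_0(U_B)$; this is precisely the content isolated by Cuntz and Phillips, and completing it finishes injectivity and hence the proof.
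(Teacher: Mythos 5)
The paper offers no proof of this statement: it is quoted verbatim as \cite[Lemma 2.1.7]{Phill}, so there is nothing internal to compare your argument with. Judged on its own terms, your surjectivity argument is complete and correct (it is the standard properly-infinite argument, which the paper separately records as Lemma \ref{properly inf}), and your reduction of injectivity to the single homotopy $u\sim_h sus^*+(1-ss^*)$ via the corner isomorphism $M_{k+1}(B)\cong pBp$ is also correct. But the proof stops exactly at the step that carries the entire mathematical content of the lemma, and you say so yourself: you do not prove that $\alpha_s$ induces the identity on $\pi_0(U_B)$, you only assert that it is ``the content isolated by Cuntz and Phillips.'' That is not a proof; it is a correct identification of where the proof has to happen.

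The gap is not cosmetic. The ``elementary ingredients'' you list --- the transported Whitehead rotation giving $\alpha_{s_1}(a)\sim_h\alpha_{s_2}(a)$, and the multiplicativity $\alpha_{s_1}(a)\alpha_{s_2}(b)=s_1as_1^*+s_2bs_2^*+(1-s_1s_1^*-s_2s_2^*)$ --- are available in \emph{every} unital properly infinite C*-algebra, as is your entire reduction; yet $K_1$-injectivity of general unital properly infinite C*-algebras is a well-known open problem (cf.\ Blanchard--Rohde--R\o rdam). So no combination of the listed relations can close the argument: they only compare the maps $\alpha_s$ with one another, never with the identity. Moreover, the obvious ways to compare $\alpha_s(u)$ with $u$ are circular: if $s$ commutes with $u$ then $u^*\alpha_s(u)=ss^*+u^*(1-ss^*)$, a unitary whose $K_1$-class vanishes because $[1-ss^*]_0=0$, so proving it lies in $U_0$ is an instance of the very injectivity one is trying to establish. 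Likewise your identification $u=u\otimes 1$ under $B\cong B\otimes\mathcal{O}_\infty$ silently assumes the isomorphism carries $u$ to $u\otimes 1$, which is only true up to approximate unitary equivalence and needs to be justified and then exploited. A complete proof must inject the $\mathcal{O}_\infty$-absorption (e.g.\ via an isomorphism $B\cong B\otimes\mathcal{O}_\infty$ approximately unitarily equivalent to $b\mapsto b\otimes 1$, together with a non-circular argument connecting $u\otimes t_1t_1^*+1\otimes(1-t_1t_1^*)$ to $u\otimes 1$), and that is precisely what is missing here and what Phillips' Lemma 2.1.7 actually supplies.
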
 

\begin{dfn}\label{tau0}
We denote by $\tau_0$  the Busby invariant of the extension
$$0\to\mathbb{K}\to E_{n+1}\to\mathcal{O}_{n+1}\to 0.$$
\end{dfn}
The inclusion map $C(X)\otimes \mathbb{K} \hookrightarrow C(X)\otimes E_{n+1}$ induces the Busby invariant $\tau={\rm id}_{C(X)}\otimes \tau_0 \colon C(X)\otimes \mathcal{O}_{n+1} \hookrightarrow \mathcal{Q}(C(X)\otimes \mathbb{K})$ of the unital essential extension
$$0\to C(X)\otimes \mathbb{K}\to C(X)\otimes E_{n+1} \xrightarrow{{\rm id}_{C(X)}\otimes \pi} C(X)\otimes \mathcal{O}_{n+1}\to 0.$$
Since $\mathbb{K}$ and $E_{n+1}$ are KK-equivalent to $\mathbb{C}$ (see \cite[Theorem 4.4]{Pim}) the above exact sequence induces the following 6-term exact sequence $\colon$
\begin{align*}
\xymatrix{
K_0(C(X))\ar[r]^{-n}& K_0(C(X))\ar[r]^{\rho}&K_0(C(X)\otimes\mathcal{O}_{n+1})\ar[d]\\
K_1(C(X)\otimes\mathcal{O}_{n+1})\ar[u]^{{\rm ind}}&K_1(C(X))\ar[l]^{\rho}&K_1(C(X)).\ar[l]^{-n}
}
\end{align*}

For a pointed topological space $(X, x_0)$, we denote by $\Sigma X$ its reduced suspension with the base point $x_0$. 
For pointed topological spaces $(X, x_0), (Y, y_0)$, we denote the set of the continuous maps from $X$ to $Y$ by ${\rm Map}(X, Y)$ and denote the set of base point preserving continuous maps by ${\rm Map}_0(X, Y)$. We denote the homotopy set ${\rm Map}(X, Y)/\sim_h$ by $[X, Y]$ and denote ${\rm Map}_0(X, Y)/\sim_h$ by $[X, Y]_0$. We remark that if $Y$ is an H-space, the natural map $[X, Y]_0\to [X, Y]$ is bijective.
\begin{lem}\label{L1}
Let $(X, x_0)$ be a based compact Hausdorff space. Then the natural map $$U_{{(C_0(X, x_0)\otimes \mathcal{O}_{n+1})}^{\sim}}/\sim_h \to K_1(C_0(X, x_0)\otimes \mathcal{O}_{n+1})$$ is a surjective isomorphism.
\end{lem}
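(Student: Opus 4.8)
The plan is to compare $B:=C_0(X,x_0)\otimes\mathcal{O}_{n+1}$ with the unital algebra $A:=C(X)\otimes\mathcal{O}_{n+1}$. Although $C_0(X,x_0)^{\sim}=C(X)$, the unitization does not commute with the tensor product, so $B^{\sim}\neq A$. Instead, evaluation at the base point gives the split exact sequence $0\to B\to A\xrightarrow{\operatorname{ev}_{x_0}}\mathcal{O}_{n+1}\to 0$, and since $1_A\notin B$ the unitization $B^{\sim}$ is identified with the subalgebra $\{a\in A:\operatorname{ev}_{x_0}(a)\in\mathbb{C}1\}$ of $A$ (if $\operatorname{ev}_{x_0}(a)=\lambda 1$ then $a-\lambda 1\in\ker\operatorname{ev}_{x_0}=B$). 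Write $\iota\colon B^{\sim}\hookrightarrow A$ for this inclusion. The algebra $A$ absorbs $\mathcal{O}_{\infty}$ because $\mathcal{O}_{n+1}\otimes\mathcal{O}_{\infty}\cong\mathcal{O}_{n+1}$, so Lemma \ref{pil} applies to $A$ and $U_A/\!\sim_h\,\to K_1(A)$ is bijective. By contrast $B^{\sim}$ is not $\mathcal{O}_{\infty}$-absorbing; it is not even properly infinite, for any isometry in $B^{\sim}$ is a unimodular scalar at $x_0$, whence two isometries in $B^{\sim}$ cannot have orthogonal ranges. Thus Lemma \ref{pil} cannot be applied to $B^{\sim}$ directly, and the whole task is to transport the conclusion from $A$ to $B^{\sim}$ along $\iota$, controlling the discrepancy concentrated at $x_0$.

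First I would record the $K$-theoretic input. Applying $K_1$ to the split sequence and using $K_1(\mathcal{O}_{n+1})=0$ shows that $B\hookrightarrow A$ induces an isomorphism on $K_1$; together with the canonical identification $K_1(B^{\sim})=K_1(B)$ this gives that $K_1(\iota)\colon K_1(B^{\sim})\to K_1(A)$ is an isomorphism. Naturality of the comparison map produces a commuting square whose bottom edge $K_1(\iota)$ and right edge $U_A/\!\sim_h\,\to K_1(A)$ are bijective; hence the natural map $U_{B^{\sim}}/\!\sim_h\,\to K_1(B)$ is bijective provided $\iota_{*}\colon U_{B^{\sim}}/\!\sim_h\,\to U_A/\!\sim_h$ is bijective, which is what I would prove.

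For the surjectivity of $\iota_{*}$ I would take $w\in U_A$ and set $c:=1\otimes\operatorname{ev}_{x_0}(w)^{-1}\in U_A$. The class $[c]_1$ is the image of $[\operatorname{ev}_{x_0}(w)^{-1}]_1\in K_1(\mathcal{O}_{n+1})=0$ under the unital embedding $\mathcal{O}_{n+1}\to A$, so $[c]_1=0$; as $A$ is $K_1$-injective, $c\sim_h 1$ in $U_A$. Therefore $w':=cw$ satisfies $w'\sim_h w$ in $U_A$, while $\operatorname{ev}_{x_0}(w')=1$ gives $w'\in U_{B^{\sim}}$. Hence $\iota_{*}[w']=[w]$ and $\iota_{*}$ is surjective.

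The injectivity of $\iota_{*}$ is the step needing the most care, although it again reduces to a base-point correction. If $u,v\in U_{B^{\sim}}$ satisfy $\iota(u)\sim_h\iota(v)$ in $U_A$, I would choose a homotopy $(w_t)_{t\in[0,1]}$ in $U_A$ with $w_0=u$, $w_1=v$, and set $g_t:=\operatorname{ev}_{x_0}(w_t)\in U_{\mathcal{O}_{n+1}}$; this is a path joining the unimodular scalars $u(x_0)$ and $v(x_0)$. I would then correct the homotopy by a constant-in-$X$ unitary: pick a path of scalars $z_t$ with $|z_t|=1$, $z_0=u(x_0)$, $z_1=v(x_0)$, and put $\tilde w_t:=(1\otimes z_tg_t^{-1})\,w_t$. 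Then $\tilde w_t\in U_A$ with $\operatorname{ev}_{x_0}(\tilde w_t)=z_t1\in\mathbb{C}1$, so $\tilde w_t\in U_{B^{\sim}}$ for all $t$, while unimodularity of $u(x_0),v(x_0)$ forces $z_0g_0^{-1}=z_1g_1^{-1}=1$, whence $\tilde w_0=u$ and $\tilde w_1=v$. This shows $u\sim_h v$ in $U_{B^{\sim}}$, so $\iota_{*}$ is injective. The only delicate points are the verifications that $w'$ and $(\tilde w_t)$ are genuine continuous unitary paths lying in $U_{B^{\sim}}$ with the prescribed endpoints; I expect this base-point bookkeeping, rather than any deep homotopy-theoretic obstruction, to be the main thing to get right, the essential mechanism being that constant-in-$X$ unitaries move freely in $U_A$ and absorb the discrepancy at $x_0$.
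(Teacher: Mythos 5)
Your argument is correct and takes essentially the same route as the paper: both reduce to the unital algebra $C(X)\otimes\mathcal{O}_{n+1}$ via the split sequence and $K_1(\mathcal{O}_{n+1})=0$, apply Lemma \ref{pil} there, and then dispose of the base-point discrepancy. The paper handles that last step by quoting that $[X,U_{\mathcal{O}_{n+1}}]_0\to[X,U_{\mathcal{O}_{n+1}}]$ is bijective because $U_{\mathcal{O}_{n+1}}$ is an H-space; your explicit scalar corrections at $x_0$ are just that fact written out by hand.
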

\begin{proof}
Since $K_1(\mathcal{O}_{n+1})=0$, we have
$K_1(C_0(X, x_0)\otimes \mathcal{O}_{n+1})=K_1(C(X)\otimes \mathcal{O}_{n+1}).$
By Lemma \ref{pil}, the natural map $$[X, U_{\mathcal{O}_{n+1}}]=U_{C(X)\otimes \mathcal{O}_{n+1}}/\sim_h \to K_1(C(X)\otimes \mathcal{O}_{n+1})$$ is an isomorphism. 
Since $U_{\mathcal{O}_{n+1}}$ is an H-space, we have
$$U_{(C_0(X, x_0)\otimes \mathcal{O}_{n+1})^{\sim}}/\sim_h =[X, U_{\mathcal{O}_{n+1}}]_0=[X, U_{\mathcal{O}_{n+1}}].$$
Therefore we have the conclusion.
\end{proof}
\begin{lem}[{\cite[Proposition 6.6]{B}}]\label{br}
Let $A$ be a C*-algebra and let $I$ be a two-sided closed ideal of $A$. 
If $A/I$ and $I$ are $K_1$-injective and the natural map $U_{S(A/I)^{\sim}}\to K_1(S(A/I))$ is surjective, then $A$ is $K_1$-injective.
\end{lem}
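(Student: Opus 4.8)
The plan is to read $K_1$-injectivity as an injectivity statement about a natural map and then reduce it to a diagram chase between the homotopy exact sequence of a fibration and the six-term exact sequence of $K$-theory. Since the set $U_{A^{\sim}}/\sim_h$ is the group $\pi_0(U_{A^{\sim}})$ and every map in play is a homomorphism, $A$ is $K_1$-injective exactly when the natural map $m\colon\pi_0(U_{A^{\sim}})\to K_1(A)$ is injective. The first ingredient I would set up is an elementary lifting property for the map $p\colon U_{A^{\sim}}\to U_{(A/I)^{\sim}}$ induced by the quotient $q\colon A^{\sim}\to(A/I)^{\sim}$: a continuous path $w_t$ of unitaries downstairs lifts to a path upstairs. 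Indeed, after a fine subdivision each increment $w_{t_{j+1}}w_{t_j}^{*}$ can be written as $\exp(ih_j)$ with $h_j$ self-adjoint of norm less than $\pi$, and since $q$ is surjective on self-adjoint elements one lifts $h_j$ to a self-adjoint $\widetilde{h}_j\in A^{\sim}$ and exponentiates. A parametrised version of the same argument shows that $p$ is a fibration onto its open image $\operatorname{im}p$, which contains ${U_0}_{(A/I)^{\sim}}$ and has kernel (hence fibre over $1$) equal to $U_{I^{\sim}}$; in particular $\pi_1(\operatorname{im}p)=\pi_1(U_{(A/I)^{\sim}})$ and $\pi_0(\operatorname{im}p)\hookrightarrow\pi_0(U_{(A/I)^{\sim}})$.

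Granting the fibration, I would assemble the commutative ladder
\begin{align*}
\xymatrix{
\pi_1(U_{(A/I)^{\sim}})\ar[r]^{\partial}\ar[d]^{s}&\pi_0(U_{I^{\sim}})\ar[r]\ar[d]&\pi_0(U_{A^{\sim}})\ar[r]^{p_*}\ar[d]^{m}&\pi_0(\operatorname{im}p)\ar[d]\\
K_0(A/I)\ar[r]^{\partial_0}&K_1(I)\ar[r]&K_1(A)\ar[r]&K_1(A/I)
}
\end{align*}
whose top row is the homotopy exact sequence of $p$ (exact as a sequence of groups, all the spaces being topological groups and $U_{I^{\sim}}$ being the kernel of the homomorphism $p$) and whose bottom row is the six-term exact sequence of the extension; the vertical arrows are the natural maps. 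Under the identifications $U_{S(A/I)^{\sim}}=\Omega U_{(A/I)^{\sim}}$ and $K_1(S(A/I))=K_0(A/I)$, the hypothesis is precisely that $s$ is surjective; $K_1$-injectivity of $I$ makes the second vertical arrow injective; and $K_1$-injectivity of $A/I$ makes the composite $\pi_0(\operatorname{im}p)\hookrightarrow\pi_0(U_{(A/I)^{\sim}})\to K_1(A/I)$ injective. The four lemma (the monomorphism version of the five lemma) then yields that $m$ is injective, which is the desired conclusion.

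The remaining squares commute by naturality of the natural maps, and exactness of the top row at $\pi_0(U_{I^{\sim}})$ and $\pi_0(U_{A^{\sim}})$ follows directly from the path-lifting description of $\partial$. The technical heart, and the step I expect to be the main obstacle, is the commutativity of the left-hand square: one must check that the homotopy connecting map $\partial$ (lift a loop of unitaries to a path and record the component of its endpoint in $U_{I^{\sim}}$) agrees, up to a universal sign, with the exponential map $\partial_0$ of the six-term sequence (lift a projection to a self-adjoint element and exponentiate). Both maps are implemented by lifting followed by exponentiation, so I would verify the agreement by representing a class in $\pi_1(U_{(A/I)^{\sim}})$ through the Bott isomorphism by a projection $f$ over $A/I$ and checking that lifting the associated loop of unitaries reproduces $\exp(2\pi i\,\widetilde{f})$; this is the standard compatibility of the $K$-theory boundary maps with the connecting maps of the homotopy groups.
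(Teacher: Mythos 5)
The paper offers no proof of this lemma at all --- it is quoted directly from Brown--Pedersen \cite[Proposition 6.6]{B} --- so there is no internal argument to compare yours against; what you have written is the natural proof and, I believe, essentially the one in the cited source. The reduction of $K_1$-injectivity to injectivity of $m\colon\pi_0(U_{A^{\sim}})\to K_1(A)$, the path-lifting/local-section argument making $p\colon U_{A^{\sim}}\to\operatorname{im}p\subset U_{(A/I)^{\sim}}$ a fibration onto an open and closed subgroup, the translation of the hypothesis into surjectivity of $s\colon\pi_1(U_{(A/I)^{\sim}})\to K_0(A/I)$, and the four-lemma chase are all correct.

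Two points need more care than you give them. The minor one: the fibre of $p$ over $1$ is $\{u\in U_{A^{\sim}}\mid u-1\in I\}$, not $U_{I^{\sim}}$ itself; the two differ by the connected scalar factor $S^1$, so their $\pi_0$'s coincide, but this should be said since the paper's definition of $K_1$-injectivity of $I$ refers to $U_{I^{\sim}}$. The substantive one is exactly the step you flag, commutativity of the left-hand square. As sketched, you would check the compatibility of the homotopy boundary $\partial$ with the exponential map only on Bott loops $e^{2\pi it}f+(1-f)$, but a class in the \emph{unstabilized} group $\pi_1(U_{(A/I)^{\sim}})$ need not be represented by such a loop (the projection $f$ may only exist in matrix algebras), so this does not yet cover all of the domain of the square. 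The standard repair: the path-lifting boundaries $\partial_n\colon\pi_1(U_n((A/I)^{\sim}))\to\pi_0(\{u\in U_n(A^{\sim})\mid u-1\in M_n(I)\})$ commute with the stabilization maps, so both composites in the square factor through a single group homomorphism $K_1(S(A/I))=\varinjlim_n\pi_1(U_n((A/I)^{\sim}))\to K_1(I)$; at the stabilized level every class is a difference of Bott classes $\beta[p]-\beta[q]$, the computation $\pi(e^{2\pi it\tilde f})=e^{2\pi it}f+(1-f)$ for a self-adjoint lift $\tilde f$ of $f$ shows the limit map agrees with the exponential map on these generators, and the unstabilized square then commutes a posteriori. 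With that supplement your proof is complete.
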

We refer to \cite{R} for the surjectivity, the properly infinite full projections and the properly infiniteness of the C*-algebras.
\begin{lem}[{\cite[Exercise 8.9]{R}}]\label{properly inf}
Let $A$ be a unital properly infinite C*-algebra. Then the natural map $U_A/\sim_h \to K_1(A)$ is surjective.
\end{lem}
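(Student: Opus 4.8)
The plan is to exploit the defining feature of proper infiniteness — the abundance of isometries with mutually orthogonal ranges — in order to \emph{compress} a unitary matrix over $A$ down to a single unitary in $A$ without changing its $K_1$-class. Recall that the natural map sends the homotopy class of $w\in U_A$ to $[w]_1\in K_1(A)$, and that an arbitrary element of $K_1(A)$ is represented by some $u\in U_{M_n(A)}$ for a suitable $n$. Thus surjectivity amounts to producing, for each such $u$, a unitary $v\in U_A$ with $[v]_1=[u]_1$.

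First I would manufacture the required isometries. By definition, proper infiniteness of $1_A$ supplies two isometries $s_1,s_2\in A$ with orthogonal ranges, i.e.\ $s_1^*s_1=s_2^*s_2=1$ and $s_1^*s_2=0$. Setting $t_i=s_2^{\,i-1}s_1$ for $1\le i\le n-1$ and $t_n=s_2^{\,n-1}$ yields $n$ isometries in $A$ with mutually orthogonal ranges, since the relation $s_1^*s_2=0$ annihilates all cross terms. Writing $p=\sum_{i=1}^n t_it_i^*\le 1$, I would then define
\[
v=\sum_{i,j=1}^n t_i\,u_{ij}\,t_j^*+(1-p)\in A,\qquad u=(u_{ij})\in M_n(A),
\]
and a direct computation using $t_i^*t_j=\delta_{ij}1$ and unitarity of $u$ gives $v^*v=vv^*=1$, so $v\in U_A$.

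It then remains to identify the $K_1$-classes. Assembling the $t_i$ into the single isometry $S=\sum_j t_j e_{1j}\in M_n(A)$, one has $S^*S=1_{M_n(A)}$ and $SS^*=\operatorname{diag}(p,0,\dots,0)$, and a short matrix-unit calculation shows $SuS^*+(1-SS^*)=\operatorname{diag}(v,1,\dots,1)=v\oplus 1_{n-1}$. Now I would invoke the standard fact that conjugating by an isometry is trivial on $K_1$: the explicit unitary $z=\begin{pmatrix} S & 1-SS^* \\ 0 & S^*\end{pmatrix}\in M_2(M_n(A))$ satisfies $z\,(u\oplus 1)\,z^*=(SuS^*+1-SS^*)\oplus 1$, whence $[SuS^*+1-SS^*]_1=[u]_1$ because conjugation by a unitary fixes $K_1$. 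Combining the two displays yields $[v]_1=[v\oplus 1_{n-1}]_1=[u]_1$, which is exactly the asserted surjectivity.

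The only place where genuine care is needed is verifying that $z$ is unitary and implements the claimed conjugation of $u\oplus 1$; everything else is bookkeeping with the Cuntz-type relations $t_i^*t_j=\delta_{ij}$. Conceptually, the single real input is proper infiniteness itself, which is precisely what guarantees enough orthogonal isometric room inside $A$ to absorb the matrix amplification $M_n(A)\to A$ at the level of unitaries.
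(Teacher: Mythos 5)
Your argument is correct and complete: the isometries $t_i$ built from $s_1,s_2$ do have mutually orthogonal ranges, the compression $v=\sum t_iu_{ij}t_j^*+(1-p)$ is unitary, and the unitary $z$ correctly implements $z(u\oplus 1)z^*=(SuS^*+1-SS^*)\oplus 1$, so $[v]_1=[u]_1$. The paper does not prove this lemma at all --- it only cites it as an exercise from R{\o}rdam--Larsen--Laustsen --- and your compression-by-orthogonal-isometries argument is precisely the standard intended solution, so there is nothing to compare beyond noting that you have supplied the proof the paper omits.
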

\begin{lem}[{\cite[Exercise 4.9]{R}}]\label{full}
Let $A$ be a unital C*-algebra, and let $p$ and $q$ be properly infinite full projections. Then there exists a partial isometry $v$ with $p=vv^*, \;q=v^*v$, if and only if $[p]_0=[q]_0$ in $K_0(A)$.
\end{lem}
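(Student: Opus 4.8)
The ``only if'' direction is immediate: a partial isometry $v$ with $vv^*=p$ and $v^*v=q$ exhibits $p$ and $q$ as Murray--von Neumann equivalent, and Murray--von Neumann equivalent projections have the same class in $K_0(A)$ by the very construction of the group. So the entire content lies in the converse, and the plan is to prove it by an absorption argument that uses the hypothesis $[p]_0=[q]_0$ in an essential way. First I would record two standard consequences of ``full and properly infinite'' (both available from the theory surveyed in \cite{R}). Writing $\precsim$ for Murray--von Neumann subequivalence and $n\cdot p$ for the $n$-fold orthogonal sum in $M_n(A)$, fullness of a projection $P$ gives $1_A\precsim n\cdot P$ for some $n$, while proper infiniteness gives $P\oplus P\precsim P$ and hence $m\cdot P\precsim P$ for every $m$ by iteration. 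Combining these, any projection $s\in M_k(A)$ satisfies $s\le 1_k\sim k\cdot 1_A\precsim kn\cdot P\precsim P$. Thus \emph{every matrix projection is subequivalent to any prescribed full properly infinite projection}; in particular $p\precsim q$ and $q\precsim p$.

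The reduction step is as follows. From $p\precsim q$ I choose a subprojection $p'\le q$ with $p'$ Murray--von Neumann equivalent to $p$; since this equivalence preserves both fullness and proper infiniteness, $p'$ is again full and properly infinite, and it suffices to prove $p'\sim q$. Put $r:=q-p'$, the orthogonal complement of $p'$ inside $q$, so that $[r]_0=[q]_0-[p']_0=[q]_0-[p]_0=0$. The theorem is thereby reduced to the absorption statement: \emph{if $p'$ is full properly infinite and $r\perp p'$ is a projection with $[r]_0=0$, then $p'+r\sim p'$}. I stress that $K_0$-triviality of $r$ is exactly the right hypothesis and cannot be weakened to mutual subequivalence, which never suffices: in $\mathcal{O}_{\infty}$ one has $1\oplus 1\precsim 1$ yet $1\oplus 1\not\sim 1$, their $K_0$-classes being different.

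To prove the absorption statement I would unwind $[r]_0=0$ into a finite witness, namely a projection $t\in M_k(A)$ with $r\oplus t\sim t$ in the Murray--von Neumann semigroup. By the first paragraph $t\precsim p'$, and proper infiniteness of $p'$ lets me fix a subprojection $f\le p'$ with $f\sim p'$ together with a copy $t'\le f$ of $t$. Since $r\perp p'\ge f\ge t'$, the projections $r$ and $t'$ are orthogonal, and $r\oplus t\sim t$ transports to an honest internal equivalence $r+t'\sim t'$. Writing the orthogonal decompositions $f+r=(f-t')+(t'+r)$ and $f=(f-t')+t'$, a direct computation with the partial isometry that is the identity on $f-t'$ and implements $t'+r\sim t'$ on the remaining piece yields $f+r\sim f$. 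Adding the orthogonal complement $p'-f$ to both sides then gives $q=p'+r=(p'-f)+(f+r)\sim(p'-f)+f=p'\sim p$, as desired.

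The step I expect to be the main obstacle is this final construction: one must check that the constituent partial isometries have orthogonal domains and orthogonal ranges and produce no cross terms, so that their sum is a genuine partial isometry with initial projection $p'+r$ and final projection $p'$. The conceptual crux that the bookkeeping makes precise is twofold: proper infiniteness supplies enough internal room, a subprojection $f\sim p'$ disjoint from $r$, to house the absorbing projection $t$, while the hypothesis $[r]_0=0$ is precisely what guarantees that $r$ is swallowed \emph{without changing the $K_0$-class}, so that the projection one lands on is exactly $p'$ rather than something strictly larger.
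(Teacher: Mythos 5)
Your proof is correct. The paper does not prove this lemma at all --- it is quoted directly from R{\o}rdam--Larsen--Laustsen as \cite[Exercise 4.9]{R} --- so there is no internal argument to compare against; your route (mutual subequivalence of $p$ and $q$ from fullness and proper infiniteness, reduction to a subprojection $p'\le q$ with $[q-p']_0=0$, and absorption of the $K_0$-trivial orthogonal remainder via a witness $t$ housed inside a copy of $p'$) is exactly the standard solution to that exercise, going back to Cuntz, and the orthogonality bookkeeping in your final partial-isometry sum does go through as you anticipate.
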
 
We show that the algebra $C(X)\otimes E_{n+1}$ is $K_1$-injective.
\begin{prop}\label{k1}
Let $X$ be a compact Hausdorff space. Then, the map
$$U_{C(X)\otimes E_{n+1}}/\sim_h \to K_1(C(X)\otimes E_{n+1})$$
is an isomorphism.
\end{prop}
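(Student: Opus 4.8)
The plan is to establish the two halves of the asserted bijection separately: surjectivity of the natural map $U_{C(X)\otimes E_{n+1}}/\sim_h \to K_1(C(X)\otimes E_{n+1})$, and its injectivity (that is, the $K_1$-injectivity of $C(X)\otimes E_{n+1}$). Since the map is always a group homomorphism, a bijection is automatically an isomorphism.

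First I would handle surjectivity. For $n\geq 1$ the canonical generators $T_1,T_2$ are isometries with mutually orthogonal ranges, so $1_{E_{n+1}}$ is a properly infinite projection and $E_{n+1}$ is unital and properly infinite. Tensoring with $C(X)$ preserves this feature: the elements $1\otimes T_1$ and $1\otimes T_2$ are isometries with orthogonal ranges in $C(X)\otimes E_{n+1}$, so its unit is properly infinite. Hence $C(X)\otimes E_{n+1}$ is a unital properly infinite C*-algebra, and Lemma \ref{properly inf} immediately yields surjectivity.

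The bulk of the work is injectivity, for which I would invoke Lemma \ref{br} applied to the extension $0\to C(X)\otimes \mathbb{K}\to C(X)\otimes E_{n+1}\to C(X)\otimes \mathcal{O}_{n+1}\to 0$, i.e. with $A=C(X)\otimes E_{n+1}$, $I=C(X)\otimes \mathbb{K}$ and $A/I=C(X)\otimes \mathcal{O}_{n+1}$. The ideal $I=C(X)\otimes \mathbb{K}$ is $K_1$-injective by the general fact recorded earlier that any algebra of the form $B\otimes \mathbb{K}$ is $K_1$-injective. The quotient $A/I$ tensorially absorbs $\mathcal{O}_{\infty}$, since $\mathcal{O}_{n+1}\otimes \mathcal{O}_{\infty}\cong \mathcal{O}_{n+1}$ gives $C(X)\otimes \mathcal{O}_{n+1}\otimes \mathcal{O}_{\infty}\cong C(X)\otimes \mathcal{O}_{n+1}$; as it is unital, Lemma \ref{pil} shows it is $K_1$-injective.

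The remaining hypothesis of Lemma \ref{br}, namely surjectivity of $U_{S(A/I)^{\sim}}\to K_1(S(A/I))$, is where I expect the only genuine subtlety, and it is essentially a matter of writing the suspension in the exact form demanded by Lemma \ref{L1}. I would identify $S(C(X)\otimes \mathcal{O}_{n+1})=C_0((0,1))\otimes C(X)\otimes \mathcal{O}_{n+1}=C_0((0,1)\times X)\otimes \mathcal{O}_{n+1}$, and then take $(W,w_0)$ to be the one-point compactification of the locally compact Hausdorff space $(0,1)\times X$. Because $(0,1)\times X$ is noncompact, $W$ is a genuine based compact Hausdorff space and $C_0((0,1)\times X)=C_0(W,w_0)$, so $S(C(X)\otimes \mathcal{O}_{n+1})=C_0(W,w_0)\otimes \mathcal{O}_{n+1}$. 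Lemma \ref{L1} applied to $(W,w_0)$ then gives the required surjectivity. With all three hypotheses of Lemma \ref{br} verified, $C(X)\otimes E_{n+1}$ is $K_1$-injective, and together with the surjectivity of the second paragraph this shows the natural map is a bijective homomorphism, hence a group isomorphism. The hardest part is thus the topological bookkeeping that realizes the suspension as $C_0(W,w_0)\otimes \mathcal{O}_{n+1}$; everything else reduces to a direct application of the cited lemmas.
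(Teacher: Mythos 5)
Your proposal is correct and follows essentially the same route as the paper: surjectivity via proper infiniteness of $C(X)\otimes E_{n+1}$ and Lemma \ref{properly inf}, and injectivity via Lemma \ref{br} applied to the extension $0\to C(X)\otimes \mathbb{K}\to C(X)\otimes E_{n+1}\to C(X)\otimes \mathcal{O}_{n+1}\to 0$, with the remaining surjectivity hypothesis checked by realizing $S(C(X)\otimes \mathcal{O}_{n+1})$ as $C_0(W,w_0)\otimes \mathcal{O}_{n+1}$ for a based compact Hausdorff space $W$ and invoking Lemma \ref{L1}. Your $W$, the one-point compactification of $(0,1)\times X$, is exactly the paper's space $Y=[0,1]\times X/(\{0\}\times X\sqcup\{1\}\times X)$.
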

\begin{proof}
Surjectivity follows from the fact that $C(X)\otimes E_{n+1}$ is properly infinite and Lemma \ref{properly inf}.  We identify $SC_0(X, x_0)\otimes \mathcal{O}_{n+1}$ with $C_0(\Sigma X, x_0)\otimes \mathcal{O}_{n+1}$. Since $C(X)\otimes \mathcal{O}_{n+1}$ is $K_1$-injective by Lemma \ref{pil} and $C(X)\otimes \mathbb{K}$ is $K_1$-injective, it is sufficient to prove the surjectivity of the natural map $U_{(SC(X)\otimes \mathcal{O}_{n+1})^{\sim}}\to K_1(SC(X)\otimes \mathcal{O}_{n+1})$. 
For the space $Y \colon=[0,1]\times X/(\{0\}\times X\sqcup\{1\}\times X)$, we have $SC(X)=C_0(Y, y_0)$. So we have the conclusion by Lemma \ref{br}. 
\end{proof}
Let $\operatorname{End}E_{n+1}$ be the semi-group of unital $*$-endomorphisms of $E_{n+1}$. 
We topologize  $\operatorname{End}E_{n+1}$ by the point-wise norm topology, and let $\operatorname{End}_0E_{n+1}$ be the path component of ${\rm id}_{E_{n+1}}$ in $\operatorname{End}E_{n+1}$. 
We denote by $\operatorname{End}_eE_{n+1}$ (resp. $\operatorname{Aut}_eE_{n+1}$) the subset of $\operatorname{End}E_{n+1}$ (resp. $\operatorname{Aut}E_{n+1}$) consisting of all elements fixing the minimal projection $e$. 
Every automorphism of $E_{n+1}$ preserves the ideal of compact operators  and induces an automorphism of $\mathcal{O}_{n+1}$.
For $\alpha$ in $\operatorname{Aut}E_{n+1}$, we denote by $\tilde{\alpha}$ the induced automorphism of $\mathcal{O}_{n+1}$. This gives a group homomorphism $\operatorname{Aut}E_{n+1}\to \operatorname{Aut}\mathcal{O}_{n+1}$.
\begin{lem} \label{end}
The set $\operatorname{End}_0E_{n+1}$ equals to a subset $\{ \rho \in \operatorname{End}E_{n+1}\mid \rho(e)\; is\; a\; minimal\; projection\; of\; \mathbb{K}\}$ of $\operatorname{End}E_{n+1}$, and the map
$$\operatorname{End}_0E_{n+1}\ni \rho \mapsto u_{\rho}\colon =\sum_{i=1}^{n+1}\rho(T_i)T^*_i\in U_{E_{n+1}}(1-e)\colon=\{u(1-e)\in E_{n+1}\mid u\in U_{E_{n+1}}\}$$
is a homeomorphism.
\end{lem}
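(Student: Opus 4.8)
The plan is to parametrize endomorphisms by where they send the generators: by the universal property of $E_{n+1}$, a unital $*$-endomorphism $\rho$ is the same datum as a tuple of isometries $\{\rho(T_i)\}$ with mutually orthogonal ranges (i.e.\ $\rho(T_i)^*\rho(T_j)=\delta_{ij}$). First I would record the three identities that drive everything. Using $T_i^*T_j=\delta_{ij}$ and $eT_i=0$, for every $\rho\in\operatorname{End}E_{n+1}$ the element $u_\rho=\sum_i\rho(T_i)T_i^*$ satisfies $u_\rho^*u_\rho=\sum_iT_iT_i^*=1-e$, $u_\rho u_\rho^*=\sum_i\rho(T_i)\rho(T_i)^*=1-\rho(e)$, and $u_\rho T_i=\rho(T_i)$. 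The last identity shows $\rho$ is recovered from $u_\rho$, so $\rho\mapsto u_\rho$ is injective on all of $\operatorname{End}E_{n+1}$; it is manifestly continuous for the point-norm topology, being a finite sum of evaluations.

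Next I would characterize the target. If $w=u(1-e)$ with $u\in U_{E_{n+1}}$, then $w$ is a partial isometry with $w^*w=1-e$ and $1-ww^*=ueu^*$; since $\mathbb{K}$ is an ideal and $e$ is minimal, $ueu^*$ is again a minimal projection of $\mathbb{K}$. Conversely, given a partial isometry $w$ with $w^*w=1-e$ and $1-ww^*=f$ a minimal projection of $\mathbb{K}$, the rank-one projections $e,f$ are Murray--von Neumann equivalent in $\mathbb{K}$, so choosing $s$ with $s^*s=e$, $ss^*=f$ yields a unitary $u=w+s$ with $u(1-e)=w$. Hence $U_{E_{n+1}}(1-e)$ is exactly the set of partial isometries $w$ with $w^*w=1-e$ and $1-ww^*$ minimal in $\mathbb{K}$. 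Combined with $1-u_\rho u_\rho^*=\rho(e)$, this says precisely that $u_\rho\in U_{E_{n+1}}(1-e)$ if and only if $\rho(e)$ is a minimal projection of $\mathbb{K}$; and surjectivity of $\rho\mapsto u_\rho$ onto $U_{E_{n+1}}(1-e)$ follows by defining $\rho$ from $w$ via $\rho(T_i)=wT_i$ (one checks these are isometries with orthogonal ranges and that $u_\rho=w$). Thus $\rho\mapsto u_\rho$ is a continuous bijection from $\mathcal{S}:=\{\rho:\rho(e)\text{ is minimal in }\mathbb{K}\}$ onto $U_{E_{n+1}}(1-e)$, with inverse $w\mapsto(\rho:T_i\mapsto wT_i)$; continuity of this inverse for the point-norm topology holds because $w\mapsto wT_i$ is norm-continuous and the resulting homomorphisms are contractive, so point-norm convergence on the generators propagates to all of $E_{n+1}$ by an $\varepsilon/3$ estimate.

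It then remains to identify $\mathcal{S}$ with $\operatorname{End}_0E_{n+1}$. For $\mathcal{S}\subseteq\operatorname{End}_0E_{n+1}$ I would use that $U_{E_{n+1}}$ is path-connected: by Proposition \ref{k1} applied to a one-point space together with $K_1(E_{n+1})=K_1(\mathbb{C})=0$, every unitary is homotopic to $1$, so $u\mapsto u(1-e)$ makes $U_{E_{n+1}}(1-e)$ path-connected; transporting through the homeomorphism shows $\mathcal{S}$ is path-connected, and since $u_{\operatorname{id}}=1-e$ it contains $\operatorname{id}$, giving $\mathcal{S}\subseteq\operatorname{End}_0E_{n+1}$. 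For the reverse inclusion, take any continuous path $\rho_t$ from $\operatorname{id}$ to $\rho$ and track $\rho_t(e)$: its image $\pi(\rho_t(e))$ in $\mathcal{O}_{n+1}$ is a norm-continuous path of projections starting at $\pi(e)=0$, and since $\{t:\pi(\rho_t(e))=0\}$ is clopen and nonempty it is all of $[0,1]$, whence $\rho_t(e)\in\mathbb{K}$ throughout; the constancy of the (finite) rank of projections in $\mathbb{K}$ under norm-continuous deformation then forces $\rho(e)$ to share the rank of $e$, i.e.\ to be minimal. Hence $\operatorname{End}_0E_{n+1}\subseteq\mathcal{S}$, and the two sets coincide, which also upgrades the bijection above to the asserted homeomorphism.

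The main obstacle I anticipate is the identification $\operatorname{End}_0E_{n+1}=\mathcal{S}$ rather than the algebraic bijection: the forward inclusion genuinely relies on the $K_1$-triviality of $E_{n+1}$ via Proposition \ref{k1}, and the reverse inclusion requires the clopen-plus-rank argument to be run carefully, so that neither the membership $\rho_t(e)\in\mathbb{K}$ nor the constancy of the rank is silently assumed. By contrast, verifying that $w\mapsto(T_i\mapsto wT_i)$ is point-norm continuous is routine once one observes that these homomorphisms are contractive.
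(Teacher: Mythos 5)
Your proof is correct and follows essentially the same route as the paper: the bijection is inverted via $w\mapsto(T_i\mapsto wT_i)$ using the universal property, the inclusion $\{\rho:\rho(e)\text{ minimal}\}\subseteq\operatorname{End}_0E_{n+1}$ rests on $K_1$-injectivity of $E_{n+1}$ together with $K_1(E_{n+1})=0$, and the reverse inclusion is the standard stability of (minimal) projections under norm-continuous deformation. You merely supply more detail than the paper in two places it treats as routine — the explicit characterization of $U_{E_{n+1}}(1-e)$ as partial isometries with minimal cokernel projection, and the clopen-plus-rank argument for the direction the paper calls trivial — which is harmless.
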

\begin{proof}
First, we show $\operatorname{End}_0E_{n+1}\subset\{\rho \in \operatorname{End}E_{n+1}\mid \rho(e) \colon {\rm minimal\;projection}\}$ because the converse is trivial. Consequently, the map $\operatorname{End}_0E_{n+1}\ni\rho \mapsto u_{\rho}\in U_{E_{n+1}}(1-e)$ is well-defined. 
If $\rho(e)$ is a minimal projection, there exists a partial isometry $v$ with $vv^*=\rho(e), \;v^*v=e$. Then the unitary $v+u_{\rho}$ is in the path component of $1_{E_{n+1}}$ by the $K_1$-injectivity of $E_{n+1}$. We take a norm continuous path of unitaries $\{u_t\}_{t\in [0,1]}$ in $U_{E_{n+1}}$ from $v+u_{\rho}$ to $1_{E_{n+1}}$, and we have the continuous path $\rho_t\colon T_i\mapsto u_tT_i$ from $\rho$ to $\rm id_{E_{n+1}}$.

Second, we show the map $\operatorname{End}_0E_{n+1}\ni \rho\mapsto u_{\rho}\in U_{E_{n+1}}(1-e)$ is a homeomorphism.
For every $w\in U_{E_{n+1}}(1-e)$, we have the map $\rho_w : T_i\mapsto wT_i$ by the universality of $E_{n+1}$.
The map $U_{E_{n+1}}(1-e)\ni w\mapsto \rho_w \in \operatorname{End}_0E_{n+1}$ is continuous because $\{T_i\}_{i=1}^{n+1}$ is the generator of $E_{n+1}$.
This gives the inverse of the map $\operatorname{End}_0E_{n+1}\ni\rho \mapsto u_{\rho}\in U_{E_{n+1}}(1-e)$.
\end{proof}

\subsection{Section algebras and the theory of extensions of C*-algebras}

We use the following elementary fact.
\begin{lem}\label{bun}
Let $A$ be a unital C*-algebra, and let $X$ be a compact metrizable space. Let $\mathcal{P}_1$ and $\mathcal{P}_2$ be principal $\operatorname{Aut}A$ bundles over $X$. Let $A_1$ and $A_2$ be the section algebras of the associated bundles of $\mathcal{P}_1$ and $\mathcal{P}_2$ with fibre $A$ respectively. 
Then $\mathcal{P}_1$ and $\mathcal{P}_2$ are isomorphic if and only if there exists a $C(X)$-linear isomorphism $\varphi \colon A_1\to A_2$.
\end{lem}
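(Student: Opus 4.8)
The plan is to exploit the standard triangle relating a principal $\operatorname{Aut}A$ bundle, its associated C*-algebra bundle, and the section algebra of the latter; the whole argument hinges on the fact that the structure group $\operatorname{Aut}A$ \emph{is} the full automorphism group of the fibre $A$ and acts on it faithfully. Set $\mathcal{A}_i:=\mathcal{P}_i\times_{\operatorname{Aut}A}A$, the associated locally trivial bundle of C*-algebras with fibre $A$, so that $A_i=\Gamma(X,\mathcal{A}_i)$ is its algebra of continuous sections. This is a continuous $C(X)$-algebra whose fibre at $x\in X$ is $(A_i)_x=A_i/I_xA_i\cong A$, where $I_x:=\{f\in C(X)\mid f(x)=0\}$.

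The forward implication is formal. An isomorphism $\mathcal{P}_1\to\mathcal{P}_2$ of principal bundles covering $\mathrm{id}_X$ induces, through the associated-bundle construction, a bundle map $\mathcal{A}_1\to\mathcal{A}_2$ which is fibrewise a $*$-isomorphism, because $\operatorname{Aut}A$ acts through $*$-automorphisms; passing to sections yields a $C(X)$-linear $*$-isomorphism $\varphi\colon A_1\to A_2$.

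For the converse I would recover each principal bundle as the frame bundle of its associated algebra bundle. Put $\mathcal{Q}(\mathcal{A}_i):=\bigsqcup_{x\in X}\operatorname{Iso}(A,(\mathcal{A}_i)_x)$, the bundle of $*$-isomorphisms from $A$ onto the fibres, equipped with the right $\operatorname{Aut}A$-action by precomposition. Over a trivialising open set $U$ one has $\mathcal{Q}(\mathcal{A}_i)|_U\cong U\times\operatorname{Aut}A$ with the same transition cocycle as $\mathcal{P}_i$, and the assignment $p\in(\mathcal{P}_i)_x\mapsto(a\mapsto[p,a])$ is an $\operatorname{Aut}A$-equivariant isomorphism $\mathcal{P}_i\cong\mathcal{Q}(\mathcal{A}_i)$ (here faithfulness and the relation $[pg,a]=[p,g\cdot a]$ guarantee that each fibre is an $\operatorname{Aut}A$-torsor). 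Now given a $C(X)$-linear $*$-isomorphism $\varphi\colon A_1\to A_2$, $C(X)$-linearity forces $\varphi(I_xA_1)=I_xA_2$, so $\varphi$ descends to fibrewise $*$-isomorphisms $\varphi_x\colon(\mathcal{A}_1)_x\to(\mathcal{A}_2)_x$; post-composition $\theta\mapsto\varphi_x\circ\theta$ then defines an $\operatorname{Aut}A$-equivariant isomorphism $\mathcal{Q}(\mathcal{A}_1)\to\mathcal{Q}(\mathcal{A}_2)$, and hence $\mathcal{P}_1\cong\mathcal{P}_2$.

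I expect the only genuine obstacle to be the continuity statement implicit in the last step: one must check that the fibrewise maps $(\varphi_x)_{x\in X}$ assemble into a continuous bundle isomorphism $\mathcal{A}_1\to\mathcal{A}_2$, equivalently that $x\mapsto\varphi_x$ is continuous into $\operatorname{Aut}A$ in the point-norm topology. I would verify this locally: over a trivialising $U$ one has $A_1|_U\cong C(U)\otimes A\cong A_2|_U$, and the claim reduces to the elementary fact that a $C(U)$-linear $*$-isomorphism of $C(U)\otimes A$ is implemented by a point-norm continuous family in $\operatorname{Aut}A$, since for each $a\in A$ the function $x\mapsto\varphi_x(a)$ is the evaluation of $\varphi(1\otimes a)\in C(U,A)$ and is therefore continuous. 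Compactness and metrizability of $X$ let one patch these local verifications over a finite trivialising cover, completing the argument.
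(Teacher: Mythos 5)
Your argument is correct. Note that the paper offers no proof of this lemma at all --- it is introduced with the phrase ``We use the following elementary fact'' and left unproved --- so there is nothing to compare against; your frame-bundle argument (recovering $\mathcal{P}_i$ as the $\operatorname{Aut}A$-torsor bundle of isomorphisms $A\to(\mathcal{A}_i)_x$, and observing that a $C(X)$-linear $*$-isomorphism of section algebras descends to a point-norm continuous family of fibrewise isomorphisms) is exactly the standard justification the author is implicitly relying on, and all the points you flag as needing care (that $\varphi(I_xA_1)=I_xA_2$, that $x\mapsto\varphi_x$ is continuous because $x\mapsto\varphi(1\otimes a)(x)$ is, and that the local checks patch over a finite trivialising cover) are handled correctly.
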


Let $\pi\colon\mathcal{M}(C(X)\otimes \mathbb{K})\to \mathcal{Q}(C(X)\otimes \mathbb{K})$ be the quotient map by the ideal $C(X)\otimes \mathbb{K}$.
We need the following technical theorem of the theory of extensions of C*-algebras.
\begin{thm}[{\cite[Theorem 2.10]{PPV}}]\label{PPV}
Let $X$ be a finite CW complex. 
Let $A$ be a separable simple unital C*-algebra, and let $\mu \colon A\to \mathcal{M}(C(X)\otimes \mathbb{K})$ and $\sigma \colon A\to\mathcal{Q}(C(X)\otimes \mathbb{K})$ be unital $*$-homomorphisms. Then $\sigma\oplus \pi\circ\mu$ and $\sigma$ are strongly unitarily equivalent.
\end{thm}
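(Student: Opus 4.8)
The statement is a parametrised (over the base $X$) non-commutative Weyl--von Neumann theorem: an arbitrary unital extension $\sigma$ of the simple algebra $A$ absorbs the trivial extension $\pi\circ\mu$. The plan is to adapt the proof of Voiculescu's absorption theorem, in Arveson's quasicentral-approximate-unit formulation, to the $C(X)$-linear setting. First I would record the reductions. Since $A$ is simple and $\sigma$ is unital (hence nonzero), $\ker\sigma=0$, so $\sigma$ is faithful; this injectivity, together with the stability of the ideal $C(X)\otimes\mathbb{K}$ (which provides ample room in the corona), is what makes $\sigma$ ``large enough'' to absorb a trivial summand, and is the simplicity-driven analogue of the purely-large condition behind such absorption results. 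Fixing once and for all a unitary identification of $H\oplus H$ with $H$ (so that $M_2(\mathbb{K})\cong\mathbb{K}$ and hence $M_2(\mathcal{Q}(C(X)\otimes\mathbb{K}))\cong\mathcal{Q}(C(X)\otimes\mathbb{K})$), the claim becomes the existence of a unitary $U\in U_{\mathcal{M}(C(X)\otimes\mathbb{K})}$ with $\mathrm{Ad}\,\pi(U)\circ(\sigma\oplus\pi\circ\mu)=\sigma$.

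The engine of the argument is a quasicentral approximate unit. I would first pass from corona-level to multiplier-level data: lift $\sigma$ to a unital completely positive map $\Sigma\colon A\to\mathcal{M}(C(X)\otimes\mathbb{K})$ with $\pi\circ\Sigma=\sigma$, so that $\Sigma(a^*a)-\Sigma(a)^*\Sigma(a)\in C(X)\otimes\mathbb{K}$ for all $a$ (such a cp lift is available here by a Choi--Effros/Arveson argument, using nuclearity of the base $C(X)$, and in the application also of $A=\mathcal{O}_{n+1}$). Then, using Arveson's averaging trick, I would choose an approximate unit $(e_k)_{k\ge1}$ of $C(X)\otimes\mathbb{K}$ that is quasicentral relative to the separable set $\Sigma(A)\cup\mu(A)$, i.e. $\|[e_k,\Sigma(a)]\|\to0$ and $\|[e_k,\mu(a)]\|\to0$ for every $a\in A$; separability of $A$ lets a single sequence work on a dense set, hence on all of $A$.

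With the gaps $f_k\colon=(e_{k+1}-e_k)^{1/2}$ (mutually almost-orthogonal positive elements of the ideal whose squares sum SOT*-convergently to $1$), I would run the usual Voiculescu telescoping: quasicentrality forces the blocks $f_k\,\mu(\cdot)\,f_k$ to nearly commute with $\Sigma$, so one can transport them into mutually orthogonal sub-blocks sitting inside the range of $\Sigma$, with norm errors tending to $0$. Assembling the resulting partial isometries into a single SOT*-convergent sum produces the desired $U\in\mathcal{M}(C(X)\otimes\mathbb{K})$; because all correction terms lie in $C(X)\otimes\mathbb{K}$, the conjugation $\mathrm{Ad}\,\pi(U)$ carries $\sigma\oplus\pi\circ\mu$ \emph{exactly} to $\sigma$ in the corona, and the fact that these corrections are in the ideal is precisely what upgrades the equivalence to a \emph{strong} unitary equivalence in the sense of the definition above.

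The main obstacle is the parametrisation over $X$. In the classical case $X$ is a point and the constructions are purely Hilbert-space-theoretic; here every estimate must be uniform in $x\in X$ and every operator-valued function constructed must be SOT*-continuous so as to genuinely belong to $\mathcal{M}(C(X)\otimes\mathbb{K})=C^b_{s*}(X,\mathbb{B}(H))$ rather than to a mere measurable field. This is exactly where the hypothesis that $X$ is a \emph{finite} CW complex enters: one builds $(e_k)$, the gaps $f_k$, and the transporting partial isometries locally over the finitely many cells (or over a finite good cover with a subordinate partition of unity) and glues them, checking that quasicentrality and the orthogonality estimates survive the gluing and that the assembled $U(x)$ varies SOT*-continuously in $x$. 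Maintaining simultaneous quasicentrality in $x$ while keeping $e_k(x)$ continuous, and patching the fibrewise Voiculescu moves into one global continuous unitary, are the delicate points that the finiteness and cellular structure of $X$ are designed to handle.
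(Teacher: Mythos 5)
You should first note that the paper does not prove this statement at all: it is imported verbatim from Pimsner--Popa--Voiculescu, and the only mathematical content the author adds is the remark immediately after the statement that, because $A$ is simple, the hypotheses of \cite[Theorem 2.10]{PPV} (which are non-degeneracy conditions on the extension localized at the points of $X$) are automatically satisfied. So the ``paper's proof'' is a citation plus a one-line verification of hypotheses, whereas your proposal attempts to re-derive the PPV theorem itself. Your outline does name the strategy PPV actually use --- quasicentral approximate units, Voiculescu-style telescoping, and a construction carried out cell by cell over the finite CW complex --- but as a proof it has two concrete defects.

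First, the completely positive lift $\Sigma$ of $\sigma$ is both misjustified and unnecessary. Choi--Effros lifting applies to nuclear u.c.p.\ maps; nuclearity of the base $C(X)$ (or of the ideal $C(X)\otimes\mathbb{K}$) does not make an arbitrary unital $*$-homomorphism out of $A$ liftable, and the theorem as stated assumes only that $A$ is separable, simple and unital, not nuclear. The standard route avoids lifting entirely: one forms the extension algebra $E=\pi^{-1}(\sigma(A))\subset\mathcal{M}(C(X)\otimes\mathbb{K})$, regards $\mu$ (composed with $E\to E/(C(X)\otimes\mathbb{K})\cong A$) as a representation of $E$ annihilating the ideal, and absorbs \emph{that} into the identity representation of $E$; nothing about $\sigma$ is ever lifted. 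Second, everything that makes the theorem hard is deferred rather than done: the approximate unit that is quasicentral uniformly in $x$, the transport of the blocks $f_k\mu(\cdot)f_k$ into the range of $\sigma$ with SOT*-continuous dependence on $x$, and the gluing over cells constitute the technical core of the PPV paper, and your sketch only labels them as ``delicate points.'' Relatedly, the hypothesis that simplicity of $A$ is really discharging is a \emph{fiberwise} non-degeneracy of $\sigma$ (its localization at every point of $X$ must remain a unital essential extension, so that there is room at each fiber to perform the Voiculescu moves), not the global injectivity you invoke, which by itself is far weaker than the purely-large-type condition you compare it to. As it stands the proposal is a plausible road map; for the purposes of this paper the correct move is the author's, namely to cite PPV and check that simplicity verifies their assumptions.
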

The theorem above is a special case of \cite[Theorem 2.10]{PPV}. Since $A$ is simple, the assumptions for it are satisfied. 
\begin{lem}\label{ext}
Let $X$ be a finite CW complex.
Let $A$ be a separable simple unital C*-algebra. 
Suppose that $A$ has a unital essential trivial extension $\pi\circ\mu$ where $\mu \colon A\hookrightarrow \mathcal{M}(C(X)\otimes \mathbb{K})$ is a unital embedding.
Then two unital essential extensions $\tau _1$ and $\tau _2$ are weakly unitarily equivalent if and only if $[\tau_1]=[\tau_2]\; in \;\operatorname{Ext}(A, C(X)\otimes \mathbb{K})$.
\end{lem}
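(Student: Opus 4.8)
The plan is to treat the two implications separately, the content residing almost entirely in deducing weak unitary equivalence from equality in $\operatorname{Ext}$. For the forward direction, if $\tau_1\sim_{w.u.e}\tau_2$ then, as already noted after the definition of $\sim_s$, a weak unitary equivalence induces $\sim_s$ (see \cite[Proposition 5.6.4]{Bl}), so $[\tau_1]=[\tau_2]$ in $\operatorname{Ext}(A,C(X)\otimes\mathbb{K})$; there is nothing more to do.

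For the converse I would unfold the definition of $\sim_s$ and then kill the auxiliary trivial summands with the Pimsner--Popa--Voiculescu absorption theorem (Theorem \ref{PPV}). Write $\tau_\infty:=\pi\circ\mu$ for the distinguished unital essential trivial extension given by the hypothesis. Assuming $[\tau_1]=[\tau_2]$, choose trivial extensions $\rho_1,\rho_2$ with $\tau_1\oplus\rho_1\sim_{s.u.e}\tau_2\oplus\rho_2$. The target is the absorption identity $\tau_i\oplus\rho_i\sim_{s.u.e}\tau_i$ for $i=1,2$: granting it, transitivity of $\sim_{s.u.e}$ yields $\tau_1\sim_{s.u.e}\tau_2$, and since $\tau_1={\rm Ad}\,\pi(U)\circ\tau_2$ exhibits a weak unitary equivalence via $u=\pi(U)$, we obtain $\tau_1\sim_{w.u.e}\tau_2$. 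In fact this route proves the a priori stronger conclusion that $\tau_1$ and $\tau_2$ are strongly unitarily equivalent.

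The absorption identity is the heart of the proof and the step I expect to be the main obstacle. I would obtain it from two applications of Theorem \ref{PPV}, using crucially that $A$ is separable simple unital so that its hypotheses hold. Applying Theorem \ref{PPV} with $\sigma=\tau_i\oplus\rho_i$ and the given unital embedding $\mu$ gives $(\tau_i\oplus\rho_i)\oplus\tau_\infty\sim_{s.u.e}\tau_i\oplus\rho_i$, while applying it with $\sigma=\tau_i$ gives $\tau_i\oplus\tau_\infty\sim_{s.u.e}\tau_i$. To bridge the two I must show that the auxiliary summand is inert, i.e. $\rho_i\oplus\tau_\infty\sim_{s.u.e}\tau_\infty$; this is exactly the uniqueness, up to strong unitary equivalence, of the unital essential trivial extension of $A$, which follows by running Theorem \ref{PPV} in both directions and using the flip $\tau_\infty\oplus\tau_\infty'\sim_{s.u.e}\tau_\infty'\oplus\tau_\infty$, implemented by conjugation with the coordinate-exchange unitary in $\mathcal{M}(C(X)\otimes\mathbb{K})$. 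Chaining these, $\tau_i\oplus\rho_i\sim_{s.u.e}(\tau_i\oplus\rho_i)\oplus\tau_\infty\sim_{s.u.e}\tau_i\oplus(\rho_i\oplus\tau_\infty)\sim_{s.u.e}\tau_i\oplus\tau_\infty\sim_{s.u.e}\tau_i$. The delicate point to monitor is the bookkeeping: the trivial extensions that emerge from $\sim_s$ must be presented in the form $\pi\circ(\text{unital }*\text{-homomorphism})$ to which Theorem \ref{PPV} literally applies, and one must check that direct sums with $\tau_\infty$ preserve the unital essential trivial type so that the uniqueness statement is applicable; once this is arranged, the remaining manipulations of $\sim_{s.u.e}$ are formal.
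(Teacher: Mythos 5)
Your forward direction and your overall strategy (absorb the auxiliary trivial summands via Theorem \ref{PPV}) are the same as the paper's, but there is a genuine gap at the step you yourself flag as ``bookkeeping'': the trivial extensions $\rho_1,\rho_2$ produced by the definition of $\sim_s$ need \emph{not} be unital, and nothing in the definition lets you choose them unital. Your key identity $\rho_i\oplus\tau_\infty\sim_{s.u.e}\tau_\infty$ then fails outright: if $\rho_i(1_A)\neq 1$ modulo $C(X)\otimes\mathbb{K}$, the extension $\rho_i\oplus\tau_\infty$ is not unital, and neither strong nor weak unitary equivalence can relate a non-unital extension to a unital one (conjugation by a unitary fixes the image of the unit). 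So the ``uniqueness of the unital essential trivial extension,'' which is correct as far as it goes, is not applicable to the summands you actually have, and the chain $\tau_i\oplus\rho_i\sim_{s.u.e}\tau_i\oplus(\rho_i\oplus\tau_\infty)\sim_{s.u.e}\tau_i\oplus\tau_\infty$ breaks. Resolving this is the real content of the lemma, not a formality: the paper first adds $\pi\circ\mu$ to both sides so that $\rho_i(1_A)$ becomes a properly infinite full projection, then uses $K_0(\mathcal{M}(C(X)\otimes\mathbb{K}))=0$ (Theorem \ref{kuiper}) together with Lemma \ref{full} to produce an isometry $V_i$ with $V_iV_i^*=\rho_i(1_A)$, and replaces $\rho_i$ by the unital homomorphism ${\rm Ad}V_i^*\circ\rho_i$, to which Theorem \ref{PPV} applies.

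This repair is also exactly where the conclusion degrades from strong to weak unitary equivalence: the passage from $\tau_i\oplus\pi\circ\rho_i$ to $\tau_i\oplus\pi\circ({\rm Ad}V_i^*\circ\rho_i)$ is implemented by the isometry $1\oplus V_i$, which is not a unitary of the multiplier algebra, so the resulting implementing unitary lives only in $\mathcal{Q}(C(X)\otimes\mathbb{K})$. Your claimed ``a priori stronger conclusion'' that $\tau_1\sim_{s.u.e}\tau_2$ is therefore not just unproved but false in general: the obstruction to lifting the weak equivalence to a strong one is the class $[w]_1\in K_1(\mathcal{Q}(C(X)\otimes\mathbb{K}))\cong K_0(C(X))$ of the implementing unitary, and a substantial part of Section 3 of the paper (Proposition \ref{VP}, Lemma \ref{even}, Theorem \ref{muoo}) exists precisely to show that this class vanishes in the particular situations needed there. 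If strong unitary equivalence followed directly from $[\tau_1]=[\tau_2]$, all of that work would be unnecessary; treat this as a red flag that the unitality bookkeeping cannot be waved away.
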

\begin{proof}

We show that $[\tau_1]=[\tau_2]$ implies $\tau_1\sim_{w.u.e}\tau_2$ as the other implication is always the case. 
By definition, there exists a trivial extension $\pi\circ\rho_i$ such that $\tau_1\oplus \pi\circ\rho_1\sim_{s.u.e}\tau_2\oplus\pi\circ\rho_2$. 
Adding $\pi\circ\mu$ to the both side, we may assume that $\rho_i(1_A)$ is a properly infinite full projection in $\mathcal{M}(C(X)\otimes \mathbb{K})$. 
Since $K_0(\mathcal{M}(C(X)\otimes \mathbb{K}))=0$ and $\rho_i(1_A)$ is properly infinite full, there exists an isometry $V_i$ with $V_iV_i^*=\rho_i(1_A)$. 
Now we have $\tau_1\oplus \pi\circ({\rm Ad}V_1^*\circ\rho_1)\sim_{w.u.e}\tau_2\oplus \pi\circ({\rm Ad}V_2^*\circ\rho_2)$. 
It follows from Theorem \ref{PPV} that $\tau_i\sim_{s.u.e}\tau_i\oplus \pi\circ({\rm Ad}V_i^*\circ\rho_i)$, and we have the conclusion.
\end{proof}
We have the following theorem of Paschke and Valette.
\begin{thm}[{\cite[Proposition 3]{V}, \cite[Theorem 6]{Pas}}]\label{dual}
Let $A$ and $B$ be unital separable C*-algebras, and assume that $A$ is nuclear. Let $\mu \colon A\to \mathcal{M}(\mathbb{K})$ be a unital embedding with $\mu (A)\cap \mathbb{K}=\{0\}$. For the unital $*$-homomorphism $\tau \colon=\pi(1_{B}\otimes \mu)$, we have an isomorphism 
$$\alpha_{\tau} \colon K_1(\tau(A)'\cap \mathcal{Q}(B\otimes \mathbb{K})) \to \operatorname{Ext}(SA, B\otimes \mathbb{K})$$
which sends the class of a unitary $u \in \tau(A)'\cap \mathcal{Q}(B\otimes \mathbb{K})$ to the class of extension $$\tau_u \colon SA \ni (e^{2\pi it}-1)a \mapsto (u-1)\tau(a) \in \mathcal{Q}(B\otimes \mathbb{K}).$$
\end{thm}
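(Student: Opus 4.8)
The plan is to establish, in order, that $\tau_u$ yields a well-defined class depending only on $[u]_1$, that $\alpha_\tau$ is a group homomorphism, and finally that it is bijective, the last being the substantial point. First I would check well-definedness. Identifying $SA$ with $C_0(S^1\setminus\{1\})\otimes A$ and writing $z$ for the coordinate on $S^1$, a unitary gives a unital $*$-homomorphism $C(S^1)\to\mathcal{Q}(B\otimes\mathbb{K})$ by $z\mapsto u$; since $u\in\tau(A)'$ this has range commuting with $\tau(A)$, so by nuclearity of the commutative factor the two assemble into $\tau_u\colon SA\to\mathcal{Q}(B\otimes\mathbb{K})$, $(z-1)a\mapsto(u-1)\tau(a)$, hence into a class $[\tau_u]\in\operatorname{Ext}(SA,B\otimes\mathbb{K})$. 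A norm-continuous path $u_t\in U(D)$ induces a homotopy of Busby invariants, and since $A$ is separable and nuclear $\operatorname{Ext}(SA,B\otimes\mathbb{K})$ is homotopy invariant, so $[\tau_{u_0}]=[\tau_{u_1}]$. Because $\tau$ is an absorbing trivial extension (Voiculescu's theorem, using that $\mu$ is faithful with $\mu(A)\cap\mathbb{K}=0$), the isometries implementing $\tau\cong\bigoplus_\infty\tau$ lie in $D$, so $D$ is properly infinite; by Lemma \ref{properly inf} every class of $K_1(D)$ is represented by a single unitary, and combined with homotopy invariance and the additivity below this shows $[\tau_u]$ depends only on $[u]_1$.

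Next I would prove additivity. Choosing isometries $s_1,s_2\in D$ with $s_1s_1^*+s_2s_2^*=1$ from the copy of $\mathcal{O}_\infty$ absorbed in $D$, the Whitehead rotation gives $uv\sim_h s_1us_1^*+s_2vs_2^*$ in $U(D)$, and under the identification furnished by $s_1,s_2$ the right-hand side represents the direct sum $\tau_u\oplus\tau_v$. As addition in $\operatorname{Ext}(SA,B\otimes\mathbb{K})$ is the direct sum of extensions, this yields $\alpha_\tau([uv]_1)=[\tau_u]+[\tau_v]$, so $\alpha_\tau$ is a homomorphism.

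For bijectivity I would construct an inverse. Given an essential, non-degenerate Busby invariant $\sigma\colon SA\to\mathcal{Q}(B\otimes\mathbb{K})$, extend it to the multiplier algebra $\mathcal{M}(SA)$, which contains the commuting subalgebras $C(S^1)\otimes 1_A$ and $1\otimes A$; this produces a unital homomorphism $\tau_\sigma\colon A\to\mathcal{Q}(B\otimes\mathbb{K})$ and a unitary $W\in\tau_\sigma(A)'$ with $\sigma=(\tau_\sigma)_W$. Using Voiculescu's theorem to absorb $\tau_\sigma$ into the fixed absorbing extension $\tau$ and transporting $W$ by the implementing unitary produces a unitary $u\in\tau(A)'\cap\mathcal{Q}(B\otimes\mathbb{K})=D$ with $[\tau_u]=[\sigma]$, giving surjectivity; running the construction backwards, and using that a trivial extension of $SA$ lifts to the multiplier so that its unitary is connected to $1$ by Theorem \ref{kuiper}, gives injectivity. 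Structurally this is the six-term exact sequence of $0\to J\to M\to D\to 0$, where $M=(1_B\otimes\mu)(A)'\cap\mathcal{M}(B\otimes\mathbb{K})$, $J=M\cap(B\otimes\mathbb{K})$, and $\pi(M)=D$; here $K_*(M)=0$ by a Kuiper-type swindle, so the boundary map, which one checks coincides with $\alpha_\tau$, is an isomorphism onto $K_0(J)\cong\operatorname{Ext}(SA,B\otimes\mathbb{K})$.

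The hard part will be the bijectivity, and specifically the fact that the $A$-part $\tau_\sigma$ extracted from an arbitrary extension of $SA$ need not coincide with the fixed $\tau$. Reconciling the two — showing that every class in $\operatorname{Ext}(SA,B\otimes\mathbb{K})$ has a representative whose $A$-part is strongly unitarily equivalent to $\tau$, so that the commuting unitary can be placed inside $D$ without altering its $K_1$-class — is exactly where Voiculescu's non-commutative Weyl--von Neumann uniqueness theorem and the nuclearity of $A$ are indispensable, equivalently where one needs the surjectivity $\pi(M)=D$ of the commutant-lifting map together with the vanishing $K_*(M)=0$.
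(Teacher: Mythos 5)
The paper offers no proof of Theorem \ref{dual}: it is imported as a black box from Valette \cite{V} and Paschke \cite{Pas}, so there is no internal argument to compare against. Your proposal is, in substance, a reconstruction of the Paschke--Valette proof itself: well-definedness and additivity via proper infiniteness of $D=\tau(A)'\cap \mathcal{Q}(B\otimes\mathbb{K})$ and the rotation trick, bijectivity by splitting an extension of $SA$ into an extension of $A$ together with a commuting unitary and then using Voiculescu's theorem to replace that $A$-part by the fixed absorbing $\tau$; your closing remark about the six-term sequence of $0\to J\to M\to D\to 0$ with $K_*(M)=0$ and $\pi(M)=D$ is precisely Valette's formulation, and you correctly locate nuclearity as entering through the commutant-lifting/absorption step. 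So the route is the right one and agrees with the cited sources.

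Two places in the sketch would need real work in a full write-up. First, in the inverse construction you assume the Busby invariant $\sigma\colon SA\to\mathcal{Q}(B\otimes\mathbb{K})$ is nondegenerate so that it extends to $\mathcal{M}(SA)$ and one can evaluate on $C(S^1)\cdot 1$ and $1\otimes A$; an arbitrary class in $\operatorname{Ext}(SA,B\otimes\mathbb{K})$ need not admit such a representative on the nose, and one must first add a suitable trivial nondegenerate extension (e.g.\ the restriction to $SA$ of a unital trivial extension of $C(S^1)\otimes A$) before the splitting into an $A$-part and a unitary makes sense. Second, well-definedness on $K_1(D)$ requires handling unitaries and homotopies in $M_k(D)$, not just in $D$: proper infiniteness reduces matrices to corners as you say, but one must also check that the resulting extension by $M_k(B\otimes\mathbb{K})$ is identified with one by $B\otimes\mathbb{K}$ compatibly with addition in $\operatorname{Ext}$, since this is where the content of ``$[\tau_u]$ depends only on $[u]_1$'' actually lives. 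Both points are routine for the cited authors but should not be waved through.
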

The following theorem holds from  the argument of \cite[Section 1, Theorem 1.5]{P}. 
\begin{thm}[{\cite[Section 1]{P}}]\label{sue}
Let $\tau_1$ and $\tau_2$ be unital extensions of $\mathcal{O}_{n+1}$ by $\mathbb{K}$. Then $\tau_1\sim_{s.u.e} \tau_2$ if and only if $\tau_1\sim_h \tau_2$.
\end{thm}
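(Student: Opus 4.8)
The plan is to treat the two implications separately, the forward direction ($\sim_{s.u.e}\Rightarrow\sim_h$) being soft and the converse carrying all the content.

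\emph{Easy direction.} Suppose $\tau_1=\operatorname{Ad}\pi(U)\circ\tau_2$ for some $U\in U_{\mathbb{B}(H)}=U_{\mathcal{M}(\mathbb{K})}$. By the generalized Kuiper theorem (Theorem~\ref{kuiper} with $A=\mathbb{C}$), $U_{\mathbb{B}(H)}$ is norm-contractible, so there is a norm-continuous path $\{U_t\}_{t\in[0,1]}\subset U_{\mathbb{B}(H)}$ with $U_0=1$ and $U_1=U$. Then $t\mapsto\operatorname{Ad}\pi(U_t)\circ\tau_2$ is point-norm continuous and joins $\tau_2$ to $\tau_1$ through unital extensions, whence $\tau_1\sim_h\tau_2$.

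\emph{Hard direction; set-up.} Assume $\tau_1\sim_h\tau_2$. Since $\mathcal{O}_{n+1}$ is simple, every unital $\tau_i$ is injective, hence essential, and a faithful unital representation of $\mathcal{O}_{n+1}$ on $H$ with trivial intersection with $\mathbb{K}$ supplies a unital essential trivial extension; thus Lemma~\ref{ext} applies with $X$ a point. As the class in $\operatorname{Ext}(\mathcal{O}_{n+1},\mathbb{K})\cong\mathbb{Z}_n$ (read off from Theorem~\ref{uct}) is a homotopy invariant, the whole homotopy keeps the $\operatorname{Ext}$-class of $\tau_2$, so by Lemma~\ref{ext} every member of the homotopy, in particular $\tau_1$, is weakly unitarily equivalent to $\tau_2$; write $\tau_1=\operatorname{Ad}(u)\circ\tau_2$ with $u\in U_{\mathcal{Q}(\mathbb{K})}$. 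Now $\tau_1\sim_{s.u.e}\tau_2$ holds exactly when, after right multiplication by a unitary of the relative commutant $C:=\tau_2(\mathcal{O}_{n+1})'\cap\mathcal{Q}(\mathbb{K})$, $u$ can be made to lift to $U_{\mathbb{B}(H)}$. From the six-term sequence of $0\to\mathbb{K}\to\mathbb{B}(H)\to\mathcal{Q}(\mathbb{K})\to0$ one gets $\partial\colon K_1(\mathcal{Q}(\mathbb{K}))\xrightarrow{\cong}K_0(\mathbb{K})=\mathbb{Z}$; since $\mathcal{Q}(\mathbb{K})$ is $K_1$-injective its identity component is $\ker\partial$, and by a finite-rank correction a Calkin unitary lifts to $U_{\mathbb{B}(H)}$ if and only if its index class vanishes. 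The task thus reduces to showing $\partial[u]=0$ after correcting by $U_C$.

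\emph{Hard direction; the core and the obstacle.} The key is to implement the homotopy by conjugations. I would prove that the orbit map
\begin{equation*}
q\colon U_{\mathcal{Q}(\mathbb{K})}\longrightarrow\{\operatorname{Ad}(w)\circ\tau_2\mid w\in U_{\mathcal{Q}(\mathbb{K})}\},\qquad w\mapsto\operatorname{Ad}(w)\circ\tau_2
\end{equation*}
has the path-lifting property. Granting this, the homotopy $\{\tau_t\}$ from $\tau_2$ to $\tau_1$, which by the previous paragraph lies in the image of $q$, lifts to a norm-continuous path $\{u_t\}$ with $\operatorname{Ad}(u_t)\circ\tau_2=\tau_t$; as $u_0\in U_C$ commutes with $\tau_2$, replacing $u_t$ by $u_tu_0^{-1}$ we arrange $u_0=1$, so $u_1$ lies in the identity component and $\partial[u_1]=0$. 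Taking the commutant correction trivial, $u_1$ lifts to $U\in U_{\mathbb{B}(H)}$ with $\operatorname{Ad}\pi(U)\circ\tau_2=\tau_1$, i.e. $\tau_1\sim_{s.u.e}\tau_2$. The main obstacle is precisely the path-lifting property of $q$, which rests on a local conjugacy (perturbation) lemma: two unital extensions of $\mathcal{O}_{n+1}$ that are close in the point-norm topology are conjugate by a Calkin unitary near $1$. This is genuinely delicate, since the naive candidate $\sum_i\tau'(S_i)\tau(S_i)^*$ is unitary and near $1$ yet fails to conjugate $\tau$ to $\tau'$ on words of length exceeding one; overcoming it is exactly where semiprojectivity of $\mathcal{O}_{n+1}$, the purely infinite simple structure of $\mathcal{Q}(\mathbb{K})$, absorption (Theorem~\ref{PPV} with $X$ a point), and if necessary the Paschke--Valette computation of $K_1(C)$ (Theorem~\ref{dual}) enter, and where the argument of \cite[Section~1]{P} carries the load.
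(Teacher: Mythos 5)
Your soft direction and your reduction of the hard direction are both sound: Kuiper's theorem (Theorem \ref{kuiper}) gives $\sim_{s.u.e}\Rightarrow\sim_h$, and homotopy invariance of the class in $\operatorname{Ext}(\mathcal{O}_{n+1},\mathbb{K})\cong\mathbb{Z}_n$ together with Lemma \ref{ext} (with $X$ a point) correctly reduces everything to showing that the Calkin unitary $u$ with $\tau_1=\operatorname{Ad}(u)\circ\tau_2$ has vanishing Fredholm index after a correction from $\tau_2(\mathcal{O}_{n+1})'\cap\mathcal{Q}(\mathbb{K})$. The genuine gap is that the argument then stops exactly where it must begin: the path-lifting property of the orbit map $q$, which you reduce to a local conjugacy lemma (point-norm close unital extensions of $\mathcal{O}_{n+1}$ are conjugate by a Calkin unitary near $1$), is asserted rather than proved, and you explicitly defer it to \cite{P}. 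That lemma is not a technical footnote; it is a local form of the theorem itself, and the only proof I know of it goes \emph{through} the Paschke--Salinas classification rather than feeding into it. Semiprojectivity of $\mathcal{O}_{n+1}$ only yields that close homomorphisms are homotopic, not unitarily conjugate, and as you yourself observe the obvious unitary $\sum_i\tau'(S_i)\tau(S_i)^*$ does not implement a conjugation. So the proposal is a correct framing plus an unproven core.

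For comparison, the paper gives no proof either: it quotes \cite[Section 1, Theorem 1.5]{P}, whose actual mechanism is the integer-valued index $\operatorname{ind}[V_\tau]_1\in K_0(\mathbb{K})=\mathbb{Z}$ of the unitary $V_\tau\in\mathbb{M}_{n+2}(\mathcal{Q}(\mathbb{K}))$ built from $\tau(\mathbb{S})$ --- the very element that reappears in Section 3.1 of this paper. That index is a complete invariant for strong unitary equivalence of unital extensions, is norm-continuous in the generators and hence constant along a point-norm homotopy, and equality of indices produces a conjugating unitary that lifts to $U_{\mathbb{B}(H)}$. With that invariant in hand both implications are immediate and no path-lifting is needed; without it, your scheme requires you to prove the local conjugacy statement from scratch (via absorption, Theorem \ref{PPV}, and a controlled uniqueness argument), which the proposal does not do.
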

\begin{prop}\label{VP}
Let $X$ be a compact Hausdorff space with $\operatorname{Tor}(K_0(C(X)),\mathbb{Z}_n)=0$, and let $\sigma \colon \mathcal{O}_{n+1}\to \mathcal{Q}(C(X)\otimes \mathbb{K})$ be an arbitrary unital extension.
Then every element of $K_1(\sigma(\mathcal{O}_{n+1})'\cap\mathcal{Q}(C(X)\otimes \mathbb{K}))$ is a $n$-torsion element, and the set  $U_{(\sigma(\mathcal{O}_{n+1})'\cap\mathcal{Q}(C(X)\otimes \mathbb{K}))}$ is contained in the path component  of $1$ of $U_{\mathcal{Q}(C(X)\otimes \mathbb{K})}$.
\end{prop}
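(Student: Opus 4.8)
The plan is to identify the $K_1$-group of the relative commutant $C_\sigma:=\sigma(\mathcal{O}_{n+1})'\cap\mathcal{Q}(C(X)\otimes\mathbb{K})$ with an $\operatorname{Ext}$-group, read off the $n$-torsion from the universal coefficient theorem, and then derive the unitary statement from $K_1$-injectivity of $\mathcal{Q}(C(X)\otimes\mathbb{K})$. First I would fix the trivial absorbing extension $\tau=\pi(1_{C(X)}\otimes\mu)$ of Theorem \ref{dual}, where $\mu\colon\mathcal{O}_{n+1}\hookrightarrow\mathcal{M}(\mathbb{K})$ is a unital embedding with $\mu(\mathcal{O}_{n+1})\cap\mathbb{K}=\{0\}$; such a $\mu$ exists because $\mathcal{O}_{n+1}$ is unital, simple, nuclear and infinite-dimensional (take an infinite multiple of a faithful representation). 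Since $\mathcal{O}_{n+1}$ is simple, Theorem \ref{PPV} gives $\sigma\oplus\pi\circ(1_{C(X)}\otimes\mu)=\sigma\oplus\tau\sim_{s.u.e}\sigma$, and conjugating by the implementing multiplier unitary yields an isomorphism $C_\sigma\cong C_{\sigma\oplus\tau}$. Thus every unital extension is absorbing, and it suffices to analyze the commutant of $\sigma\oplus\tau$.

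Next I would run Paschke--Valette duality. The formula $u\mapsto\sigma_u$, with $\sigma_u\bigl((e^{2\pi it}-1)a\bigr)=(u-1)\sigma(a)$, makes sense for any unital extension because $u$ commutes with $\sigma(\mathcal{O}_{n+1})$, and it induces a homomorphism $\beta_\sigma\colon K_1(C_\sigma)\to\operatorname{Ext}(S\mathcal{O}_{n+1},C(X)\otimes\mathbb{K})$. Surjectivity is clean: under $C_\sigma\cong C_{\sigma\oplus\tau}$ the block-diagonal unitaries $1\oplus v$ with $v\in C_\tau$ satisfy $\beta_{\sigma\oplus\tau}([1\oplus v])=[\tau_v]=\alpha_\tau([v])$, and $\alpha_\tau$ is onto by Theorem \ref{dual}. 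The crux is injectivity, i.e. that the Paschke map is an isomorphism for the \emph{possibly non-trivial} absorbing extension $\sigma$ and not only for $\tau$; I expect this to be the main obstacle, since a non-trivial $\sigma$ admits no genuine representation lift and the naive full-corner/Morita comparison of $C_{\sigma\oplus\tau}$ with $C_\tau$ fails. I would attack it through the six-term sequence of the sub-extension $0\to C(X)\otimes\mathbb{K}\to\mathcal{D}\to C_\sigma\to0$, where $\mathcal{D}=\{T\in\mathcal{M}(C(X)\otimes\mathbb{K}):[T,\phi(a)]\in C(X)\otimes\mathbb{K}\ \forall a\}$ for a unital c.c.p. lift $\phi$ of $\sigma$, matching its connecting maps with those produced by $\tau$ via the absorption $\sigma\sim\sigma\oplus\tau$.

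Granting the isomorphism $K_1(C_\sigma)\cong\operatorname{Ext}(S\mathcal{O}_{n+1},C(X)\otimes\mathbb{K})$, I would compute the target by Theorem \ref{uct}. Here $K_0(S\mathcal{O}_{n+1})=K_1(\mathcal{O}_{n+1})=0$ and $K_1(S\mathcal{O}_{n+1})=K_0(\mathcal{O}_{n+1})=\mathbb{Z}_n$, so the $\operatorname{Hom}$-term reduces to $\operatorname{Hom}(\mathbb{Z}_n,K_0(C(X)))\cong\operatorname{Tor}(K_0(C(X)),\mathbb{Z}_n)$, which vanishes by hypothesis. The sequence therefore collapses to $\operatorname{Ext}(S\mathcal{O}_{n+1},C(X)\otimes\mathbb{K})\cong\operatorname{Ext}^1_{\mathbb{Z}}(\mathbb{Z}_n,K_1(C(X)))\cong K_1(C(X))/nK_1(C(X))$, which is annihilated by $n$. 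This yields the first assertion that every class in $K_1(C_\sigma)$ is $n$-torsion.

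Finally, for the unitary statement I would use the six-term sequence of $0\to C(X)\otimes\mathbb{K}\to\mathcal{M}(C(X)\otimes\mathbb{K})\to\mathcal{Q}(C(X)\otimes\mathbb{K})\to0$: by Theorem \ref{kuiper} one has $K_i(\mathcal{M}(C(X)\otimes\mathbb{K}))=0$, so the connecting (index) map gives an isomorphism $K_1(\mathcal{Q}(C(X)\otimes\mathbb{K}))\cong K_0(C(X))$. Given a unitary $u\in C_\sigma$, its class $[u]_1$ is $n$-torsion by the first part, hence its image in $K_0(C(X))$ is an $n$-torsion element; since $\operatorname{Tor}(K_0(C(X)),\mathbb{Z}_n)=0$ this image is $0$, so $[u]_1=0$ in $K_1(\mathcal{Q}(C(X)\otimes\mathbb{K}))$. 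As $\mathcal{Q}(C(X)\otimes\mathbb{K})$ is $K_1$-injective, $u\sim_h1$, i.e. $U_{C_\sigma}$ lies in the path component of $1$, completing the proof.
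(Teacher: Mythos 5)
Your overall architecture (identify $K_1$ of the relative commutant with $\operatorname{Ext}(S\mathcal{O}_{n+1},C(X)\otimes\mathbb{K})$, kill it by the UCT and the $\operatorname{Tor}$ hypothesis, then finish with $K_1$-injectivity of the corona) matches the paper, and your UCT computation and the final unitary argument are correct. But there is a genuine gap at the step you yourself flag as the crux: you never establish the isomorphism $K_1(C_\sigma)\cong\operatorname{Ext}(S\mathcal{O}_{n+1},C(X)\otimes\mathbb{K})$ for an arbitrary unital extension $\sigma$, where $C_\sigma=\sigma(\mathcal{O}_{n+1})'\cap\mathcal{Q}(C(X)\otimes\mathbb{K})$. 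Theorem \ref{dual} only applies to the trivial extension $\tau=\pi(1_{C(X)}\otimes\mu)$, and your absorption step $\sigma\oplus\tau\sim_{s.u.e.}\sigma$ only identifies $C_\sigma$ with $C_{\sigma\oplus\tau}$, which — as you correctly observe — does not obviously compare with $C_\tau$. The proposed repair via the six-term sequence of $0\to C(X)\otimes\mathbb{K}\to\mathcal{D}\to C_\sigma\to 0$ is left entirely as a sketch ("I would attack it\ldots", "Granting the isomorphism\ldots"), so the first assertion of the proposition is not actually proved.

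The paper closes exactly this hole with an amplification trick that you should note, because it sidesteps any duality statement for non-trivial extensions. By the UCT (with $K_0(\mathcal{O}_{n+1})=\mathbb{Z}_n$, $K_1(\mathcal{O}_{n+1})=0$) the group $\operatorname{Ext}(\mathcal{O}_{n+1},C(X)\otimes\mathbb{K})$ itself is $n$-torsion, so $[\sigma^{\oplus n}]=n[\sigma]=0=[\tau]$; Lemma \ref{ext} (whose proof is where Theorem \ref{PPV} is actually used) then produces a unitary $w\in U_{\mathcal{Q}(C(X)\otimes\mathbb{K})}$ with $\sigma^{\oplus n}={\rm Ad}\,w\circ\tau$, hence ${\rm Ad}\,w$ is an isomorphism of $C_{\sigma^{\oplus n}}$ onto $C_\tau$. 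On the other hand $C_{\sigma^{\oplus n}}\cong C_\sigma\otimes\mathbb{M}_n$ on the nose, so $K_1(C_\sigma)\cong K_1(C_{\sigma^{\oplus n}})\cong K_1(C_\tau)\cong\operatorname{Ext}(S\mathcal{O}_{n+1},C(X)\otimes\mathbb{K})$ by Theorem \ref{dual} applied only to $\tau$. In other words, the correct comparison is not between $C_{\sigma\oplus\tau}$ and $C_\tau$ but between $\mathbb{M}_n(C_\sigma)$ and $C_\tau$, and it is the $n$-torsion of the $\operatorname{Ext}$-group — not absorption of $\sigma$ — that makes the two extensions unitarily equivalent. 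If you replace your second paragraph with this argument, the rest of your proof goes through verbatim.
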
 
\begin{proof}
By Theorem \ref{uct}, all elements of $\operatorname{Ext}(S\mathcal{O}_{n+1}, C(X)\otimes \mathbb{K})$ are $n$-torsion elements. 
We define $\tau \colon=\pi \circ (1_{C(X)}\otimes \mu)$ where $\mu \colon \mathcal{O}_{n+1}\to\mathcal{M}(\mathbb{K})$ is a unital embedding. 
Since $\mathcal{O}_{n+1}$ is simple, we have $\mu (\mathcal{O}_{n+1})\cap \mathbb{K}=\{0\}$. By Theorem \ref{dual}, we have $$K_1(\tau(\mathcal{O}_{n+1})'\cap\mathcal{Q}(C(X)\otimes \mathbb{K}))\cong \operatorname{Ext}(S\mathcal{O}_{n+1}, C(X)\otimes \mathbb{K}).$$ So we have $[\sigma ^{\oplus n}]=n[\sigma]=[\tau]=0$, and Lemma \ref{ext} gives a unitary $w\in U_{\mathcal{Q}(C(X)\otimes\mathbb{K})}$ with $\sigma^{\oplus n}={\rm Ad}w\circ \tau$. We have an isomorphism
$${\rm Ad}w \colon (\sigma^{\oplus n}(\mathcal{O}_{n+1})'\cap\mathcal{Q}(C(X)\otimes\mathbb{K}))\cong (\tau (\mathcal{O}_{n+1})'\cap\mathcal{Q}(C(X)\otimes\mathbb{K})).$$
We also have $(\sigma(\mathcal{O}_{n+1})'\cap\mathcal{Q}(C(X)\otimes\mathbb{K}))\otimes \mathbb{M}_n \cong (\sigma^{\oplus n}(\mathcal{O}_{n+1})'\cap\mathcal{Q}(C(X)\otimes\mathbb{K}))$. 
So we have $$K_1(\sigma(\mathcal{O}_{n+1})'\cap\mathcal{Q}(C(X)\otimes \mathbb{K}))\cong K_1(\tau(\mathcal{O}_{n+1})'\cap\mathcal{Q}(C(X)\otimes \mathbb{K})).$$ 
Since $K_0(C(X))=K_1(\mathcal{Q}(C(X)\otimes \mathbb{K}))$ has no $n$-torsion, we have $[w]_1=0\in K_1(\mathcal{Q}(C(X)\otimes \mathbb{K}))$ for every $w\in U_{(\sigma(\mathcal{O}_{n+1})'\cap\mathcal{Q}(C(X)\otimes \mathbb{K}))}$.
So we have the conclusion by $K_1$-injectivity of $\mathcal{Q}(C(X)\otimes \mathbb{K})$ (see \cite[Section 1.13]{M}).
\end{proof} 
As an application of Lemma \ref{ext}, we show in Proposition \ref{pac} that the group $\operatorname{Aut}E_{n+1}$ is path connected.
The straightforward computation yields the lemma below.
\begin{lem}\label{isoke}
We have the following isomorphisms of K-groups and Ext-groups : 
\begin{align*}
{{\rm ev}_{pt}}_* &\colon \operatorname{Ext}(\mathcal{O}_{n+1}, C(S^{2m-1})\otimes \mathbb{K})\to \operatorname{Ext}(\mathcal{O}_{n+1}, \mathbb{K}),\\
K_1({{\rm ev}_{pt}}) &\colon K_1(\mathcal{Q}(C(S^{2m-1})\otimes \mathbb{K})) \to K_1(\mathcal{Q}(\mathbb{K})),\\
K_1({{\rm ev}_{pt}}) &\colon K_1(\mathcal{Q}(C([0,1])\otimes \mathbb{K})) \to K_1(\mathcal{Q}(\mathbb{K})),
\end{align*}
for $m\geq 1$.
\end{lem}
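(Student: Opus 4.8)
The plan is to compute both sides of each arrow outright and then to pin down the map itself by naturality. All three statements collapse to one observation: $S^{2m-1}$, $[0,1]$ and the point are connected, so the point-evaluation ${\rm ev}_{pt}$ acts as the identity on $K^0$, that is, on $K_0(C(X))\cong K_0(C(X)\otimes\mathbb{K})$ generated by the rank of the trivial class $[1]$. I will carry this identity through the universal coefficient theorem of Theorem \ref{uct} for the first arrow, and through the six-term sequence of the multiplier extension for the second and third.

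For the Ext-isomorphism I would apply Theorem \ref{uct} with $A=\mathcal{O}_{n+1}$, using $K_0(\mathcal{O}_{n+1})=\mathbb{Z}_n$, $K_1(\mathcal{O}_{n+1})=0$ and $K_0(C(S^{2m-1}))=K_1(C(S^{2m-1}))=\mathbb{Z}$. Both $\operatorname{Hom}$-terms $\operatorname{Hom}(K_i(\mathcal{O}_{n+1}),K_{i+1}(C(S^{2m-1})))$ vanish, being $\operatorname{Hom}(\mathbb{Z}_n,\mathbb{Z})$ and $\operatorname{Hom}(0,\mathbb{Z})$, so the edge map identifies $\operatorname{Ext}(\mathcal{O}_{n+1},C(S^{2m-1})\otimes\mathbb{K})$ with $\operatorname{Ext}^1_{\mathbb{Z}}(\mathbb{Z}_n,\mathbb{Z})=\mathbb{Z}_n$; the same computation at the point gives $\operatorname{Ext}(\mathcal{O}_{n+1},\mathbb{K})\cong\mathbb{Z}_n$. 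Although the splitting of the UCT is unnatural, the edge map is natural in the second variable, so ${{\rm ev}_{pt}}_*$ is identified with $\operatorname{Ext}^1_{\mathbb{Z}}(\mathbb{Z}_n,K_0({\rm ev}_{pt}))$; since $K_0({\rm ev}_{pt})=\mathrm{id}_{\mathbb{Z}}$ this is $\mathrm{id}_{\mathbb{Z}_n}$, hence an isomorphism.

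For the two $K_1(\mathcal{Q})$-isomorphisms I would use the extension $0\to C(X)\otimes\mathbb{K}\to\mathcal{M}(C(X)\otimes\mathbb{K})\to\mathcal{Q}(C(X)\otimes\mathbb{K})\to 0$ and its six-term sequence. By Theorem \ref{kuiper} we have $K_0(\mathcal{M}(C(X)\otimes\mathbb{K}))=K_1(\mathcal{M}(C(X)\otimes\mathbb{K}))=0$, the vanishing of $K_0$ being the case $i=2$ read through Bott periodicity, so the index map $K_1(\mathcal{Q}(C(X)\otimes\mathbb{K}))\to K_0(C(X)\otimes\mathbb{K})=K_0(C(X))$ is an isomorphism. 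For $X=S^{2m-1}$ and $X=[0,1]$ the target is $K^0(X)=\mathbb{Z}$, while at the point $K_1(\mathcal{Q}(\mathbb{K}))\cong K_0(\mathbb{C})=\mathbb{Z}$. Realizing $\mathcal{M}(C(X)\otimes\mathbb{K})$ as $C^b_{s*}(X,\mathbb{B}(H))$ as in the preliminaries, point-evaluation is a $*$-homomorphism carrying $C(X)\otimes\mathbb{K}$ into $\mathbb{K}$ and thus inducing a morphism from the above extension to $0\to\mathbb{K}\to\mathbb{B}(H)\to\mathcal{Q}(\mathbb{K})\to 0$; the index maps then form a commuting square and $K_1({\rm ev}_{pt})$ is identified with $K_0({\rm ev}_{pt})=\mathrm{id}_{\mathbb{Z}}$.

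The computations themselves are routine, so the one point deserving care is the functoriality. I would establish through the $C^b_{s*}(X,\mathbb{B}(H))$ picture that ${\rm ev}_{pt}$ genuinely descends to the corona algebras and induces a morphism of the two extensions, which is exactly what makes the UCT edge map and the index maps natural. Granting this, each claim follows at once from the rank isomorphism $K_0({\rm ev}_{pt})=\mathrm{id}_{\mathbb{Z}}$ on a connected space, so I expect no real obstacle beyond this bookkeeping.
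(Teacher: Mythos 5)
Your argument is correct and is precisely the ``straightforward computation'' the paper alludes to without writing out: the UCT with vanishing $\operatorname{Hom}$-terms plus naturality of its $\operatorname{Ext}^1_{\mathbb{Z}}$ edge map for the first isomorphism, and the index map of the multiplier extension (an isomorphism by Theorem \ref{kuiper}) intertwined with $K_0({\rm ev}_{pt})={\rm id}_{\mathbb{Z}}$ for the other two. The one point you flag --- that ${\rm ev}_{pt}$ descends to a morphism of the two extensions via the $C^b_{s*}(X,\mathbb{B}(H))$ picture --- is handled correctly, so there is nothing to add.
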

We need the following lemma.
\begin{lem}[{\cite[Lemma 2.3]{L}}]
Let $\tau_0 \colon\mathcal{O}_{n+1}\to \mathcal{Q}(\mathbb{K})$ be the Busby invariant in Definition \ref{tau0}.
If a unitary $u$ in $U_{\mathcal{M}(\mathbb{K})}$ commutes with $E_{n+1}$ up to compact operators (i.e. $[u, d]\in \mathbb{K}$ for every $d$ in $E_{n+1}$), there exists a self adjoint element $h$ in $\mathcal{Q}(\mathbb{K})$ such that $e^{2\pi ih}=\pi(u)$ and $[h, a]=0$ for every $a$ in $\mathcal{O}_{n+1}$.
\end{lem}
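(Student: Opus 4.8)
The plan is to translate the statement into a question about the relative commutant of $\tau_0(\mathcal{O}_{n+1})$ inside the Calkin algebra and then to exhibit a self-adjoint logarithm living there. The hypothesis $[u,d]\in\mathbb{K}$ for every $d\in E_{n+1}$ is precisely the assertion that the image $\pi(u)\in\mathcal{Q}(\mathbb{K})$ commutes with $\pi(E_{n+1})=\tau_0(\mathcal{O}_{n+1})$, i.e. $\pi(u)$ lies in $C\colon=\tau_0(\mathcal{O}_{n+1})'\cap\mathcal{Q}(\mathbb{K})$. Since any element of $C$ automatically satisfies $[h,a]=0$ for all $a\in\mathcal{O}_{n+1}$, the entire problem collapses to showing that the unitary $\pi(u)$ admits a self-adjoint logarithm \emph{inside $C$}, that is $\pi(u)=e^{2\pi ih}$ with $h=h^*\in C$.

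The easy case is when $\pi(u)$ has a spectral gap, i.e. $\sigma(\pi(u))\neq S^1$. Then a continuous branch $g$ of $\tfrac{1}{2\pi i}\log$ on $\sigma(\pi(u))$ gives $h\colon=g(\pi(u))\in C^*(1,\pi(u))\subseteq C$, which is self-adjoint and satisfies $e^{2\pi ih}=\pi(u)$. So the real content is the case $\sigma(\pi(u))=S^1$, where functional calculus alone provides no continuous logarithm. To prepare for this I would first pin down the $K$-theory of $C$: applying Proposition \ref{VP} with $X$ a point (the hypothesis holds since $\operatorname{Tor}(K_0(\mathbb{C}),\mathbb{Z}_n)=\operatorname{Tor}(\mathbb{Z},\mathbb{Z}_n)=0$), together with the isomorphism supplied by its proof via Theorem \ref{dual}, identifies $K_1(C)$ with $\operatorname{Ext}(S\mathcal{O}_{n+1},\mathbb{K})$; and the UCT (Theorem \ref{uct}), using $K_0(S\mathcal{O}_{n+1})=0$, $K_1(S\mathcal{O}_{n+1})=\mathbb{Z}_n$ and $K_*(\mathbb{K})=(\mathbb{Z},0)$, gives $\operatorname{Ext}(S\mathcal{O}_{n+1},\mathbb{K})=0$. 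Hence $K_1(C)=0$ and in particular $[\pi(u)]_1$ vanishes.

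The main obstacle is upgrading the vanishing of $[\pi(u)]_1$ to an honest single self-adjoint logarithm of $\pi(u)$ itself: the $K$-theory controls $\pi(u)$ only up to homotopy and up to matrix amplification, and the second conclusion of Proposition \ref{VP} only places $\pi(u)$ in the path component of $1$ in the ambient $U_{\mathcal{Q}(\mathbb{K})}$, not inside $U_C$. I would attack this in two steps. First, establish that $C$ itself is $K_1$-injective, so that $K_1(C)=0$ yields a genuine norm-continuous path of unitaries in $U_C$ from $\pi(u)$ to $1$. Second, and this is the delicate point, show that in $C$ every unitary in the path component of the identity is a single exponential $e^{2\pi ih}$ with $h=h^*\in C$, i.e. that the exponential rank of $C$ is minimal; one can then subdivide the path into short arcs, each lying in a spectral-gap neighbourhood of a unitary, and reassemble a single self-adjoint logarithm. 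I expect this minimal-exponential-rank property to come from the ``largeness'' of the relative commutant $C$ inside the corona algebra $\mathcal{Q}(\mathbb{K})$, corona algebras having excellent approximation and exponential behaviour, and this is where I anticipate the bulk of the technical work.

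A more hands-on alternative is to work one level up in $\mathcal{M}(\mathbb{K})=\mathbb{B}(H)$. Since $u$ is a genuine unitary there, Borel functional calculus in the von Neumann algebra $\mathbb{B}(H)$ produces a self-adjoint $K$ with $\sigma(K)\subseteq[0,1]$ and $e^{2\pi iK}=u$; the task is then to arrange the branch so that $[K,T_i]\in\mathbb{K}$. This commutator is automatically compact off the part of the spectrum where $\pi(u)$ fills $S^1$ (there a continuous branch exists and commutators of continuous functional calculi with $T_i$ stay in the ideal $\mathbb{K}$), and on the full-spectrum part one is forced back to the same perturbation problem. Either way the crux is the passage from the homotopy-theoretic triviality of $\pi(u)$ in $U_C$ to a single-exponential representative in $C$, and that is where I expect the real difficulty to lie.
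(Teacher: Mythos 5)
First, a point of comparison: the paper does not prove this lemma at all --- it is quoted from \cite[Lemma 2.3]{L} and used as a black box --- so there is no internal proof to measure your argument against; your proposal has to stand on its own. Your reduction is sound as far as it goes: the hypothesis is indeed equivalent to $\pi(u)\in C:=\tau_0(\mathcal{O}_{n+1})'\cap\mathcal{Q}(\mathbb{K})$, the spectral-gap case follows from continuous functional calculus inside $C$, and the computation $K_1(C)\cong\operatorname{Ext}(S\mathcal{O}_{n+1},\mathbb{K})=0$ (using Proposition \ref{VP} with $X$ a point to pass from $\tau_0$ to a trivial extension, then Theorem \ref{dual} and the UCT) is correct.

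The gap is exactly the step you flag as ``the delicate point,'' and it cannot be deferred: it is the entire content of the lemma. Knowing $K_1(C)=0$, or even having a norm-continuous path in $U_C$ from $\pi(u)$ to $1$, only yields $\pi(u)$ as a finite product $e^{2\pi ih_1}\cdots e^{2\pi ih_k}$ with $h_j=h_j^*\in C$; your proposed ``subdivide the path into short arcs \dots and reassemble a single self-adjoint logarithm'' silently conflates such a product with a single exponential. In a general C*-algebra a product of exponentials is not an exponential --- this is precisely the exponential rank obstruction you yourself name --- and no mechanism for the reassembly is offered. Likewise the $K_1$-injectivity of $C$ is announced as a first step but never established (it does not follow from $K_1(C)=0$, which controls unitaries only up to matrix amplification), and your ``hands-on alternative'' ends, by your own admission, at the same unresolved perturbation problem. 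So the proposal correctly isolates where the difficulty lives but does not resolve it. To complete the argument one must either prove an exponential-rank-one statement for this particular relative commutant or work directly with the extension picture: under the Paschke--Valette map of Theorem \ref{dual} the class of $\pi(u)$ is the extension $\tau_{\pi(u)}$ of $S\mathcal{O}_{n+1}$, and producing a self-adjoint logarithm in $C$ amounts to genuinely splitting that extension rather than merely observing that its class in $\operatorname{Ext}$ vanishes; that passage (from stable triviality to honest triviality, e.g.\ via an absorption theorem) is the substance of the cited result and is absent from your proposal.
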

\begin{cor}\label{N}
The group $N\colon=\{u\in U_{\mathcal{M}(\mathbb{K})}\mid [u, E_{n+1}]\subset \mathbb{K}\}$ is path connected.
\end{cor}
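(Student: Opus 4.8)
The plan is to deduce the corollary directly from the preceding lemma together with the vanishing of $K_1(\mathbb{K})$. Fix $u\in N$ and apply the lemma to obtain a self-adjoint $h\in\mathcal{Q}(\mathbb{K})$ with $e^{2\pi ih}=\pi(u)$ and $[h,a]=0$ for every $a\in\mathcal{O}_{n+1}$. Since self-adjoint elements always lift through the quotient map $\pi\colon\mathbb{B}(H)\to\mathcal{Q}(\mathbb{K})$ (symmetrize an arbitrary lift), I would choose a self-adjoint $H\in\mathbb{B}(H)=\mathcal{M}(\mathbb{K})$ with $\pi(H)=h$ and consider the norm-continuous path $t\mapsto e^{2\pi itH}$ of unitaries in $U_{\mathcal{M}(\mathbb{K})}$ joining $1$ to $e^{2\pi iH}$.

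First I would verify that this path lies in $N$. For $d\in E_{n+1}$ one has $\pi([e^{2\pi itH},d])=[e^{2\pi ith},\pi(d)]$; since $\pi(d)\in\mathcal{O}_{n+1}$ and $h$ commutes with $\mathcal{O}_{n+1}$, the element $e^{2\pi ith}$ commutes with $\pi(d)$, so $[e^{2\pi itH},d]\in\ker\pi=\mathbb{K}$. Hence $t\mapsto e^{2\pi itH}$ is a path in $N$ from $1$ to $e^{2\pi iH}$. It then remains to connect $e^{2\pi iH}$ to $u$ inside $N$. Set $w\colon=e^{2\pi iH}u^{*}$. Because $\pi(w)=e^{2\pi ih}\pi(u)^{*}=1$, we have $w\in U_{\mathbb{K}^{\sim}}$, and since $\mathbb{K}$ is an ideal every unitary of $\mathbb{K}^{\sim}$ automatically lies in $N$ (a compact perturbation of a scalar commutes with $E_{n+1}$ modulo $\mathbb{K}$). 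As $K_1(\mathbb{K})=0$ and $\mathbb{K}$ is $K_1$-injective, $U_{\mathbb{K}^{\sim}}$ is path connected, so $w\sim_h 1$ within $U_{\mathbb{K}^{\sim}}\subseteq N$. Multiplying this path on the right by $u$, which is a homeomorphism of the topological group $N$, gives $e^{2\pi iH}\sim_h u$ in $N$; combining with the exponential path yields $u\sim_h e^{2\pi iH}\sim_h 1$ in $N$. As $u\in N$ was arbitrary, $N$ is path connected.

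The genuine content of this argument is the preceding lemma, which produces a self-adjoint logarithm $h$ of $\pi(u)$ that centralizes $\mathcal{O}_{n+1}$; once this is available the remaining steps are routine. The only points requiring care are that the exponential path stays in $N$ (handled by passing to the Calkin algebra and using that $h$ centralizes $\mathcal{O}_{n+1}$) and that the residual unitary $w$ is a compact perturbation of the identity, which places it in the path component of $1$ via $K_1(\mathbb{K})=0$ and the $K_1$-injectivity of $\mathbb{K}$. I do not expect any serious obstacle beyond invoking the lemma correctly and keeping both concatenated paths inside $N$.
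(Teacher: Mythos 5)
Your argument is correct and is precisely the (implicit) proof the paper intends: the corollary is stated without proof as an immediate consequence of the preceding lemma, and your write-up supplies exactly the expected details — lift the commuting self-adjoint logarithm $h$ to a self-adjoint $H$, use the exponential path (which stays in $N$ because $e^{2\pi ith}$ commutes with $\mathcal{O}_{n+1}$), and absorb the residual unitary $e^{2\pi iH}u^*\in U_{\mathbb{K}^{\sim}}$ using $K_1(\mathbb{K})=0$ and $K_1$-injectivity. No gaps; this matches the paper's approach.
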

\begin{lem}\label{impiuni}
Let $\alpha$ be an automorphism of $E_{n+1}$, and $U_{\alpha}\colon=\sum_i\alpha(e_{i1})ve_{1i}$ be an implementing unitary of $\alpha\restriction_{\mathbb{K}}$ where $v$ is a partial isometry with $vv^*=\alpha(e), \; v^*v=e$.
Then we have $\alpha={\rm Ad}U_{\alpha}\restriction_{E_{n+1}}$
\end{lem}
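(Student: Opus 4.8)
The plan is to reduce the statement to the single claim that the $*$-endomorphism
$\gamma\colon={\rm Ad}\,U_\alpha^*\circ\alpha$ of $E_{n+1}$, regarded as a map into
$\mathbb{B}(\mathcal{F}(\mathbb{C}^{n+1}))=\mathcal{M}(\mathbb{K})$, is the identity. Indeed,
$\gamma={\rm id}_{E_{n+1}}$ says precisely that $U_\alpha^*\alpha(x)U_\alpha=x$ for all $x\in E_{n+1}$,
which is equivalent to the desired equality $\alpha={\rm Ad}\,U_\alpha\restriction_{E_{n+1}}$. The first
observation is that $\gamma$ fixes $\mathbb{K}$ pointwise: since $U_\alpha$ is an implementing unitary of
$\alpha\restriction_{\mathbb{K}}$ (which one checks directly from the defining formula using
$v^*v=e$, $vv^*=\alpha(e)$ and the matrix-unit relations, giving ${\rm Ad}\,U_\alpha(e_{kl})=\alpha(e_{kl})$
for every matrix unit $e_{kl}$), we get $\gamma(k)=U_\alpha^*\alpha(k)U_\alpha=k$ for all $k\in\mathbb{K}$.

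The key step is to show that $\gamma$ fixes each canonical generator $T_i$, and for this I would exploit
that $\mathbb{K}$ is an ideal. For every diagonal matrix unit $e_{jj}$ the product $T_ie_{jj}$ lies in
$\mathbb{K}$, hence is fixed by $\gamma$; using that $\gamma$ is multiplicative and fixes $\mathbb{K}$ this yields
\[
\gamma(T_i)\,e_{jj}=\gamma(T_i e_{jj})=T_i e_{jj}\qquad\text{for every }j.
\]
Thus $c\colon=\gamma(T_i)-T_i\in\mathbb{B}(\mathcal{F}(\mathbb{C}^{n+1}))$ satisfies $c\,e_{jj}=0$ for all $j$.
Since $\{e_{jj}\}_j$ is a maximal family of mutually orthogonal minimal projections of $\mathbb{K}$, the
partial sums $\sum_{j\in F}e_{jj}$ converge strongly to $1$, so $c\xi=\lim_F c\bigl(\sum_{j\in F}e_{jj}\bigr)\xi=0$
for every vector $\xi$ in the Fock space, whence $\gamma(T_i)=T_i$.

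Having fixed the generators, $\gamma$ agrees with ${\rm id}_{E_{n+1}}$ on a generating set of $E_{n+1}$,
and since both are $*$-homomorphisms they coincide, which gives $\alpha={\rm Ad}\,U_\alpha\restriction_{E_{n+1}}$.
The only genuinely delicate point is the last strong-convergence argument: one must keep $\gamma(T_i)$ as a
bounded operator on $\mathcal{F}(\mathbb{C}^{n+1})$ (legitimate because $\alpha$ maps into
$E_{n+1}\subset\mathbb{B}(\mathcal{F}(\mathbb{C}^{n+1}))$ and $U_\alpha\in\mathcal{M}(\mathbb{K})$), and note that
``agreement on each $e_{jj}$'' upgrades to genuine equality precisely through the strict convergence
$\sum_j e_{jj}=1$ in $\mathcal{M}(\mathbb{K})$. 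Everything else is a direct computation with matrix units.
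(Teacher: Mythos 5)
Your proof is correct and follows essentially the same route as the paper's: both reduce to the fact that ${\rm Ad}U_{\alpha}$ and $\alpha$ agree on the ideal $\mathbb{K}$ and then upgrade this to agreement on all of $E_{n+1}$ via an approximate unit of $\mathbb{K}$ converging to $1$ in an operator topology. The only cosmetic difference is that the paper left-multiplies an arbitrary $d\in E_{n+1}$ by the net $\alpha(p)$ (with $p$ ranging over finite-rank projections) and takes weak limits, while you right-multiply the generators $T_i$ by the fixed matrix units $e_{jj}$ and take strong limits.
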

\begin{proof}
We show ${\rm Ad}U_{\alpha}\restriction_{E_{n+1}}=\alpha$.
Let $F\subset\mathbb{K}$ be the set of all finite rank projections.
Since $\alpha$ is an automorphism, the image $\alpha(\mathbb{K})=\mathbb{K}$ contains a net $\{\alpha(p)\}_{p\in F}$ that weakly converges to $1$.
For every $d\in E_{n+1}$, we have $\alpha(p)\alpha(d)=\alpha(pd)={\rm Ad}U_{\alpha}(pd)=\alpha(p){\rm Ad}U_{\alpha}(d)$, and $\alpha={\rm Ad}U_{\alpha}\restriction_{E_{n+1}}$ holds.
\end{proof} 
\begin{prop}\label{pac}
The group $\operatorname{Aut}E_{n+1}$ is path connected.
\end{prop}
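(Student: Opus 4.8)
The plan is to analyse the homomorphism $q\colon\operatorname{Aut}E_{n+1}\to\operatorname{Aut}\mathcal{O}_{n+1}$, $\alpha\mapsto\tilde\alpha$, and to join an arbitrary $\alpha$ to $\operatorname{id}$ in two stages: first deforming $\alpha$ to an automorphism lying in the fibre $q^{-1}(\operatorname{id}_{\mathcal{O}_{n+1}})$, and then contracting inside that fibre. Throughout I would use that $U_{E_{n+1}}$ is path connected: since $K_1(E_{n+1})=K_1(\mathbb{C})=0$, Proposition \ref{k1} with $X$ a point gives $U_{E_{n+1}}/\!\sim_h\;\cong 0$, so every inner automorphism lies in the identity component (and, via Lemma \ref{full}, inner automorphisms can be used to normalise the minimal projection, though this is auxiliary). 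The starting point is to write $\alpha=\operatorname{Ad}U_\alpha\restriction_{E_{n+1}}$ with $U_\alpha\in U_{\mathcal{M}(\mathbb{K})}$ as in Lemma \ref{impiuni}.

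First I would handle the fibre over $\operatorname{id}_{\mathcal{O}_{n+1}}$. If $\tilde\alpha=\operatorname{id}$ then $\pi(U_\alpha)$ commutes with $\mathcal{O}_{n+1}=\pi(E_{n+1})$, i.e. $U_\alpha\in N$; conversely, for $u\in N$ the relation $udu^*-d=[u,d]u^*\in\mathbb{K}\subset E_{n+1}$ shows $\operatorname{Ad}u\restriction_{E_{n+1}}\in\operatorname{Aut}E_{n+1}$ and induces $\operatorname{id}$ on $\mathcal{O}_{n+1}$. Hence $q^{-1}(\operatorname{id})=\{\operatorname{Ad}u\restriction_{E_{n+1}}\mid u\in N\}$. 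Since $u\mapsto\operatorname{Ad}u\restriction_{E_{n+1}}$ is continuous from the norm topology to the point-norm topology and $N$ is path connected by Corollary \ref{N}, the fibre $q^{-1}(\operatorname{id})$ is path connected and contains $\operatorname{id}_{E_{n+1}}$. It therefore suffices to join an arbitrary $\alpha$ to some automorphism in $q^{-1}(\operatorname{id})$.

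For this reduction I would invoke the extension machinery. Writing $\alpha\restriction_{\mathbb{K}}=\operatorname{Ad}W$ for a unitary $W\in U_{\mathcal{M}(\mathbb{K})}$ and applying naturality of the Busby invariant to the intertwining square for $0\to\mathbb{K}\to E_{n+1}\to\mathcal{O}_{n+1}\to0$ yields $\tau_0\circ\tilde\alpha=\operatorname{Ad}\pi(W)\circ\tau_0$, so $\tau_0\circ\tilde\alpha\sim_{s.u.e}\tau_0$. By Theorem \ref{sue} this produces a homotopy $\{\sigma_s\}_{s\in[0,1]}$ of unital extensions of $\mathcal{O}_{n+1}$ by $\mathbb{K}$ with $\sigma_0=\tau_0$ and $\sigma_1=\tau_0\circ\tilde\alpha$. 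I would then read this homotopy as one unital essential extension $\Sigma\colon\mathcal{O}_{n+1}\to\mathcal{Q}(C([0,1])\otimes\mathbb{K})$ and compare it with the constant extension $\overline{\tau_0}$. Because $C([0,1])$ has the $K$-theory of a point, evaluation induces an isomorphism $\operatorname{Ext}(\mathcal{O}_{n+1},C([0,1])\otimes\mathbb{K})\to\operatorname{Ext}(\mathcal{O}_{n+1},\mathbb{K})$ (via Theorem \ref{uct}; compare Lemma \ref{isoke}), and both extensions restrict to $[\tau_0]$ at $s=0$, so $[\Sigma]=[\overline{\tau_0}]$. Applying Lemma \ref{ext} over $X=[0,1]$ gives a weak unitary equivalence between $\Sigma$ and $\overline{\tau_0}$, and I would promote the implementing unitary to a $C([0,1])$-linear automorphism of $C([0,1])\otimes E_{n+1}$, i.e. a norm-continuous path $\alpha_s\in\operatorname{Aut}E_{n+1}$ joining $\alpha$ to an automorphism with $\tilde\alpha_1=\operatorname{id}$.

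The main obstacle is exactly this last conversion: turning the abstract homotopy, respectively weak equivalence, of extensions into an honest continuous path of automorphisms of $E_{n+1}$ with prescribed endpoints. Lemma \ref{ext} yields a unitary in $U_{\mathcal{Q}(C([0,1])\otimes\mathbb{K})}$, whereas to obtain automorphisms one must lift it to the multiplier algebra, using the contractibility of $U_{\mathcal{M}(C([0,1])\otimes\mathbb{K})}$ from Theorem \ref{kuiper}, while forcing its values at $s=0,1$ to be the unitaries implementing $\alpha$ and $\operatorname{id}$. Controlling these endpoints, and ensuring the lifted path genuinely normalises $E_{n+1}$ fibrewise rather than merely inducing the correct map on the Calkin algebra, is the delicate point; this is where I expect the bulk of the technical work, and the careful use of $K_1$-injectivity through Propositions \ref{k1} and \ref{VP}, to be needed.
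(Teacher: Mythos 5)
Your overall architecture --- reduce to the fibre of $\operatorname{Aut}E_{n+1}\to\operatorname{Aut}\mathcal{O}_{n+1}$ over the identity, contract that fibre via Corollary \ref{N}, and use extension theory over $[0,1]$ plus a multiplier lift for the reduction --- matches the paper's, and your fibre analysis and Ext/$K_1$ bookkeeping (Lemma \ref{isoke}, Lemma \ref{ext}, Proposition \ref{VP}, Theorem \ref{kuiper}) identify the right toolkit. But there is a genuine gap at the step you yourself flag as delicate, and it is not fixed by a technical refinement of your setup. You build the extension $\Sigma$ over $C([0,1])$ out of the homotopy $\{\sigma_s\}$ that Theorem \ref{sue} attaches to the strong unitary equivalence $\tau_0\circ\tilde\alpha\sim_{s.u.e}\tau_0$. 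That homotopy is an arbitrary path of Busby invariants: for intermediate $s$ there is no reason that $\sigma_s(\mathcal{O}_{n+1})=\tau_0(\mathcal{O}_{n+1})$. Consequently, after you produce $v$ with $\Sigma=\operatorname{Ad}v\circ\overline{\tau_0}$ and lift it to $V\in U_{\mathcal{M}(C([0,1])\otimes\mathbb{K})}$, the fibre unitaries satisfy $V_sE_{n+1}V_s^*=\pi^{-1}(\sigma_s(\mathcal{O}_{n+1}))$, which equals $E_{n+1}$ only when $\sigma_s(\mathcal{O}_{n+1})=\tau_0(\mathcal{O}_{n+1})$; so $s\mapsto\operatorname{Ad}V_s$ need not be a path in $\operatorname{Aut}E_{n+1}$ and the reduction collapses. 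This is the same reason one cannot simply connect the implementing unitary $U_\alpha$ to $1$ inside the contractible group $U_{\mathcal{M}(\mathbb{K})}$: the intermediate unitaries do not normalise $E_{n+1}$. If an uncontrolled path sufficed, the proposition would be trivial.

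The missing idea is to control the images of the intermediate extensions. The paper does this by invoking the path-connectedness of $\operatorname{Aut}\mathcal{O}_{n+1}$ --- an input your argument never uses --- to choose a path $h_t$ from $\tilde\alpha$ to $\operatorname{id}_{\mathcal{O}_{n+1}}$, and by comparing the two extensions $\tau_1=\operatorname{id}_{C[0,1]}\otimes\tau_0$ and $\tau_2=\tau_1\circ h$ of the full $C[0,1]$-algebra $C[0,1]\otimes\mathcal{O}_{n+1}$. Since each $h_t$ is an automorphism, $\tau_2$ has the same image as $\tau_1$; Lemma \ref{ext} applied to the restrictions to $1\otimes\mathcal{O}_{n+1}$, together with the centrality of $C[0,1]$ in $\mathcal{Q}(C[0,1]\otimes\mathbb{K})$, upgrades the weak unitary equivalence to $\tau_2=\operatorname{Ad}v\circ\tau_1$ as $C[0,1]$-linear extensions, and then $\operatorname{Ad}V$ automatically preserves $\pi^{-1}(\tau_1(C[0,1]\otimes\mathcal{O}_{n+1}))=C[0,1]\otimes E_{n+1}$, giving an honest path in $\operatorname{Aut}E_{n+1}$. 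With this in place no forcing of the values of $V$ at $s=0,1$ is needed: the endpoint corrections are exactly your fibre argument, since $V_0^*U_\alpha\in N$ and $V_1\in N$. Finally, the vanishing $[v]_1=0$ that you defer is obtained by evaluating at the endpoint where $v$ lands in $\tau_0(\mathcal{O}_{n+1})'\cap\mathcal{Q}(\mathbb{K})$ and applying Proposition \ref{VP} together with Lemma \ref{isoke}; that part of your plan does go through.
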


\begin{proof}
Let $\alpha$ be an automorphism of $E_{n+1}$ and let $\tilde{\alpha}$ be an induced automorphism of $\mathcal{O}_{n+1}$. Since $\operatorname{Aut}\mathcal{O}_{n+1}$ is path connected, we take a path $h_t$ with $h_0=\tilde{\alpha}, h_1={\rm id}_{\mathcal{O}_{n+1}}$.
We take two unital essential extensions
$$\tau_1\colon={\rm id}_{C[0,1]}\otimes \tau_0 \colon C[0,1]\otimes \mathcal{O}_{n+1}\ni f(t) \mapsto f(t)\in\mathcal{Q}(C[0,1]\otimes \mathbb{K})$$
$$\tau_2\colon=\tau_1\circ h \colon C[0,1]\otimes \mathcal{O}_{n+1}\ni f(t) \mapsto h_t(f(t))\in\mathcal{Q}(C[0,1]\otimes \mathbb{K}),$$
where $\tau_0$ is  the Busby invariant in Definition \ref{tau0}, and we regard $h\colon C[0,1]\otimes\mathcal{O}_{n+1}\to C[0,1]\otimes\mathcal{O}_{n+1}$ as a $C[0,1]$-linear isomorphism.
Since $\tau_1\sim_h\tau_2$, we have $[\tau_1]=[\tau_2]$ in $\operatorname{Ext}(C[0,1]\otimes\mathcal{O}_{n+1}, C[0,1]\otimes \mathbb{K})$. 
We have $[\tau_1(1_{C[0,1]}\otimes {\rm id}_{\mathcal{O}_{n+1}})]=[\tau_2(1_{C[0,1]}\otimes {\rm id}_{\mathcal{O}_{n+1}})]$ in $\operatorname{Ext}(\mathcal{O}_{n+1}, C[0,1]\otimes \mathbb{K})$.
By Lemma \ref{ext}, there exists a unitary $v \in U_{\mathcal{Q}(C[0,1]\otimes \mathbb{K})}$ with $\tau_2(1_{C[0,1]}\otimes {\rm id}_{\mathcal{O}_{n+1}})={\rm Ad}v\circ\tau_1(1_{C[0,1]}\otimes {\rm id}_{\mathcal{O}_{n+1}})$.
Since $C[0,1]$ is in the center of $\mathcal{Q}(C[0,1]\otimes\mathbb{K})$, we have $\tau_2={\rm Ad}v\circ\tau_1$.

We show $[v]_1=0$ in $K_1(\mathcal{Q}(C[0,1]\otimes \mathbb{K}))$. By the construction of $\tau_2$ and $v$, the unitary $v_1$ is in $\tau_0(\mathcal{O}_{n+1})'\cap\mathcal{Q}(\mathbb{K})$. By Proposition \ref{VP}, we have  $[v_1]_1=0$ in $K_1(\mathcal{Q}(\mathbb{K}))$. Since the map ${{\rm ev}_1}_*\colon K_1(\mathcal{Q}(C[0,1]\otimes\mathbb{K})) \to K_1(\mathcal{Q}(\mathbb{K}))$ is an isomorphism from Lemma \ref{isoke}, we have $[v]_1=0$.

We take a unitary lift $V\in U_{\mathcal{M}(C[0,1]\otimes \mathbb{K})}$ of $v$. 
It follows that ${\rm Ad}V$ is a $C[0,1]$-linear isomorphism of $C[0,1]\otimes E_{n+1}$. 
Therefore the map $[0,1]\ni t\mapsto {\rm Ad}V_t\in \operatorname{Aut}E_{n+1}$ is continuous. Let $U_{\alpha}\colon=\sum_{i}\alpha(e_{i1})ve_{1i}$ be an implementing unitary of $\alpha$ restricted to $\mathbb{K}$ where $v$ is a partial isometry satisfying  $vv^*=\alpha(e_{11}), v^*v=e_{11}$, and $\{e_{ij}\}$ is a system of matrix units.
By Lemma \ref{impiuni}, we have ${\rm Ad}U_{\alpha}\restriction_{E_{n+1}}=\alpha$.
We have ${\rm Ad}\pi(V_0)=h_0=\tilde{\alpha}={\rm Ad}\pi(U_{\alpha})$, and it follows that $V_0^*U_{\alpha}$ commutes with $E_{n+1}$ up to compact operators.
By Corollary \ref{N}, the automorphism ${\rm Ad}V_0^*U_{\alpha}$ is in the path component of ${\rm id}_{E_{n+1}}$ in $\operatorname{Aut}E_{n+1}$. Similary there is a continuous path from ${\rm id}_{E_{n+1}}$ to ${\rm Ad}V_1$ in $\operatorname{Aut}E_{n+1}$. Therefore we have $$\alpha\sim_h{\rm Ad}V_1\circ{\rm Ad}V_0^*U_{\alpha}\sim_h{\rm Ad}V_1\sim_h{\rm id}_{E_{n+1}}.$$
\end{proof}

\subsection{Implementing unitaries of $\operatorname{Aut}_{C(X)} (C(X)\otimes E_{n+1})$}
Let $\operatorname{Aut}_{C(X)}(C(X)\otimes E_{n+1})$ be the group of $C(X)$-linear automorphisms of $C(X)\otimes E_{n+1}$. We remark that the homotopy set $[X, \operatorname{Aut}E_{n+1}]$ is identified with the set of homotopy equivalence classes of the elements of $\operatorname{Aut}_{C(X)}(C(X)\otimes E_{n+1})$.

Let $G$ be a compact topological group. We denote by $\operatorname{B}G$ its classifying space, and denote by $\operatorname{E}G$ the universal principal $G$-bundle over $\operatorname{B}G$. 
One realization of those spaces is as follows. 
For a contractible space $X$ equipped with a free $G$ action, the quotient map $X\to X/G$ gives the universal bundle.  
We refer to \cite{H} for the basic facts about the classifying spaces. 

Let $H_1$ be the set vectors of norm $1$ in a separable Hilbert space with the norm topology. 
We identify $H_1$ with the set $\{f\in\operatorname{L}^2[0,1]\mid ||f||_{2}=1\}$. 
There is a map $h_t : H_1 \times [0,1] \to H_1$ that sends $(f, t)$ to $(1_{[0, t]}f+1_{[t, 1]})/||1_{[0, t]}f+1_{[t, 1]}||_2$ where $1_{[a,b]}$ is the characteristic function of $[a,b]$. 
This gives the deformation retraction to the set $\{1_{[0, 1]}\}$, and the space $H_1$ is contractible, (see \cite{RW}). 
The group $S^1$ freely acts on $H_1$ by the scalar multiplication. 
Therefore we can adopt $H_1$ as a model of $\operatorname{E}S^1$. 
We identifies $\operatorname{B}S^1$ with the set consisting of all minimal projections, and the map $\operatorname{E}S^1=H_1\ni \xi \mapsto \xi\otimes\xi^*\in\operatorname{B}S^1$ gives the universal bundle where we denote by $\xi \otimes \eta^*$ the operator $H\ni x\mapsto \langle x, \eta\rangle\xi\in H$ for $\xi, \eta \in H$. 
The space $\operatorname{B}S^1$ is the Eilenberg-Maclane space $K(\mathbb{Z}, 2)$ and we identifies the homotopy set $[X, \operatorname{B}S^1]$ with $H^2(X)$ via the Chern classes of the line bundles.
\begin{prop}\label{imp}
Let $X$ be a compact Hausdorff space, and let $\alpha\colon X\to \operatorname{Aut}E_{n+1}$ be a continuous map. Let $\eta$ be the map $\operatorname{Aut}E_{n+1}\ni \alpha\mapsto\alpha(e)\in\operatorname{B}S^1$.
If the image of $[\alpha]$ by the map $[X,\operatorname{Aut}E_{n+1}]\xrightarrow{\eta_*} [X, \operatorname{B}S^1]$ is zero, then there exists a unitary $U$ in $U_{\mathcal{M}(C(X)\otimes \mathbb{K})}$ such that ${\rm Ad}U_x=\alpha_x$.
\end{prop}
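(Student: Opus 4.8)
The plan is to implement each $\alpha_x$ by the explicit unitary of Lemma \ref{impiuni}, and to show that the hypothesis $\eta_*[\alpha]=0$ is precisely what is needed to perform that construction continuously in $x$, landing in the multiplier algebra realized as $C^b_{s*}(X,\mathbb{B}(H))$.

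First I would isolate the obstruction. Each $\alpha_x$ preserves the ideal $\mathbb{K}$ and sends the minimal projection $e=e_{11}$ to the minimal projection $\alpha_x(e)$, so the composite $\eta\circ\alpha\colon X\to\operatorname{B}S^1$, $x\mapsto\alpha_x(e)$, is a classifying map whose class is exactly $\eta_*[\alpha]\in[X,\operatorname{B}S^1]=H^2(X)$. To apply Lemma \ref{impiuni} to the whole family simultaneously I need a continuous choice of partial isometries $v_x$ with $v_xv_x^*=\alpha_x(e)$ and $v_x^*v_x=e$; writing $e=\xi_1\otimes\xi_1^*$, this amounts to a continuous family of unit vectors $\zeta_x\in\alpha_x(e)H$, i.e. a lift of $\eta\circ\alpha$ through the universal bundle $\operatorname{E}S^1=H_1\ni\xi\mapsto\xi\otimes\xi^*\in\operatorname{B}S^1$. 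Since $H_1$ is contractible, a continuous map into $\operatorname{B}S^1$ lifts to $H_1$ if and only if it is null-homotopic, which is exactly the hypothesis $\eta_*[\alpha]=0$. This yields a norm-continuous family $\zeta_x$, and I set $v_x\colon=\zeta_x\otimes\xi_1^*$.

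Next I would assemble the implementing unitary. Following Lemma \ref{impiuni}, put $U_x\colon=\sum_i\alpha_x(e_{i1})v_xe_{1i}$. Evaluating on $\eta\in H$ gives $U_x\eta=\sum_i\langle\eta,\xi_i\rangle\,w_{i,x}$ with $w_{i,x}\colon=\alpha_x(e_{i1})\zeta_x$; since $\alpha_x\restriction_{\mathbb{K}}$ is an automorphism and $\zeta_x\in\alpha_x(e)H$ is a unit vector, a short computation shows $\{w_{i,x}\}_i$ is an orthonormal basis of $H$, so $U_x$ is the unitary carrying the fixed basis $\{\xi_i\}$ to $\{w_{i,x}\}$. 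Each $w_{i,x}$ is norm-continuous in $x$ (as $\alpha$ is point-norm continuous and $\zeta_x$ is continuous), and since $\sum_{i>N}|\langle\eta,\xi_i\rangle|^2\to0$ uniformly in $x$, the partial sums converge uniformly, whence $x\mapsto U_x\eta$ is norm-continuous and $x\mapsto U_x$ is SOT-continuous. For a family of unitaries, SOT-continuity automatically upgrades to SOT*-continuity, so $U=(U_x)_x$ is a norm-bounded SOT*-continuous family of unitaries, i.e. an element of $U_{\mathcal{M}(C(X)\otimes\mathbb{K})}=U_{C^b_{s*}(X,\mathbb{B}(H))}$.

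Finally, Lemma \ref{impiuni} applied fibrewise gives $\operatorname{Ad}U_x=\alpha_x$ on $E_{n+1}$ for every $x$; since $x\mapsto\alpha_x(d)$ is norm-continuous for each $d\in E_{n+1}$, the map $\operatorname{Ad}U$ preserves $C(X)\otimes E_{n+1}$ and restricts there to $\alpha$, which is the claim. The conceptual heart is the identification of $\eta_*[\alpha]$ with the obstruction to lifting $\eta\circ\alpha$ to the contractible total space $H_1$; I expect the main technical obstacle to be the verification that the resulting $U$ is a bona fide multiplier, namely that it is bounded and SOT*-continuous, which the uniform tail estimate above handles.
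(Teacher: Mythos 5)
Your proposal is correct and follows essentially the same route as the paper: the hypothesis $\eta_*[\alpha]=0$ yields a norm-continuous section $x\mapsto\xi_x\in H_1$ of the universal bundle with $\xi_x\otimes\xi_x^*=\alpha_x(e)$, one forms $U_x=\sum_i\alpha_x(e_{i1})\,\xi_x\otimes\xi_0^*\,e_{1i}$, checks SOT- (hence SOT*-) continuity so that $U\in U_{\mathcal{M}(C(X)\otimes\mathbb{K})}$, and concludes fibrewise by Lemma \ref{impiuni}. You merely make explicit some steps the paper leaves implicit (the lifting criterion for $\operatorname{E}S^1\to\operatorname{B}S^1$ and the uniform tail estimate for SOT-continuity).
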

\begin{proof}
Let $\xi_0$ be a norm $1$ eigenvector corresponding to the minimal projection $e$.
By assumption, there exists a norm continuous section $\xi \colon X\to H_1$ with $\xi_x\otimes\xi_x^*=\alpha_x(e)$. 
Using a system of matrix units $\{1_{C(X)}\otimes e_{ij}\}$ with $e=e_{11}\colon=\xi_0\otimes\xi_0^*$,
we have a unitary $U_x\colon=\sum_i\alpha_x(e_{i1})\xi_x\otimes\xi_0^*e_{1i}$. 
Since $\xi_x$ is norm continuous, $U\colon X\ni x\mapsto U_x\in\mathcal{M}(\mathbb{K})$ is SOT-continuous.
In particular, $U\colon X\to U_{\mathcal{M}(\mathbb{K})}$ is SOT*-continuous and $U\in U_{\mathcal{M}(C(X)\otimes\mathbb{K})}$.
Lemma \ref{impiuni} shows ${\rm Ad}U_x\restriction_{E_{n+1}}=\alpha_x$ for every $x\in X$.
\end{proof}
\begin{lem}\label{tri}
Let $X$ be a compact Hausdorff space and let $\alpha$ be an element of $\operatorname{Map}(X, \operatorname{Aut}\mathcal{O}_{n+1})$. Then the map $\alpha : C(X)\otimes E_{n+1}\to C(X)\otimes E_{n+1}$ induces the identity map of the K-groups, $K_i(\alpha)={\rm id}_{K_i(C(X)\otimes E_{n+1})}$, $i=1, 2$.
\end{lem}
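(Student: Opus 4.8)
The plan is to reduce the statement to the single observation that $\alpha$ fixes the central copy $C(X)\otimes 1$ of $C(X)\otimes E_{n+1}$, and to exploit that the K-theory of $C(X)\otimes E_{n+1}$ is entirely detected on that copy. More precisely, I would use the unital inclusion $j\colon\mathbb{C}\hookrightarrow E_{n+1}$, $1\mapsto 1_{E_{n+1}}$, which I claim implements the KK-equivalence $E_{n+1}\sim_{KK}\mathbb{C}$ of \cite[Theorem 4.4]{Pim}, together with its amplification $\iota\colon C(X)\to C(X)\otimes E_{n+1}$, $f\mapsto f\otimes 1_{E_{n+1}}$.

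First I would record that $K_0(E_{n+1})\cong\mathbb{Z}$ is generated by $[1_{E_{n+1}}]_0$ (from the six-term sequence of $0\to\mathbb{K}\to E_{n+1}\to\mathcal{O}_{n+1}\to 0$, where the relation $[e]_0=-n[1_{E_{n+1}}]_0$ makes $K_0(\mathcal{O}_{n+1})=\mathbb{Z}_n$ the cokernel of multiplication by $n$) and that $K_1(E_{n+1})=0$; hence $j$ induces an isomorphism on K-theory and, both algebras lying in the bootstrap class, is a KK-equivalence. Tensoring with $\operatorname{id}_{C(X)}$ shows $\iota$ is again a KK-equivalence, so $K_i(\iota)\colon K_i(C(X))\to K_i(C(X)\otimes E_{n+1})$ is an isomorphism for every $i$ (equivalently this follows from the K\"unneth theorem, as $K_*(E_{n+1})$ is free). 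Second, since the map $\alpha$ on $C(X)\otimes E_{n+1}$ is a unital $C(X)$-linear $*$-homomorphism, for every $f\in C(X)$ we have $\alpha(f\otimes 1_{E_{n+1}})=f\cdot\alpha(1\otimes 1_{E_{n+1}})=f\otimes 1_{E_{n+1}}$, that is, $\alpha\circ\iota=\iota$. Passing to K-theory gives $K_i(\alpha)\circ K_i(\iota)=K_i(\iota)$, and cancelling the isomorphism $K_i(\iota)$ yields $K_i(\alpha)=\operatorname{id}$ for $i=1,2$.

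The main obstacle, such as it is, lies not in the cancellation step but in the two auxiliary verifications. The first is confirming that it is precisely the unital inclusion (and not some other morphism) that implements the KK-equivalence, and that this property survives tensoring by $C(X)$; I would handle this by the explicit generator computation above together with nuclearity of $C(X)$ and $E_{n+1}$, or else bypass KK entirely by invoking the K\"unneth theorem for the free group $K_*(E_{n+1})$. The second, more a matter of bookkeeping, is to ensure that the map denoted $\alpha$ on $C(X)\otimes E_{n+1}$ is genuinely unital and $C(X)$-linear, so that the identity $\alpha\circ\iota=\iota$ holds on the nose; this is immediate from the construction, since $C(X)$-linearity together with $\alpha(1\otimes 1_{E_{n+1}})=1\otimes 1_{E_{n+1}}$ forces $\alpha$ to fix the central subalgebra $C(X)\otimes 1$ pointwise.
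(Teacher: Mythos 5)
Your proposal is correct and follows essentially the same route as the paper: the paper's proof consists of exactly the commutative square expressing $\alpha\circ\iota=\iota$ (from $C(X)$-linearity and unitality) together with the fact that the unital inclusion induces a K-theory isomorphism via the KK-equivalence of $\mathbb{C}$ and $E_{n+1}$, after which one cancels $K_i(\iota)$. Your additional verifications (the generator computation for $K_0(E_{n+1})$ and the K\"unneth alternative) are just a more explicit justification of the step the paper attributes to \cite[Theorem 4.4]{Pim}.
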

\begin{proof}
Since $\alpha$ is $C(X)$-linear, we have the commutative diagram below :
$$\xymatrix{
C(X)\ar@{=}[r]\ar[d]&C(X)\ar[d]\\
C(X)\otimes E_{n+1}\ar[r]^{\alpha}&C(X)\otimes E_{n+1}.
}$$
We have the conclusion from the KK-equivalence of $\mathbb{C}$ and $E_{n+1}$.
\end{proof}
Let $r : \operatorname{Aut}E_{n+1}\to \operatorname{Aut}\mathbb{K}$ be the restriction map. Then we have a commutative diagram below 
$$\xymatrix{
[X, \operatorname{Aut}E_{n+1}]\ar[dr]^{\eta_*}\ar[r]^{r_*}&[X, \operatorname{Aut}\mathbb{K}]\ar@{=}[d]^{\eta_*}\\
&[X, \operatorname{B}S^1].
}$$
We remark that the map $\eta : \operatorname{Aut}\mathbb{K}\to \operatorname{B}S^1$ gives the homotopy equivalence (see \cite[Lemma 2.8]{DP}). 
\begin{lem}\label{zero}
The map $\eta_* \colon [S^k, \operatorname{Aut}E_{n+1}]\to [S^k, \operatorname{B}S^1]$ is the zero map for $k\geq1$.
Hence the map $r_*\colon [S^k, \operatorname{Aut}E_{n+1}]\to [S^k, \operatorname{Aut}\mathbb{K}]$ is also zero.
\end{lem}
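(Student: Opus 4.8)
The plan is to reduce to a single nontrivial dimension and then compute a Chern class by $K$-theory. Since $\operatorname{B}S^1=K(\mathbb{Z},2)$, we have $[S^k,\operatorname{B}S^1]=H^2(S^k)=0$ for every $k\geq 1$ with $k\neq 2$, so the statement is automatic there and only $k=2$ needs work. Fix a continuous map $\alpha\colon S^2\to\operatorname{Aut}E_{n+1}$, regarded as a $C(S^2)$-linear automorphism of $C(S^2)\otimes E_{n+1}$, and write $\alpha_x$ for its value at $x$. Under the identification of $\operatorname{B}S^1$ with the minimal projections, $\eta_*(\alpha)\in[S^2,\operatorname{B}S^1]=H^2(S^2)\cong\mathbb{Z}$ is the first Chern class $c_1(L)$ of the line bundle $L$ whose fibre at $x$ is the range of the rank-one projection $\alpha_x(e)$. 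Inside $K^0(S^2)=K_0(C(S^2)\otimes\mathbb{K})\cong\mathbb{Z}\oplus\mathbb{Z}$, the class $[\alpha(1_{C(S^2)}\otimes e)]_0$ records the rank (which is $1$) together with $c_1(L)=\eta_*(\alpha)$ in the reduced summand, so it suffices to prove $[\alpha(1_{C(S^2)}\otimes e)]_0=[1_{C(S^2)}\otimes e]_0$ in $K_0(C(S^2)\otimes\mathbb{K})$.

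First I would pass to the larger algebra $C(S^2)\otimes E_{n+1}$. Since $\alpha$ is a $C(S^2)$-linear automorphism, the computation of Lemma \ref{tri} applies verbatim and gives $K_0(\alpha)=\operatorname{id}$, whence $[\alpha(1_{C(S^2)}\otimes e)]_0=[1_{C(S^2)}\otimes e]_0$ already in $K_0(C(S^2)\otimes E_{n+1})$. It then remains to descend this equality to $K_0(C(S^2)\otimes\mathbb{K})$, i.e.\ to show that $(\operatorname{id}_{C(S^2)}\otimes\iota)_*$ is injective, where $\iota\colon\mathbb{K}\hookrightarrow E_{n+1}$ is the ideal inclusion. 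The six-term sequence of $0\to\mathbb{K}\to E_{n+1}\to\mathcal{O}_{n+1}\to 0$, together with $K_*(\mathcal{O}_{n+1})=(\mathbb{Z}_n,0)$ and $K_*(\mathbb{K})=(\mathbb{Z},0)$, forces $\iota_*\colon K_0(\mathbb{K})\to K_0(E_{n+1})$ to be multiplication by $\pm n$; equivalently, as an element of $KK(\mathbb{K},E_{n+1})\cong\mathbb{Z}$, the class of $\iota$ is $\pm n$ times a $KK$-equivalence. Tensoring by $C(S^2)$ and taking Kasparov products, the induced map on $K_0(C(S^2)\otimes-)=K^0(S^2)$ is $\pm n$ times an isomorphism, hence injective because $K^0(S^2)\cong\mathbb{Z}\oplus\mathbb{Z}$ is torsion-free and $n\geq 1$. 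As $[\alpha(1_{C(S^2)}\otimes e)]_0$ and $[1_{C(S^2)}\otimes e]_0$ are images of classes in $K_0(C(S^2)\otimes\mathbb{K})$ with equal image in $K_0(C(S^2)\otimes E_{n+1})$, they coincide in $K_0(C(S^2)\otimes\mathbb{K})$, so $c_1(L)=0$ and $\eta_*(\alpha)=0$.

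The hard part will be exactly this descent: one needs not merely injectivity of $\iota_*$ on $K_0(\mathbb{K})$ but injectivity of $(\operatorname{id}_{C(S^2)}\otimes\iota)_*$, which is why I would record $\iota$ as $\pm n$ times a $KK$-equivalence and invoke torsion-freeness of $K^0(S^2)$; everything else is bookkeeping with the identification of $\operatorname{B}S^1$. Finally, for the second assertion I would use the commutative triangle in which the composite $[S^k,\operatorname{Aut}E_{n+1}]\xrightarrow{r_*}[S^k,\operatorname{Aut}\mathbb{K}]\xrightarrow{\eta_*}[S^k,\operatorname{B}S^1]$ equals the diagonal $\eta_*$, and in which the vertical map $\eta_*\colon[S^k,\operatorname{Aut}\mathbb{K}]\to[S^k,\operatorname{B}S^1]$ is a bijection because $\eta\colon\operatorname{Aut}\mathbb{K}\to\operatorname{B}S^1$ is a homotopy equivalence. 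Since the diagonal $\eta_*$ vanishes by the above, injectivity of the vertical $\eta_*$ forces $r_*=0$.
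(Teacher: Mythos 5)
Your proposal is correct and follows essentially the same route as the paper: reduce to $k=2$, use the $C(S^2)$-linearity of $\alpha$ (Lemma \ref{tri}) to get $K_0(\alpha)={\rm id}$, descend via injectivity of $K_0(C(S^2)\otimes\mathbb{K})\to K_0(C(S^2)\otimes E_{n+1})$, and identify the resulting $K_0$-class with the Chern class via the Bott generator; the second assertion is handled by the same commutative triangle the paper uses. Your expansion of why the descent map is injective (it is $\pm n$ times a $KK$-equivalence and $K^0(S^2)$ is torsion-free) is exactly the content the paper leaves implicit in its six-term exact sequence.
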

\begin{proof}
If $k\neq 2$, we have $[S^k, \operatorname{B}S^1]=H^2(S^k)=0$.
We show the statement in the case of $k=2$. For every $\alpha$ in $\operatorname{Map}(S^2,  \operatorname{Aut}E_{n+1})$, the map $K_0(\alpha) :K_0(C(S^2)\otimes E_{n+1}) \to K_0(C(S^2)\otimes E_{n+1})$ is the identity by Lemma \ref{tri}. Since the map $K_0(C(S^2)\otimes \mathbb{K}) \xrightarrow{-n} K_0(C(S^2)\otimes E_{n+1})$ is injective, $[e]_0=[\alpha(e)]_0$ in $K_0(C(S^2)\otimes \mathbb{K})$. The group $\tilde{K}_0C(S^2)$ is generated by the Bott element, the class of the tautological line bundle (see \cite[Section 9.2.10]{Bl}), and the tautological line bundle is also a generator of $H^2(S^2)$. Therefore we have $\eta_*([\alpha])=0$ in $H^2(S^2)$.  
\end{proof}

\subsection{Some fibration sequences}
In this section, we introduce several fibrations to compute the homotopy groups of $\operatorname{Aut}E_{n+1}$.
We refer to \cite[Chap. 6]{AT} for the definition and the basic facts about fibrations.
\begin{dfn}
Let $X, Y$ and $Z$ be the topological spaces, and let $\pi:X\to Y$ be the continuous map.
The map $\pi$ has the homotopy lifting property (abbreviated to HLP) for $Z$, if for every commuting diagram
$$
\xymatrix{
\{0 \} \times Z \ar[r]^{g}\ar[d] &X\ar[d]^{\pi} \\
[0, 1]\times Z\ar[r]^{f}& Y,
}
$$
there exists a continuous map $\tilde{g}\colon [0,1]\times Z\to X$ such that $\tilde{g}(0,z)=g(z)$ for every $z$ in $Z$ and $\pi \circ\tilde{g}=f$.\\
The map $\pi:X\to Y$ is a Serre fibration, if $\pi$ has HLP for every $n$-disc, $\mathbb{D}^n$.
\end{dfn}
We remark that a Serre fibration has HLP for every CW complex. A fibration gives a long exact sequence of  homotopy sets. We denote by $\Omega X$ or $\Omega _{x_0}X$ the loop space of the pointed set $(X, x_0)$.
\begin{thm}
Let $(Z, z_0)$ be a pointed CW complex. 
Let $\pi :(X, x_0)\to (Y, y_0)$ be a Serre fibration with the fibre $F:=\pi^{-1}(y_0)$. Then, there is a long exact sequence of groups $(i\geq 1)$, and exact sequence of pointed sets $(i\geq 0)$
$$\to[Z, \Omega ^iF]_0\to[Z, \Omega^iX]_0\to [Z, \Omega^iY]_0\to \dotsm \to [Z, F]_0\to[Z, X]_0\to [Z, Y]_0.$$
In particular, we have the long exact sequence of the homotopy groups in the case of $Z=\{z_0\}$.
\end{thm}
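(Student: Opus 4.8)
The plan is to reduce the statement to the classical long exact homotopy sequence of a single auxiliary Serre fibration, obtained by applying the functor ${\rm Map}_0(Z,-)$ to $\pi$. Give the mapping spaces the compact-open topology and base them at the constant maps, and let $\pi_* \colon {\rm Map}_0(Z,X)\to {\rm Map}_0(Z,Y)$ be post-composition with $\pi$. Its fibre over the constant map $c_{y_0}$ is $\{f\colon Z\to X\mid \pi\circ f=c_{y_0},\ f(z_0)=x_0\}={\rm Map}_0(Z,F)$, because $\pi^{-1}(y_0)=F$ and $x_0\in F$. Granting that $\pi_*$ is again a Serre fibration, the theorem will follow by translating the homotopy groups of the three mapping spaces into the homotopy sets appearing in the statement.

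To prove that $\pi_*$ is a Serre fibration I would check the homotopy lifting property against each disc $\mathbb{D}^n$. Working in a convenient category of spaces (compactly generated weak Hausdorff, so that the exponential law holds), a lifting problem given by $g\colon\{0\}\times\mathbb{D}^n\to{\rm Map}_0(Z,X)$ and $f\colon[0,1]\times\mathbb{D}^n\to{\rm Map}_0(Z,Y)$ with $\pi_*\circ g=f|_{\{0\}\times\mathbb{D}^n}$ transposes, via the adjunction between ${\rm Map}(\mathbb{D}^n,{\rm Map}_0(Z,W))$ and the space of maps $\mathbb{D}^n\times Z\to W$ carrying $\mathbb{D}^n\times\{z_0\}$ to $w_0$, into an ordinary homotopy lifting problem for $\pi$ with parameter space $\mathbb{D}^n\times Z$ and homotopy variable $[0,1]$. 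Since $\mathbb{D}^n\times Z$ is a CW complex, $\pi$ has the needed lifting property. The delicate point, and the \emph{main obstacle}, is that the lift produced must still send $\mathbb{D}^n\times\{z_0\}$ to $x_0$, so that it re-transposes to a map landing in ${\rm Map}_0(Z,X)$. I would arrange this by invoking the relative homotopy lifting property of the Serre fibration $\pi$ for the CW pair $(\mathbb{D}^n\times Z,\mathbb{D}^n\times\{z_0\})$: over $\mathbb{D}^n\times\{z_0\}$ the transposed base homotopy is constantly $y_0$ and is lifted by the constant $x_0$, and this partial lift is extended, together with the given initial lift, to all of $[0,1]\times\mathbb{D}^n\times Z$. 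Here one uses that $z_0$ may be taken to be a $0$-cell of $Z$.

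Finally I would set up the identification $\pi_i({\rm Map}_0(Z,W))\cong[Z,\Omega^iW]_0$ for each space $W$ and $i\geq 0$. Composing the smash--hom adjunction ${\rm Map}_0(S^i,{\rm Map}_0(Z,W))\cong{\rm Map}_0(S^i\wedge Z,W)$, the identity $S^i\wedge Z=\Sigma^iZ$, and the loop--suspension adjunction ${\rm Map}_0(\Sigma^iZ,W)\cong{\rm Map}_0(Z,\Omega^iW)$, and then passing to $\pi_0$, gives $\pi_i({\rm Map}_0(Z,W))=[\Sigma^iZ,W]_0=[Z,\Omega^iW]_0$, naturally in $W$ and compatibly with the group structures for $i\geq 1$. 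Passing the classical long exact homotopy sequence of the fibration $\pi_*\colon{\rm Map}_0(Z,X)\to{\rm Map}_0(Z,Y)$ with fibre ${\rm Map}_0(Z,F)$ through this dictionary produces exactly the asserted sequence, exact as groups for $i\geq 1$ and as pointed sets at the tail. The special case of a one-point $Z$ collapses every $[Z,\Omega^iW]_0$ to $\pi_i(W)$ and recovers the usual long exact sequence of homotopy groups of the fibration.
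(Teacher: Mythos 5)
The paper does not prove this statement at all: it is quoted as a standard fact with a pointer to \cite[Chap.~6]{AT}, so there is no in-paper argument to compare against. Your proof is the standard mapping-space argument and is correct. You correctly isolate the two real issues: (i) that $\pi_*\colon \operatorname{Map}_0(Z,X)\to\operatorname{Map}_0(Z,Y)$ is again a Serre fibration with fibre $\operatorname{Map}_0(Z,F)$, which you get by transposing the disc lifting problem to a lifting problem for the CW pair $(\mathbb{D}^n\times Z,\mathbb{D}^n\times\{z_0\})$ and using the relative homotopy lifting property of $\pi$ (valid because $\{0\}\times W\cup[0,1]\times A$ is a retract of $[0,1]\times W$ for a CW pair $(W,A)$, reducing relative HLP to absolute HLP); and (ii) the natural identification $\pi_i(\operatorname{Map}_0(Z,W))\cong[\Sigma^iZ,W]_0\cong[Z,\Omega^iW]_0$, which is consistent with the paper's own Proposition on $[\Sigma X,Y]_0\cong[X,\Omega Y]_0$. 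The caveats you flag (compactly generated category for the exponential law, $z_0$ a $0$-cell so that $(Z,\{z_0\})$ is a CW pair) are exactly the right ones. For comparison, the cited reference packages the same content slightly differently, via the fibration sequence $\dotsm\to\Omega F\to\Omega X\to\Omega Y\to F\to X\to Y$ and the fact that $[Z,-]_0$ carries a fibration sequence to an exact sequence of pointed sets; your route through $\operatorname{Map}_0(Z,-)$ proves a little more (an actual fibration of mapping spaces) at the cost of the point-set care you already supply, and either version suffices for every use made of the theorem in this paper.
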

 We have the following fact.
\begin{prop}[{\cite[Theorem 6.42]{AT}}]\label{sus}
Let $(X, x_0),  (Y, y_0)$ be the pointed topological spaces. Then the natural map $[\Sigma X, Y]_0\to [X, \Omega Y]_0$ is a bijection.
\end{prop}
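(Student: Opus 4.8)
The plan is to realize this bijection as the smash--hom adjunction applied to the circle, since $\Sigma X = X \wedge S^1$ and $\Omega Y = \operatorname{Map}_0(S^1, Y)$. First I would construct the natural correspondence at the level of based maps. Given a based map $f \colon \Sigma X \to Y$, define $\Phi(f) \colon X \to \Omega Y$ by the currying rule $\Phi(f)(x)(t) = f([x \wedge t])$, where $[x \wedge t]$ denotes the class of $(x,t)$ in $\Sigma X = (X \times [0,1])/(\{x_0\} \times [0,1] \cup X \times \{0,1\})$. One checks that $\Phi(f)(x)$ is indeed a based loop, because the endpoints $t = 0, 1$ lie in the collapsed subspace and are therefore sent to $y_0$, and that $\Phi(f)(x_0)$ is the constant loop because $\{x_0\} \times [0,1]$ is collapsed; hence $\Phi(f)$ is a based map into $\Omega Y$. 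Conversely, a based map $g \colon X \to \Omega Y$ yields $\Psi(g) \colon \Sigma X \to Y$ by $\Psi(g)([x \wedge t]) = g(x)(t)$, which is well defined on the quotient and based. These assignments are mutually inverse, so together they give a bijection $\operatorname{Map}_0(\Sigma X, Y) \cong \operatorname{Map}_0(X, \Omega Y)$.

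The continuity of $\Phi(f)$ and $\Psi(g)$ is exactly the exponential law for mapping spaces with the compact-open topology, which is available here because the interval $[0,1]$ (equivalently $S^1$) is locally compact Hausdorff. Concretely, the adjunction map $\operatorname{Map}([0,1] \times X, Y) \to \operatorname{Map}(X, \operatorname{Map}([0,1], Y))$ sending a map to its transpose is a homeomorphism, and restricting to the based subspaces and passing to the reduced quotients produces the desired correspondence.

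Next I would promote this to a statement about homotopy classes. A based homotopy between $f_0, f_1 \colon \Sigma X \to Y$ is a continuous map $H \colon \Sigma X \times [0,1] \to Y$ with $H(-,0) = f_0$, $H(-,1) = f_1$, and $H(*, s) = y_0$ for all $s$, where $*$ denotes the basepoint of $\Sigma X$. Applying the adjunction uniformly in the homotopy parameter $s$ produces $\widehat{H} \colon X \times [0,1] \to \Omega Y$, a based homotopy from $\Phi(f_0)$ to $\Phi(f_1)$; its continuity again follows from the exponential law, now with the extra factor $[0,1]$. The same construction runs in reverse, so $\Phi$ carries homotopic maps to homotopic maps and induces a well-defined bijection $[\Sigma X, Y]_0 \to [X, \Omega Y]_0$, which is the natural map in the statement.

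The main obstacle is not algebraic but point-set topological: one must verify that each transpose map is genuinely continuous and that the adjunction descends correctly through the collapsing that defines the \emph{reduced} suspension and the \emph{based} loop space. This is clean precisely because $S^1$ is compact Hausdorff, so the exponential law holds on the nose and restricts to the based subspaces; the only care needed is the bookkeeping of basepoints, ensuring that the correspondence stays within $\operatorname{Map}_0$ at every stage.
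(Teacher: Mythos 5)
The paper does not actually prove this proposition; it is quoted from Davis--Kirk \cite[Theorem 6.42]{AT}, whose proof is precisely the exponential-law argument you give, so your proposal is correct and follows the standard route. The one point you leave implicit is that, in the reverse direction for homotopies, the map $q\times\mathrm{id}_{[0,1]}\colon X\times[0,1]\times[0,1]\to\Sigma X\times[0,1]$ (with $q$ the collapsing map) must still be a quotient map so that the transposed homotopy descends continuously to $\Sigma X\times[0,1]$; this again holds because $[0,1]$ is locally compact Hausdorff.
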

By the theorem of Hurewicz, every principal $G$-bundle is a fibration. Therefore we use the long exact sequence to compute the homotopy groups of the topological group $G$. We refer to the argument in \cite[Lemma 2.8, 2.16, Corollary 2.9]{DP} for the proof of the following 4 lemmas.
\begin{lem}\label{LE}
Let $p: U_{E_{n+1}}\to U_{E_{n+1}}(1-e)$ be the multiplication by $1-e$. Then, the map $p$ is a principal $S^1$-bundle that has the $S^1$ action by the right multiplication of $(1-e)+ze$, $z\in S^1$.
\end{lem}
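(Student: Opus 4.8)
The plan is to produce an explicit local trivialization of the map $p$ and to verify that the right $S^1$-action is free and transitive on the fibres, so that $p$ becomes a principal $S^1$-bundle in the sense required. First I would identify the fibre. For $u \in U_{E_{n+1}}$ we have $p(u) = u(1-e)$, and since $e$ is the minimal projection with $1-e = \sum_{i=1}^{n+1}T_iT_i^*$, two unitaries $u,u'$ satisfy $u(1-e) = u'(1-e)$ exactly when $u^*u'$ fixes the range of $1-e$, i.e. $u^*u'$ acts as the identity on $(1-e)H$ and as a scalar on the one-dimensional space $eH \cong \mathbb{C}$. Thus $u^*u' = (1-e) + z e$ for some $z \in S^1$, which shows simultaneously that the fibres of $p$ are precisely the orbits of the right action $u \mapsto u\bigl((1-e)+ze\bigr)$ and that this action is free (if $u\bigl((1-e)+ze\bigr) = u$ then $ze = e$, forcing $z=1$). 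This is the content of the $S^1$-action being well defined and free.

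Next I would check surjectivity and the continuity of the action, both of which are routine: the map $S^1 \times U_{E_{n+1}} \to U_{E_{n+1}}$, $(z,u)\mapsto u((1-e)+ze)$, is norm-continuous because multiplication is, and every element of $U_{E_{n+1}}(1-e)$ is by definition $u(1-e)$ for some unitary $u$, so $p$ is onto its stated codomain. The nontrivial part is \textbf{local triviality}, which is the main obstacle: I must construct, for each $w_0 \in U_{E_{n+1}}(1-e)$, a neighborhood $V$ and a continuous section $s\colon V \to U_{E_{n+1}}$ of $p$, from which the homeomorphism $V \times S^1 \to p^{-1}(V)$, $(w,z)\mapsto s(w)\bigl((1-e)+ze\bigr)$, follows formally. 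The idea is that a partial isometry $w = u(1-e)$ with final space $u(1-e)H$ can be completed to a unitary by adjoining any unit vector spanning the orthogonal complement $\bigl(u(1-e)H\bigr)^{\perp} = u(eH)$; choosing this completing vector continuously in $w$ is exactly the obstruction, and locally it can be done.

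To carry out the section, I would fix $w_0 = u_0(1-e)$ and let $\eta_0$ be a unit vector spanning the one-dimensional space $(w_0 w_0^*{}^{\perp})H = u_0(eH)$. For $w$ near $w_0$ the projection $1 - ww^*$ is close to $1 - w_0 w_0^* = u_0 e u_0^*$, which is the rank-one projection onto $\mathbb{C}\eta_0$; hence $(1-ww^*)\eta_0 \neq 0$ on a neighborhood $V$, and I can set $\eta_w := (1-ww^*)\eta_0 / \|(1-ww^*)\eta_0\|$, a norm-continuous unit vector spanning the range of $1-ww^*$. Then $s(w) := w + \eta_w \otimes \xi_0^*$, where $\xi_0$ is the unit eigenvector of $e$ (so $e = \xi_0 \otimes \xi_0^*$), is a unitary with $s(w)(1-e) = w$, depending norm-continuously on $w$; this is the desired local section. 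Finally, the fibre $p^{-1}(w_0)$ is the $S^1$-orbit of $s(w_0)$, and since a point orbit of a free continuous action of the compact group $S^1$ that admits local sections is a principal bundle, I conclude that $p$ is a principal $S^1$-bundle with the stated action. I expect essentially all difficulty to sit in the continuous choice of the completing vector $\eta_w$; once that is in place the bundle structure is formal.
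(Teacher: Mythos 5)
Your proposal is correct and follows essentially the same route as the paper, which does not write out a proof but defers to the local-section arguments of Dadarlat--Pennig (\cite[Lemma 2.8, 2.16, Corollary 2.9]{DP}): identify the fibre as the free right $S^1$-orbit $\{u((1-e)+ze)\}$ and build a continuous local section by completing the partial isometry $w$ with the normalized vector $(1-ww^*)\eta_0/\|(1-ww^*)\eta_0\|$. The only point worth making explicit is that $s(w)=w+\eta_w\otimes\xi_0^*$ lies in $U_{E_{n+1}}$ (not merely in $U_{\mathbb{B}(\mathcal{F}(\mathbb{C}^{n+1}))}$), which holds because the rank-one operator $\eta_w\otimes\xi_0^*$ belongs to $\mathbb{K}\subset E_{n+1}$.
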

\begin{lem}\label{nemui}
Let $H_1$ be the set of vectors of norm $1$ of the Hilbert space $H$, and $\xi_0\in H_1$ be a vector corresponding the minimal projection $e$. The map $q \colon U_{E_{n+1}}\to H_1$ that sends a unitary $u$ to $u\xi_0$ is a fibration with the fiber $U_{(1-e)E_{n+1}(1-e)}$.
\end{lem}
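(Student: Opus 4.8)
The plan is to prove the stronger statement that $q$ is a locally trivial fiber bundle over $H_1$; since $H_1$ is metrizable, hence paracompact, this makes $q$ a Hurewicz (in particular Serre) fibration, as in the argument of \cite[Lemma 2.16]{DP}. I first identify the fiber over $\xi_0$. Because $e=\xi_0\otimes\xi_0^*$, a unitary $u\in U_{E_{n+1}}$ satisfies $q(u)=u\xi_0=\xi_0$ if and only if $u\xi_0=\xi_0=u^*\xi_0$, which forces $ue=eu=e$; writing $u=e+(1-e)u(1-e)$ then gives a homeomorphism $q^{-1}(\xi_0)\cong U_{(1-e)E_{n+1}(1-e)}$ via $u\mapsto (1-e)u(1-e)$, with inverse $w\mapsto e+w$. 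The surjectivity of $q$ (transitivity of the $U_{E_{n+1}}$-action on $H_1$) is where the inclusion $\mathbb{K}^{\sim}=\mathbb{K}+\mathbb{C}1\subset E_{n+1}$ is used: for any $\eta\in H_1$ a unitary rotation of $\operatorname{span}\{\xi_0,\eta\}$ carrying $\xi_0$ to $\eta$ and fixing the orthogonal complement is a finite-rank perturbation of $1$, hence lies in $\mathbb{K}^{\sim}\subset E_{n+1}$ and maps to $\eta$.

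The heart of the proof is a continuous local section $s$ of $q$ near $\xi_0$. On the neighborhood $V=\{\eta\in H_1\mid \operatorname{Re}\langle\eta,\xi_0\rangle>0\}$, set $\lambda=\langle\eta,\xi_0\rangle$ and $w=\eta-\lambda\xi_0$, so that $w\perp\xi_0$ and $|\lambda|^2+\langle w,w\rangle=1$. Let $s(\eta)$ be the unitary acting as the identity on $\{\xi_0,w\}^{\perp}$ and on $\operatorname{span}\{\xi_0,w\}$ by $s(\eta)\xi_0=\lambda\xi_0+w=\eta$ and $s(\eta)w=-\langle w,w\rangle\xi_0+\bar\lambda w$; this is the $\mathrm{SU}(2)$-rotation with matrix $\left(\begin{smallmatrix}\lambda&-\|w\|\\ \|w\|&\bar\lambda\end{smallmatrix}\right)$ in the orthonormal frame $(\xi_0, w/\|w\|)$. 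All the data defining $s(\eta)$ are polynomial in $\xi_0,w,\lambda,\bar\lambda$, each $s(\eta)-1$ has rank at most two so $s(\eta)\in\mathbb{K}^{\sim}\subset E_{n+1}$, and $q(s(\eta))=\eta$ with $s(\xi_0)=1$.

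With the section in hand I would establish local triviality by translation and conclude. Using surjectivity, the sets $gV$ for $g\in U_{E_{n+1}}$ cover $H_1$; over $gV$ the translated section $\tilde s(\eta)=g\,s(g^{-1}\eta)$ satisfies $q(\tilde s(\eta))=\eta$, and the maps $u\mapsto (q(u),\tilde s(q(u))^{-1}u)$ and $(\eta,h)\mapsto \tilde s(\eta)h$ are mutually inverse homeomorphisms between $q^{-1}(gV)$ and $(gV)\times q^{-1}(\xi_0)$. This exhibits $q$ as a locally trivial bundle with fiber $U_{(1-e)E_{n+1}(1-e)}$, which is a fibration over the paracompact base $H_1$.

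I expect the main obstacle to be the norm-continuity of the section $s$ at the points where $w\to 0$: there the direction $w/\|w\|$ is ill-defined, so one cannot simply invoke continuity of the orthonormal frame. The remedy is to observe that in the operator-norm estimate of $s(\eta)-1$ the only terms involving $w/\|w\|$ carry the scalar factors $\|w\|$ and $\bar\lambda-1$, both of which tend to $0$ as $\eta\to\xi_0$; hence $s(\eta)\to 1=s(\xi_0)$ and $s$ is genuinely norm-continuous on all of $V$, including on the phase-circle $\{e^{i\theta}\xi_0\}$ through the base point, rather than only off it.
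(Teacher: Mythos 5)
Your overall strategy---an explicit local section of $q$ near $\xi_0$, translation by the group action to get local triviality, and paracompactness of $H_1$ to conclude---is sound and is essentially the route of the reference the paper itself cites for this lemma (\cite[Lemma 2.16]{DP}); your identification of the fiber and your surjectivity argument are both correct. The gap is in the continuity of the section $s$. Writing $f=w/\|w\|$, one computes $s(\eta)-1=(\lambda-1)\,\xi_0\otimes\xi_0^*+w\otimes\xi_0^*-\xi_0\otimes w^*+(\bar\lambda-1)\,f\otimes f^*$, and the last term is the problem: its norm is $|\bar\lambda-1|$, which tends to $0$ only when $\lambda\to 1$. Your proposed remedy therefore proves continuity at the single point $\xi_0$, not along the phase circle. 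Concretely, at $\eta_0=e^{i\theta}\xi_0$ with $0<|\theta|<\pi/2$ (a point of $V$, and such points lie in \emph{every} neighborhood of $\xi_0$), take $\eta_k=\sqrt{1-k^{-2}}\,e^{i\theta}\xi_0+k^{-1}\zeta$ with $\zeta\perp\xi_0$ a unit vector: then $s(\eta_k)\to 1+(e^{i\theta}-1)e+(e^{-i\theta}-1)\,\zeta\otimes\zeta^*$, which depends on the choice of $\zeta$ and differs from $s(\eta_0)=1+(e^{i\theta}-1)e$. So $s$ is genuinely discontinuous on a set accumulating at the base point, and the claimed trivialization over $V$ (or over any smaller neighborhood of $\xi_0$, since shrinking cannot avoid these points) fails as written.

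The defect is repairable without changing your architecture. Replace the frame matrix $\left(\begin{smallmatrix}\lambda&-\|w\|\\ \|w\|&\bar\lambda\end{smallmatrix}\right)$ by $\left(\begin{smallmatrix}\lambda&-(\lambda/|\lambda|)\|w\|\\ \|w\|&|\lambda|\end{smallmatrix}\right)$, which is still unitary (note $\lambda\neq 0$ on $V$) and still sends $\xi_0$ to $\lambda\xi_0+w=\eta$. Using $|\lambda|^2+\|w\|^2=1$, the offending term becomes $(|\lambda|-1)\,f\otimes f^*=-\tfrac{1}{1+|\lambda|}\,w\otimes w^*$, which is manifestly norm-continuous on all of $V$, and the off-diagonal term $-(\lambda/|\lambda|)\,\xi_0\otimes w^*$ is likewise continuous; the correction is still of rank at most two, so it stays in $\mathbb{K}^{\sim}\subset E_{n+1}$, and the rest of your argument then goes through verbatim.
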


\begin{rem}\label{hs}
Since $H_1$ is contractible, it is follows from the long exact sequence of the homotopy groups induced by the fibration of Lemma \ref{nemui} that the map $$U_{(1-e)E_{n+1}(1-e)}\ni w\mapsto w+e \in U_{E_{n+1}}$$ is the weak homotopy equivalence. Hence the map $\operatorname{End}_eE_{n+1} \ni \rho \mapsto e+\sum_i{\rho}(T_i)T_i^* \in U_{E_{n+1}}$ is the weak homotopy equivalence.
\end{rem}
\begin{lem}\label{fib}
Let $\eta : \operatorname{End_0}E_{n+1}\to \operatorname{B}S^1$ be the map that sends $\alpha$ to $\alpha(e)$ and let $\operatorname{Aut}_eE_{n+1}$ be the stabilizer subgroup of the minimal projection $e$. Then there is  a principal $\operatorname{Aut}_eE_{n+1}$-bundle  
$$\operatorname{Aut}_eE_{n+1}\to \operatorname{Aut}E_{n+1}\xrightarrow{\eta} \operatorname{B}S^1.$$
\end{lem}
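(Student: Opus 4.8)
The plan is to exhibit $\eta\colon\operatorname{Aut}E_{n+1}\to\operatorname{B}S^1$, $\alpha\mapsto\alpha(e)$, as the orbit map of a continuous, transitive action whose orbit map admits local sections, and then to invoke the standard fact (used in the same spirit in \cite[Lemma 2.8, 2.16, Corollary 2.9]{DP}) that such an orbit map is a locally trivial principal bundle for the stabiliser. First I would record the purely algebraic skeleton. The map $\eta$ is well defined because an automorphism preserves the ideal $\mathbb{K}$ and sends minimal projections to minimal projections. The group $\operatorname{Aut}_eE_{n+1}$ acts on $\operatorname{Aut}E_{n+1}$ on the right by composition; this action is free (automorphisms are invertible) and satisfies $\eta(\alpha\circ\beta)=\alpha(\beta(e))=\alpha(e)=\eta(\alpha)$ for $\beta\in\operatorname{Aut}_eE_{n+1}$. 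Conversely, if $\eta(\alpha)=\eta(\alpha')$ then $\alpha'^{-1}\circ\alpha$ fixes $e$, so the fibres of $\eta$ are exactly the right cosets $\alpha\circ\operatorname{Aut}_eE_{n+1}$, the fibre over the base point $e$ being $\operatorname{Aut}_eE_{n+1}$ itself. It remains only to prove that $\eta$ is surjective and that it admits continuous local sections.

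The key device is that a unitary $w\in U_{\mathcal{M}(\mathbb{K})}=U(\mathbb{B}(H))$ differing from $1$ by a finite-rank operator lies in the group $N$ of Corollary \ref{N} and hence implements an automorphism of $E_{n+1}$: since $w-1$ is finite rank we have $[w,d]\in\mathbb{K}\subset E_{n+1}$ for every $d\in E_{n+1}$, so $\operatorname{Ad}w(d)=d+[w,d]w^*\in E_{n+1}$, and the same computation for $w^*$ shows $\operatorname{Ad}w\in\operatorname{Aut}E_{n+1}$. Because any unit vector of the Fock space can be carried to any other by a unitary equal to the identity off the two-dimensional span of the two vectors, these conjugations already act transitively on the minimal projections $\operatorname{B}S^1=\{\xi\otimes\xi^*\}$. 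This yields the surjectivity of $\eta$. Moreover, left multiplication $L_\gamma\colon\alpha\mapsto\gamma\circ\alpha$ is a homeomorphism of $\operatorname{Aut}E_{n+1}$ intertwining the right $\operatorname{Aut}_eE_{n+1}$-action and satisfying $\eta\circ L_\gamma=\gamma\cdot\eta$, so a local trivialisation near the base point can be translated to a neighbourhood of any $p_0=\gamma(e)$; it therefore suffices to trivialise $\eta$ near $e$.

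To build a section near $e=\xi_0\otimes\xi_0^*$ I would use the universal bundle $H_1\to\operatorname{B}S^1$, $\xi\mapsto\xi\otimes\xi^*$. Being a locally trivial principal $S^1$-bundle, it admits over a neighbourhood $V$ of $e$ a continuous lift $\xi\colon V\to H_1$ with $\xi_e=\xi_0$. For $p\in V$ let $w_p$ be the rotation unitary carrying $\xi_0$ to $\xi_p$ inside $\operatorname{span}\{\xi_0,\xi_p\}$ and equal to the identity on its orthogonal complement, chosen continuously with $w_e=1$; each $w_p-1$ is finite rank, so $s(p):=\operatorname{Ad}w_p\in\operatorname{Aut}E_{n+1}$ and $\eta(s(p))=(w_p\xi_0)\otimes(w_p\xi_0)^*=\xi_p\otimes\xi_p^*=p$. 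Thus $s$ is a continuous local section. The map $\Phi\colon V\times\operatorname{Aut}_eE_{n+1}\to\eta^{-1}(V)$, $(p,\beta)\mapsto s(p)\circ\beta$, is then a right-$\operatorname{Aut}_eE_{n+1}$-equivariant homeomorphism over $V$ with inverse $\alpha\mapsto(\eta(\alpha),\,s(\eta(\alpha))^{-1}\circ\alpha)$, and translating by the $L_\gamma$ gives such a trivialisation near every point of $\operatorname{B}S^1$. Hence $\eta$ is a principal $\operatorname{Aut}_eE_{n+1}$-bundle.

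The step I expect to require the most care is the construction and continuity of the section: producing the continuous local lift $\xi\colon V\to H_1$, verifying that $p\mapsto\operatorname{Ad}w_p$ is continuous for the point-norm topology, and checking that $\Phi$ is a homeomorphism (in particular that composition and inversion in $\operatorname{Aut}E_{n+1}$ are continuous, so that $\Phi^{-1}$ is continuous). The conceptual crux, however, is the observation that finite-rank perturbations of $1$ implement automorphisms of $E_{n+1}$; this is precisely what lets the classical homogeneous-space argument for $\operatorname{Aut}\mathbb{K}\to\operatorname{B}S^1$ be lifted through the inclusion $\mathbb{K}\subset E_{n+1}$, and it is where the specific structure of the Cuntz--Toeplitz algebra, rather than an abstract stabiliser computation, enters.
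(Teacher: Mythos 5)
Your proof is correct and follows essentially the same route as the paper, which for this lemma simply delegates to the homogeneous-space argument of \cite[Lemma 2.8, 2.16, Corollary 2.9]{DP}: identify the fibres of $\eta$ with the right cosets of the stabiliser and produce continuous local sections by conjugating with unitaries that are finite-rank perturbations of $1$ (hence normalise $E_{n+1}$). The details you flag as delicate --- the continuous lift $V\to H_1$, norm-continuity of $p\mapsto w_p$, and continuity of composition and inversion in the point-norm topology --- all go through by standard arguments, so there is no gap.
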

\begin{rem}\label{min}
Since the map $\eta _* : [S^k, \operatorname{Aut}E_{n+1}]\to [S^k, \operatorname{B}S^1]$ is the zero map by Lemma \ref{zero}, it follows from Lemma \ref{fib} that for every $\alpha$ in $\operatorname{Map}(S^k, \operatorname{Aut}E_{n+1})$, there exists $\alpha'$ in $\operatorname{Map}(S^k, \operatorname{Aut}_eE_{n+1})$ that is homotopic to $\alpha$ in $\operatorname{Map}(S^k, \operatorname{Aut}E_{n+1})$.
\end{rem} 

\begin{lem}
The following sequence gives a fibration :
$$\operatorname{End}_eE_{n+1}\to \operatorname{End}_0E_{n+1}\xrightarrow{\eta}\operatorname{B}S^1.$$
\end{lem}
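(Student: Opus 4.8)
The plan is to exhibit $\eta$ as a locally trivial map with fibre $\operatorname{End}_eE_{n+1}$, in the same spirit as the principal bundle of Lemma \ref{fib}, and then to invoke the fact that a map which is locally trivial over a numerable cover of a paracompact base is a fibration. The base $\operatorname{B}S^1$, being the space of minimal projections in the norm topology, is metrizable and hence paracompact, so this last step is automatic, and the whole content is the construction of the local trivializations.

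First I would record two bookkeeping facts. By Lemma \ref{end} the condition $\rho(e)=e$ forces $\rho(e)$ to be a minimal projection, so $\operatorname{End}_eE_{n+1}\subseteq\operatorname{End}_0E_{n+1}$ and the fibre $\eta^{-1}(e)=\{\rho\in\operatorname{End}_0E_{n+1}\mid\rho(e)=e\}$ is precisely $\operatorname{End}_eE_{n+1}$. By Proposition \ref{pac} the group $\operatorname{Aut}E_{n+1}$ is path connected and contains $\operatorname{id}$, hence $\operatorname{Aut}E_{n+1}\subseteq\operatorname{End}_0E_{n+1}$; since $\operatorname{End}_0E_{n+1}$ is a sub-semi-group (being the path component of the identity), the composition of an element of $\operatorname{Aut}E_{n+1}$ with one of $\operatorname{End}_0E_{n+1}$ stays inside $\operatorname{End}_0E_{n+1}$.

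The key geometric input is Lemma \ref{fib}: the map $\eta\colon\operatorname{Aut}E_{n+1}\to\operatorname{B}S^1$ is a principal $\operatorname{Aut}_eE_{n+1}$-bundle, so it is locally trivial and admits, over each member of an open cover $\{V\}$ of $\operatorname{B}S^1$, a continuous section $s\colon V\to\operatorname{Aut}E_{n+1}$ with $s(v)(e)=v$. Using $s$ I would define
$$\Phi_V\colon V\times\operatorname{End}_eE_{n+1}\to\eta^{-1}(V),\qquad (v,\rho)\mapsto s(v)\circ\rho.$$
Then $\eta(s(v)\circ\rho)=s(v)(\rho(e))=s(v)(e)=v$, so $\Phi_V$ covers the projection to $V$ and, by the previous paragraph, lands in $\operatorname{End}_0E_{n+1}$. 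Its inverse is $\sigma\mapsto(\eta(\sigma),\,s(\eta(\sigma))^{-1}\circ\sigma)$: since $s(v)(e)=v$ gives $s(v)^{-1}(v)=e$, the endomorphism $s(\eta(\sigma))^{-1}\circ\sigma$ fixes $e$ and so lies in $\operatorname{End}_eE_{n+1}$. Both maps are continuous because composition is continuous for the point-norm topology and inversion is continuous on the topological group $\operatorname{Aut}E_{n+1}$ (here $E_{n+1}$ is separable). Thus $\Phi_V$ is a homeomorphism over $V$, giving the desired local triviality; equivalently, this identifies $\operatorname{End}_0E_{n+1}$ with the associated bundle $\operatorname{Aut}E_{n+1}\times_{\operatorname{Aut}_eE_{n+1}}\operatorname{End}_eE_{n+1}$.

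I expect the main obstacle to be bookkeeping rather than conceptual: one must ensure that every endomorphism produced by the trivializations remains in the path component $\operatorname{End}_0E_{n+1}$ (not merely in $\operatorname{End}E_{n+1}$) and genuinely fixes $e$, which is exactly where Proposition \ref{pac} and Lemma \ref{end} are used, and that the section $s$ together with inversion combine continuously. Once local triviality is established, Dold's theorem (local fibrations over a numerable cover glue to a global fibration) upgrades $\eta$ to a Serre fibration with fibre $\operatorname{End}_eE_{n+1}$, completing the proof.
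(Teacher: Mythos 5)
Your argument is correct, and it is essentially the intended one: the paper gives no written proof of this lemma but defers to the arguments of Dadarlat--Pennig, which likewise deduce such fibrations by using local sections of the underlying principal bundle (here Lemma \ref{fib}) to trivialize the total space locally as $V\times\operatorname{End}_eE_{n+1}$ and then invoke the fact that a locally trivial map is a (Serre) fibration. The only remark worth adding is that membership of $s(v)\circ\rho$ in $\operatorname{End}_0E_{n+1}$ follows even more directly from Lemma \ref{end}, since $s(v)(\rho(e))$ is a minimal projection of $\mathbb{K}$, so the appeal to Proposition \ref{pac} and to the semigroup structure of the path component is not strictly needed.
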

\begin{rem}\label{Ae}
In section 3, we show that the map $\operatorname{Aut}E_{n+1}\to \operatorname{End}_0E_{n+1}$ is a weak homotopy equivalence.
Hence the groups $\operatorname{Aut}_eE_{n+1}$ and $\operatorname{End}_eE_{n+1}$ are weakly homotopy equivalent from the long exact sequences and 5-lemma. 
Then the map $\operatorname{Aut}_eE_{n+1}\ni \alpha \mapsto e+\sum_i\alpha(T_i)T_i^*\in U_{E_{n+1}}$ is the weak homotopy equivalence by the Remark \ref{hs}. 
\end{rem}
By the fibration in Lemma \ref{LE}, we know the homotopy groups of $\operatorname{End}_0E_{n+1}$.
\begin{thm}\label{homotopy}
The homotopy groups of $\operatorname{End}_0E_{n+1}$ are as follows:
$$\pi_1(\operatorname{End}_0E_{n+1})=\mathbb{Z}_n,\;\;\pi_{2k+1}(\operatorname{End}_0E_{n+1})=\mathbb{Z},$$
$$\pi_{2k}(\operatorname{End}_0E_{n+1})=0,\;\;where\; k\geq 1.$$

\end{thm}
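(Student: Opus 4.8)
The plan is to feed the principal $S^1$-bundle of Lemma \ref{LE} into its long exact sequence of homotopy groups. By Lemma \ref{end} the base space $U_{E_{n+1}}(1-e)$ is homeomorphic to $\operatorname{End}_0E_{n+1}$, so Lemma \ref{LE} furnishes a fibration $S^1\to U_{E_{n+1}}\to \operatorname{End}_0E_{n+1}$ whose fibre inclusion $S^1\to U_{E_{n+1}}$ is the orbit map $z\mapsto (1-e)+ze$ of the identity. Everything then reduces to two inputs: the homotopy groups of $U_{E_{n+1}}$, and the single homomorphism induced on $\pi_1$ by this fibre inclusion.

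For the first input I would observe that $U_{E_{n+1}}$ is a connected H-space. Connectedness follows from Proposition \ref{k1} with $X$ a point, since $U_{E_{n+1}}/\!\sim_h\,\cong K_1(E_{n+1})=0$ by the KK-equivalence of $E_{n+1}$ and $\mathbb{C}$. Because $U_{E_{n+1}}$ is an H-space,
\[
\pi_k(U_{E_{n+1}})=[S^k,U_{E_{n+1}}]_0=[S^k,U_{E_{n+1}}]=U_{C(S^k)\otimes E_{n+1}}/\!\sim_h\,\cong K_1(C(S^k)\otimes E_{n+1}),
\]
where the last isomorphism is Proposition \ref{k1} applied to $X=S^k$. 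Using the KK-equivalence of $E_{n+1}$ and $\mathbb{C}$, this equals $K_1(C(S^k))=K^1(S^k)$, namely $\mathbb{Z}$ for $k$ odd and $0$ for $k$ even.

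For $k\geq 3$ both $\pi_k(S^1)$ and $\pi_{k-1}(S^1)$ vanish, so the long exact sequence yields $\pi_k(\operatorname{End}_0E_{n+1})\cong \pi_k(U_{E_{n+1}})$, giving $\mathbb{Z}$ in odd degrees and $0$ in even degrees, which accounts for $\pi_{2k+1}=\mathbb{Z}$ and $\pi_{2k}=0$ with $k\geq 2$. The whole theorem then rests on the low-degree segment
\[
0=\pi_2(U_{E_{n+1}})\to \pi_2(\operatorname{End}_0E_{n+1})\to \pi_1(S^1)\xrightarrow{i_*}\pi_1(U_{E_{n+1}})\to \pi_1(\operatorname{End}_0E_{n+1})\to \pi_0(S^1)=0,
\]
and I expect the main obstacle to be identifying the fibre-inclusion map $i_*\colon \mathbb{Z}=\pi_1(S^1)\to \pi_1(U_{E_{n+1}})=\mathbb{Z}$.

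To pin down $i_*$ I would pass through the suspension/Bott picture: the isomorphism $\pi_1(U_{E_{n+1}})\cong K_1(C(S^1)\otimes E_{n+1})\cong K_0(E_{n+1})=\mathbb{Z}[1]_0$ sends a loop $z\mapsto 1+(z-1)p$ to $[p]_0$. The generator of $\pi_1(S^1)$ maps under $i_*$ to the class of $z\mapsto (1-e)+ze=1+(z-1)e$, that is, to $[e]_0$. Since $1=e+\sum_{i=1}^{n+1}T_iT_i^*$ and each $T_iT_i^*$ is Murray--von Neumann equivalent to $T_i^*T_i=1$, we get $[1]_0=[e]_0+(n+1)[1]_0$, hence $[e]_0=-n[1]_0$. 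Thus $i_*$ is multiplication by $-n$, which is injective with cokernel $\mathbb{Z}_n$. Reading off the displayed segment gives $\pi_2(\operatorname{End}_0E_{n+1})=\ker i_*=0$ and $\pi_1(\operatorname{End}_0E_{n+1})=\operatorname{coker}i_*=\mathbb{Z}_n$, completing the computation.
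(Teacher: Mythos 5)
Your proposal is correct and follows essentially the same route as the paper: identify $\operatorname{End}_0E_{n+1}$ with $U_{E_{n+1}}(1-e)$ via Lemma \ref{end}, run the long exact sequence of the $S^1$-fibration of Lemma \ref{LE}, compute $\pi_k(U_{E_{n+1}})=K^1(S^k)$ from $K_1$-injectivity, and observe that the fibre inclusion $z\mapsto 1-e+ze$ induces multiplication by $-n$ on $\pi_1$. The only difference is that you justify the $-n$ (via the Bott picture and $[e]_0=[1]_0-(n+1)[1]_0=-n[1]_0$) where the paper merely asserts it; that computation is correct and consistent with the index map in the paper's six-term sequence.
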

\begin{proof}
By Lemma \ref{end}, it is sufficient to compute the homotopy groups of $U_{E_{n+1}}(1-e)$. By the fibration sequence
$$S^1\to U_{E_{n+1}}\xrightarrow{p} U_{E_{n+1}}(1-e),$$
we have the long exact sequence of the homotopy groups 
$$\dotsm\to 0\to \pi_k(U_{E_{n+1}})\to \pi_k(U_{E_{n+1}}(1-e))\to\dotsm \to \pi_1(S^1)\to\pi_1(U_{E_{n+1}})\to\pi_1(U_{E_{n+1}}(1-e))\to 0.$$
The map $S^1\to U_{E_{n+1}}$ sends a complex number $z$ to a unitary $1-e+ze$. 
We have $[S^k, U_{E_{n+1}}]_0=[S^k, U_{E_{n+1}}]=K^1(S^k)$ by $K_1$-injectivity of $C(S^k)\otimes E_{n+1}$. The map$$\mathbb{Z}=[S^1, S^1]_0\ni [z]\mapsto [1-e+ze]\in [S^1, U_{E_{n+1}}]_0=\mathbb{Z}$$ is the multiplication by $-n$, and so we have the conclusion.

\end{proof}
We remark that a generator of $\pi_1(\operatorname{End}_0E_{n+1})=\mathbb{Z}_n$ is the canonical gauge action of $S^1$ that is $\lambda_z : T_i\mapsto zT_i$ for every $z\in S^1$.

\section{The main result}
\subsection{The homotopy groups of $\operatorname{Aut}E_{n+1}$}
In this section, by using the theory of extensions, we show that the inclusion map $\operatorname{Aut}E_{n+1}\to\operatorname{End}_0E_{n+1}$ is a weak homotopy equivalence. 
First, we show $[S^{2m}, \operatorname{Aut}E_{n+1}]$ is trivial, for $m\geq 1$. 
Second, we show the surjectivity of the map $[S^{2m-1}, \operatorname{Aut}E_{n+1}]\to [S^{2m-1}, {\operatorname{End}}_0E_{n+1}]$  for $m\geq 1$. 
Finally, we show the injectivity of the map.

Let $X$ be a compact Hausdorff space.
It is well-known in homotopy theory that every principal $\operatorname{Aut}E_{n+1}$ bundle $\mathcal{P}$ over $X$ comes from the classifying map $X\to \operatorname{BAut}E_{n+1}$ \cite[Section 4, Proposition 10.6]{H}. 
So we identifies the equivalence class of a principal bundle $[\mathcal{P}]$ with the homotopy equivalence class of its classifying map and denote $[\mathcal{P}] \in [X, \operatorname{BAut}E_{n+1}]$. 
For a bundle $\mathcal{P}$, the section algebra of the associated bundle $\mathcal{P}\times_{\operatorname{Aut}E_{n+1}} E_{n+1}$ is a locally trivial continuous $C(X)$-algebra $\Gamma(X, \mathcal{P}\times_{\operatorname{Aut}E_{n+1}} E_{n+1})$.

Let $k$ be a natural number.
 For every $\alpha \in \operatorname{Map}(S^k, \operatorname{Aut}E_{n+1})$, there is a principal $\operatorname{Aut}E_{n+1}$-bundle $\mathcal{P}_{\alpha}$ representing the class $[\alpha]$ in $[S^k, \operatorname{Aut}E_{n+1}]\cong[S^{k+1}, \operatorname{BAut}E_{n+1}]$. 
We construct a continuous field of $E_{n+1}$ over $S^{k+1}$ corresponding to $\mathcal{P}_{\alpha}$ as follows.
We denote the interior of the closed $k+1$-disc by $\overset{\circ}{\mathbb{D}}^{k+1}$. 
We view $S^{k+1}$ as a non-reduced suspension of $S^k$, that is, $\overset{\circ}{\mathbb{D}}^{k+1}\cup S^k\cup \overset{\circ}{\mathbb{D}}^{k+1}$, and view $\alpha$ a clutching function on $S^k$ of two trivial bundles over $\overset{\circ}{\mathbb{D}}^{k+1}\cup S^k$ and $S^k\cup \overset{\circ}{\mathbb{D}}^{k+1}$.  
By the following lemma, we have $[S^k, \operatorname{Aut}E_{n+1}]=[S^{k+1}, \operatorname{BAut}E_{n+1}]$.
\begin{lem}[{\cite[Corollary 8.3]{H}}]\label{BG}
Let $G$ be a path connected group. Let $X$ be a topological space, and let $SX$ be its non-reduced suspension. Then the map
$$[X, G]\ni[\alpha]\mapsto [\mathcal{P}_{\alpha}]\in [SX, \operatorname{B}G]$$
is bijective.
\end{lem}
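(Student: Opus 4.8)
The plan is to establish the bijection by the clutching construction, realizing every principal $G$-bundle over the suspension as some $\mathcal{P}_\alpha$ and then controlling the clutching function up to homotopy. Write $SX = C_+X \cup_X C_-X$ as the union of the two cones on $X$ glued along the equatorial copy of $X$. Each cone $C_\pm X$ is contractible, so (working in the framework of numerable bundles, which is automatic for the nice spaces at hand) every principal $G$-bundle $\mathcal{P}$ over $SX$ restricts to a trivial bundle over each cone. Choosing trivializations $\mathcal{P}|_{C_\pm X} \cong C_\pm X \times G$ and comparing them over the overlap $X$ produces a transition function $\phi_{\mathcal{P}} \colon X \to G$, and by construction $\mathcal{P} \cong \mathcal{P}_{\phi_{\mathcal{P}}}$. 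This immediately yields surjectivity of $[\alpha]\mapsto[\mathcal{P}_\alpha]$.

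Next I would prove that $\phi_{\mathcal{P}}$ is well-defined up to homotopy, which produces a candidate inverse map $[SX,\operatorname{B}G]\to[X,G]$ and, together with surjectivity, gives injectivity. The only freedom in $\phi_{\mathcal{P}}$ comes from the choice of trivializations over the two cones: replacing them by gauge transformations $g_\pm \colon C_\pm X \to G$ changes the clutching function to $(g_+|_X)\,\phi_{\mathcal{P}}\,(g_-|_X)^{-1}$. Since each cone is contractible, the restrictions $g_\pm|_X$ are null-homotopic, i.e.\ homotopic to constant maps $c_\pm \in G$; hence $\phi_{\mathcal{P}}$ is determined up to homotopy and up to the two-sided translation $\phi \mapsto c_+\,\phi\, c_-^{-1}$. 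Here the hypothesis that $G$ is path connected enters decisively: a path in $G$ from $c_\pm$ to the identity induces a homotopy from $c_+\phi c_-^{-1}$ to $\phi$, so the class $[\phi_{\mathcal{P}}]\in[X,G]$ is canonical. Combining this with the standard homotopy invariance of the clutching construction (homotopic clutching functions give isomorphic bundles) shows that $[\mathcal{P}_{\alpha_0}]=[\mathcal{P}_{\alpha_1}]$ forces $[\alpha_0]=[\alpha_1]$, and that $[\alpha]\mapsto[\mathcal{P}_\alpha]$ and $[\mathcal{P}]\mapsto[\phi_{\mathcal{P}}]$ are mutually inverse.

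I expect the main obstacle to be precisely the bookkeeping in the previous paragraph: verifying that $\mathcal{P}\mapsto[\phi_{\mathcal{P}}]$ is genuinely independent of every choice (the two trivializations, and the chosen representative of the isomorphism class) and is a two-sided inverse, rather than the local triviality step, which is routine. A cleaner but essentially equivalent route, which I would keep in reserve, passes through the classifying space: the adjunction of Proposition \ref{sus} relates $[SX,\operatorname{B}G]$ to $[X,\Omega \operatorname{B}G]$, while $\Omega\operatorname{B}G\simeq G$ for a topological group $G$; the same connectedness hypothesis is what reconciles the reduced-suspension/based-maps form of the adjunction with the non-reduced suspension and the free homotopy classes $[X,G]$ and $[SX,\operatorname{B}G]$ appearing in the statement. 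Either way, path connectedness of $G$ is exactly the ingredient that removes the basepoint and constant-translation ambiguities, and for our application $X=S^k$ is a finite CW complex, so no set-theoretic subtleties about numerability arise.
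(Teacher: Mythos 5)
Your proposal is correct and follows the standard clutching-function argument over the two cones of $SX$, with path connectedness of $G$ used exactly where it is needed to absorb the constant translations coming from changes of trivialization; this is the same argument as in the cited source \cite[Corollary 8.3]{H}, which the paper invokes without reproving. The caveats you flag (numerability, identification of $[SX,\operatorname{B}G]$ with isomorphism classes of bundles) are handled correctly and are harmless for the paper's application to $X=S^k$.
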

\begin{dfn}\label{sect}
We identify the section algebra of $\mathcal{P}_{\alpha}\times _{\operatorname{Aut}E_{n+1}}E_{n+1}$ with the following algebra :
$$B_{\alpha} :=\{(F_1, F_2) \in (C([0, 1]\times S^k)\otimes E_{n+1})^{\oplus 2} \mid F_i(0)\in 1_{C(S^k)}\otimes E_{n+1}, \; F_1(1)=\alpha (F_2(1)) \in C(S^k)\otimes E_{n+1}\},$$
and denote by $C_{\alpha}$ the essential ideal
$$C_{\alpha} :=\{(F_1, F_2) \in (C([0, 1]\times S^k)\otimes \mathbb{K})^{\oplus 2} \mid F_i(0)\in 1_{C(S^k)}\otimes \mathbb{K}, \; F_1(1)=\alpha (F_2(1)) \in C(S^k)\otimes \mathbb{K}\}.$$ 
Let $A_{\alpha}$ be the quotient algebra of $B_{\alpha}$ by $C_{\alpha}$ : 
$$A_{\alpha} :=\{(a_1, a_2)\in (C([0, 1]\times S^k)\otimes \mathcal{O}_{n+1})^{\oplus 2} \mid a_i\in 1_{C(S^k)}\otimes \mathcal{O}_{n+1}, \; a_1(1)=\tilde{\alpha}(a_2(1))\in C(S^k)\otimes \mathcal{O}_{n+1}\}.$$
\end{dfn}
The algebra $A_{\alpha}$ is isomorphic to the section algebra $\Gamma (S^{k+1}, \mathcal{P}_{\tilde{\alpha}}\times_{\operatorname{Aut}\mathcal{O}_{n+1}}\mathcal{O}_{n+1})$, where $\tilde{\alpha}$ is the induced map in $\operatorname{Map}(S^k,\mathcal{O}_{n+1})$. 
We remark that $C(S^{k+1})$ is identified with the algebra
$$\{(f_1, f_2) \in (C([0, 1]\times S^k))^{\oplus 2} \mid f_i(0)\in \mathbb{C}, \; f_1(1)=f_2(1)\in C(S^k)\}.$$
which is the center of $B_{\alpha}$. 
Since the map $[S^k, \operatorname{Aut}E_{n+1}]\to [S^k, \operatorname{Aut}\mathbb{K}]$ is zero map by Lemma \ref{zero}, the associated bundle $\mathcal{P}_{\alpha}\times _{\operatorname{Aut}E_{n+1}}\mathbb{K}$ is trivial. We fix a trivialization and obtain $\theta_{\alpha}\colon C_{\alpha}\to C(S^{k+1})\otimes \mathbb{K}$. Thus we get a unital essential extension $\tau_{\theta_{\alpha}}$
$$
\xymatrix{
C_{\alpha}\ar[r]\ar[d]^{\theta_{\alpha}} &B_{\alpha} \ar[r]^{\pi_{\alpha}}\ar[d]^{\theta_{\alpha}} & A_{\alpha} \ar[d]^{\tau_{\theta_{\alpha}}}\\
C(S^{k+1})\otimes \mathbb{K} \ar[r] &\mathcal{M}(C(S^{k+1})\otimes \mathbb{K}) \ar[r]^{\pi} & \mathcal{Q}(C(S^{k+1})\otimes \mathbb{K})
}
$$ 
where the isomorphism $\theta_{\alpha} : C_{\alpha}\to C(S^{k+1})\otimes \mathbb{K}$ depends on the trivialization of the bundle  $\mathcal{P}_{\alpha}\times _{\operatorname{Aut}E_{n+1}}\mathbb{K}$.
\begin{lem}\label{even}
Let $m\geq 1$ be a natural number. Then we have $[S^{2m}, \operatorname{Aut}E_{n+1}]=0$.
\end{lem}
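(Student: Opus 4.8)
The plan is to follow the template of Proposition \ref{pac}, with the parameter interval $[0,1]$ replaced by the odd sphere $S^{2m+1}$; taking $k=2m$ even is what makes $S^{2m+1}$ odd-dimensional, so that $K_0(C(S^{2m+1}))=\mathbb{Z}$ is torsion-free and both the evaluation isomorphisms of Lemma \ref{isoke} and the vanishing in Proposition \ref{VP} become available. Fix $\alpha\in\operatorname{Map}(S^{2m},\operatorname{Aut}E_{n+1})$. By Lemma \ref{BG} together with Lemma \ref{bun}, the class $[\alpha]$ is trivial if and only if the section algebra $B_\alpha$ of Definition \ref{sect} is $C(S^{2m+1})$-linearly isomorphic to $C(S^{2m+1})\otimes E_{n+1}$. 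Since the $\mathbb{K}$-bundle is trivial by Lemma \ref{zero}, I would fix a trivialization $\theta_\alpha$ and obtain the unital essential extension $\tau_{\theta_\alpha}\colon A_\alpha\to\mathcal{Q}(C(S^{2m+1})\otimes\mathbb{K})$, which I will compare with the trivial extension $\tau={\rm id}_{C(S^{2m+1})}\otimes\tau_0$ of $C(S^{2m+1})\otimes\mathcal{O}_{n+1}$.

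The comparison proceeds in two steps. First I would produce a $C(S^{2m+1})$-linear isomorphism $\phi\colon A_\alpha\to C(S^{2m+1})\otimes\mathcal{O}_{n+1}$, i.e. trivialize the $\mathcal{O}_{n+1}$-quotient field. Granting this, both $\tau_{\theta_\alpha}\circ\phi^{-1}$ and $\tau$ are unital essential extensions of the same algebra, and restricting them along the constant embedding $1_{C(S^{2m+1})}\otimes{\rm id}_{\mathcal{O}_{n+1}}$ of the simple algebra $\mathcal{O}_{n+1}$ gives two classes in $\operatorname{Ext}(\mathcal{O}_{n+1},C(S^{2m+1})\otimes\mathbb{K})$. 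Both have image $[\tau_0]$ under the evaluation isomorphism ${{\rm ev}_{pt}}_*$ of Lemma \ref{isoke}, because the fibre of either extension at a point of $S^{2m+1}$ is the standard extension $0\to\mathbb{K}\to E_{n+1}\to\mathcal{O}_{n+1}\to 0$; hence the two classes coincide. Since $\mathcal{O}_{n+1}$ is simple, Lemma \ref{ext} then yields a unitary $v\in U_{\mathcal{Q}(C(S^{2m+1})\otimes\mathbb{K})}$ with $\tau\circ\phi={\rm Ad}v\circ\tau_{\theta_\alpha}$ on the restricted copy, and since $C(S^{2m+1})$ is central this identity propagates to all of $A_\alpha$.

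It then remains to lift $v$ to the multiplier algebra. Evaluating at a base point, $v$ restricts to a unitary in the relative commutant $\tau_0(\mathcal{O}_{n+1})'\cap\mathcal{Q}(\mathbb{K})$, so $[v|_{pt}]_1=0$ by Proposition \ref{VP}; the evaluation isomorphism $K_1(\mathcal{Q}(C(S^{2m+1})\otimes\mathbb{K}))\to K_1(\mathcal{Q}(\mathbb{K}))$ of Lemma \ref{isoke} then forces $[v]_1=0$. By $K_1$-injectivity of the corona algebra, $v$ is connected to $1$ and hence lifts to a unitary $V\in U_{\mathcal{M}(C(S^{2m+1})\otimes\mathbb{K})}$. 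Finally ${\rm Ad}V$ preserves the ideal $C(S^{2m+1})\otimes\mathbb{K}$ and carries $B_\alpha$ onto $C(S^{2m+1})\otimes E_{n+1}$ $C(S^{2m+1})$-linearly, so $[\alpha]=0$ and $[S^{2m},\operatorname{Aut}E_{n+1}]=0$.

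I expect the main obstacle to be the first step, the trivialization $\phi$ of the quotient field $A_\alpha$, since the homotopy groups of $\operatorname{Aut}\mathcal{O}_{n+1}$ are not yet available at this stage. My approach would be to use the implementing unitary $U\in U_{\mathcal{M}(C(S^{2m})\otimes\mathbb{K})}$ provided by Proposition \ref{imp} (applicable because $\eta_*[\alpha]=0$ by Lemma \ref{zero}), so that $\tilde{\alpha}={\rm Ad}\pi(U)$, and to extend $U$ over a hemisphere by Kuiper's theorem (Theorem \ref{kuiper}). This realizes $A_\alpha$ concretely but does not by itself normalize $\mathcal{O}_{n+1}$ in the interior of the hemisphere; the obstruction to correcting this into a normalizing family again lands in the unitaries of the relative commutant $\tau_0(\mathcal{O}_{n+1})'\cap\mathcal{Q}$, which Proposition \ref{VP} shows are homotopically trivial over the $n$-torsion-free sphere $S^{2m+1}$. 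Arranging a single unitary that simultaneously trivializes the quotient and implements the equivalence of extensions, so that the two steps above are compatible, is where I expect the real work to lie.
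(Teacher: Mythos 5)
Your second half reproduces the paper's argument almost verbatim: trivialize the $\mathbb{K}$-bundle via Lemma \ref{zero} to get $\tau_{\theta_\alpha}$, compare the restriction to the constant copy of $\mathcal{O}_{n+1}$ with $1_{C(S^{2m+1})}\otimes\tau_0$ through the evaluation isomorphism of Lemma \ref{isoke}, apply Lemma \ref{ext} to get a unitary in the corona, kill its $K_1$-class via Proposition \ref{VP} and Lemma \ref{isoke}, lift it by $K_1$-injectivity and Theorem \ref{kuiper}, and conclude with Lemma \ref{bun} and Lemma \ref{BG}. Two remarks on that half. First, your claim that $v$ evaluated at a basepoint lies in the relative commutant $\tau_0(\mathcal{O}_{n+1})'\cap\mathcal{Q}(\mathbb{K})$ is not quite right as stated: $\mathrm{ev}_{pt}(v)$ intertwines two fibre extensions that are unitarily equivalent to $\tau_0$ but need not literally equal it, so it is not a priori a commutant element of a single extension, which is what Proposition \ref{VP} requires. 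The paper repairs this by invoking Theorem \ref{sue} (Paschke--Salinas) to produce a second unitary $U\in U_{\mathcal{M}(\mathbb{K})}$ intertwining the fibre extensions, so that $\pi(U)^*\mathrm{ev}_{pt}(v)$ does lie in a relative commutant and has trivial $K_1$-class; since $[\pi(U)]_1=0$, the conclusion $[\mathrm{ev}_{pt}(v)]_1=0$ follows. You need this intermediate step.

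The genuine gap is in your first step. The trivialization $\varphi_\alpha\colon C(S^{2m+1})\otimes\mathcal{O}_{n+1}\to A_\alpha$ is obtained in the paper in one line from $[S^{2m},\operatorname{Aut}\mathcal{O}_{n+1}]=0$, which is Dadarlat's Theorem 7.4 in \cite{D2} --- an external input that is perfectly available at this stage (the paper also uses path-connectedness of $\operatorname{Aut}\mathcal{O}_{n+1}$ and $[S^{2m-1},\operatorname{End}\mathcal{O}_{n+1}]=[S^{2m-1},\operatorname{Aut}\mathcal{O}_{n+1}]$ from the same source elsewhere). Your premise that the homotopy groups of $\operatorname{Aut}\mathcal{O}_{n+1}$ ``are not yet available'' is mistaken, and it sends you into an unnecessary replacement argument that you do not complete: extending the implementing unitary $U$ of Proposition \ref{imp} over a hemisphere by Kuiper's theorem produces unitaries whose adjoint action does not preserve $C(S^{2m})\otimes E_{n+1}$ in the interior, and your proposal to correct this via unitaries in the relative commutant is, as you yourself say, ``where the real work would lie'' --- i.e., it is not done. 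As written, the proof of the existence of $\varphi_\alpha$ is missing; citing \cite[Theorem 7.4]{D2} closes it immediately and is what the paper does.
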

\begin{proof}
Since $[S^{2m}, \operatorname{Aut}\mathcal{O}_{n+1}]=0$ by \cite[Theorem 7.4]{D2}, there is a trivialization $\varphi_{\alpha} : C(S^{2m+1})\otimes \mathcal{O}_{n+1}\to A_{\alpha}$ that is $C(S^{2m+1})$-linear isomorphism for every $\alpha \in \operatorname{Map}(S^{2m},  \operatorname{Aut}\mathcal{O}_{n+1})$. Consider two extensions of $\mathcal{O}_{n+1}$ by $C(S^{2m+1})\otimes\mathbb{K}$ :
$$\sigma_{\alpha} \colon=\tau_{\theta_{\alpha}}\circ\varphi_{\alpha}(1_{C(S^{2m+1})}\otimes {\rm id}_{\mathcal{O}_{n+1}}),$$
$$\sigma \colon=1_{C(S^{2m+1})}\otimes \tau_0.$$
where the map $\tau_0$ is the Busby invariant in Definition \ref{tau0}.
It follows from the construction that $[{\rm ev}_{pt}\circ\sigma_{\alpha}]=[{\rm ev}_{pt}\circ\sigma]$ in $\operatorname{Ext}(\mathcal{O}_{n+1}, \mathbb{K})$. By Lemma \ref{isoke}, we have $[\sigma_{\alpha}]=[\sigma]$ in $\operatorname{Ext}(\mathcal{O}_{n+1}, C(S^{2m+1})\otimes \mathbb{K})$, and Lemma \ref{ext} yields that there exists a unitary $w$ in $U_{\mathcal{Q}(C(S^{2m+1})\otimes\mathbb{K})}$ satisfying ${\rm Ad}w\circ\sigma=\sigma_{\alpha}$. There is another unitary $U$ in $U_{\mathcal{M}(\mathbb{K})}$ with ${\rm Ad}\pi(U)\circ{\rm ev}_{pt}\circ\sigma={\rm ev}_{pt}\circ\sigma_{\alpha}$ by Theorem \ref{sue}.
 By Proposition \ref{VP}, we have $[{\rm ev}_{pt}(w)]_1=[{\rm ev}_{pt}(w)\pi(U^*)]_1=0$ in $K_1\mathcal{Q}(\mathbb{K})$, and Lemma \ref{isoke} yields that $[w]_1=0$. Therefore we have a unitary $W$ that is a lift of $w$, and the map ${\rm Ad}W : C(S^{2m+1})\otimes E_{n+1} \to \theta_{\alpha}(B_{\alpha})$ is a $C(S^{2m+1})$-linear isomorphism. From Lemma \ref{bun}, the bundle $\mathcal{P}_{\alpha}$ is isomorphic to the trivial bundle, and we have $[\alpha]=0$ in $[S^{2m}, \operatorname{Aut}E_{n+1}]$ by  Lemma \ref{BG} and Proposition \ref{pac}.
\end{proof}
\begin{lem}\label{sur}
Let $m\geq 1$ be a natural number. Then the map 
$$[S^{2m-1}, \operatorname{Aut}E_{n+1}]\ni [\alpha]\mapsto [\tilde{\alpha}]\in [S^{2m-1}, \operatorname{Aut}\mathcal{O}_{n+1}]$$
is surjective. 
\end{lem}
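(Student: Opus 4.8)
The statement to prove is that the map $[S^{2m-1}, \operatorname{Aut}E_{n+1}] \to [S^{2m-1}, \operatorname{Aut}\mathcal{O}_{n+1}]$ sending $[\alpha]$ to $[\tilde\alpha]$ is surjective. The natural strategy is to start with an arbitrary automorphism-valued map $\beta \in \operatorname{Map}(S^{2m-1}, \operatorname{Aut}\mathcal{O}_{n+1})$ and produce a lift $\alpha \in \operatorname{Map}(S^{2m-1}, \operatorname{Aut}E_{n+1})$ whose induced map $\tilde\alpha$ is homotopic to $\beta$. Since an element of $\operatorname{Aut}E_{n+1}$ is essentially a $C(X)$-linear automorphism of the total section algebra covering a compatible automorphism of the quotient, the task is to lift a $C(S^{2m-1})$-linear automorphism of $C(S^{2m-1})\otimes \mathcal{O}_{n+1}$ to one of $C(S^{2m-1})\otimes E_{n+1}$.

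\textbf{First step.} I would reduce the problem to comparing the Busby invariants of two unital essential extensions of $\mathcal{O}_{n+1}$ by $C(S^{2m-1})\otimes \mathbb{K}$, exactly as in the proof of Lemma \ref{even}. Namely, given $\beta$, one builds the section algebra $A_\beta$ of the associated $\mathcal{O}_{n+1}$-bundle over $S^{2m}$ via the clutching construction of Definition \ref{sect}, together with its Busby invariant $\tau_{\theta_\beta}$ arising from the diagram after Definition \ref{sect}. To compare this with the trivial extension $\sigma := 1_{C(S^{2m})}\otimes \tau_0$, I would show that the two induced extension classes agree in $\operatorname{Ext}(\mathcal{O}_{n+1}, C(S^{2m})\otimes \mathbb{K})$. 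This is where Lemma \ref{isoke} does the main work: the evaluation map ${\rm ev}_{pt}$ induces an isomorphism on $\operatorname{Ext}$-groups over the odd sphere, so checking the equality of classes reduces to checking it at a single point, which holds by construction since both extensions restrict to $\tau_0$ fibrewise.

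\textbf{Second step.} Once $[\tau_{\theta_\beta}\circ \varphi] = [\sigma]$ in $\operatorname{Ext}(\mathcal{O}_{n+1}, C(S^{2m})\otimes \mathbb{K})$ for a suitable $C(S^{2m})$-linear trivialization $\varphi$ of $A_\beta$, Lemma \ref{ext} supplies a unitary $w \in U_{\mathcal{Q}(C(S^{2m})\otimes \mathbb{K})}$ implementing the weak unitary equivalence. The $K_1$-triviality of $w$ follows from Proposition \ref{VP} combined with Theorem \ref{sue} and Lemma \ref{isoke}, exactly as in Lemma \ref{even}; hence $w$ lifts to a unitary $W \in U_{\mathcal{M}(C(S^{2m})\otimes \mathbb{K})}$, and $\operatorname{Ad}W$ gives a $C(S^{2m})$-linear isomorphism of the total section algebras. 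By Lemma \ref{bun} this exhibits the $E_{n+1}$-bundle $\mathcal{P}_\alpha$ covering $\mathcal{P}_{\beta}$, and unwinding the clutching description (Lemma \ref{BG}) together with path-connectedness (Proposition \ref{pac}) produces the desired $\alpha$ with $\tilde\alpha \sim_h \beta$.

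\textbf{Main obstacle.} The delicate point is not the extension-theoretic comparison, which closely parallels Lemma \ref{even}, but rather organizing the clutching data so that the lifted $C(S^{2m})$-linear isomorphism $\operatorname{Ad}W$ is genuinely compatible with the prescribed quotient automorphism $\beta$ on the nose, rather than merely up to homotopy. In other words, I expect the real care to lie in tracking how the trivialization $\theta_\beta$ of the compact ideal bundle and the implementing unitary $W$ interact with the clutching function, so that the resulting automorphism of $E_{n+1}$ over each fibre of $S^{2m-1}$ induces precisely $\beta$. Verifying this compatibility, and confirming that the ideal bundle $\mathcal{P}_\beta \times_{\operatorname{Aut}E_{n+1}}\mathbb{K}$ is trivial (so that $\theta_\beta$ exists) via Lemma \ref{zero}, is where the argument requires the most attention.
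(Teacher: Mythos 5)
Your overall plan---reduce to a comparison of Busby invariants and use Lemma \ref{ext} plus the $K_1$-vanishing of the implementing unitary---is the right spirit, but the specific reduction you propose does not work, for two concrete reasons. First, the clutching construction over $S^{2m}$ is not available at the start of the argument: the algebras $B_\beta$ and $C_\beta$ of Definition \ref{sect}, and hence the Busby invariant $\tau_{\theta_\beta}$, are only defined when the clutching function takes values in $\operatorname{Aut}E_{n+1}$, which is exactly the lift you are trying to construct; and the trivialization $\varphi$ of $A_\beta$ that you invoke exists only when $[\beta]=0$ in $[S^{2m-1},\operatorname{Aut}\mathcal{O}_{n+1}]$ (Lemmas \ref{bun} and \ref{BG}), whereas surjectivity is only at stake for the nonzero classes, e.g.\ the generator of $\pi_1(\operatorname{Aut}\mathcal{O}_{n+1})=\mathbb{Z}_n$. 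Second, even setting this aside, the base of your comparison is the \emph{even} sphere $S^{2m}$, and Lemma \ref{isoke} asserts that ${\rm ev}_{pt}$ is an isomorphism on $\operatorname{Ext}(\mathcal{O}_{n+1},C(S^{2m-1})\otimes\mathbb{K})$ only for odd spheres (and $[0,1]$); over $S^{2m}$ one has $\operatorname{Ext}(\mathcal{O}_{n+1},C(S^{2m})\otimes\mathbb{K})\cong\operatorname{Ext}^1_{\mathbb{Z}}(\mathbb{Z}_n,\mathbb{Z}^{\oplus 2})\cong\mathbb{Z}_n^{\oplus 2}$, so evaluation at a point is not injective and the ``check at one fibre'' step fails. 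This is precisely why the injectivity part of the paper (the unnumbered lemma computing $j_*([1\otimes e]_0)=-ng_1$ and producing $b_2$) needs a genuine K-theoretic computation over $S^{2m}$ rather than an appeal to Lemma \ref{isoke}.

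The paper's proof of Lemma \ref{sur} avoids suspension entirely and works over $S^{2m-1}$ itself: given $\gamma\in\operatorname{Map}(S^{2m-1},\operatorname{Aut}\mathcal{O}_{n+1})$, viewed as a $C(S^{2m-1})$-linear automorphism, it compares the two extensions $\sigma=\tau(1\otimes{\rm id}_{\mathcal{O}_{n+1}})$ and $\sigma_\gamma=\tau\circ\gamma(1\otimes{\rm id}_{\mathcal{O}_{n+1}})$ of $\mathcal{O}_{n+1}$ by $C(S^{2m-1})\otimes\mathbb{K}$, where $\tau={\rm id}_{C(S^{2m-1})}\otimes\tau_0$. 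Path-connectedness of $\operatorname{Aut}\mathcal{O}_{n+1}$ together with Theorem \ref{sue} gives ${\rm ev}_x\circ\sigma_\gamma\sim_{s.u.e}{\rm ev}_x\circ\sigma$ fibrewise, Lemma \ref{isoke} (now legitimately, since $S^{2m-1}$ is odd) gives $[\sigma_\gamma]=[\sigma]$, Lemma \ref{ext} gives weak unitary equivalence, and Proposition \ref{VP} plus Lemma \ref{isoke} kill the $K_1$-class of the implementing unitary, upgrading this to a unitary $U_\gamma\in U_{\mathcal{M}(C(S^{2m-1})\otimes\mathbb{K})}$ with ${\rm Ad}\pi(U_\gamma)\circ\tau=\tau\circ\gamma$. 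Then ${\rm Ad}U_\gamma$ restricts to a $C(S^{2m-1})$-linear automorphism of $C(S^{2m-1})\otimes E_{n+1}$ whose induced map on the quotient is $\gamma$ \emph{on the nose}, so $\Gamma_x={\rm Ad}(U_\gamma)_x$ is the required lift. If you want to salvage your route, you should replace the suspension picture by this direct comparison over the odd sphere; the ``main obstacle'' you identify (compatibility of ${\rm Ad}W$ with $\beta$) then disappears, because the strong unitary equivalence already produces an exact lift rather than one up to homotopy.
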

\begin{proof}
Let $\tau$ be the map ${\rm id}_{C(S^{2m-1})}\otimes \tau_0$ where $\tau_0$ is the Busby invariant in Definition \ref{tau0}.
We show that for every $\gamma\in\operatorname{Map}(S^{2m-1}, \operatorname{Aut}\mathcal{O}_{n+1})$ there exists a lift $\Gamma\in\operatorname{Map}(S^{2m-1}, \operatorname{Aut}\mathcal{O}_{n+1})$ with $\tilde{\Gamma}_x=\gamma_x$ for every $x\in S^{2m-1}$. 
We recall the notation that $\tilde{\Gamma}_x$ is an induced automorphism of $\mathcal{O}_{n+1}$ from $\Gamma_x$.
For every $\gamma$ in $\operatorname{Map}(S^{2m-1}, \operatorname{Aut}\mathcal{O}_{n+1})$, we regard $\gamma$ as an element of $\operatorname{Aut}_{C(S^{2m-1})} (C(S^{2m-1})\otimes \mathcal{O}_{n+1})$, and there are two extensions of $\mathcal{O}_{n+1}$
$$\sigma_{\gamma} \colon=\tau\circ\gamma(1_{C(S^{2m-1})}\otimes{\rm id}_{\mathcal{O}_{n+1}}),$$
$$\sigma \colon=\tau(1_{C(S^{2m-1})}\otimes{\rm id}_{\mathcal{O}_{n+1}}).$$
For every $x\in S^{2m-1}$, the map $\gamma_x$ is homotopic to ${\rm id}_{\mathcal{O}_{n+1}}$ in $\operatorname{Aut}\mathcal{O}_{n+1}$ because $\operatorname{Aut}\mathcal{O}_{n+1}$ is path connected by  \cite[Theorem 1.1]{D2}. 
Hence we have ${\rm ev}_x\circ\sigma_{\gamma}\sim_{s.u.e} {\rm ev}_x\circ\sigma$ by  Theorem \ref{sue} because ${\rm ev}_x\circ\sigma_{\gamma}\sim_{h} {\rm ev}_x\circ\sigma$.
From  Lemma \ref{isoke}, we have $[\sigma_{\gamma}]=[\sigma]$ in $\operatorname{Ext}(\mathcal{O}_{n+1}, C(S^{2m-1})\otimes \mathbb{K})$, and $\sigma_{\gamma}\sim_{w.u.e} \sigma$ by  Lemma \ref{ext}. 
We have two unitaries $v\in \mathcal{Q}(C(S^{2m-1})\otimes\mathbb{K})$ and $V\in\mathcal{M}(\mathbb{K})$ satisfying $\sigma_{\gamma}={\rm Ad}v\circ\sigma$ and ${\rm ev}_{pt}\circ\sigma_{\gamma}={\rm Ad}\pi(V)\circ{\rm ev}_{pt}\circ\sigma$.
So we have $[{\rm ev}_{pt}(v)]_1=[\pi(V)^*{\rm ev}_{pt}(v)]_1=0$ by Proposition \ref{VP}, and Lemma \ref{isoke} yields $[v]_1=0\in K_1\mathcal{Q}(C(S^{2m-1})\otimes\mathbb{K})$.
Therefore we have $\sigma_{\gamma}\sim_{s.u.e} \sigma$, and there is a unitary $U_{\gamma}\in U_{\mathcal{M}(C(S^{2m-1})\otimes \mathbb{K})}$ with 
$${\rm Ad}\pi(U_{\gamma})(\tau(1\otimes a)=\tau(\gamma(1\otimes a)), \;a\in \mathcal{O}_{n+1}.$$
We have the following commutative diagram
$$\xymatrix{
C(S^{2m-1})\otimes E_{n+1}\ar[d]^{\pi} \ar[r]^{{\rm Ad}U_{\gamma}}&C(S^{2m-1})\otimes E_{n+1}\ar[d]^{\pi}\\
C(S^{2m-1})\otimes \mathcal{O}_{n+1}\ar[r]^{\gamma}&C(S^{2m-1})\otimes \mathcal{O}_{n+1}.
}$$
The map $\Gamma : S^{2m-1}\ni x\mapsto{\rm Ad}(U_{\gamma})_x \in \operatorname{Aut}E_{n+1}$ is continuous and it is a lift of the map $\gamma$. 
\end{proof}

For $\alpha'$ in $\operatorname{Map}(S^{2m-1}, \operatorname{Aut}_eE_{n+1})$ with $m\geq 1$, we take the map $\theta_{\alpha'}$ as follows. 
Let $U_{\alpha'}$ be a unitary in $U_{\mathcal{M}(C(S^{2m-1})\otimes _{(1-e)}\mathbb{K}_{(1-e)})}$ of the form $U_{\alpha'}=\sum_{i\neq 1}\alpha'(1_{C(S^{2m-1})}\otimes e_{i1})(1_{C(S^{2m-1})}\otimes e_{1i})$ where $\{e_{ij}\}$ is a system of matrix units with $e_{11}=e$. By Theorem \ref{kuiper}, there is a norm continuous path from $1-e$ to $U_{\alpha'}$ in  $U_{\mathcal{M}(C(S^k)\otimes _{(1-e)}\mathbb{K}_{(1-e)})}$. Adding the projection $1_{C(S^{2m-1})}\otimes e$ to the path, we have a norm continuous path $U\in C[0,1]\otimes \mathcal{M}(C(S^{2m-1})\otimes\mathbb{K})$ satisfying
$$U_t \in U_{\mathcal{M}(C(S^{2m-1})\otimes \mathbb{K})}, \; {\rm Ad}U_1\restriction_{E_{n+1}}=\alpha', \; U_0=1, \; eU_t=U_te=e,\; t\in [0, 1],$$
where we write $1_{C(S^{2m-1})}\otimes e$ simply by $e$.
We define two $C(S^{2m})$-algebras $M$ and $M_{\alpha'}$ :
$$M \colon=\{(F_1, F_2) \in \mathcal{M}(C([0,1]\times S^{2m-1})\otimes \mathbb{K})^{\oplus 2}\mid F_i(0) \in 1\otimes \mathcal{M}(\mathbb{K}), \; F_1(1)=F_2(1) \in \mathcal{M}(C(S^{2m-1})\otimes \mathbb{K})\},$$
$$M_{\alpha'} \colon=\{(F_1, F_2) \in  \mathcal{M}(C([0,1]\times S^{2m-1})\otimes \mathbb{K})^{\oplus 2} \mid F_i(0) \in 1\otimes \mathcal{M}(\mathbb{K}), \; F_1(1)={\rm Ad}U_1(F_2(1)) \in \mathcal{M}(C(S^{2m-1})\otimes \mathbb{K}) \}. $$  
The algebras $M$ and $M_{\alpha'}$ are $C(S^{2m})$-linearly isomorphic to $\mathcal{M}(C(S^{2m})\otimes \mathbb{K})$ and we identify $M$ with $\mathcal{M}(C(S^{2m})\otimes \mathbb{K})$.
\begin{dfn}\label{iru}
We define a map $\theta_{\alpha'} : M_{\alpha'}\to M$ by the $C(S^{2m})$-linear isomorphism
$$\theta_{\alpha'}(F_1, F_2):=(F_1, {\rm Ad}U(F_2)), \; F_i\in\mathcal{M}(C(S^{2m-1})\otimes \mathbb{K}).$$
\end{dfn}
The algebras $B_{\alpha'}$ and $C_{\alpha'}$ defined in Definition \ref{sect} are subalgebras of $M_{\alpha'}$.

We denote by $l$ the constant map $S^{2m-1}\to \{{\rm id}_{E_{n+1}}\}$ and denote by $\tilde{l}$ the induced map $S^{2m-1}\to \{{\rm id}_{\mathcal{O}_{n+1}}\}$.   
If $\alpha'$ is homotopic to $l$ in $\operatorname{Map}(S^{2m-1}, \operatorname{End}_eE_{n+1})$, then $[\tilde{\alpha'}]=0$ in $[S^{2m-1}, \operatorname{Aut}\mathcal{O}_{n+1}]$ because of $[S^{2m-1}, \operatorname{End}\mathcal{O}_{n+1}]=[S^{2m-1}, \operatorname{Aut}\mathcal{O}_{n+1}]$ by \cite[Proposition 6.1]{D2}, and there is a trivialization $\varphi_{\alpha'}\colon C(S^{2m})\otimes \mathcal{O}_{n+1}\to A_{\alpha'}$ . 
We can explicitely construct $\varphi_{\alpha'}$ from the homotopy between $\tilde{\alpha'}^{-1}$ and $\tilde{l}$.
\begin{dfn}\label{path}
Let $\alpha'$ be an element of $\operatorname{Map}(S^{2m-1}, \operatorname{Aut}_eE_{n+1})$ homotopic to $l$ in $\operatorname{Map}(S^{2m-1}, \operatorname{End}_eE_{n+1})$, and let $h_t : [0,1]\times S^{2m-1}\to \operatorname{Aut}\mathcal{O}_{n+1}$ be a path from $\tilde{l}=h_0$ to $\tilde{\alpha'}^{-1}=h_1$. Then we define the map $\varphi_{\alpha'}$ as a $C(S^{2m})$-linear isomorphism of the form 
$$\varphi_{\alpha'} \colon C(S^{2m})\otimes \mathcal{O}_{n+1}\ni (a_1(s), a_2(t))\mapsto (a_1(s), h_t(a_2(t)))\in A_{\alpha'},\; s, t \in [0,1],$$
where $C(S^{2m})\otimes \mathcal{O}_{n+1}$ is identified with the algebra
$$A_l=\{(a_1, a_2)\in (C([0,1]\times S^{2m-1})\otimes \mathcal{O}_{n+1})^{\oplus 2}\mid a_i(0)\in 1_{C(S^{2m-1})}\otimes \mathcal{O}_{n+1}, \; a_1(1)=a_2(1) \in C(S^{2m-1})\otimes \mathcal{O}_{n+1}\}.$$
\end{dfn} 
The map $\tau_{\theta_{\alpha'}}\circ\varphi_{\alpha'}$ is the Busby invariant of a unital essential extension of $C(S^{2m})\otimes \mathcal{O}_{n+1}$. The following lemma says that $\tau_{\alpha'}\circ\varphi_{\alpha'}\sim_{w.u.e}\tau={\rm id}_{C(S^{2m})}\otimes \tau_0$ where $\tau_0$ is the Busby invariant in Definition \ref{tau0}. 
\begin{lem}
Let $m\geq 1$ be a natural number and let $\alpha'$ be an element of $\operatorname{Map}(S^{2m-1}, \operatorname{Aut}_eE_{n+1})$ which is homotopic to $l$ in $\operatorname{Map}(S^{2m-1}, \operatorname{End}_eE_{n+1})$. Let $\tau_{\alpha'}\circ\varphi_{\alpha'}$ and $\tau$ be as above. Let $i : C(S^{2m})\ni (f_1, f_2)\mapsto (f_1, f_2) \in B_{\alpha'}$ be the canonical unital embedding and let $j : C(S^{2m})\otimes \mathbb{K} \subset \theta_{\alpha'}(B_{\alpha'})$ be the inclusion map. Then the following hold.

$\rm (1)$ $(\theta_{\alpha'}\circ i)_* : K_0(C(S^{2m}))\cong K_0(\theta_{\alpha'}(B_{\alpha'})).$

We denote $g_1:=(\theta_{\alpha'}\circ i)_*([1_{C(S^{2m})}]_0)$ and $g_2:=(\theta_{\alpha'}\circ i)_*(b_1)$, where $b_1$ is a generator of $K_0C(S^{2m})$.

$\rm (2)$ We have $j_*([1_{C(S^{2m})}\otimes e]_0)=-ng_1$, and there exists a generator $b_2\in K_0(C(S^{2m})\otimes \mathbb{K})$ with $j_*(b_2)=-ng_2$.\\
In particular, it follows that $[\tau_{\alpha'}\circ\varphi_{\alpha'}]=[\tau]$ in $\operatorname{Ext}(C(S^{2m})\otimes \mathcal{O}_{n+1}, C(S^{2m})\otimes \mathbb{K}).$
\end{lem}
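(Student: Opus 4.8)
The plan is to establish the two K-theory assertions (1) and (2) by a diagram chase through the 6-term exact sequences associated to the extensions, and then deduce the Ext-equality from the isomorphism part of the UCT (Theorem \ref{uct}). The key structural input is that $B_{\alpha'}$, $C_{\alpha'}$, and $A_{\alpha'}$ are the section algebras of bundles over $S^{2m}$ whose fibres are KK-equivalent to $\mathbb{C}$, $\mathbb{K}\sim\mathbb{C}$, and $\mathcal{O}_{n+1}$ respectively, so their K-groups are computable from the suspension structure of $S^{2m}$.

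\textbf{Proof outline for (1).} First I would compute $K_*(\theta_{\alpha'}(B_{\alpha'}))$. Since $\theta_{\alpha'}$ is a $C(S^{2m})$-linear isomorphism from $B_{\alpha'}$, it suffices to compute $K_*(B_{\alpha'})$. Now $B_{\alpha'}$ is the section algebra of the bundle $\mathcal{P}_{\alpha'}\times_{\operatorname{Aut}E_{n+1}}E_{n+1}$, and by Lemma \ref{tri} the clutching automorphism $\tilde{\alpha'}$ induces the identity on $K_*$ of the fibre. Hence the Mayer--Vietoris sequence for the suspension decomposition $S^{2m}=\overset{\circ}{\mathbb{D}}^{2m}\cup S^{2m-1}\cup\overset{\circ}{\mathbb{D}}^{2m}$ has the clutching map acting as the identity, so the sequence splits and
\begin{equation*}
K_0(B_{\alpha'})\cong K_0(C(S^{2m})\otimes E_{n+1})\cong K_0(C(S^{2m})).
\end{equation*}
The last isomorphism uses the KK-equivalence $E_{n+1}\sim\mathbb{C}$ (Theorem 4.4 of \cite{Pim}). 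Under this identification the unital embedding $\theta_{\alpha'}\circ i$ realizes precisely the inclusion $C(S^{2m})\hookrightarrow C(S^{2m})\otimes E_{n+1}$, which is a $K_0$-isomorphism. This gives (1) and identifies the generators $g_1=[1]_0$ and $g_2$ with the images of $[1_{C(S^{2m})}]_0$ and the Bott generator $b_1$.

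\textbf{Proof outline for (2), and the conclusion.} For (2) I would use the 6-term exact sequence of the ideal inclusion $C(S^{2m})\otimes\mathbb{K}\hookrightarrow\theta_{\alpha'}(B_{\alpha'})\to A_{\alpha'}$, which is the $C(S^{2m})$-linear version of the defining extension $0\to\mathbb{K}\to E_{n+1}\to\mathcal{O}_{n+1}\to0$. The index maps in this sequence are governed by multiplication by $-n$, exactly as in the 6-term diagram displayed earlier in the excerpt (where $K_0(C(X))\xrightarrow{-n}K_0(C(X))$ appears). Tracking $[1_{C(S^{2m})}\otimes e]_0$ and a suitable Bott-type generator $b_2\in K_0(C(S^{2m})\otimes\mathbb{K})$ through the map $j_*$ induced by the inclusion $j$ then yields $j_*([1\otimes e]_0)=-ng_1$ and $j_*(b_2)=-ng_2$; the factor $-n$ is forced by the $K_0$-relation $[e]_0\mapsto -n[1]_0$ coming from $e=1-\sum_iT_iT_i^*$ together with the KK-computation. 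Finally, to get $[\tau_{\alpha'}\circ\varphi_{\alpha'}]=[\tau]$ in $\operatorname{Ext}(C(S^{2m})\otimes\mathcal{O}_{n+1},C(S^{2m})\otimes\mathbb{K})$, I would invoke the isomorphism half of Theorem \ref{uct}: since $K_1(\mathcal{O}_{n+1})=0$ kills the $\operatorname{Hom}$-terms, the Ext-class is determined entirely by the isomorphism class of the group extension $[K_i(B)\to K_i(C_\tau)\to K_i(A)]$, and assertions (1) and (2) show that both $\tau_{\alpha'}\circ\varphi_{\alpha'}$ and $\tau$ produce the same group extension of $K_0$-groups (namely the one with the $-n$ connecting data computed above).

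\textbf{The main obstacle} I expect is the careful bookkeeping in step (2): one must verify that the index/exponential maps in the $C(S^{2m})$-linear 6-term sequence genuinely reduce, fibrewise and compatibly with the suspension clutching, to multiplication by $-n$, and that a generator $b_2$ of $K_0(C(S^{2m})\otimes\mathbb{K})$ can be chosen so that $j_*(b_2)=-ng_2$ holds on the nose rather than merely up to the ambiguity of the splitting. This requires knowing that the trivialization $\theta_{\alpha'}$ and the homotopy $h_t$ defining $\varphi_{\alpha'}$ interact correctly with the chosen generators, so that the naturality of the 6-term sequence under the $C(S^{2m})$-linear isomorphisms is respected. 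Once this alignment of generators is pinned down, the UCT gives the Ext-equality essentially for free.
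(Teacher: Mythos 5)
Your overall strategy (compute the $K_0$-data of the extension $C(S^{2m})\otimes\mathbb{K}\to\theta_{\alpha'}(B_{\alpha'})\to A_{\alpha'}$ and feed it into the isomorphism part of Theorem \ref{uct}) is the same as the paper's, and your treatment of (1) and of the generator $b_2$ amounts to the paper's diagram chase. But there is a genuine gap at the central claim of (2), namely $j_*([1_{C(S^{2m})}\otimes e]_0)=-ng_1$. The fibrewise relation $[e]_0=-n[1]_0$, which you say ``forces'' the factor $-n$, only determines the component of $j_*([1\otimes e]_0)$ along $g_1$ (restrict to a point); it says nothing about the component along the Bott generator $g_2$. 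A priori $j_*([1\otimes e]_0)=-ng_1+cng_2$ for some integer $c$ (every entry of the matrix of $j_*$ is divisible by $n$, since $j_*$ is injective with cokernel $\mathbb{Z}_n^{\oplus 2}$), and the residue of $c$ mod $n$ is exactly the $\operatorname{Ext}^1_{\mathbb{Z}}$-obstruction that the lemma is designed to kill --- so this coefficient cannot be extracted for free from naturality of the 6-term sequence. You correctly flag this as ``the main obstacle,'' but you propose no mechanism to resolve it; in particular your outline never uses the hypothesis that $\alpha'\sim_h l$ in $\operatorname{Map}(S^{2m-1},\operatorname{End}_eE_{n+1})$, which is precisely where the missing information comes from.

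Concretely, the fibrewise identity $1-e=\sum_iT_iT_i^*$ does not globalize over $B_{\alpha'}$: the constant section $(T_i,T_i)$ fails the clutching condition $F_1(1)=\alpha'(F_2(1))$, so one cannot exhibit $1_{B_{\alpha'}}-(e,e)$ as a sum of ranges of isometries in $B_{\alpha'}$ without further input. The paper's fix is to use the homotopy $h'_t$ from $l$ to $\alpha'$ inside $\operatorname{End}_eE_{n+1}$ to build a unital $*$-homomorphism $\eta\colon B_{\alpha'}\ni(F_1(s),F_2(t))\mapsto(F_1(s),h'_t(F_2(t)))\in B_l=C(S^{2m})\otimes E_{n+1}$; since each $h'_t$ fixes $e$, this sends $(e,e)$ to $1\otimes e$, and since $\eta_*$ is a unital isomorphism on $K_0$ the computation reduces to the trivial bundle, where $[1\otimes e]_0=-n[1]_0$ holds with no Bott component. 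Some such device exploiting the homotopy hypothesis is indispensable; without it the asserted equality, and hence the Ext-identity, does not follow.
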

\begin{proof}

We identify the sphere $S^{2m}$ with the space $(\overset{\circ}{\mathbb{D}}^{2m}\cup S^{2m-1}\cup \overset{\circ}{\mathbb{D}}^{2m})$ where $\overset{\circ}{\mathbb{D}}^{2m}$ is the interior of the $2m$-disc, and we identify $C_0(\overset{\circ}{\mathbb{D}}^{2m})$ with the algebra $\{ F \in C_0[0, 1)\otimes C(S^{2m-1})\mid F(0)\in \mathbb{C}1_{C(S^{2m-1})}\}$. 
Let $x_0\in S^{2m-1}$ be the base point of $S^{2m}$ and $S^{2m-1}$. 
The map $\iota_0 \colon C_0(\overset{\circ}{\mathbb{D}}^{2m})\ni F\mapsto (F, 0)\in C_0(S^{2m}, x_0)$ induces an isomorphism of K-groups. 
An element $b_1$ is the generator of $K_0(C_0(S^{k+1}, x_0))$.
Let $\iota \colon (C_0(\overset{\circ}{\mathbb{D}}^{2m})\otimes E_{n+1})^{\oplus 2}\ni (F_1, F_2) \mapsto (F_1, F_2) \in B_{\alpha'}$ be an embedding, and let $r \colon B_{\alpha'}\ni (F_1, F_2)\mapsto F_1(1)\in C(S^{2m-1})\otimes E_{n+1}$ be the restriction map.\\ 
First, we show (1). 
We have the following commutative diagram
$$\xymatrix{
(C_0(\overset{\circ}{\mathbb{D}}^{2m})\otimes E_{n+1})^{\oplus 2} \ar[r]^{\;\;\;\;\;\;\;\;\;\;\;\;\;\;\;\iota}&B_{\alpha'}\ar[r]^{r\;\;\;\;\;\;\;}&C(S^{2m-1})\otimes E_{n+1}\\
C_0(\overset{\circ}{\mathbb{D}}^{2m})^{\oplus 2}\ar[r]\ar[u]&C(S^{2m})\ar[r]\ar[u]^{i}&C(S^{2m-1})\ar[u]
}.$$
From  the KK-equivalence of $E_{n+1}$ and $\mathbb{C}$, the vertical maps $({\rm id}_{C_0(\overset{\circ}{\mathbb{D}}^{2m})}\otimes 1_{E_{n+1}})^{\oplus 2}$ and ${\rm id}_{C(S^{2m-1})}\otimes 1_{E_{n+1}}$ induce isomorphisms of K-groups. 
Therefore the map $K_0(i) : K_i(C(S^{2m})) \to K_i(B_{\alpha'})$ is an isomorphism by 6-term exact sequences and the 5-lemma.

Second, we find $b_2$.
We denote by $\iota_1$ the inclusion $C_0(\overset{\circ}{\mathbb{D}}^{2m})\otimes E_{n+1}\ni F_1 \mapsto (F_1, 0)\in (C_0(\overset{\circ}{\mathbb{D}}^{2m})\otimes E_{n+1})^{\oplus 2}$. 
We consider the following commutative diagram
$$\xymatrix{
C_0(\overset{\circ}{\mathbb{D}}^{2m})\otimes \mathbb{K} \ar[r]^{\iota_1}\ar[d]&(C_0(\overset{\circ}{\mathbb{D}}^{2m})\otimes \mathbb{K})^{\oplus 2}\ar[r]^{\;\;\;\;\theta_{\alpha'}\circ\iota}\ar[d]&C_0(S^{2m},x_0)\otimes \mathbb{K}\ar[d]^{j}\\
C_0(\overset{\circ}{\mathbb{D}}^{2m})\otimes E_{n+1}\ar[r]^{\iota_1}&(C_0(\overset{\circ}{\mathbb{D}}^{2m})\otimes E_{n+1})^{\oplus 2}\ar[r]^{\;\;\;\;\;\;\theta_{\alpha'}\circ\iota}&\theta_{\alpha'}(B_{\alpha'})\\
C_0(\overset{\circ}{\mathbb{D}}^{2m})\ar[rr]^{\iota_0}\ar[u]^{{\rm id}\otimes 1_{E_{n+1}}}&&C_0(S^{2m},x_0)\ar[u]^{\theta_{\alpha'}\circ i}
.}$$
Since $\theta_{\alpha'}\circ\iota\circ\iota_1=\iota_0\otimes {\rm id}_{\mathbb{K}}$ and ${K_0(\iota_0)}$ is an isomorphism of K-groups, from diagram chasing we can find a generator $b_2'\in K_0(C_0(\overset{\circ}{\mathbb{D}}^{2m})\otimes\mathbb{K})$ that sent to $-nb_1\in K_0(C_0(S^{2m}, x_0))$ by the map $K_0(\theta_{\alpha'})^{-1}\circ K_0(j)\circ K_0(\theta_{\alpha'}\circ\iota\circ\iota_1)$. Hence we have $b_2\colon=K_0(\theta_{\alpha'}\circ\iota\circ\iota_1)(b_2').$

Third, we show $j_*([1\otimes e]_0)=-ng_1$.
From the assumptions, there exists the map $h' : [0,1]\times S^k \to \operatorname{End}_eE_{n+1}$ with $h'_1=\alpha'$, $h'_0=l$. We have the unital $*$-homomorphism $$\eta : B_{\alpha'}\ni (F_1(s), F_2(t)) \mapsto (F_1(s), h'_t(F_2(t))\in B_l=C(S^{2m})\otimes E_{n+1}$$
which sends $(e, e)\in B_{\alpha'}$ to $(e, e)\in B_l=C(S^{2m})\otimes E_{n+1}$.
We have $\theta_{\alpha'}^{-1}(j(1_{C(S^{2m})}\otimes e))=(e,e)\in B_{\alpha'}$ and $(e, e)=1_{C(S^{2m})}\otimes e\in B_l=C(S^{2m})\otimes E_{n+1}$, and the following commutative diagram holds
$$\xymatrix{
K_0(B_{\alpha'})\ar[r]^{K_0(\eta)}&K_0(B_{l})\\
K_0(C(S^{2m}))\ar[u]^{K_0(i)}\ar@{=}[r]&K_0(C(S^{2m}))\ar[u]^{K_0({\rm id}\otimes 1_{E_{n+1}})}
}.$$
We have  
$$K_0(\eta)([\theta_{\alpha'}^{-1}(j(1_{C(S^{2m})}\otimes e))]_0)=[(e, e)]_0=-n[1_{B_l}]_0=K_0(\eta)([1_{B_{\alpha'}}]_0).$$
Since $i_*$ is an isomorphism, the map $\eta _*$ is also an isomorphism, and we have $[j(1_{C(S^{k+1})}\otimes e)]_0=-n[1_{\theta_{\alpha'}(B_{\alpha'})}]_0=-ng_1$.

Finally, we show that $[{\tau_{\alpha'}}\circ\varphi_{\alpha'}]=[\tau]$.
By Theorem \ref{uct}, we identify $\operatorname{Ext}(C(S^{2m})\otimes \mathcal{O}_{n+1}, C(S^{2m})\otimes \mathbb{K})$ with $\operatorname{Ext}^1_{\mathbb{Z}}(K_0(C(S^{2m})\otimes \mathcal{O}_{n+1}), K_0(C(S^{2m})\otimes \mathbb{K}))$ because $k$ is an odd number. The element $[{\tau_{\alpha'}}\circ\varphi_{\alpha'}]$ is identified with the class of extension 
$$[K_0(C(S^{2m})\otimes \mathbb{K}) \to K_0\theta_{\alpha'}(B_{\alpha'}) \to K_0(C(S^{2m})\otimes \mathcal{O}_{n+1})].$$
By the computation above, it is equal to the class
$[\mathbb{Z}^{\oplus 2}\xrightarrow{-n} \mathbb{Z}^{\oplus 2} \to \mathbb{Z}_n^{\oplus 2}]=[\tau ].$
\end{proof}
We have the Busby invariants of two extensions of $\mathcal{O}_{n+1}$ by $C(S^{2m})\otimes \mathbb{K}$ : 
\begin{align*}
\sigma_{\alpha'} :=&\tau_{\alpha'}\circ\varphi_{\alpha'}(1_{C(S^{2m})}\otimes {\rm id}_{\mathcal{O}_{n+1}}),\\
\sigma :=&1_{C(S^{2m})}\otimes \tau_0.
\end{align*}
From the lemma above, we have 
$$[\sigma_{\alpha'}]=(1\otimes {\rm id}_{\mathcal{O}_{n+1}})^*([\tau_{\alpha'}\circ\varphi_{\alpha'}])=(1\otimes {\rm id}_{\mathcal{O}_{n+1}})^*([\tau])=[\sigma]$$ in $\operatorname{Ext}(\mathcal{O}_{n+1}, C(S^{2m})\otimes \mathbb{K})$.
 Hence there exists a unitary $w_{\alpha'}$ in $U_{\mathcal{Q}(C(S^{2m})\otimes \mathbb{K})}$ satisfying $\sigma_{\alpha'}={\rm Ad}w_{\alpha'}\circ\sigma$ by Lemma \ref{ext}.
 
For a unital essential extension $\nu\colon\mathcal{O}_{n+1}\to\mathcal{Q}(C(S^{2m})\otimes \mathbb{K})$, we take a unitary  $V_{\nu}$ introduced in \cite[Section 1]{P} :
$$
V_{\nu}=\left(
\begin{array}{cccc}
0& &\nu(\mathbb{S}) & \\
\pi(w)& &0_{n+1} &
\end{array}
\right) \in \mathbb{M}_{n+2}(\mathcal{Q}(C(S^{2m})\otimes \mathbb{K})). 
$$
We denote $(S_1, \dotsm, S_{n+1})$ by $\mathbb{S}$. We claim that, for the above $\sigma$, we have ${\rm ind}([V_{\sigma}]_1)=-[1_{C(S^{2m})}\otimes e]_0 \in K_0(C(S^{2m})\otimes \mathbb{K})$. 
Indeed, there is a unitary lift
$$\left(
\begin{array}{cccccccc}
0& &1_{C(S^{2m})}\otimes\mathbb{T}& &1_{C(S^{2m})}\otimes e& &0 &\\
w& &0_{n+1} & &0& &0_{n+1}& \\
& & & &0_1& &w^*& \\
\text{{\huge{O}}}_{n+2} & & & &1_{C(S^{2m})}\otimes\mathbb{T}^*& &0_{n+1}& 
\end{array}
\right)
$$
of $V_{\sigma}\oplus V^*_{\sigma}$, 
where $\mathbb{T}=(T_1,\dotsm, T_{n+1})$ and the element $w$ is of the form $1_{C(S^{2m})}\otimes w_0$ and $w_0 \colon H\to H^{\oplus n+1}$ is a unitary operator.
The element $w\in \mathbb{M}_{n+1,1}(\mathcal{M}(C(S^{2m})\otimes\mathbb{K}))$ is a partial isometry with $ww^*=1_{n+1}$ and $w^*w=1$ in $\mathbb{M}_{n+1}(\mathcal{M}(C(S^{2m})\otimes\mathbb{K}))$.
 From direct computation of the index map, we have ${\rm ind}([V_{\sigma}]_1)=-[1_{C(S^{2m})}\otimes e]_0 \in K_0(C(S^{2m})\otimes \mathbb{K})$. Direct computation yields 
\begin{align*}
V_{\sigma_{\alpha'}}V^*_{\sigma}=&(\sum_{i=1}^{n+1}w_{\alpha'}\sigma(S_i)w^*_{\alpha'}\sigma(S^*_i)) \oplus 1_{n+1}\\
=&\left(
w_{\alpha'}\oplus 1_{n+1}\right)\left(
\begin{array}{cc}
\mathbb{S}&0_1 \\
0_{n+1}&\mathbb{S}^*
\end{array}
\right)\left(
\begin{array}{cc}
w_{\alpha'}^*\otimes 1_{n+1}&0 \\
0&0_1
\end{array}
\right)\left(
\begin{array}{cc}
\mathbb{S}^*&0_{n+1}\\
0_1&\mathbb{S}
\end{array}
\right).
\end{align*}
Hence we have $[V_{\sigma_{\alpha'}}]_1-[V_{\sigma}]_1=-n[w_{\alpha'}]_1$ in $K_1(\mathcal{Q}(C(S^{2m})\otimes \mathbb{K}))$.

We show $[w_{\alpha'}]_1=0 \in K_1(\mathcal{Q}(C(S^{2m})\otimes \mathbb{K}))$ in Theorem \ref{muoo},
and we need the following three lemmas for that.
Recall the path $h \colon [0,1]\times S^{2m-1}\to \operatorname{Aut}\mathcal{O}_{n+1}$ from $\tilde{l}=h_0$ to $\tilde{\alpha'}^{-1}=h_1$ in Definition \ref{path}.
Here and subsequently, we write a unitary $\sum_{i=1}^{n+1}h(1\otimes S_i)1\otimes S_i^*\in U_{(C([0,1]\times S^{2m-1})\otimes \mathcal{O}_{n+1})}$ by $v$ where we denote $1_{C([0,1]\times S^{2m-1})}\otimes S_i$ by $1\otimes S_i$ for simplicity.
We denote 
$$
W=\left(
\begin{array}{cccc}
0&0&& \text{{\huge{O}}}_{n+2}\\
w&0_{n+1}&&\\
&&0_1&w^*\\
\text{{\huge{O}}}_{n+2}&&0&0_{n+1}
\end{array}
\right)\in \mathbb{M}_{2n+4}(M).
$$
By the definition of $\tau_{\theta_{\alpha'}}$ and $\varphi_{\alpha'}$, the following lemma holds.   
\begin{lem}\label{simpl}
Let $y_{\alpha'}$ be an element of the form
$$y_{\alpha'}\colon =\left(\left(
\begin{array}{cccc}
0_1&\mathbb{S}&&\text{{\huge{O}}}_{n+2}\\
0&0_{n+1}&&\\
&&0_1&0\\
\text{{\huge{O}}}_{n+2}&&\mathbb{S}^*&0_{n+1}
\end{array}
\right), \left(
\begin{array}{cccc}
0_1&v\mathbb{S}&&\text{{\huge{O}}}_{n+2}\\
0&0_{n+1}&&\\
&&0_1&0\\
\text{{\huge{O}}}_{n+2}&&\mathbb{S}^*v^*&0_{n+1}
\end{array}
\right)
\right)\in \mathbb{M}_{2n+4}(A_{\alpha'})
$$ 
where we write $1_{C([0,1]\times S^{2m-1})}\otimes \mathbb{S}$ simply by $\mathbb{S}$.
Then we have
$$V_{\sigma_{\alpha'}}\oplus V^*_{\sigma_{\alpha'}}=\pi(W)+\tau_{\theta_{\alpha'}}\otimes {\rm id}_{\mathbb{M}_{2n+4}}(y_{\alpha'}).$$
\end{lem}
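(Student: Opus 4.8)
The plan is to verify the claimed identity by a direct block-matrix computation, reducing everything to the single relation $\tau_{\theta_{\alpha'}}\circ\varphi_{\alpha'}(1_{C(S^{2m})}\otimes\mathbb{S})=\sigma_{\alpha'}(\mathbb{S})$. First I would unwind Definition \ref{path}. Since each $h_t$ is an automorphism of $\mathcal{O}_{n+1}$ and $v=\sum_i h(1\otimes S_i)(1\otimes S_i^*)$, one has $v\,\mathbb{S}=h_t(\mathbb{S})$, so that $\varphi_{\alpha'}(1_{C(S^{2m})}\otimes S_i)=(S_i,h_t(S_i))=(S_i,(v\mathbb{S})_i)$ as an element of $A_{\alpha'}$. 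Thus the pair $(\mathbb{S},v\mathbb{S})$ is a genuine row over $A_{\alpha'}$; its matching condition at $t=1$ reads $\tilde{\alpha'}((v\mathbb{S})(1))=\mathbb{S}$, which holds because $h_1=\tilde{\alpha'}^{-1}$. Consequently $y_{\alpha'}=(y_1,y_2)$ lies in $\mathbb{M}_{2n+4}(A_{\alpha'})$, its only nonzero blocks being the $(\mathbb{S},v\mathbb{S})$ entry and, by taking adjoints, the $(\mathbb{S}^*,\mathbb{S}^*v^*)$ entry.

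Next I would apply $\tau_{\theta_{\alpha'}}\otimes{\rm id}_{\mathbb{M}_{2n+4}}$ entrywise. By the defining relation $\sigma_{\alpha'}=\tau_{\theta_{\alpha'}}\circ\varphi_{\alpha'}(1_{C(S^{2m})}\otimes{\rm id}_{\mathcal{O}_{n+1}})$ and the previous paragraph, the $(\mathbb{S},v\mathbb{S})$ block is sent to $\sigma_{\alpha'}(\mathbb{S})$; and since $\tau_{\theta_{\alpha'}}$ is a $*$-homomorphism, the $(\mathbb{S}^*,\mathbb{S}^*v^*)$ block, being precisely the adjoint of $(\mathbb{S},v\mathbb{S})$, is sent to $\sigma_{\alpha'}(\mathbb{S})^*$. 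Hence, in the $1,(n+1),1,(n+1)$ block decomposition, $\tau_{\theta_{\alpha'}}\otimes{\rm id}(y_{\alpha'})$ is the matrix whose only nonzero blocks are $\sigma_{\alpha'}(\mathbb{S})$ in the $(1,2)$-position and $\sigma_{\alpha'}(\mathbb{S})^*$ in the $(4,3)$-position.

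Finally I would read off $\pi(W)$, whose only nonzero blocks are $\pi(w)$ in the $(2,1)$-position and $\pi(w^*)=\pi(w)^*$ in the $(3,4)$-position, and add. The sum carries $\sigma_{\alpha'}(\mathbb{S}),\pi(w),\pi(w)^*,\sigma_{\alpha'}(\mathbb{S})^*$ in positions $(1,2),(2,1),(3,4),(4,3)$, which is exactly the block description of $V_{\sigma_{\alpha'}}\oplus V^*_{\sigma_{\alpha'}}$ obtained from $V_\nu=\begin{pmatrix}0&\nu(\mathbb{S})\\ \pi(w)&0\end{pmatrix}$ by setting $\nu=\sigma_{\alpha'}$ and taking the adjoint in the second summand. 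This establishes the identity.

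I expect the only genuine care to be the bookkeeping of the four block positions and the check that the lower entry of $y_{\alpha'}$ really is the adjoint of the upper one, so that the $*$-homomorphism property of $\tau_{\theta_{\alpha'}}$ produces $\sigma_{\alpha'}(\mathbb{S})^*$ rather than something else. Once the block positions are pinned down, the whole statement is forced by the definitions of $\varphi_{\alpha'}$, $\tau_{\theta_{\alpha'}}$, and $V_\nu$, with no analytic input beyond that $\tau_{\theta_{\alpha'}}$ is a unital $*$-homomorphism.
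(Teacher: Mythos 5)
Your proposal is correct and takes the same route as the paper, which gives no written proof beyond the assertion that the identity holds ``by the definition of $\tau_{\theta_{\alpha'}}$ and $\varphi_{\alpha'}$''; your block-by-block verification, including the observations that $v\mathbb{S}=h(\mathbb{S})$ so that $(\mathbb{S},v\mathbb{S})=\varphi_{\alpha'}(1\otimes\mathbb{S})$ lies in $A_{\alpha'}$ and that the lower block is its adjoint, is exactly the computation the paper leaves to the reader.
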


In the lemma below, we regard an element $x\in C([0,1]\times S^{2m-1})\otimes E_{n+1}$ as a $C(S^{2m-1})\otimes E_{n+1}$ valued continuous function on $[0,1]$ and denote by $x_t,\; t\in[0,1]$, and frequently write $1_{C(S^{2m-1})\otimes E_{n+1}}$ by $1_{C(S^{2m-1})}$ for simplicity.
\begin{lem}\label{pertur}
Let $V\in U_{C([0,1],\,  C(S^{2m-1})\otimes E_{n+1})}$ be a unitary with $V_0=1_{C(S^{2m-1})\otimes E_{n+1}}$.
Then we can choose a unitary $\mathcal{V}\in U_{C([0,1],\, C(S^{2m-1})\otimes \mathbb{K})^{\sim}}$ satisfying the following
\begin{align*}
\mathcal{V}_0=&1_{C(S^{2m-1})\otimes E_{n+1}},\\
1_{C(S^{2m-1})\otimes E_{n+1}}-&\mathcal{V}_t\in C(S^{2m-1})\otimes \mathbb{K},\\
{\mathcal{V}^*_t V_t(1_{C(S^{2m-1})}\otimes e)}=&{(1_{C(S^{2m-1})}\otimes e)\mathcal{V}^*_t V_t}
=(1_{C(S^{2m-1})}\otimes e),\;t\in [0,1].
\end{align*}
\end{lem}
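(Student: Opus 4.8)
The plan is to reduce the three requirements to a single transport problem for the vector $\xi_0$ and then to solve that problem by composing small rotations. Writing $e=\xi_0\otimes\xi_0^*$, the last requirement $\mathcal{V}^*_t V_t(1_{C(S^{2m-1})}\otimes e)=(1_{C(S^{2m-1})}\otimes e)\mathcal{V}^*_t V_t=1_{C(S^{2m-1})}\otimes e$ is, because $\mathcal{V}_t^*V_t$ is unitary, equivalent to $\mathcal{V}_t^*V_t\xi_0=\xi_0$ pointwise on $S^{2m-1}$, that is, to the identity $\mathcal{V}_t\xi_0=V_t\xi_0$ of $H$-valued functions. The second requirement $1-\mathcal{V}_t\in C(S^{2m-1})\otimes\mathbb{K}$ will be automatic once $\mathcal{V}_t$ is produced as $1$ plus a norm-continuous $\mathbb{K}$-valued function. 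Thus it suffices to build a norm-continuous path $t\mapsto\mathcal{V}_t\in U_{(C(S^{2m-1})\otimes\mathbb{K})^{\sim}}$ with $\mathcal{V}_0=1$ and $\mathcal{V}_t\xi_0=V_t\xi_0$.

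Set $\eta_t:=V_t\xi_0$. Since $V$ is a norm-continuous path of unitaries over the compact space $S^{2m-1}$, the assignment $(t,x)\mapsto\eta_t(x)=V_t(x)\xi_0$ is a norm-continuous family of unit vectors of $H_1$ with $\eta_0=\xi_0$ constant, and I would realise this family by parallel transport. By uniform continuity on $[0,1]\times S^{2m-1}$, choose $0=t_0<\dots<t_N=1$ so that $\|\eta_s(x)-\eta_{t_j}(x)\|$ stays small, uniformly in $x$, for $s\in[t_j,t_{j+1}]$, so that the two vectors entering each rotation below are close, in particular never antipodal. For a pair of close unit vectors $a,b$ let $R(a,b)$ denote the canonical rotation inside $\mathrm{span}(a,b)$ with $R(a,b)b=a$ and $R(a,b)=1$ on $\{a,b\}^{\perp}$, so that $R(a,b)-1$ has rank $\le 2$ and $R(b,b)=1$. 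Setting $\mathcal{V}_{t_j}:=R(\eta_{t_j},\eta_{t_{j-1}})\cdots R(\eta_{t_1},\eta_{t_0})$ and, for $t\in[t_j,t_{j+1}]$, $\mathcal{V}_t:=R(\eta_t,\eta_{t_j})\mathcal{V}_{t_j}$, one checks inductively that $\mathcal{V}_t\xi_0=\eta_t$, that $\mathcal{V}_0=1$, and that the two descriptions of $\mathcal{V}_{t_{j+1}}$ agree (because $R(b,b)=1$), so that $\mathcal{V}$ is continuous in $t$. Each factor $R(\eta_s(x),\eta_{t_j}(x))$ is $1$ plus a finite-rank operator depending norm-continuously on $x$, so $\mathcal{V}_t-1\in C(S^{2m-1})\otimes\mathbb{K}$ and $\mathcal{V}_t$ is a unitary of $(C(S^{2m-1})\otimes\mathbb{K})^{\sim}$ with trivial scalar part; this uses that, no matter how far $\eta_t$ travels in $H_1$, every rotation we employ remains a compact perturbation of the identity.

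The main point to verify carefully is the existence of $R(a,b)$ as a norm-continuous function of the pair $(a,b)$ on the set of close unit vectors: one must exhibit an explicit formula for the two-dimensional unitary carrying $b$ to $a$ inside $\mathrm{span}(a,b)$ which is continuous in the complex setting, in particular across the locus where $a$ is a phase multiple of $b$ (there the rank of $R(a,b)-1$ drops, but its norm tends to $0$, so norm continuity survives). Once this elementary lemma is in hand, uniform continuity of $\eta$ promotes the pointwise continuity of $x\mapsto R(\eta_s(x),\eta_{t_j}(x))$ and of $t\mapsto R(\eta_t(x),\eta_{t_j}(x))$ to the required norm continuity of $\mathcal{V}$, and the reduction in the first paragraph then yields all three asserted properties. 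I should stress that a single global rotation $R(\eta_t,\xi_0)$ would not suffice, since $\eta_t$ may become antipodal to $\xi_0$; it is precisely the stepwise transport along a fine partition that circumvents this, which is why the partition is essential.
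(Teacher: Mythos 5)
Your overall strategy is the same as the paper's: reduce the three conditions to the single requirement $\mathcal{V}_t\xi_0=V_t\xi_0$ (the paper phrases this as transporting the projection $1\otimes e$, but its unitary $\mathcal{V}^0_t=w^0_t+V_t(1\otimes e)$ is exactly a unitary carrying $\xi_0$ to $V_t\xi_0$), choose a partition of $[0,1]$ fine enough that the vector moves little on each subinterval uniformly in $x$, and compose stepwise transport unitaries each of which is $1$ plus a finite--rank operator. The reduction, the induction, the consistency check at the partition points, and the verification that the product lies in $U_{C([0,1],\,C(S^{2m-1})\otimes\mathbb{K})^{\sim}}$ with scalar part $1$ are all fine.

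The genuine gap is exactly at the point you flag and defer: the existence of a \emph{norm-continuous} $R(a,b)$. Your parenthetical justification is wrong. If $a=e^{i\theta}b$ with $\theta\neq 0$ (which can occur with $\|a-b\|=|e^{i\theta}-1|$ small but nonzero, and nothing prevents $\eta_t(x)$ from being a nontrivial phase multiple of $\eta_{t_j}(x)$), then any unitary with $Rb=a$ satisfies $\|R-1\|\geq\|a-b\|>0$, so the norm of $R(a,b)-1$ does \emph{not} tend to zero at that locus. Worse, the ``canonical rotation in $\operatorname{span}(a,b)$'' as usually understood is genuinely discontinuous there: writing $a=\lambda b+\mu b'$ with $b'\perp b$, $\mu=\|a-\lambda b\|>0$, the rotation with matrix $\left(\begin{smallmatrix}\lambda&-\mu\\ \mu&\bar\lambda\end{smallmatrix}\right)$ has $R-1$ containing the term $(\bar\lambda-1)\,b'\otimes b'^*$, whose coefficient tends to $e^{-i\theta}-1\neq 0$ while the direction $b'$ oscillates without limit as $a\to e^{i\theta}b$; hence $R(a_n,b_n)$ need not converge. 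The repair is precisely the paper's device: take $R(a,b):=a\otimes b^*+w$, where $w$ is the partial isometry in the polar decomposition of $(1-a\otimes a^*)(1-b\otimes b^*)$ restricted to $b^{\perp}$ (well defined and norm-continuous once $\|a\otimes a^*-b\otimes b^*\|<1$). Then $Rb=a$, $R(b,b)=1$, $R-1$ has rank $\leq 2$, and on $\operatorname{span}(a,b)$ the matrix of $R$ is $\left(\begin{smallmatrix}\lambda&-\bar\mu\lambda/|\lambda|\\ \mu&|\lambda|\end{smallmatrix}\right)$, so the coefficient of the ill-defined direction $b'\otimes b'^*$ in $R-1$ is $|\lambda|-1\to 0$ as $\mu\to 0$, and continuity across the phase locus holds. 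With this choice of $R$ substituted for your rotation, your argument goes through and coincides with the paper's proof; without it, the key lemma you rely on is unproved and, in the form stated, false.
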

\begin{proof}
There is a partition $0=t_0<t_1<\dotsm <t_m=1$ satisfying, 
\begin{align*}
||V_t(1_{C(S^{2m-1})}\otimes e)V^*_t-V_{t_k}(1_{C(S^{2m-1})}\otimes e)V^*_{t_k}||< 1,\; t\in [t_k,t_{k+1}].
\end{align*}
We construct the unitary $\mathcal{V}$ by induction.
For $t\in[t_0, t_1]$, we have a polar decomposition
\begin{align}
\label{ir}
V_t(1_{C(S^{2m-1})}\otimes (1-e))V^*_t(1_{C(S^{2m-1})}\otimes (1-e))=w^0_t|V_t(1_{C(S^{2m-1})}\otimes (1-e))V^*_t(1_{C(S^{2m-1})}\otimes (1-e))|
\end{align}
 for $t\in[t_0,t_1],$
and there exists a unitary
\begin{equation*}
\mathcal{V}^0_t\colon =\left\{
\begin{array}{c}
w^0_t+V_t(1_{C(S^{2m-1})}\otimes e),\;t\in[t_0,t_1]\\
w^0_{t_1}+V_{t_1}(1_{C(S^{2m-1})}\otimes e),\;t\in[t_1,t_m].
\end{array}
\right.
\end{equation*}
with $\mathcal{V}^0_0=1_{C(S^{2m-1})}$.
Since $\pi(V_t(1_{C(S^{2m-1})}\otimes (1-e))V^*_t(1_{C(S^{2m-1})}\otimes (1-e)))=1_{C(S^{2m-1})\otimes \mathcal{O}_{n+1}}$ and (\ref{ir}),
we have $1_{C(S^{2m-1})}-\mathcal{V}^0_t\in C(S^{2m-1})\otimes \mathbb{K}.$ 
The unitary ${\mathcal{V}^0}^*V\in U_{C([0,1],\,C(S^{2m-1})\otimes E_{n+1})}$ satisfies the following 
\begin{align}
{\mathcal{V}_t^0}^*V_t(1_{C(S^{2m-1})}\otimes e)=(1_{C(S^{2m-1})}\otimes e){\mathcal{V}_t^0}^*V_t=(1_{C(S^{2m-1})}\otimes e),\;t\in[t_0,t_1],\nonumber\\ 
{\mathcal{V}_0^0}^*V_0=1_{C(S^{2m-1})\otimes E_{n+1}},\nonumber\\
\label{1}
||{\mathcal{V}_t^0}^*V_t(1_{C(S^{2m-1})}\otimes e)V^*_t\mathcal{V}^0_t-{\mathcal{V}^0_{t_k}}^*V_{t_k}(1_{C(S^{2m-1})}\otimes e)V^*_{t_k}\mathcal{V}^0_{t_k}||< 1,\; t\in [t_k,t_{k+1}],\; m-1\geq k\geq 0.
\end{align}
 The condition (\ref{1}) is satisfied by the computation below
\begin{align*}
&{\mathcal{V}^0_t}^*V_t(1_{C(S^{2m-1})}\otimes e)V_t^*\mathcal{V}^0_t-{\mathcal{V}^0_{t_k}}^*V_{t_k}(1_{C(S^{2m-1})}\otimes e)V_{t_k}^*{\mathcal{V}^0_{t_k}}\\
=&\left\{
\begin{array}{c}
0,\;t\in[t_0,t_1]\cap[t_k,t_{k+1}]\\
{\rm Ad}{\mathcal{V}^0_{t_1}}^*(V_t(1_{C(S^{2m-1})}\otimes e)V_t^*-V_{t_k}(1_{C(S^{2m-1})}\otimes e)V_{t_k}^*),\; t\in[t_1,t_m]\cap[t_k,t_{k+1}].
\end{array}\right.
\end{align*}
Let $l$ be a number with $m-1\geq l\geq 0$.
Assume that there exist unitaries $\mathcal{V}^0, \dotsm, \mathcal{V}^{l}$ satisfying 
\begin{align}
1_{C(S^{2m-1})\otimes E_{n+1}}-\mathcal{V}^i_t\in C(S^{2m-1})\otimes \mathbb{K}, \;\mathcal{V}^i_0=1_{C(S^{2m-1})\otimes E_{n+1}},\;l\geq i\geq 0,\nonumber\\
{U^l_t}^*(1_{C(S^{2m-1})}\otimes e)=(1_{C(S^{2m-1})}\otimes e){U^l_t}^*=(1_{C(S^{2m-1})}\otimes e),\;t\in[t_0,t_{l+1}],\nonumber\\
\label{2}
||{U^l_t}^*(1_{C(S^{2m-1})}\otimes e){U^l_t}-{U^l_{t_k}}^*(1_{C(S^{2m-1})}\otimes e){U^l_{t_k}}||< 1,\;t\in [t_k,t_{k+1}], \;m-1\geq k\geq 0,
\end{align}
where we denote $U^l_t\colon =V_t^*\mathcal{V}^0_t\dotsm \mathcal{V}_t^l$. 
Now we construct a unitary $\mathcal{V}^{l+1}$ satisfying 
\begin{align}
\label{4}
1_{C(S^{2m-1})\otimes E_{n+1}}-\mathcal{V}^{l+1}_t\in C(S^{2m-1})\otimes \mathbb{K}, \;\mathcal{V}^{l+1}_0=1_{C(S^{2m-1})\otimes E_{n+1}},\\
\label{5}
{\mathcal{V}^{l+1}_t}^*{U^l_t}^*(1\otimes e)=(1\otimes e){\mathcal{V}^{l+1}_t}^*{U^l_t}^*=(1\otimes e),\;t\in[t_0,t_{l+2}],\\
\label{6}
||{\mathcal{V}_t^{l+1}}^*{U^l_t}^*(1\otimes e){U^l_t}\mathcal{V}^{l+1}_t-{\mathcal{V}^{l+1}_{t_k}}^*{U^l_{t_k}}^*(1\otimes e){U^l_{t_k}}\mathcal{V}^{l+1}_{t_k}||< 1,\;t\in [t_k,t_{k+1}], \;m-1\geq k\geq 0,
\end{align}
where we write $1_{C(S^{2m-1})}\otimes e$ simply by $1\otimes e$.
By the assumption (\ref{2}), we have a partial isometry $w^{l+1}$ of a polar decomposition 
\begin{align}
&(1_{C(S^{2m-1})}-{U^l_t}^*(1\otimes e)U^l_t)(1_{C(S^{2m-1})}-{U^l_{t_{l+1}}}^*(1\otimes e)U^l_{t_{l+1}})\nonumber\\
\label{3}
=&w^{l+1}_t|(1_{C(S^{2m-1})}-{U^l_t}^*(1\otimes e)U^l_t)(1_{C(S^{2m-1})}-{U^l_{t_{l+1}}}^*(1\otimes e)U^l_{t_{l+1}})|,\;t\in [t_{k+1},t_{k+2}]. 
\end{align}
Let $\mathcal{V}^{l+1}$ be a unitary of the form
\begin{align*}
\mathcal{V}^{l+1}_t\colon =\left\{
\begin{array}{c}
1_{C(S^{2m-1})},\;t\in[0,t_{l+1}]\\
w^{l+1}_t+{U^l_t}^*(1_{C(S^{2m-1})}\otimes e)U^l_{t_{l+1}},\;t\in[t_{l+1},t_{l+2}]\\
w^{l+1}_{t_{l+2}}+{U^l_{t_{l+2}}}^*(1_{C(S^{2m-1})}\otimes e)U^l_{t_{l+1}},\;t\in[t_{l+2},t_{m}].
\end{array}
\right.
\end{align*}
Since $\pi((1_{C(S^{2m-1})}-{U^l_t}^*(1\otimes e)U^l_t)(1_{C(S^{2m-1})}-{U^l_{t_{l+1}}}^*(1\otimes e)U^l_{t_{l+1}}))=1_{C(S^{2m-1})\otimes \mathcal{O}_{n+1}}$ and (\ref{3}), we have (\ref{4}).
By the construction of $\mathcal{V}^{l+1}$, we have
\begin{align*}
{\mathcal{V}^{l+1}}^*_t{U^l_t}^*(1\otimes e)=&(1\otimes e){\mathcal{V}^{l+1}}^*_t{U^l_t}^*\\=&(1\otimes e),\;\;\;\;\;t\in[t_0,t_{l+1}],\\
{\mathcal{V}^{l+1}}^*_t{U^l_t}^*(1\otimes e)=&{U^l_{t_{l+1}}}^*(1\otimes e)U^l_t{U_t^l}^*(1\otimes e)\\
=&(1\otimes e)\\
=&(1\otimes e){U^l_{t_{l+1}}}^*(1\otimes e)U^l_{t}{U^l_{t}}^*\\
=&(1\otimes e){\mathcal{V}_t^{l+1}}^*{U^l_t}^*,\;\;\;\;\;t\in[t_{l+1},t_{l+2}],
\end{align*}
and $\mathcal{V}^{l+1}$ satisfies (\ref{5}).
For every $k$, $m-1\geq k\geq 0$, direct computation yields
\begin{align*}
&{\mathcal{V}^{l+1}}^*_t{U^l_t}^*(1\otimes e)U^l_t\mathcal{V}^{l+1}_t-{\mathcal{V}^{l+1}}^*_{t_{k+1}}{U^l_{t_{k}}}^*(1\otimes e)U^l_{t_{k}}\mathcal{V}^{l+1}_{t_{k+1}}\\
=&\left\{
\begin{array}{c}
0,\;\;\;\;\;\;\;t\in[t_0,t_{l+2}]\cap[t_k,t_{k+1}]\\
{\rm Ad}\mathcal{V}^{l+1}_{t_{l+2}}({U^l_t}^*(1\otimes e)U^l_t-{U^l_{t_{k}}}^*(1\otimes e)U^l_{t_k}),\;t\in[t_{l+2},t_m]\cap[t_k,t_{k+1}],
\end{array}\right.
\end{align*}
and the condition (\ref{6}) is satisfied by (\ref{2}).
Now we have a sequence of unitaries $\mathcal{V}^0,\dotsm, \mathcal{V}^{m-1}$ by induction,
and a unitary $\mathcal{V}\colon =\mathcal{V}^0\dotsm\mathcal{V}^{m-1}$ satisfies the assertion of the lemma.
\end{proof}
In the sequal,
we denote by $u_{{\alpha'}^{-1}}$ the element $\sum_{i=1}^{n+1}{\alpha'}^{-1}(T_i)T_i^*$.
The element $u_{{\alpha'}^{-1}}$ is in $U_{C(S^{2m-1})\otimes _{(1-e)}{E_{n+1}}_{(1-e)}}$ because $\alpha'\in \operatorname{Map}(S^{2m-1}, \operatorname{Aut}_eE_{n+1})$. 
We remark that $$\pi(u_{{\alpha'}^{-1}})=\sum_ih_1(1_{C(S^{2m-1})}\otimes S_i)1_{C(S^{2m-1})}\otimes S^*_i=v_1\in C(S^{2m-1})\otimes \mathcal{O}_{n+1}.$$
We also note $v\in {U_0}_{(C([0,1]\times S^{2m-1})\otimes \mathcal{O}_{n+1})}$ because $v_0=1_{C(S^{2m-1})}$.
\begin{lem}\label{implift}
There exists a unitary $V\in U_{C([0,1]\times S^{2m-1})\otimes E_{n+1}}$ satisfying the following
\begin{align*}
\pi(V)=&v,\;V_0=1, \;V_1=u_{{\alpha'}^{-1}}+1_{C(S^{2m-1})}\otimes e,\\
V_t(1_{C(S^{2m-1})}\otimes e)=&(1_{C(S^{2m-1})}\otimes e)V_t=(1_{C(S^{2m-1})}\otimes e),\;t\in[0,1].
\end{align*}
In particular, an element $Y_{\alpha'}$ of the form
$$
Y_{\alpha'}\colon =\left(\left(
\begin{array}{cccc}
0_1&\mathbb{T}&e&0\\
0&0_{n+1}&0&0_{n+1}\\
&&0_1&0\\
\text{{\huge{O}}}_{n+2}&&\mathbb{T}^*&0_{n+1}
\end{array}
\right), \left(
\begin{array}{cccc}
0_1&V\mathbb{T}&e&0\\
0&0_{n+1}&0&0_{n+1}\\
&&0_1&0\\
\text{{\huge{O}}}_{n+2}&&\mathbb{T}^*V^*&0_{n+1}
\end{array}
\right)
\right)\in \mathbb{M}_{2n+4}(B_{\alpha'})
$$
is send to $y_{\alpha'}$ by the quotient map $\pi_{\alpha'}\colon B_{\alpha'}\to A_{\alpha'}$ where we write $1_{C([0,1]\times S^{2m-1})}\otimes \mathbb{T}$ and $1_{C([0,1]\times S^{2m-1})}\otimes e$ by $\mathbb{T}$ and $e$ respectively for simplicity.
\end{lem}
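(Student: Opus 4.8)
The plan is to build an $e$-fixing unitary lift of $v$ over $[0,1]\times S^{2m-1}$ and then repair its value at $t=1$, the feasibility of the repair being a $K_1$-vanishing statement. First I would fix any norm-continuous unitary $V'\in U_{C([0,1],\,C(S^{2m-1})\otimes E_{n+1})}$ with $V'_0=1$ and $\pi(V')=v$. Such a lift exists by a standard path-lifting argument: since $v_0=1$ lifts to $1$ and $[0,1]$ is compact, one chooses a fine partition of $[0,1]$, writes each increment $v_s v_{s_j}^{-1}$ as $\exp(ib)$ with $b$ a norm-continuous self-adjoint element of $C(S^{2m-1})\otimes\mathcal{O}_{n+1}$, lifts $b$ continuously through the surjection ${\rm id}\otimes\pi$, and concatenates the resulting exponentials. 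Applying Lemma \ref{pertur} to $V'$ produces $\mathcal{V}$ with $\mathcal{V}_0=1$, $1-\mathcal{V}_t\in C(S^{2m-1})\otimes\mathbb{K}$, and $\mathcal{V}^*_tV'_t$ fixing $1\otimes e$; I then set $\hat V:=\mathcal{V}^*V'$, so that $\hat V_0=1$, $\hat V$ fixes $1\otimes e$, and $\pi(\hat V)=v$ (because $\pi(\mathcal{V}_t)=1$).

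It remains to correct the endpoint. Put $z:=\hat V_1^*(u_{{\alpha'}^{-1}}+1\otimes e)$. Since $\hat V_1$ and $u_{{\alpha'}^{-1}}+1\otimes e$ both lift $v_1=\pi(u_{{\alpha'}^{-1}})$ and both fix $e$, the element $z$ is an $e$-fixing unitary in $(C(S^{2m-1})\otimes\mathbb{K})^{\sim}$. The heart of the proof is to show $[z]_1=0$ in $K_1(C(S^{2m-1})\otimes\mathbb{K})$. I would first compute the class of $z$ in $K_1(C(S^{2m-1})\otimes E_{n+1})$, where $[z]_1=-[\hat V_1]_1+[u_{{\alpha'}^{-1}}+1\otimes e]_1$. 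Here $[\hat V_1]_1=0$ because $\hat V$ is a path from $1$ to $\hat V_1$. For the second term I would exploit the relation $e+u_{{\alpha'}^{-1}}={\alpha'}^{-1}\bigl((e+u_{\alpha'})^*\bigr)$, which follows from $u_{\rho\sigma}=\rho(u_\sigma)u_\rho$ applied to $\rho=\alpha',\,\sigma={\alpha'}^{-1}$ together with ${\alpha'}^{-1}(e)=e$. By Lemma \ref{tri} the $C(S^{2m-1})$-linear automorphism ${\alpha'}^{-1}$ acts as the identity on $K_1(C(S^{2m-1})\otimes E_{n+1})$, so $[u_{{\alpha'}^{-1}}+1\otimes e]_1=-[e+u_{\alpha'}]_1$; and the standing hypothesis $\alpha'\sim_h l$ in ${\rm Map}(S^{2m-1},\operatorname{End}_eE_{n+1})$, fed through the continuous map $\rho\mapsto e+u_\rho$ of Remark \ref{hs} (which sends $l$ to $1$), gives $[e+u_{\alpha'}]_1=0$. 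Hence $[z]_1=0$ in $K_1(C(S^{2m-1})\otimes E_{n+1})$. Finally, the inclusion of the ideal induces on $K_1(C(S^{2m-1}))=K^1(S^{2m-1})\cong\mathbb{Z}$ the map multiplication by $-n$, which is injective; by the $6$-term sequence of Section $2$ this means the ideal map is injective on $K_1$, so $[z]_1=0$ already holds in $K_1(C(S^{2m-1})\otimes\mathbb{K})$.

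With $[z]_1=0$ and the $K_1$-injectivity of $(C(S^{2m-1})\otimes\mathbb{K})^{\sim}$ (Proposition \ref{k1}), there is a norm-continuous path $c_s$ of $e$-fixing unitaries in $(C(S^{2m-1})\otimes\mathbb{K})^{\sim}$ with $c_0=1$, $c_1=z$ and $\pi(c_s)=1$. I would then splice, setting $V_t:=\hat V_t$ for $t\le\frac12$ and $V_t:=\hat V_t\,c_{2t-1}$ for $t\ge\frac12$. This $V$ is $e$-fixing, $V_0=1$, $\pi(V)=v$ (since $\pi(c_s)=1$), and $V_1=\hat V_1 z=u_{{\alpha'}^{-1}}+1\otimes e$, which are exactly the asserted properties. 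The ``in particular'' statement is then a direct verification: applying $\pi_{\alpha'}\otimes{\rm id}_{\mathbb{M}_{2n+4}}$ to $Y_{\alpha'}$ and using $\pi(V)=v$, $\pi(1\otimes\mathbb{T})=1\otimes\mathbb{S}$ and $\pi(e)=0$ yields $y_{\alpha'}$ entrywise; membership $Y_{\alpha'}\in\mathbb{M}_{2n+4}(B_{\alpha'})$ follows by checking the boundary conditions, using $V_0=1$ at $t=0$ and, at $t=1$, the identities $V_1\mathbb{T}={\alpha'}^{-1}(\mathbb{T})$ and $\alpha'(e)=e$.

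I expect the main obstacle to be this endpoint correction, i.e. the vanishing $[z]_1=0$. The delicate feature is that a priori $[u_{{\alpha'}^{-1}}+1\otimes e]_1$ lies only in $n\mathbb{Z}=\ker\pi_*$, and it is precisely the homotopy hypothesis $\alpha'\sim_h l$ in $\operatorname{End}_eE_{n+1}$ — transported from $\alpha'$ to ${\alpha'}^{-1}$ through Lemma \ref{tri} — that forces this class to be exactly $0$, after which injectivity of the ideal map on $K_1$ lets one descend the vanishing from $E_{n+1}$ to $\mathbb{K}$.
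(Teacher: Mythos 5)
Your proposal is correct and follows essentially the same route as the paper: lift $v$, make the lift $e$-fixing via Lemma \ref{pertur}, and then correct the endpoint by a unitary in $1+C(S^{2m-1})\otimes\mathbb{K}$ whose $K_1$-class vanishes by the cocycle identity $u_{\rho\sigma}=\rho(u_\sigma)u_\rho$, Lemma \ref{tri}, the hypothesis $\alpha'\sim_h l$ in $\operatorname{Map}(S^{2m-1},\operatorname{End}_eE_{n+1})$, and the injectivity of $K_1(C(S^{2m-1})\otimes\mathbb{K})\to K_1(C(S^{2m-1})\otimes E_{n+1})$. The one step to tighten is the production of the $e$-fixing path $c_s$ with $\pi(c_s)=1$: this does not follow from $K_1$-injectivity of $(C(S^{2m-1})\otimes\mathbb{K})^\sim$ (which is not what Proposition \ref{k1} asserts anyway); instead, as in the paper, one writes $z=e+(1-e)z(1-e)$, runs the $K_1$-injectivity argument in the stable corner $C(S^{2m-1})\otimes{}_{(1-e)}\mathbb{K}_{(1-e)}$ whose $K_1$ maps isomorphically onto $K_1(C(S^{2m-1})\otimes\mathbb{K})$ under $u\mapsto u+1\otimes e$, adds $1\otimes e$ to the resulting path, and divides by its scalar part (the paper's $\lambda(t)$) to arrange $\pi(c_s)=1$ without disturbing the endpoints.
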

\begin{proof}
Since $v\in {U_0}_{(C([0,1]\times S^{2m-1})\otimes \mathcal{O}_{n+1})}$, one has a unitary lift $V'\in {U_0}_{(C(S^{2m-1})\otimes E_{n+1})}$ of $v$ with $V'_0=1$.
By Lemma \ref{pertur}, we may assume the following
\begin{align}
\label{8}
V'_t(1_{C(S^{2m-1})}\otimes e)=(1_{C(S^{2m-1})}\otimes e)V'_t=1_{C(S^{2m-1})}\otimes e,\;t\in [0,1]\\
\label{7}
V'_0=1_{C(S^{2m-1})\otimes E_{n+1}}
\end{align}
Now we show that we can get the unitary $V$ by a compact perturbation of $V'$.
By (\ref{8}), an element $u_{{\alpha'}^{-1}}{V'_1}^*$ is a unitary in $U_{(C(S^{2m-1})\otimes _{(1-e)}\mathbb{K}_{(1-e)})^{\sim}}$ with $\pi(u_{{\alpha'}^{-1}}{V'_1}^*)=1_{C(S^{2m-1})\otimes \mathcal{O}_{n+1}}$.
Since $\alpha'$ is homotopic to $l$ in $\operatorname{Map}(S^{2m-1},\operatorname{End}_eE_{n+1})$, the unitary $u_{{\alpha'}^{-1}}$ is in ${U_0}_{(C(S^{2m-1})\otimes _{(1-e)}{E_{n+1}}_{(1-e)})}$.
Hence we have $u_{{\alpha'}^{-1}}+1_{C(S^{2m-1})}\otimes e\in{U_0}_{(C(S^{2m-1})\otimes E_{n+1})}$.
Recall that the map $K_1(C(S^{2m-1})\otimes \mathbb{K})\hookrightarrow K_1(C(S^{2m-1})\otimes E_{n+1})$ is injective because $\operatorname{Tor}(K_1(C(S^{2m-1})), \mathbb{Z}_n)=0$ and the map  
\begin{align*}
K_1(C(S^{2m-1})\otimes _{(1-e)}\mathbb{K}_{(1-e)})\ni [u]_1\mapsto [u+1\otimes e]_1\in K_1(C(S^{2m-1})\otimes \mathbb{K})
\end{align*}
is an isomorphism.
Therefore we have $[u_{{\alpha'}^{-1}}{V'_1}^*]_1=0$ in $K_1(C(S^{2m-1})\otimes _{(1-e)}\mathbb{K}_{(1-e)})$,
and we get a continuous path $c\colon [0,1]\to U_{(C(S^{2m-1})\otimes _{(1-e)}\mathbb{K}_{(1-e)})^{\sim}}$ from $u_{{\alpha'}^{-1}}{V'_1}^*$ to $1_{C(S^{2m-1})}\otimes (1-e)$ by the $K_1$-injectivity of $C(S^{2m-1})\otimes_{(1-e)}\mathbb{K}_{(1-e)}$.
For every $t\in[0,1]$, we have $\lambda(t)\in S^1$ with
$$\lambda(0)=\lambda(1)=1,\;\lambda(t)1_{C(S^{2m-1})}\otimes (1-e)-c(t)\in C(S^{2m-1})\otimes _{(1-e)}\mathbb{K}_{(1-e)}$$
by (\ref{7}) and $\pi(u_{{\alpha'}^{-1}}{V'_1}^*)=1_{C(S^{2m-1})\otimes \mathcal{O}_{n+1}}$,
and the function $\lambda\colon [0,1]\to S^1$ is continuous.
Now we get the element $V\colon =(\bar{\lambda}c+1_{C([0,1]\times S^{2m-1})}\otimes e)V'\in U_{(C([0,1]\times S^{2m-1})\otimes E_{n+1})}$ satisfying the assertion of the lemma.
Since $V_1(1_{C(S^{2m-1})}\otimes \mathbb{T})=u_{{\alpha'}^{-1}}(1_{C(S^{2m-1})}\otimes \mathbb{T})=\alpha'^{-1}(1_{C(S^{2m-1})}\otimes \mathbb{T})$ and $\alpha'(1_{C(S^{2m-1})}\otimes e)=(1_{C(S^{2m-1})}\otimes e)$,
direct computation yields
\begin{align*}
\alpha'\otimes {\rm id}_{\mathbb{M}_{2n+4}}\left(
\begin{array}{cccc}
0_1&V\mathbb{T}&e&0\\
0&0_{n+1}&0&0_{n+1}\\
&&0_1&0\\
\text{{\huge{O}}}_{n+2}&&\mathbb{T}^*V^*&0_{n+1}
\end{array}
\right)=\left(
\begin{array}{cccc}
0_1&\mathbb{T}&e&0\\
0&0_{n+1}&0&0_{n+1}\\
&&0_1&0\\
\text{{\huge{O}}}_{n+2}&&\mathbb{T}^*&0_{n+1}
\end{array}
\right).
\end{align*}
Hence $Y_{\alpha'}$ is an element of $\mathbb{M}_{2n+4}(B_{\alpha'})$ that is sent to $y_{\alpha'}$ by the quotient map $\pi_{\alpha'}\colon B_{\alpha'}\to A_{\alpha'}$. 
\end{proof}
We remark that $Y_{\alpha'}$ is a partial isometry.
We have $\theta_{\alpha'}(Y_{\alpha'})\theta_{\alpha'}(Y_{\alpha'})^*=1\oplus 0_{n+1}\oplus 0_1\oplus 1_{n+1}$ and $\theta_{\alpha'}(Y_{\alpha'})^*\theta_{\alpha'}(Y_{\alpha'})=0_1\oplus 1_{n+1}\oplus1\oplus 0_{n+1}$.
Recall that $W$ is a partial isometry with $WW^*=0_1\oplus 1_{n+1}\oplus 1\oplus 0_{n+1}$ and $W^*W=1\oplus 0_{n+1}\oplus 0_1\oplus 1_{n+1}$.
Therefore an element $W+\theta_{\alpha'}(Y_{\alpha'})$ is a unitary in $U_{\mathbb{M}_{2n+4}(M)}$. 
\begin{thm}\label{muoo}
Let $m\geq 1$ be a natural number.
Let $\alpha'$ be an element of $\operatorname{Map}(S^{2m-1}, \operatorname{Aut}_eE_{n+1})$ that is homotopic to $l$ in $\operatorname{Map}(S^{2m-1}, \operatorname{End}_eE_{n+1})$.
Let $w_{\alpha'},\, Y_{\alpha'}$ and $V_{\sigma_{\alpha'}}$ be as mentioned above.
Then we have $[w_{\alpha'}]_1=0$ in $K_1(\mathcal{Q}(C(S^{2m})\otimes \mathbb{K}))$.
\end{thm}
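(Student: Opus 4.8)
The plan is to evaluate $[V_{\sigma_{\alpha'}}]_1$ and $[V_\sigma]_1$ through the index map and then feed the result into the relation $[V_{\sigma_{\alpha'}}]_1-[V_\sigma]_1=-n[w_{\alpha'}]_1$ established just above the statement. By Theorem \ref{kuiper} we have $K_0(\mathcal{M}(C(S^{2m})\otimes\mathbb{K}))=K_1(\mathcal{M}(C(S^{2m})\otimes\mathbb{K}))=0$, so the six-term exact sequence of the extension $0\to C(S^{2m})\otimes\mathbb{K}\to\mathcal{M}(C(S^{2m})\otimes\mathbb{K})\to\mathcal{Q}(C(S^{2m})\otimes\mathbb{K})\to 0$ collapses to an isomorphism
$$\operatorname{ind}\colon K_1(\mathcal{Q}(C(S^{2m})\otimes\mathbb{K}))\xrightarrow{\ \cong\ }K_0(C(S^{2m})\otimes\mathbb{K})\cong K_0(C(S^{2m})).$$
Since $K_0(C(S^{2m}))\cong\mathbb{Z}^{2}$ is torsion free, it suffices to prove $[V_{\sigma_{\alpha'}}]_1=[V_\sigma]_1$: this forces $n[w_{\alpha'}]_1=0$ and hence $[w_{\alpha'}]_1=0$. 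By injectivity of $\operatorname{ind}$ the equality $[V_{\sigma_{\alpha'}}]_1=[V_\sigma]_1$ in turn reduces to $\operatorname{ind}([V_{\sigma_{\alpha'}}]_1)=\operatorname{ind}([V_\sigma]_1)=-[1_{C(S^{2m})}\otimes e]_0$, the right-hand value having already been computed above.

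This is exactly where the unitary lift assembled in Lemma \ref{simpl} and Lemma \ref{implift} is used. Combining them and recalling that $\tau_{\theta_{\alpha'}}(y_{\alpha'})=\pi(\theta_{\alpha'}(Y_{\alpha'}))$, I would write
$$V_{\sigma_{\alpha'}}\oplus V^*_{\sigma_{\alpha'}}=\pi(W)+\tau_{\theta_{\alpha'}}(y_{\alpha'})=\pi\!\left(W+\theta_{\alpha'}(Y_{\alpha'})\right),$$
where $W+\theta_{\alpha'}(Y_{\alpha'})$ is a genuine unitary of $\mathbb{M}_{2n+4}(\mathcal{M}(C(S^{2m})\otimes\mathbb{K}))$. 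I then read off a partial isometry lift $A$ of $V_{\sigma_{\alpha'}}$ as the top-left $(n+2)\times(n+2)$ corner of this unitary. The off-diagonal corners, including the entry $1_{C(S^{2m})}\otimes e$, lie in the ideal $C(S^{2m})\otimes\mathbb{K}$, so that $\pi(A)=V_{\sigma_{\alpha'}}$, and the index of $V_{\sigma_{\alpha'}}$ can be computed from $A$ directly.

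The substantive computation is the identification of the defect projections of $A$. Writing $A=\left(\begin{smallmatrix}0&t\\ w&0_{n+1}\end{smallmatrix}\right)$, where $w$ is the unitary identification $H\cong H^{\oplus(n+1)}$ appearing in $W$ and $t\in\mathbb{M}_{1,n+1}(\mathcal{M})$ is $\theta_{\alpha'}$ applied to the $(1,2)$-block of $Y_{\alpha'}$, one finds $w^*w=1$, $t^*t=1_{n+1}$ and $tt^*=1_{C(S^{2m})}\otimes(1-e)$, whence $A^*A=1$ and $1-AA^*=(1_{C(S^{2m})}\otimes e)\oplus 0_{n+1}$. The hypothesis $\alpha'\in\operatorname{Map}(S^{2m-1},\operatorname{Aut}_eE_{n+1})$ is used precisely here: because $\alpha'$, and therefore the implementing $U$ and the lift $V$ of Lemma \ref{implift}, fix $e$, the relation $tt^*=1-e$ holds on the nose, exactly as in the model case $\sigma$. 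Consequently $\operatorname{ind}([V_{\sigma_{\alpha'}}]_1)=[1-A^*A]_0-[1-AA^*]_0=-[1_{C(S^{2m})}\otimes e]_0=\operatorname{ind}([V_\sigma]_1)$, so $[V_{\sigma_{\alpha'}}]_1=[V_\sigma]_1$, and the relation above yields $[w_{\alpha'}]_1=0$.

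The step I expect to be the main obstacle is the bookkeeping in this index computation: verifying that the corner $A$ is genuinely a partial isometry with the asserted defect projections. This rests on tracking every occurrence of $e$ through the isomorphism $\theta_{\alpha'}$ and the $\operatorname{Ad}U$-twist built into $Y_{\alpha'}$ in Lemma \ref{implift}. All of it is forced by $\alpha'$ fixing $e$, but it must be followed carefully through the block structure of $W+\theta_{\alpha'}(Y_{\alpha'})$, since it is the persistence of the single summand $1_{C(S^{2m})}\otimes e$ that pins the index to the same value as in the untwisted extension $\sigma$.
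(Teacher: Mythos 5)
Your proposal is correct and follows essentially the same route as the paper: it relies on the same unitary lift $W+\theta_{\alpha'}(Y_{\alpha'})$ furnished by Lemma \ref{simpl} and Lemma \ref{implift}, the same use of the relation $[V_{\sigma_{\alpha'}}]_1-[V_{\sigma}]_1=-n[w_{\alpha'}]_1$, and the same torsion-freeness of $K_1(\mathcal{Q}(C(S^{2m})\otimes\mathbb{K}))\cong K_0(C(S^{2m}))$ to finish. The only (equivalent) difference is that you compute $\operatorname{ind}[V_{\sigma_{\alpha'}}]_1$ from the defect projections of the corner partial isometry $A$, whereas the paper computes it via the conjugated projection $P=(W+\theta_{\alpha'}(Y_{\alpha'}))(1_{n+2}\oplus 0_{n+2})(W+\theta_{\alpha'}(Y_{\alpha'}))^*$; both give $-[1_{C(S^{2m})}\otimes e]_0$.
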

\begin{proof}
Recall the following commutative diagram
$$\xymatrix{
B_{\alpha'}\ar[d]\ar[rr]^{\pi_{\alpha'}}&&A_{\alpha'}\ar[dd]^{\tau_{\theta_{\alpha'}}}\\
M_{\alpha'}\ar[d]^{\theta_{\alpha'}}&&\\
M\ar[rr]^{\pi}&&\mathcal{Q}(C(S^{2m})\otimes \mathbb{K}).
}$$
By Lemma \ref{simpl} and Lemma \ref{implift},
we have
\begin{align*}
V_{\sigma_{\alpha'}}\oplus V_{\sigma_{\alpha'}}^*:=&\pi\otimes {\rm id}_{\mathbb{M}_{2n+4}}(W)+\tau_{\theta_{alpha'}}\otimes {\rm id}_{\mathbb{M}_{2n+4}}(y_{\alpha'})\\
=&\pi\otimes{\rm id}_{\mathbb{M}_{2n+4}}(W+\theta_{\alpha'}\otimes {\rm id}_{\mathbb{M}_{2n+4}}(Y_{\alpha'})),
\end{align*}
so $W+\theta_{\alpha'}(Y_{\alpha'})$ is a unitary lift of $V_{\sigma_{\alpha'}}\oplus V_{\sigma_{\alpha'}}^*$.
Let $P$ be a projection of the form
$$P\colon =(W+\theta_{\alpha'}(Y_{\alpha'}))(1_{n+2}\oplus 0_{n+2})(W+\theta_{\alpha'}(Y_{\alpha'}))^*.$$
We have ${\rm ind}[V_{\sigma_{\alpha'}}V^*_{\sigma}]_1={\rm ind}[V_{\sigma_{\alpha'}}]_1-{\rm ind}[V_{\sigma}]_1=[P]_0+[1_{C(S^{2m})}\otimes e]_0-[1_{n+2}]_0\in K_0((C(S^{2m})\otimes \mathbb{K})^{\sim})$ and we show that the index is 0.
Recall $V_t(1_{C(S^{2m-1})}\otimes e)=(1_{C(S^{2m-1})}\otimes e)V_t=(1_{C(S^{2m-1})}\otimes e)$ and $U_t(1_{C(S^{2m-1})}\otimes e)=(1_{C(S^{2m-1})}\otimes e)U_t=(1_{C(S^{2m-1})}\otimes e)$ by Definition \ref{iru}.
Direct computation yields
\begin{align*}
P=&W(1_{n+2}\oplus 0_{n+2})W^*+\theta_{\alpha'}\otimes{\rm id}_{\mathbb{M}_{2n+4}}(Y_{\alpha'}(1_{n+2}\oplus 0_{n+2})Y_{\alpha'}^*)\\
=&0_1\oplus 1_{n+1}\oplus 0_{n+2}+(1-e)\oplus0_{n+1}\oplus 0_1 \oplus 0_{n+1}
\end{align*}
Now we have
$P=(1-e)\oplus 1_{n+1}\oplus 0_{n+2}$
and get ${\rm ind}[V_{\sigma_{\alpha'}}V^*_{\sigma}]_1=[P]_0+[1_{C(S^{2m})}\otimes e]_0-[1_{n+2}]_0=0$.
Therefore we have $-n[w_{\alpha'}]_1=[V_{\sigma_{\alpha'}}V^*_{\sigma}]_1=0$ and this proves the theorem because $\operatorname{Tor}(K_1(\mathcal{Q}(C(S^{2m})\otimes \mathbb{K})), \mathbb{Z}_n)=0$.
\end{proof}
\begin{cor}\label{inj}
For the above $\alpha'$, we have $[\alpha']=0$ in $[S^{2m-1}, \operatorname{Aut}E_{n+1}]$.
\end{cor}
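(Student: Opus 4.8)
The plan is to run the closing argument of Lemma \ref{even} once more, now using Theorem \ref{muoo} in place of Proposition \ref{VP} to kill the relevant $K_1$-class. Recall from the construction preceding Theorem \ref{muoo} that, since $[\sigma_{\alpha'}]=[\sigma]$ in $\operatorname{Ext}(\mathcal{O}_{n+1}, C(S^{2m})\otimes\mathbb{K})$, Lemma \ref{ext} furnishes a unitary $w_{\alpha'}\in U_{\mathcal{Q}(C(S^{2m})\otimes\mathbb{K})}$ with $\sigma_{\alpha'}={\rm Ad}w_{\alpha'}\circ\sigma$, and that Theorem \ref{muoo} gives $[w_{\alpha'}]_1=0$ in $K_1(\mathcal{Q}(C(S^{2m})\otimes\mathbb{K}))$. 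From this vanishing I would produce a $C(S^{2m})$-linear trivialization of the section algebra $B_{\alpha'}$ and invoke Lemma \ref{bun}.

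First I would lift $w_{\alpha'}$ to a multiplier unitary. Since $K_1(\mathcal{M}(C(S^{2m})\otimes\mathbb{K}))=0$ by Theorem \ref{kuiper}, the connecting map ${\rm ind}\colon K_1(\mathcal{Q}(C(S^{2m})\otimes\mathbb{K}))\to K_0(C(S^{2m})\otimes\mathbb{K})$ is injective, so the sole obstruction ${\rm ind}[w_{\alpha'}]_1$ to a unitary lift vanishes together with $[w_{\alpha'}]_1$; equivalently, $K_1$-injectivity of $\mathcal{Q}(C(S^{2m})\otimes\mathbb{K})$ places $w_{\alpha'}$ in the connected component of $1$, whose elements lift through exponentials. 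Either way I obtain a unitary $W_{\alpha'}\in U_{\mathcal{M}(C(S^{2m})\otimes\mathbb{K})}$ with $\pi(W_{\alpha'})=w_{\alpha'}$.

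Next I would upgrade the $\mathcal{O}_{n+1}$-level intertwining $\sigma_{\alpha'}={\rm Ad}w_{\alpha'}\circ\sigma$ to the full $C(S^{2m})$-linear picture. Because $C(S^{2m})$ sits in the center of both $\mathcal{M}(C(S^{2m})\otimes\mathbb{K})$ and $\mathcal{Q}(C(S^{2m})\otimes\mathbb{K})$, the automorphisms ${\rm Ad}w_{\alpha'}$ and ${\rm Ad}W_{\alpha'}$ fix $C(S^{2m})$ and are therefore $C(S^{2m})$-linear; together with the $C(S^{2m})$-linearity of $\varphi_{\alpha'}$ and $\tau_{\theta_{\alpha'}}$, this promotes the identity on generators to ${\rm Ad}w_{\alpha'}\circ\tau=\tau_{\theta_{\alpha'}}\circ\varphi_{\alpha'}$ on all of $C(S^{2m})\otimes\mathcal{O}_{n+1}$. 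Consequently ${\rm Ad}W_{\alpha'}$ carries the trivially embedded total algebra $C(S^{2m})\otimes E_{n+1}$ of $\tau$ onto the total algebra $\theta_{\alpha'}(B_{\alpha'})$ of $\tau_{\theta_{\alpha'}}\circ\varphi_{\alpha'}$, yielding a $C(S^{2m})$-linear isomorphism $C(S^{2m})\otimes E_{n+1}\cong\theta_{\alpha'}(B_{\alpha'})\cong B_{\alpha'}$. Lemma \ref{bun} then shows that $\mathcal{P}_{\alpha'}$ is the trivial principal $\operatorname{Aut}E_{n+1}$-bundle, and Lemma \ref{BG} together with the path-connectedness of $\operatorname{Aut}E_{n+1}$ (Proposition \ref{pac}) converts this into $[\alpha']=0$ in $[S^{2m-1}, \operatorname{Aut}E_{n+1}]$.

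The genuinely hard part is already behind us: it is exactly the vanishing $[w_{\alpha'}]_1=0$ of Theorem \ref{muoo} that makes $w_{\alpha'}$ liftable and hence makes the bundle trivial. Within the present corollary the only point requiring care is the third step, namely verifying that the multiplier lift of a unitary that merely intertwines the simple quotient extensions $\sigma$ and $\sigma_{\alpha'}$ really does intertwine the two extensions of the \emph{non-simple} algebra $C(S^{2m})\otimes\mathcal{O}_{n+1}$, so that it restricts to a $C(S^{2m})$-linear isomorphism of total algebras. This is where centrality of $C(S^{2m})$ is essential, and it is needed precisely because Lemma \ref{ext}, which requires simplicity, cannot be applied to $C(S^{2m})\otimes\mathcal{O}_{n+1}$ directly.
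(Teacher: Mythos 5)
Your proposal is correct and takes essentially the same route as the paper's own proof: use $[w_{\alpha'}]_1=0$ together with $K_1$-injectivity of $\mathcal{Q}(C(S^{2m})\otimes\mathbb{K})$ to lift $w_{\alpha'}$ to $W_{\alpha'}\in U_{\mathcal{M}(C(S^{2m})\otimes\mathbb{K})}$, observe that ${\rm Ad}W_{\alpha'}$ is a $C(S^{2m})$-linear isomorphism $B_l\to B_{\alpha'}$, and conclude via Lemma \ref{bun} and Lemma \ref{BG}. Your centrality argument for promoting the intertwining from $1\otimes\mathcal{O}_{n+1}$ to all of $C(S^{2m})\otimes\mathcal{O}_{n+1}$ is a correct elaboration of a step the paper leaves implicit (it is the same device used explicitly in Proposition \ref{pac}).
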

\begin{proof}
Since $[w_{\alpha'}]_1=0$, there is a unitary $W_{\alpha'}$ in $U_{\mathcal{M}(C(S^{2m})\otimes \mathbb{K})}$ that is a lift of $w_{\alpha'}$. 
Therefore we have $C(S^{2m})$-linear isomorphism ${\rm Ad}W_{\alpha'} \colon B_l\to B_{\alpha'}$, and $\mathcal{P}_l\cong \mathcal{P}_{\alpha'}$ from  Lemma \ref{bun}. 
By Lemma \ref{BG}, we have $[\alpha']=[l]=0$.
\end{proof} 

\begin{lem}\label{imp}
Let $m\geq 1$ be a natural number. Let $\alpha$ be an element in $\operatorname{Map}(S^{2m-1}, \operatorname{Aut}_eE_{n+1})$. If $\alpha \sim_h l$ in $\operatorname{Map}(S^{2m-1}, \operatorname{End}_0E_{n+1})$, then there exists $\alpha'$ in $\operatorname{Map}(S^{2m-1}, \operatorname{Aut}_eE_{n+1})$ satisfying the following :
$$\alpha'\sim_h l\; in\; \operatorname{Map}(S^{2m-1}, \operatorname{End}_eE_{n+1}),\;
\alpha\sim_h \alpha'\; in \; \operatorname{Map}(S^{2m-1}, \operatorname{Aut}E_{n+1}).$$
\end{lem}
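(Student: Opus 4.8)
The plan is to deduce the statement from a diagram chase between the two fibration sequences lying over $\operatorname{B}S^1$: the principal $\operatorname{Aut}_eE_{n+1}$-bundle $\operatorname{Aut}_eE_{n+1}\to\operatorname{Aut}E_{n+1}\xrightarrow{\eta}\operatorname{B}S^1$ of Lemma \ref{fib}, and the fibration $\operatorname{End}_eE_{n+1}\to\operatorname{End}_0E_{n+1}\xrightarrow{\eta}\operatorname{B}S^1$. The inclusions $\operatorname{Aut}E_{n+1}\hookrightarrow\operatorname{End}_0E_{n+1}$ and $\operatorname{Aut}_eE_{n+1}\hookrightarrow\operatorname{End}_eE_{n+1}$ commute with the two maps $\eta$ (both send $\beta\mapsto\beta(e)$), so they constitute a fibre-preserving morphism of these fibrations covering $\operatorname{id}_{\operatorname{B}S^1}$. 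Before chasing I would record the structural facts I need: $\operatorname{Aut}E_{n+1}$ is path-connected (Proposition \ref{pac}), $\operatorname{End}_eE_{n+1}$ is path-connected by Remark \ref{hs} together with the connectedness of $U_{E_{n+1}}$ coming from Proposition \ref{k1}, and $\operatorname{End}_0E_{n+1}$ is path-connected by Theorem \ref{homotopy}; moreover all spaces in sight are topological monoids, with $\operatorname{Aut}_eE_{n+1}$ and $\operatorname{Aut}E_{n+1}$ topological groups. Hence every based sphere-map lands in the identity component, free and based homotopy agree for the connected targets, and I may identify $[S^{2m-1},-]$ with $\pi_{2m-1}(-)$ and work with the commuting ladder of long exact homotopy sequences of the two fibrations, whose connecting homomorphisms are natural \cite{AT}.

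The chase then runs as follows. Write $\partial\colon\pi_{2m}(\operatorname{B}S^1)\to\pi_{2m-1}(\operatorname{Aut}_eE_{n+1})$ and $\partial'\colon\pi_{2m}(\operatorname{B}S^1)\to\pi_{2m-1}(\operatorname{End}_eE_{n+1})$ for the two connecting maps, and let $i_*,\,j_*$ be induced by the fibre inclusions $\operatorname{Aut}_eE_{n+1}\hookrightarrow\operatorname{Aut}E_{n+1}$ and $\operatorname{Aut}_eE_{n+1}\hookrightarrow\operatorname{End}_eE_{n+1}$. Naturality of the morphism of fibrations gives $\partial'=j_*\circ\partial$. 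Since $\pi_{2m-1}(\operatorname{B}S^1)=0$, exactness makes $i_*$ surjective with $\ker i_*=\operatorname{im}\partial$, and likewise the kernel of $\pi_{2m-1}(\operatorname{End}_eE_{n+1})\to\pi_{2m-1}(\operatorname{End}_0E_{n+1})$ equals $\operatorname{im}\partial'$. The hypothesis $\alpha\sim_h l$ in $\operatorname{End}_0E_{n+1}$ says $[\alpha]$ dies in $\pi_{2m-1}(\operatorname{End}_0E_{n+1})$, so commutativity of the ladder forces $j_*[\alpha]\in\operatorname{im}\partial'$, say $j_*[\alpha]=\partial'(c)$ with $c\in\pi_{2m}(\operatorname{B}S^1)$. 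I would pick a based map $\beta\colon S^{2m-1}\to\operatorname{Aut}_eE_{n+1}$ representing $\partial(c)$ and set $\alpha':=\alpha\cdot\beta^{-1}$, the pointwise product in the group $\operatorname{Aut}_eE_{n+1}$, so that $[\alpha']=[\alpha]-\partial(c)$ in $\pi_{2m-1}(\operatorname{Aut}_eE_{n+1})$. Then $j_*[\alpha']=\partial'(c)-j_*\partial(c)=\partial'(c)-\partial'(c)=0$, giving $\alpha'\sim_h l$ in $\operatorname{End}_eE_{n+1}$; and since $\partial(c)\in\operatorname{im}\partial=\ker i_*$ we get $i_*[\alpha']=i_*[\alpha]$, i.e. $\alpha\sim_h\alpha'$ in $\operatorname{Aut}E_{n+1}$. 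These are precisely the two required conclusions.

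The correction term is governed by $\pi_{2m}(\operatorname{B}S^1)=\pi_{2m}(K(\mathbb{Z},2))$, which vanishes for $m\geq2$; in that range $c=0$, the adjustment $\beta$ is null-homotopic, and one may simply take $\alpha'=\alpha$ (equivalently, lift the null-homotopy of $\eta\circ\alpha$ into the fibre by the homotopy lifting property). The genuine content is the case $m=1$, where the obstruction lives in $\pi_2(\operatorname{B}S^1)=\mathbb{Z}$ and must be absorbed by $\beta$. I expect the main obstacle to be the careful verification that the two inclusions really form a fibre-preserving morphism of the fibration sequences over a common $\eta$, so that the connecting homomorphisms are natural and the identity $\partial'=j_*\partial$ is justified; the remaining bookkeeping—passing between the homotopy sets $[S^{2m-1},-]$ of the statement and the homotopy groups $\pi_{2m-1}(-)$, checking that the two group laws on $\pi_{2m-1}(\operatorname{Aut}_eE_{n+1})$ coincide, and turning the based homotopies produced above into the free homotopies demanded—is routine given the path-connectedness and monoid/group structure recorded in the first step. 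This lemma then feeds Corollary \ref{inj} to yield the injectivity half of the weak homotopy equivalence.
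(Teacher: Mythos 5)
Your argument is correct, and in the only non-trivial case $m=1$ it takes a genuinely different route from the paper. Both proofs dispose of $m\geq 2$ in the same way: $\pi_{2m}(\operatorname{B}S^1)=\pi_{2m-1}(\operatorname{B}S^1)=0$ forces $[S^{2m-1},\operatorname{End}_eE_{n+1}]\cong[S^{2m-1},\operatorname{End}_0E_{n+1}]$, so $\alpha'=\alpha$ works. For $m=1$ the paper does not use the ladder of fibrations at all: it identifies $[S^1,\operatorname{End}_eE_{n+1}]$ with $[S^1,U_{E_{n+1}}]=\mathbb{Z}$ via Remark \ref{hs}, reads off from Theorem \ref{homotopy} that the obstruction is $[u_\alpha]=-nd$ for some $d$, and then writes down an explicit correcting loop $\rho_d={\rm Ad}(\bar z^dT_1T_1^*+(1-T_1T_1^*))$ in $\operatorname{Aut}_eE_{n+1}$, verifying by a matrix computation that $[u_{\rho_d}]_1=nd$ and by $K_1$-injectivity (Lemma \ref{k1}) that $\rho_d\sim_h l$ in $\operatorname{Map}(S^1,\operatorname{Aut}E_{n+1})$; the desired element is $\alpha'=\rho_d\alpha$. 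Your version replaces the explicit construction by the naturality relation $\partial'=j_*\circ\partial$ between the connecting maps of the two fibrations over $\operatorname{B}S^1$ and a formal chase, with the correction an unspecified representative of $\partial(c)$. What each buys: your argument is shorter, uniform in $m$, and isolates the purely formal content of the lemma; the paper's argument stays at the level of concrete unitaries, produces an explicit $\alpha'$, and in effect computes $\partial$ on the generator of $\pi_2(\operatorname{B}S^1)$ (its image in $[S^1,U_{E_{n+1}}]=\mathbb{Z}$ is $\pm n$), which is exactly the mechanism behind $\pi_1(\operatorname{Aut}E_{n+1})=\mathbb{Z}_n$. One small point you should nail down rather than wave at: to base $\alpha$ at $\operatorname{id}$ and identify $[S^{2m-1},-]$ with $\pi_{2m-1}(-)$ you need $\operatorname{Aut}_eE_{n+1}$ itself to be path connected; you cannot invoke Remark \ref{Ae} for this (it depends on Theorem \ref{main}, which depends on the present lemma), but it follows circularity-free from the $\pi_0$ end of the exact sequence of the bundle in Lemma \ref{fib}, using $\pi_1(\operatorname{B}S^1)=0$ and Proposition \ref{pac}. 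Also note that $[\alpha\cdot\beta^{-1}]=[\alpha]-[\beta]$ uses the Eckmann--Hilton coincidence of the pointwise and concatenation products on $\pi_{2m-1}$ of a topological group, and that $K_0(\alpha)={\rm id}$ (Lemma \ref{tri}) is what the paper needs in the analogous step $[\rho_d(u_\alpha)]_1=[u_\alpha]_1$; in your formulation this is absorbed into $j_*$ being a homomorphism, so nothing extra is required.
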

\begin{proof}
It follows from Lemma \ref{LE} that there is an exact sequence 
$$[S^{2m}, \operatorname{B}S^1] \to [S^{2m-1}, \operatorname{End}_eE_{n+1}]\to [S^{2m-1}, \operatorname{End}_0E_{n+1}]\to [S^{2m-1}, \operatorname{B}S^1],$$
and by Remark \ref{hs} the map $[S^{2m-1}, \operatorname{End}_eE_{n+1}]\to [S^{2m-1}, U_{E_{n+1}}]$ which sends         $[\alpha]$ to $[u_{\alpha}] :=[e+\sum_{i=1}^{n+1}\alpha_x(T_i)T_i^*] \in[S^1, U_{E_{n+1}}]$ is an isomorphism. Since $\operatorname{B}S^1$ is $K(\mathbb{Z}, 2)$ space, if $m\geq 2$ and $\alpha \sim_h l$ in $\operatorname{Map}(S^{2m-1}, \operatorname{End}_0E_{n+1})$, then we have $[\alpha]=0$ in $[S^{2m-1}, \operatorname{End}_eE_{n+1}]$ because $[S^{2m-1}, \operatorname{End}_eE_{n+1}]=[S^{2m-1}, \operatorname{End}_0E_{n+1}]$, $m\geq 2$.\\
Hence it is sufficient to show the claim in the case of $m=1$.
Let $\alpha$ be an element of $\operatorname{Map}(S^1, \operatorname{Aut}_eE_{n+1})$ with $\alpha\sim_h l$ in $\operatorname{Map}(S^1, \operatorname{End}_0E_{n+1})$. 
Computation in Theorem \ref{homotopy} yields that there exists $d\in \mathbb{Z}$ with $[u_{\alpha}]=-nd \in[S^1, U_{E_{n+1}}]=\mathbb{Z}$. We define $\rho_d$ by 
$$\rho_d \colon={\rm Ad}(\bar{z}^dT_1T_1^*+(1-T_1T_1^*)) \in \operatorname{Map}(S^1, \operatorname{Aut}_eE_{n+1}).$$ 
By Lemma \ref{k1}, there is a continuous path from $(\bar{z}^dT_1T_1^*+(1-T_1T_1^*))$ to $\bar{z}^d$ in $U_{C(S^1)\otimes E_{n+1}}$, and $\rho_d\sim_h {\rm Ad}\bar{z}^d=l$ in $\operatorname{Map}(S^1, \operatorname{Aut}E_{n+1})$. 
We have $[u_{\rho_d\alpha}]_1=[\rho_d(u_{\alpha})]_1+[u_{\rho_d}]_1$ in $K_1(C(S^1)\otimes E_{n+1})=[S^1, U_{E_{n+1}}]$. 
By Lemma \ref{tri}, it follows that $[\rho_d(u_{\alpha})]_1=[u_{\alpha}]_1$. Hence we have $[u_{\rho_d\alpha}]_1=-nd+[u_{\rho_d}]_1$. The following computations yield $[u_{\rho_d}]_1=nd$ :
\begin{align*}
u_{\rho_d}&=e+\sum_{i=1}^{n+1}(\bar{z}^dT_1T_1^*+(1-T_1T_1^*))T_i(z^dT_1T_1^*+(1-T_1T_1^*))T_i^*)\\
&=(\bar{z}^dT_1T_1^*+(1-T_1T_1^*))(e+\sum_{i=1}^{n+1}T_i(z^dT_1T_1^*+(1-T_1T_1^*))T_i^*),
\end{align*}
\begin{align*}
&\left(
\begin{array}{cc}
e+\sum_{i=1}^{n+1}T_i(z^dT_1T_1^*+(1-T_1T_1^*))T_i^*)&0\\
0&1_{n+1}
\end{array}\right)\\
=&\left(
\begin{array}{cccc}
\mathbb{T}& e\\
0_{n+1}&\mathbb{T}^*
\end{array}\right)\left(
\begin{array}{ccc}
(z^dT_1T_1^*+(1-T_1T_1^*))\otimes 1_{n+1}&0 \\
0& 1
\end{array}\right)\left(
\begin{array}{cc}
\mathbb{T}^*&0_{n+1}\\
e&\mathbb{T}
\end{array}\right).
\end{align*}
Therefore we have $[u_{\rho_d\alpha}]=0$ in $[S^1, U_{E_{n+1}}]$. 
By Remark \ref{hs}, we have the isomorphism $[S^1, \operatorname{End}_eE_{n+1}]\ni \rho_d\alpha \mapsto [u_{\rho_d\alpha}] \in [S^1, U_{E_{n+1}}]$, and $\alpha'\colon=\rho_d\alpha$ satisfies all assumptions of the lemma.
\end{proof}
We show the weak homotopy equivalence.
\begin{thm}\label{main}
The inclusion map $\operatorname{Aut}E_{n+1} \to \operatorname{End}_0E_{n+1}$ is a weak homotopy equivalence.
\end{thm}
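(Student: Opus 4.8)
The plan is to show that the inclusion $\iota\colon \operatorname{Aut}E_{n+1}\to\operatorname{End}_0E_{n+1}$ induces a bijection $\iota_*\colon[S^k,\operatorname{Aut}E_{n+1}]\to[S^k,\operatorname{End}_0E_{n+1}]$ for every $k\ge 0$; since both are $H$-spaces (so based and free classes agree) this is exactly a weak homotopy equivalence. The case $k=0$ is Proposition \ref{pac} together with the definition of $\operatorname{End}_0E_{n+1}$. For even $k=2m$, $m\ge 1$, there is nothing to do, as $[S^{2m},\operatorname{Aut}E_{n+1}]=0$ by Lemma \ref{even} while $\pi_{2m}(\operatorname{End}_0E_{n+1})=0$ by Theorem \ref{homotopy}. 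So the whole content is the bijectivity of $\iota_*$ in the odd degrees $k=2m-1$.

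For injectivity, suppose $\alpha\in\operatorname{Map}(S^{2m-1},\operatorname{Aut}E_{n+1})$ has $\iota_*[\alpha]=0$, i.e. $\alpha\sim_h l$ in $\operatorname{End}_0E_{n+1}$, where $l$ is the constant map at $\operatorname{id}_{E_{n+1}}$. First I would use Remark \ref{min} (which rests on Lemma \ref{zero} and the fibration of Lemma \ref{fib}) to homotope $\alpha$ inside $\operatorname{Aut}E_{n+1}$ to a map $\alpha_1\in\operatorname{Map}(S^{2m-1},\operatorname{Aut}_eE_{n+1})$; since this homotopy takes place in $\operatorname{Aut}E_{n+1}\subset\operatorname{End}_0E_{n+1}$ we still have $\alpha_1\sim_h l$ in $\operatorname{End}_0E_{n+1}$. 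Lemma \ref{imp} then yields $\alpha'\in\operatorname{Map}(S^{2m-1},\operatorname{Aut}_eE_{n+1})$ with $\alpha'\sim_h l$ in $\operatorname{End}_eE_{n+1}$ and $\alpha_1\sim_h\alpha'$ in $\operatorname{Aut}E_{n+1}$, and Corollary \ref{inj} gives $[\alpha']=0$ in $[S^{2m-1},\operatorname{Aut}E_{n+1}]$. Hence $[\alpha]=[\alpha_1]=[\alpha']=0$.

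Surjectivity is where the real work lies, and I would split it into $m=1$ and $m\ge 2$. For $m=1$ the gauge loop $z\mapsto\lambda_z$ consists of automorphisms and generates $\pi_1(\operatorname{End}_0E_{n+1})=\mathbb{Z}_n$ (the remark after Theorem \ref{homotopy}), so $\iota_*$ is onto. For $m\ge 2$ we have $\pi_{2m-1}(\operatorname{End}_0E_{n+1})=\mathbb{Z}$, so $\operatorname{im}\iota_*=d\mathbb{Z}$ for some $d\ge1$, and I would pin down $d=1$ from two inputs. The first is the naturality square for the quotient $E_{n+1}\to\mathcal{O}_{n+1}$,
\[
\begin{CD}
[S^{2m-1},\operatorname{Aut}E_{n+1}] @>{\iota_*}>> [S^{2m-1},\operatorname{End}_0E_{n+1}]\\
@V{q_G}VV @VV{q_H}V\\
[S^{2m-1},\operatorname{Aut}\mathcal{O}_{n+1}] @>>> [S^{2m-1},\operatorname{End}_0\mathcal{O}_{n+1}],
\end{CD}
\]
whose vertical maps send $\alpha$ to the induced $\tilde\alpha$; the bottom map is bijective by \cite[Proposition 6.1]{D2}, $q_G$ is surjective by Lemma \ref{sur}, and $q_H$ is the reduction $\mathbb{Z}\to\mathbb{Z}_n$ whose kernel is the image $n\mathbb{Z}$ of $K_1(C(S^{2m-1})\otimes\mathbb{K})$ in the six-term sequence. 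Hence $q_H(\operatorname{im}\iota_*)=\pi_{2m-1}(\operatorname{End}_0\mathcal{O}_{n+1})=\mathbb{Z}_n$, so $\gcd(d,n)=1$. The second input is an analogue of the automorphisms $\rho_d$ of Lemma \ref{imp}: for a scalar unitary $\zeta$ representing a generator of $K^1(S^{2m-1})$, the inner automorphism $\operatorname{Ad}(\zeta T_1T_1^{*}+(1-T_1T_1^{*}))$ lies in $\operatorname{Map}(S^{2m-1},\operatorname{Aut}_eE_{n+1})$, and by the very computation of Lemma \ref{imp} (using that the canonical endomorphism $\phi(x)=\sum_iT_ixT_i^{*}$ acts on $K$-theory as multiplication by $n+1$) its image under $\iota_*$ is $\pm n$ times a generator of $\pi_{2m-1}(\operatorname{End}_0E_{n+1})$. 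Thus $n\mathbb{Z}\subseteq\operatorname{im}\iota_*$, i.e. $d\mid n$; combined with $\gcd(d,n)=1$ this forces $d=1$, and $\iota_*$ is onto.

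The hardest and least formal step is this surjectivity in degrees $\ge 3$, and specifically the factor-$n$ computation identifying $\iota_*\big[\operatorname{Ad}(\zeta T_1T_1^{*}+(1-T_1T_1^{*}))\big]$ with $\pm n$ times a generator; once that and the naturality square are in hand, the remaining assembly is routine use of Lemmas \ref{even}, \ref{sur}, \ref{imp} and Corollary \ref{inj} together with the long exact sequences.
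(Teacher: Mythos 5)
Your overall architecture is the same as the paper's: reduce to odd spheres via Lemma \ref{even} and Theorem \ref{homotopy}, prove injectivity by the chain Remark \ref{min} $\to$ Lemma \ref{imp} $\to$ Corollary \ref{inj}, and prove surjectivity in degree $2m-1\ge 3$ by combining (a) an element of the image that is invertible mod $n$, coming from Lemma \ref{sur} and the comparison with $\operatorname{End}\mathcal{O}_{n+1}$, with (b) the containment $n\mathbb{Z}\subseteq\operatorname{im}\iota_*$ coming from inner automorphisms. The injectivity half and the $m=1$ case are correct as you state them.

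There is, however, a concrete failure in your step (b) for $m\ge 2$. You propose to realize $n\mathbb{Z}$ in the image using $\operatorname{Ad}(\zeta T_1T_1^{*}+(1-T_1T_1^{*}))$ for ``a scalar unitary $\zeta$ representing a generator of $K^1(S^{2m-1})$.'' No such $\zeta$ exists: a scalar unitary is a map $S^{2m-1}\to S^1$, and $[S^{2m-1},S^1]=H^1(S^{2m-1})=0$ for $m\ge 2$, so every scalar unitary represents $0$ in $K^1(S^{2m-1})=\mathbb{Z}$. (This is why the $\rho_d$ trick of Lemma \ref{imp} is stated only for $S^1$, where $z\mapsto \bar z^{\,d}$ genuinely generates $K^1$.) As written, your element is homotopic to the identity and contributes nothing to the image, so the conclusion $d\mid n$ is not established. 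The repair is exactly what the paper does: take an arbitrary unitary $V\in U_{C(S^{2m-1})\otimes E_{n+1}}$ whose class generates $K_1(C(S^{2m-1})\otimes E_{n+1})\cong K^1(S^{2m-1})=\mathbb{Z}$ (such $V$ exists by Proposition \ref{k1}), replace it by a homotopic $V'$ commuting with $1\otimes e$ using Remark \ref{hs}, and compute
\[
[u_{\operatorname{Ad}V'}]_1=\Bigl[\,1\otimes e+\sum_{i=1}^{n+1}V'(1\otimes T_i){V'}^{*}(1\otimes T_i^{*})\Bigr]_1=[V']_1+(n+1)[{V'}^{*}]_1=-n[V]_1,
\]
so the inner automorphisms sweep out exactly $n\mathbb{Z}\subset[S^{2m-1},\operatorname{End}_0E_{n+1}]=\mathbb{Z}$. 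With that substitution your argument closes; the rest of your assembly (the naturality square forcing $\gcd(d,n)=1$, hence $d=1$) agrees with the paper's.
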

\begin{proof}
By  Lemma \ref{even} and Theorem \ref{homotopy}, we consider only the case of odd homotopy groups. 
Let $k$ be an odd number.

First, we show the map $[S^k, \operatorname{Aut}E_{n+1}]\to[S^k, \operatorname{End}_0E_{n+1}]$ is injective. 
If $\alpha$ in $\operatorname{Map}(S^k, \operatorname{Aut}E_{n+1})$ is homotopic to $l$ in $\operatorname{Map}(S^k, \operatorname{End}_0E_{n+1})$, we may assume that there exists $\alpha'\in \operatorname{Map}(S^k, \operatorname{Aut}_eE_{n+1})$ homotopic to $\alpha$ by Remark \ref{min}.
From  Lemma \ref{imp}, we may assume that $\alpha'\sim_h l$ in $\operatorname{Map}(S^k, \operatorname{End}_eE_{n+1})$, and we have $[\alpha']=[\alpha]=0$ in $[S^k, \operatorname{Aut}E_{n+1}]$ by Corollary \ref{inj}. 
Therefore the map $[S^k, \operatorname{Aut}E_{n+1}]\to [S^k, \operatorname{End}_0E_{n+1}]$ is injective.

Second, we show the surjectivity.
The following commutative diagram holds
$$\xymatrix{
[S^k, \operatorname{Aut}E_{n+1}]\ar@{->>}[r]^{{\rm Lem}\; \ref{sur}}\ar[d]&[S^k, \operatorname{Aut}\mathcal{O}_{n+1}]\ar@{=}[d]\\
[S^k, \operatorname{End}_0E_{n+1}]\ar[r]&[S^k, \operatorname{End}\mathcal{O}_{n+1}].
}$$
In the case of $k=1$, we have $[S^1, \operatorname{End}_0E_{n+1}] = [S^1, \operatorname{End}\mathcal{O}_{n+1}]=\mathbb{Z}_n$ because the generators of the both groups are constructed from canonical gauge actions of $S^1$ that are of the form $\lambda_z \colon T_i\mapsto zT_i$ and $\tilde{\lambda}_z \colon S_i\mapsto zS_i$. Therefore the surjectivity follows from  Lemma \ref{sur}.

In the case of $k\geq 3$, the map $[S^k, U_{E_{n+1}}]\to [S^k, \operatorname{End}_0E_{n+1}]=\mathbb{Z}$ in Theorem \ref{homotopy} is an isomorphism. 
Therefore the map $\mathbb{Z}=[S^k, \operatorname{End}_0E_{n+1}]\to [S^k, \operatorname{End}\mathcal{O}_{n+1}]=[S^k, U_{\mathcal{O}_{n+1}}]=\mathbb{Z}_n$ is the quotient by $n\mathbb{Z}$. 
Hence the image of the map $[S^k, \operatorname{Aut}E_{n+1}]\to [S^k, \operatorname{End}_0E_{n+1}]=\mathbb{Z}$ contains an element  $nd+1$ for some $d\in \mathbb{Z}$ by Lemma \ref{sur}.

On the other hand, we show that the image contains $n\mathbb{Z}$. 
For every $V\in U_{C(S^k)\otimes E_{n+1}}$, there exists $V'\in U_{C(S^k)\otimes E_{n+1}}$ with $V'(1_{C(S^k)}\otimes e)=(1_{C(S^k)}\otimes e)V'=(1_{C(S^k)}\otimes e)$ which is homotopic to $V$ in $U_{C(S^k)\otimes E_{n+1}}$ by Remark \ref{hs}.

Since the isomorphism $[S^k, U_{E_{n+1}}]\to [S^k, \operatorname{End}_0^1E_{n+1}]$ sends $$-n[V]_1=[1_{C(S^k)}\otimes e+\sum_{i=1}^{n+1}V'(1\otimes T_i)V'^*(1\otimes T_i^*)]_1$$ to $[{\rm Ad}V']=[{\rm Ad}V]$,   
the subset $$\{ [{\rm Ad}V] \in [S^k, \operatorname{Aut}E_{n+1}] \mid V\in U_{C(S^k)\otimes E_{n+1}}\}$$ is mapped onto the subset $$\{-n[V]_1 \in K_1(C(S^k)\otimes E_{n+1}) \mid V\in U_{(C(S^k)\otimes E_{n+1}}\}=n\mathbb{Z} \subset [S^k, \operatorname{End}_0E_{n+1}]=\mathbb{Z}.$$  Therefore the image contains $nd+1$ and $n\mathbb{Z}$, and we have the conclusion.
\end{proof}

\subsection{An exact sequence of homotopy sets}
 We have the principal $\operatorname{Aut}_eE_{n+1}$-bundle
$\operatorname{Aut}_eE_{n+1}\xrightarrow{i}\operatorname{Aut}E_{n+1}\xrightarrow{\eta}\operatorname{B}S^1.$ We denote by $f$ the classifying map of the bundle and denote by $r$ the restriction map $\operatorname{Aut}E_{n+1}\to\operatorname{Aut}\mathbb{K}$. In this section, we show the following theorem.
\begin{thm}\label{seq}
Let $X$ be a compact CW-complex. Then we have the following exact sequence of the pointed set where first $4$-terms gives the exact sequence of the groups :
$$H^1(X)\to K^1(X)\to [X, \operatorname{Aut}E_{n+1}]\xrightarrow{\eta_*} H^2(X)\xrightarrow{f_*}[X, \operatorname{BAut}_eE_{n+1}]\xrightarrow{Bi_*}[X, \operatorname{BAut}E_{n+1}]\xrightarrow{Br_*}H^3(X).$$
It follows that ${\rm Im}\; \eta_*\subset \operatorname{Tor}(H^2(X), \mathbb{Z}_n)$ and ${\rm Im}\; Br_*\subset \operatorname{Tor}(H^3(X), \mathbb{Z}_n)$.
\end{thm}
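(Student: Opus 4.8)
The plan is to exhibit the whole displayed sequence as the long exact sequence, in the sense of applying $[X,-]$, of a single fibre sequence, then to identify the six terms and read off the two torsion bounds. I would start from the principal $\operatorname{Aut}_eE_{n+1}$-bundle of Lemma \ref{fib}, namely $\operatorname{Aut}_eE_{n+1}\xrightarrow{i}\operatorname{Aut}E_{n+1}\xrightarrow{\eta}\operatorname{B}S^1$ with classifying map $f$. The Barratt--Puppe extension of a fibration (\cite[Chap. 6]{AT}, \cite{H}) gives the fibre sequence
$$\operatorname{Aut}_eE_{n+1}\xrightarrow{i}\operatorname{Aut}E_{n+1}\xrightarrow{\eta}\operatorname{B}S^1\xrightarrow{f}\operatorname{BAut}_eE_{n+1}\xrightarrow{Bi}\operatorname{BAut}E_{n+1}$$
together with its leftward delooping $\Omega\operatorname{B}S^1\to\operatorname{Aut}_eE_{n+1}\to\cdots$. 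The one non-formal point is the final map to $\operatorname{BAut}\mathbb{K}$: the commutative diagram preceding Lemma \ref{zero} exhibits $\eta$ as the composite $\operatorname{Aut}E_{n+1}\xrightarrow{r}\operatorname{Aut}\mathbb{K}\xrightarrow{\simeq}\operatorname{B}S^1$ with the second arrow a homotopy equivalence (\cite[Lemma 2.8]{DP}), so the homotopy fibre of $r$ coincides with that of $\eta$, which is $\operatorname{Aut}_eE_{n+1}$. Hence $\operatorname{Aut}_eE_{n+1}\xrightarrow{i}\operatorname{Aut}E_{n+1}\xrightarrow{r}\operatorname{Aut}\mathbb{K}$ is a fibre sequence of topological groups, and applying the classifying space functor yields a fibration $\operatorname{BAut}_eE_{n+1}\xrightarrow{Bi}\operatorname{BAut}E_{n+1}\xrightarrow{Br}\operatorname{BAut}\mathbb{K}$ with fibre $\operatorname{BAut}_eE_{n+1}$; splicing this on the right supplies $Br$, which is exactly the classifying map of the $\operatorname{B}S^1$-fibration $\operatorname{B}S^1\xrightarrow{f}\operatorname{BAut}_eE_{n+1}\xrightarrow{Bi}\operatorname{BAut}E_{n+1}$.

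Applying $[X,-]$ then gives a long exact sequence of pointed sets. The terms coming from loop spaces are groups and the connecting maps among them are homomorphisms; since $\Omega\operatorname{B}S^1\simeq S^1$, $\operatorname{Aut}_eE_{n+1}\simeq\Omega\operatorname{BAut}_eE_{n+1}$, $\operatorname{Aut}E_{n+1}\simeq\Omega\operatorname{BAut}E_{n+1}$ and $\operatorname{B}S^1\simeq\Omega\operatorname{BAut}\mathbb{K}$, the first four terms form an exact sequence of groups. I would then identify the terms: $[X,\Omega\operatorname{B}S^1]=[X,S^1]=H^1(X)$, $[X,\operatorname{B}S^1]=H^2(X)$ by the Chern class identification, and $[X,\operatorname{BAut}\mathbb{K}]=H^3(X)$ through $\operatorname{Aut}\mathbb{K}\simeq\operatorname{B}S^1$. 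Finally the weak equivalence $\operatorname{Aut}_eE_{n+1}\simeq U_{E_{n+1}}$ of Remark \ref{Ae}, the $K_1$-injectivity of $C(X)\otimes E_{n+1}$ (Proposition \ref{k1}) and $E_{n+1}\sim_{KK}\mathbb{C}$ give $[X,\operatorname{Aut}_eE_{n+1}]=[X,U_{E_{n+1}}]=K_1(C(X)\otimes E_{n+1})=K^1(X)$, matching all six terms.

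For the torsion bounds I would use that the connecting maps are multiplication by $\pm n$. For $\eta_*$ the argument is $K$-theoretic: for $\alpha\in\operatorname{Map}(X,\operatorname{Aut}E_{n+1})$ one has $\eta_*[\alpha]=c_1(L_\alpha)$, where $L_\alpha$ is the line bundle $x\mapsto\alpha_x(e)$. As $\alpha$ is $C(X)$-linear it acts as the identity on $K_0(C(X)\otimes E_{n+1})$ (Lemma \ref{tri}), so $j_*[L_\alpha]=[\alpha(1\otimes e)]_0=[1\otimes e]_0=j_*[1]$, where $j_*\colon K^0(X)\to K^0(X)$ is induced by $C(X)\otimes\mathbb{K}\hookrightarrow C(X)\otimes E_{n+1}$ and is multiplication by $-n$ by the six-term sequence; hence $n([L_\alpha]-[1])=0$ in $K^0(X)$, and applying the determinant homomorphism $K^0(X)\to H^2(X)$ gives $n\,c_1(L_\alpha)=0$. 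For $Br_*$ I would run the Serre spectral sequence of the $\operatorname{B}S^1$-fibration above: the fibre $\operatorname{B}S^1=K(\mathbb{Z},2)$ has $H^2=\mathbb{Z}\gamma$, and $f$ is multiplication by $\pm n$ on $\pi_2$ by the computation in the proof of Theorem \ref{homotopy} (the map $[z]\mapsto[1-e+ze]$ is $\times(-n)$), so $\operatorname{Im}(f^*)=n\mathbb{Z}\gamma$. Since $\operatorname{Im}(f^*)=E_\infty^{0,2}=\ker d_3$ and $d_3(\gamma)=Br^{*}\iota_3=:u$, linearity of the transgression yields $n\,u=d_3(n\gamma)=0$ in $H^3(\operatorname{BAut}E_{n+1})$; pulling back along classifying maps gives $n\cdot\operatorname{Im}\eta_*=0$ and $n\cdot\operatorname{Im}Br_*=0$, that is $\operatorname{Im}\eta_*\subset\operatorname{Tor}(H^2(X),\mathbb{Z}_n)$ and $\operatorname{Im}Br_*\subset\operatorname{Tor}(H^3(X),\mathbb{Z}_n)$.

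The genuinely non-formal step, and the main obstacle, is the rightward continuation producing $Br$: one must identify the homotopy fibre of $r$ with $\operatorname{Aut}_eE_{n+1}$ via the factorization $\eta=(\text{equiv})\circ r$, and check that delooping turns the resulting fibre sequence of groups into a fibration with fibre $\operatorname{BAut}_eE_{n+1}$. This is precisely where path connectivity enters: $\operatorname{Aut}E_{n+1}$ and $\operatorname{Aut}_eE_{n+1}$ are path connected (Proposition \ref{pac}, Remark \ref{Ae}), so $\operatorname{BAut}E_{n+1}$ and $\operatorname{BAut}_eE_{n+1}$ are simply connected, the delooped fibre sequence has the expected homotopy fibre, and the spectral sequence computations run with trivial local coefficients. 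Everything else is bookkeeping with results already established.
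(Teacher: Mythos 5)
Your first five terms are obtained exactly as in the paper: the principal bundle $\operatorname{Aut}_eE_{n+1}\to\operatorname{Aut}E_{n+1}\xrightarrow{\eta}\operatorname{B}S^1$ plus the Puppe/classifying-space exact sequence (the paper's Lemma \ref{gen}), together with the identifications $[X,\Omega\operatorname{B}S^1]=H^1(X)$ and $[X,\operatorname{Aut}_eE_{n+1}]=K^1(X)$. One small omission there: the statement asserts the first four terms are exact as \emph{groups}, and identifying $[X,\operatorname{Aut}_eE_{n+1}]$ (with composition) with $K^1(X)$ (with addition) is not automatic; the paper devotes Lemma \ref{ho} to checking that $\alpha\mapsto e+\sum_i\alpha(T_i)T_i^*$ is multiplicative up to the identity $u_{\alpha\beta}=\alpha(u_\beta)u_\alpha$ and Lemma \ref{tri}. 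Your loop-space formalism gives you a group structure, but not yet that it is the $K^1$ one.

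The genuinely different, and weakest, part of your proposal is the rightmost term. You obtain $Br$ by ``applying the classifying space functor'' to $\operatorname{Aut}_eE_{n+1}\to\operatorname{Aut}E_{n+1}\xrightarrow{r}\operatorname{Aut}\mathbb{K}$. This is not formal: $\operatorname{Aut}_eE_{n+1}$ is neither normal in $\operatorname{Aut}E_{n+1}$ nor the kernel of $r$ (the kernel consists of automorphisms acting trivially on all of $\mathbb{K}$, not just fixing $e$), and $r$ need not be surjective; so the sequence is a homotopy fibre sequence of groups but not a group extension, and delooping it requires identifying $\operatorname{hofib}(\operatorname{B}r)$ with the homotopy quotient $\operatorname{E}\operatorname{Aut}E_{n+1}\times_{\operatorname{Aut}E_{n+1}}\operatorname{Aut}\mathbb{K}$ and then with $\operatorname{E}\operatorname{Aut}E_{n+1}/\operatorname{Aut}_eE_{n+1}\simeq\operatorname{BAut}_eE_{n+1}$ via the transitive action on minimal projections with local sections. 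You flag this as the main obstacle but do not carry it out; the paper avoids it entirely by proving the required exactness at $[X,\operatorname{BAut}E_{n+1}]$ by hand (Lemma \ref{thr}): a bundle whose associated $\operatorname{Aut}\mathbb{K}$-bundle is trivial has its structure group reduced to $\operatorname{Aut}_eE_{n+1}$ by correcting transition functions with explicit partial isometries. You should either supply the homotopy-quotient identification or fall back on that concrete argument. Your treatment of the torsion bounds, by contrast, is a genuinely different and perfectly viable route: for $\eta_*$ you globalize the paper's Lemma \ref{zero} using Lemma \ref{tri}, the fact that $K_0(C(X)\otimes\mathbb{K})\to K_0(C(X)\otimes E_{n+1})$ is multiplication by $-n$, and the determinant homomorphism $K^0(X)\to H^2(X)$; for $Br_*$ you use that $Br^*\iota_3\in\ker(Bi^*)=\operatorname{Im}d_3$ in the Serre spectral sequence of $\operatorname{B}S^1\to\operatorname{BAut}_eE_{n+1}\to\operatorname{BAut}E_{n+1}$ and that $\ker d_3\supset n\,H^2(\operatorname{B}S^1)$. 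Both are correct and avoid the paper's computation of $H^2(\operatorname{Aut}E_{n+1})=\mathbb{Z}_n$ and $H^3(\operatorname{BAut}E_{n+1})=\mathbb{Z}_n$ via Whitehead, Hurewicz and the finiteness of the homology of spaces with finite homotopy groups (Lemma \ref{h}); what the paper's route buys is the sharper structural statement that these cohomology groups are exactly $\mathbb{Z}_n$, while yours only needs, and only yields, that the relevant classes are killed by $n$.
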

The following lemma is well known in the homotopy theory.
We refer to \cite[Chap 3, Section 6]{TL}
\begin{lem}\label{gen}
Let $X$ be a CW-complex.
Let $G$ be a topological group and let $H$ be a subgroup of $G$ such that $H\to G\to G/H$ is a principal $H$-bundle.
Suppose that $G/H$ has a homotopy type of a CW-complex.
Let $f : G/H \to\operatorname{B}H$ be its classifying map. Then we have the exact sequence of  pointed sets :
$$[X, G]\to[X, G/H]\xrightarrow{f_*}[X, \operatorname{B}H]\to[X, \operatorname{B}G].$$
\end{lem}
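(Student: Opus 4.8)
The plan is to realize the four spaces $G$, $G/H$, $\operatorname{B}H$, $\operatorname{B}G$ together with the maps $q$ (the quotient), $f$ (the classifying map) and $\operatorname{B}i$ as consecutive stages of a single homotopy fibre sequence, and then to apply the functor $[X,-]$. First I would fix a functorial model of the classifying spaces via the Borel construction: choose a contractible space $\operatorname{E}G$ carrying a free $G$-action, so that $\operatorname{B}G=\operatorname{E}G/G$, and, since $H$ acts freely on $\operatorname{E}G$ as well, $\operatorname{B}H=\operatorname{E}G/H$. The induced projection $\operatorname{B}i\colon \operatorname{E}G/H\to \operatorname{E}G/G$ is then a fibre bundle whose fibre over the base point is $G/H$; this is the standard fibration $G/H\to \operatorname{B}H\xrightarrow{\operatorname{B}i}\operatorname{B}G$ attached to the inclusion $i\colon H\hookrightarrow G$. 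The hypothesis that $G/H$ has the homotopy type of a CW complex is what guarantees that the classifying map $f$ exists and that the representability machinery below applies.

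The key identification is that the fibre inclusion $G/H\hookrightarrow \operatorname{B}H$ is homotopic to the classifying map $f$ of the principal $H$-bundle $q\colon G\to G/H$. Indeed, the universal $H$-bundle $\operatorname{E}G\to \operatorname{E}G/H=\operatorname{B}H$ restricts, over the fibre $G/H\subset \operatorname{B}H$, to the $H$-bundle carried by a single $G$-orbit in $\operatorname{E}G$, which is $H$-equivariantly isomorphic to $q\colon G\to G/H$. Hence the restriction of the universal bundle along the fibre inclusion is $q$, so the fibre inclusion classifies $q$ and is homotopic to $f$. Looping the fibration $G/H\to \operatorname{B}H\to \operatorname{B}G$ once and using the natural homotopy equivalences $\Omega\operatorname{B}G\simeq G$ and $\Omega\operatorname{B}H\simeq H$ for the topological groups $G$ and $H$, I obtain the homotopy fibre sequence
$$H\xrightarrow{i}G\xrightarrow{q}G/H\xrightarrow{f}\operatorname{B}H\xrightarrow{\operatorname{B}i}\operatorname{B}G,$$
in which the connecting map $\Omega\operatorname{B}G\simeq G\to G/H$ is precisely $q$ and the following map is $f$.

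Next I would read off exactness by applying $[X,-]$ to two consecutive fibration sequences extracted from this long sequence. The triple $G\xrightarrow{q}G/H\xrightarrow{f}\operatorname{B}H$ is a fibration sequence (the homotopy fibre of $f$ is $G$), and the triple $G/H\xrightarrow{f}\operatorname{B}H\xrightarrow{\operatorname{B}i}\operatorname{B}G$ is a fibration sequence (the homotopy fibre of $\operatorname{B}i$ is $G/H$). Since $X$ is a CW complex, the standard homotopy exact sequence of a fibration applied to each triple yields exactness of the sequence of pointed sets
$$[X,G]\xrightarrow{q_*}[X,G/H]\xrightarrow{f_*}[X,\operatorname{B}H]\xrightarrow{(\operatorname{B}i)_*}[X,\operatorname{B}G]$$
at the two middle terms $[X,G/H]$ and $[X,\operatorname{B}H]$, where the base point of each set is the class of the constant map. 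This is exactly the assertion of the lemma.

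The step I expect to be the main obstacle is the bookkeeping needed to make \textquotedblleft exact sequence of pointed sets\textquotedblright\ literally correct for the \emph{unbased} homotopy classes $[X,-]$: the homotopy exact sequence of a fibration is most naturally stated for based classes $[\,\cdot\,,\,\cdot\,]_0$, and passing to $[X,-]$ requires controlling the action of the fundamental group of the base on the fibre. I would handle this either by invoking the unbased form of the exact sequence directly, as in the cited reference, or, in the situations of interest where the relevant bases are simply connected (for instance $\operatorname{B}G$ is simply connected whenever $G$ is path connected, as holds for $G=\operatorname{Aut}E_{n+1}$ by Proposition \ref{pac}), by using that the natural map $[X,-]_0\to[X,-]$ is then a bijection, so that the based exact sequence transfers verbatim to the unbased one.
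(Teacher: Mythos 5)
The paper offers no proof of this lemma at all --- it simply cites \cite{TL} --- so there is no in-paper argument to diverge from; your proof is the standard one and is, in substance, the argument of the cited reference: model $\operatorname{B}H$ as $\operatorname{E}G/H$, identify the fibre inclusion $G/H\hookrightarrow \operatorname{E}G/H$ over $\operatorname{B}G$ with the classifying map of $q\colon G\to G/H$, and read off exactness at the two middle terms from the resulting fibration triples. Two points are worth making explicit. First, freeness of the $H$-action on $\operatorname{E}G$ alone does not make $\operatorname{E}G\to\operatorname{E}G/H$ a principal $H$-bundle; local triviality is needed, and it is supplied precisely by the hypothesis that $G\to G/H$ is a principal $H$-bundle (over a chart $U$ trivializing $\operatorname{E}G\to\operatorname{B}G$, the map $\operatorname{E}G\to\operatorname{E}G/H$ becomes ${\rm id}_U\times q\colon U\times G\to U\times G/H$). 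Since this is the only place that hypothesis enters the proof, it should not be passed over silently. Second, the based-versus-free bookkeeping you flag as the main obstacle can be dissolved entirely, without any simple-connectivity assumption on $\operatorname{B}G$: for exactness at $[X,\operatorname{B}H]$, a free nullhomotopy of $\operatorname{B}i\circ u$ compresses $u$ into the fibre $G/H$ by the homotopy lifting property; for exactness at $[X,G/H]$, since $f$ classifies $q$, the composite $f\circ u$ is nullhomotopic iff the pullback bundle $u^{*}G$ over the CW complex $X$ is trivial, iff it admits a section, i.e.\ iff $u=q\circ g$ for some $g\colon X\to G$ on the nose. This bundle-theoretic reading also sidesteps the issue that $\Omega\operatorname{B}G\simeq G$ is in general only a weak equivalence (which would anyway suffice for $[X,-]$ with $X$ a CW complex). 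With these two remarks incorporated, your argument is complete and matches the cited source.
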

Since $\operatorname{B}S^1$ has a homotopy type of a CW-complex, we can apply the above lemma to $\operatorname{Aut}_eE_{n+1}\to \operatorname{Aut}E_{n+1}\to \operatorname{B}S^1.$

\begin{lem}\label{thr}
Let $X$ be a CW-complex.
The following sequence of pointed sets is exact :
$$[X, \operatorname{BAut}_eE_{n+1}]\xrightarrow{Bi_*}[X, \operatorname{BAut}E_{n+1}]\xrightarrow{Br_*}[X, \operatorname{BAut}\mathbb{K}].$$
\end{lem}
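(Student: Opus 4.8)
The plan is to reduce the statement to an application of Lemma \ref{gen}, which I would apply to the pair $(\operatorname{Aut}\mathbb{K},\operatorname{Aut}E_{n+1})$ in place of $(G,H)$. For this I first record that $r\colon\operatorname{Aut}E_{n+1}\to\operatorname{Aut}\mathbb{K}$ is injective: if $\alpha|_{\mathbb{K}}=\operatorname{id}_{\mathbb{K}}$, then by Lemma \ref{impiuni} $\alpha$ is implemented by a unitary commuting with $\mathbb{K}$, hence by a scalar, so $\alpha=\operatorname{id}$. Thus $r$ realises $\operatorname{Aut}E_{n+1}$ as a subgroup of $\operatorname{Aut}\mathbb{K}$, and (granting the principal bundle structure and CW homotopy type discussed below) Lemma \ref{gen} yields exactness of
$$[X,\operatorname{Aut}\mathbb{K}]\to[X,\operatorname{Aut}\mathbb{K}/\operatorname{Aut}E_{n+1}]\xrightarrow{f'_*}[X,\operatorname{BAut}E_{n+1}]\xrightarrow{Br_*}[X,\operatorname{BAut}\mathbb{K}]$$
at $[X,\operatorname{BAut}E_{n+1}]$, where $f'\colon\operatorname{Aut}\mathbb{K}/\operatorname{Aut}E_{n+1}\to\operatorname{BAut}E_{n+1}$ is the classifying map of the bundle $\operatorname{Aut}E_{n+1}\to\operatorname{Aut}\mathbb{K}\to\operatorname{Aut}\mathbb{K}/\operatorname{Aut}E_{n+1}$. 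Hence $\ker Br_*=\operatorname{Im}f'_*$, and everything reduces to identifying $f'$ with $Bi$ under a homotopy equivalence $\operatorname{Aut}\mathbb{K}/\operatorname{Aut}E_{n+1}\simeq\operatorname{BAut}_eE_{n+1}$.

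To build that equivalence I would use the stabiliser $(\operatorname{Aut}\mathbb{K})_e$ of $e$ in $\operatorname{Aut}\mathbb{K}$. It is isomorphic to the unitary group of the corner $(1-e)\mathcal{M}(\mathbb{K})(1-e)\cong\mathcal{M}(\mathbb{K})$, hence contractible by Theorem \ref{kuiper}; moreover $\operatorname{Aut}_eE_{n+1}=\operatorname{Aut}E_{n+1}\cap(\operatorname{Aut}\mathbb{K})_e$ acts freely on it, so $(\operatorname{Aut}\mathbb{K})_e$ serves as a model for $\operatorname{E}\operatorname{Aut}_eE_{n+1}$ and $\operatorname{BAut}_eE_{n+1}\simeq(\operatorname{Aut}\mathbb{K})_e/\operatorname{Aut}_eE_{n+1}$. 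Since the bundle projection $\eta\colon\operatorname{Aut}E_{n+1}\to\operatorname{B}S^1$, $\alpha\mapsto\alpha(e)$, of Lemma \ref{fib} is surjective, every minimal projection equals $\gamma(e)$ for some $\gamma\in\operatorname{Aut}E_{n+1}$, and as $(\operatorname{Aut}\mathbb{K})_e$ is the stabiliser of $e$ this gives $\operatorname{Aut}E_{n+1}\cdot(\operatorname{Aut}\mathbb{K})_e=\operatorname{Aut}\mathbb{K}$. Together with $\operatorname{Aut}E_{n+1}\cap(\operatorname{Aut}\mathbb{K})_e=\operatorname{Aut}_eE_{n+1}$, the analogue of the second isomorphism theorem for homogeneous spaces then makes the natural map $(\operatorname{Aut}\mathbb{K})_e/\operatorname{Aut}_eE_{n+1}\to\operatorname{Aut}\mathbb{K}/\operatorname{Aut}E_{n+1}$ a homeomorphism, so indeed $\operatorname{Aut}\mathbb{K}/\operatorname{Aut}E_{n+1}\simeq\operatorname{BAut}_eE_{n+1}$.

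Finally I would check that this equivalence carries $f'$ to $Bi$ by comparing classifying data. Pulling the bundle $\operatorname{Aut}E_{n+1}\to\operatorname{Aut}\mathbb{K}\to\operatorname{Aut}\mathbb{K}/\operatorname{Aut}E_{n+1}$ back along the homeomorphism above gives the associated bundle $(\operatorname{Aut}\mathbb{K})_e\times_{\operatorname{Aut}_eE_{n+1}}\operatorname{Aut}E_{n+1}$ over $(\operatorname{Aut}\mathbb{K})_e/\operatorname{Aut}_eE_{n+1}$; this is exactly the principal $\operatorname{Aut}E_{n+1}$-bundle obtained by extending the universal $\operatorname{Aut}_eE_{n+1}$-bundle (modelled by $(\operatorname{Aut}\mathbb{K})_e$) along $i$, which is classified by $Bi$. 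By naturality of the Borel construction the two classifying maps agree up to homotopy, so $\operatorname{Im}f'_*=\operatorname{Im}Bi_*$ and the sequence is exact. As an independent check on the inclusion $\operatorname{Im}Bi_*\subseteq\ker Br_*$, note that $r\circ i$ factors through the contractible group $(\operatorname{Aut}\mathbb{K})_e$, so $B(r\circ i)=Br\circ Bi$ factors through the contractible space $\operatorname{B}(\operatorname{Aut}\mathbb{K})_e$ and is null-homotopic.

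The main obstacle is the point-set input underlying the first two paragraphs: one must know that the left translation actions of $\operatorname{Aut}E_{n+1}$ on $\operatorname{Aut}\mathbb{K}$ and of $\operatorname{Aut}_eE_{n+1}$ on $(\operatorname{Aut}\mathbb{K})_e$ admit local sections, so that the relevant quotients are genuine principal bundles, the second isomorphism theorem gives a homeomorphism, and $\operatorname{Aut}\mathbb{K}/\operatorname{Aut}E_{n+1}$ has a CW homotopy type as required by Lemma \ref{gen}. These compatibilities between the point-norm topologies and the contractibility furnished by Theorem \ref{kuiper} are precisely the technical facts established in the style of \cite[Lemma 2.8, 2.16]{DP}, which I would invoke rather than reprove.
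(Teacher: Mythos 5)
Your proposal takes a genuinely different route from the paper, and it contains a gap that I believe is fatal rather than a deferrable technicality. The entire argument rests on treating $r\colon\operatorname{Aut}E_{n+1}\to\operatorname{Aut}\mathbb{K}$ as a topological embedding, so that $\operatorname{Aut}E_{n+1}\to\operatorname{Aut}\mathbb{K}\to\operatorname{Aut}\mathbb{K}/\operatorname{Aut}E_{n+1}$ and $\operatorname{Aut}_eE_{n+1}\to(\operatorname{Aut}\mathbb{K})_e\to(\operatorname{Aut}\mathbb{K})_e/\operatorname{Aut}_eE_{n+1}$ become principal bundles for the groups with their point-norm topology on $E_{n+1}$. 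But $r$ is injective and continuous without being a homeomorphism onto its image. Concretely, let $P_{\leq n}\in\mathbb{K}\subset E_{n+1}$ be the projection of $\mathcal{F}(\mathbb{C}^{n+1})$ onto tensors of degree at most $n$ and set $u_n:=1-2P_{\leq n}\in U_{E_{n+1}}$. Then $\operatorname{Ad}u_n\in\operatorname{Aut}_eE_{n+1}$ and $\operatorname{Ad}u_n\restriction_{\mathbb{K}}\to\operatorname{id}_{\mathbb{K}}$ in the point-norm topology (it is eventually the identity on each finite-rank operator supported in low degrees), yet $\operatorname{Ad}u_n(T_1)-T_1=-2T_1Q_n$ where $Q_n$ is the projection onto degree exactly $n$, so $\|\operatorname{Ad}u_n(T_1)-T_1\|=2$ for all $n$ and $\operatorname{Ad}u_n\not\to\operatorname{id}_{E_{n+1}}$. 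Hence the cosets of $r(\operatorname{Aut}E_{n+1})$ in $\operatorname{Aut}\mathbb{K}$, with the subspace topology, are not homeomorphic to $\operatorname{Aut}E_{n+1}$, so no local trivialization $U\times\operatorname{Aut}E_{n+1}\cong\pi^{-1}(U)$ can exist; Lemma \ref{gen} does not apply to the pair $(\operatorname{Aut}\mathbb{K},\operatorname{Aut}E_{n+1})$, and likewise $(\operatorname{Aut}\mathbb{K})_e$ cannot serve as a model of $\operatorname{E}\operatorname{Aut}_eE_{n+1}$. This is not "the same technical fact as in \cite[Lemma 2.8, 2.16]{DP}": the fibrations used there and in this paper all have base a space of projections or vectors, where local sections come from polar decomposition near a point, whereas your base is a coset space of a subgroup that is not even topologically embedded. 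Your algebraic bookkeeping (injectivity of $r$, the coset decomposition $\operatorname{Aut}\mathbb{K}=r(\operatorname{Aut}E_{n+1})\cdot(\operatorname{Aut}\mathbb{K})_e$, and the identification of $f'$ with $Bi$) is fine at the level of sets, and the easy inclusion $\operatorname{Im}Bi_*\subset\ker Br_*$ via contractibility of $\operatorname{Aut}_e\mathbb{K}$ is correct and matches the paper; it is the hard inclusion that collapses.

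For comparison, the paper avoids this issue entirely. It never places $\operatorname{Aut}E_{n+1}$ inside $\operatorname{Aut}\mathbb{K}$: for $\ker Br_*\subset\operatorname{Im}Bi_*$ it takes a principal $\operatorname{Aut}E_{n+1}$-bundle $\mathcal{P}$ whose associated $\operatorname{Aut}\mathbb{K}$-bundle is trivial and reduces the structure group to $\operatorname{Aut}_eE_{n+1}$ by hand, using the trivialization $h_i$ to produce minimal projections $h_i^{-1}(x)(e)$, correcting by unitaries $V_i(x)$ built from polar decompositions of products of nearby projections, and checking that the corrected transition functions fix $e$. If you want to keep a homogeneous-space picture, you would have to replace $\operatorname{Aut}\mathbb{K}$ by something carrying the finer topology (e.g.\ a normalizer-type group of unitaries acting on $E_{n+1}$ point-norm continuously), which is a different argument from the one you wrote.
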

\begin{proof}
The group $\operatorname{Aut}_e\mathbb{K}$ is identified with the group $U_{\mathcal{M}(\mathbb{K})}$ by the map taking the implementing unitary $U_{\alpha}=\sum_{i\neq 1}\alpha(e_{i1})e_{1i}$ for $\alpha \in \operatorname{Aut}_{e_{11}}\mathbb{K}$. Hence it is contractible, and $[X, \operatorname{BAut}_e\mathbb{K}]=\{pt\}$.
From the commutative diagram below, 
$$\xymatrix{
[X, \operatorname{BAut}_eE_{n+1}]\ar[r]\ar[rd]& [X, \operatorname{BAut}E_{n+1}]\ar[r]&[X, \operatorname{BAut}\mathbb{K}]\\
&[X, \operatorname{BAut}_e\mathbb{K}],\ar[ur]&
}$$
$Br_*\circ Bi_*$ is trivial.
Therefore it is sufficient to prove that for every $\mathcal{P}\in \operatorname{Map}(X, \operatorname{BAut}E_{n+1})$ with the trivial associated bundle $\mathcal{P}\times_{\operatorname{Aut}E_{n+1}}\operatorname{Aut}\mathbb{K}$, the structure group of $\mathcal{P}$ is reduced to $\operatorname{Aut}_eE_{n+1}$. 
Let $\mathcal{P}\in \operatorname{Map}(X, \operatorname{BAut}E_{n+1})$ be a principal $\operatorname{Aut}E_{n+1}$-bundle with the trivial associated bundle $\mathcal{P}\times_{\operatorname{Aut}E_{n+1}}\operatorname{Aut}\mathbb{K}$. 
We take an open covering $\{U_i\}$ of $X$ giving a local trivialization of $\mathcal{P}$, and denote by $\phi_{ji} : U_j\cup U_i\to \operatorname{Aut}E_{n+1}$ the transition function. 
By the assumption, there exists the map $h_i :U_i\times \operatorname{Aut}\mathbb{K}\to U_i\times \operatorname{Aut}\mathbb{K}$ that is compatible with the transition functions, and is equivariant with respect to the right multiplication of $\operatorname{Aut}\mathbb{K}$. 
The diagram below holds 
$$\xymatrix{
U_i\cap U_j\times \operatorname{Aut}\mathbb{K}\ar[d]^{r(\phi_{ji})}\ar[r]^{h_i}&U_i\cap U_j\times \operatorname{Aut}\mathbb{K}\ar@{=}[d]\\
U_j\cap U_i\times \operatorname{Aut}\mathbb{K}\ar[r]^{h_j}&U_j\cap U_i\times \operatorname{Aut}\mathbb{K}.
}$$We also denote by $\phi_{ji}$ the map 
$$U_i\cap U_j\times \operatorname{Aut}\mathbb{K}\ni (x, \alpha)\mapsto (x,r(\phi_{ji}(x))\alpha)\in U_j\cap U_i\times \operatorname{Aut}\mathbb{K}.$$
We denote $h_i(x):=Pr_i(h_i(x, {\rm id}))$ where $Pr_i : U_i\times \operatorname{Aut}\mathbb{K}\to \operatorname{Aut}\mathbb{K}$. Since $h_i$ is equivariant, we have $h_i^{-1}(x)=h_i(x)^{-1}$.
We have $h_j(x)r(\phi_{ji}(x)) h_i^{-1}(x)(e)=e$ because $h_j\circ \phi_{ji}\circ h_i^{-1}(x, {\rm id})=(x, {\rm id})$ for every $x\in U_j\cap U_i$.
 If we take an appropriate refinement of $\{U_i\}$, we  may assume that for every $i$, there exists $x_i \in U_i$ satisfying $||h^{-1}_i(x)(e)-h^{-1}_i(x_i)(e)||<1, \; x\in U_i$.  There is a unitary $V'_i(x)$ that is the sum of partial isometries constructed from the polar decomposition of $h_i^{-1}(x)(e)h_i^{-1}(x_i)(e)$ and $(1-h_i^{-1}(x)(e))(1-h^{-1}(x_i)(e))$, and $V'_i(x)h^{-1}_i(x_i)(e){V_i'(x)}^*=h_i^{-1}(x)(e)$ holds. We fix a unitary $W_i\in U_{\mathbb{K}^1}$ with $W_ieW_i^*=h_i^{-1}(x_i)(e)$. Then we have a unitary $V_i(x)=V'_i(x)W_i\in U_{\mathbb{K}^{\sim}}$ with $V_i(x)eV_i(x)^*=h_i^{-1}(x)(e)$. The correction of the map 
$$u_i : U_i\times \operatorname{Aut}E_{n+1} \ni (x, \alpha)\mapsto (x, {\rm Ad}V_i\alpha)\in U_i\times \operatorname{Aut}E_{n+1}$$
gives the following :
$$\xymatrix{
U_i\cap U_j\times \operatorname{Aut}E_{n+1}\ar[d]^{\phi_{ji}}&U_i\cap U_j\times \operatorname{Aut}E_{n+1}\ar[l]^{u_i}\ar[d]^{\tilde{\phi}_{ji}}\\
U_j\cap U_i\times \operatorname{Aut}E_{n+1}&U_j\cap U_i\times \operatorname{Aut}E_{n+1},\ar[l]^{u_j}
}$$
where $\tilde{\phi}_{ji}$ is of the form
$$\tilde{\phi}_{ji} : (x,\alpha)\mapsto (x, {\rm Ad}V_j(x)^*\phi_{ji}(x){\rm Ad}V_i(x)\alpha).$$  We have the transition function $$U_j\cap U_i\ni x\mapsto {\rm Ad}V_j(x)^*\phi_{ji}(x){\rm Ad}V_i(x)\in\operatorname{Aut}_eE_{n+1}$$
by the computation below :
\begin{align*}
 {\rm Ad}V_j(x)^*\phi_ji(x){\rm Ad}V_i(x)(e)=&{\rm Ad}V_j(x)^*\phi_{ji}(x)h_j^{-1}(x)(e)\\
=&{\rm Ad}V_j(x)^*h_j^{-1}(x)(e)\\
=&{\rm Ad}V_j(x)^*{\rm Ad}V_j(x)(e)\\
=&e.
\end{align*}
Therefore the structure group of $\mathcal{P}$ is reduced to $\operatorname{Aut}_eE_{n+1}$.
\end{proof}

\begin{lem}\label{ho}
Let $X$ be a compact Hausdorff space.
The map $\operatorname{Aut}_eE_{n+1}\ni \alpha \mapsto e+\sum_{i=1}^{n+1}\alpha(T_i)T_i^* \in U_{E_{n+1}}$ induces a group isomorphism $[X, \operatorname{Aut}_eE_{n+1}]\to [X, U_{E_{n+1}}]=K^1(X)$.
\end{lem}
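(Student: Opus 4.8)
The plan is to decompose the statement into three independent claims: the identification of the target $[X,U_{E_{n+1}}]$ with $K^1(X)$, the bijectivity of the induced map, and the fact that it is a homomorphism of groups. Write $\Phi$ for the map $\alpha\mapsto u_\alpha:=1_{C(X)}\otimes e+\sum_{i=1}^{n+1}\alpha(1_{C(X)}\otimes T_i)(1_{C(X)}\otimes T_i)^*$, and give $[X,\operatorname{Aut}_eE_{n+1}]$ the group structure coming from pointwise composition. First I would recall that $[X,U_{E_{n+1}}]=U_{C(X)\otimes E_{n+1}}/\sim_h$, which by Proposition \ref{k1} is isomorphic to $K_1(C(X)\otimes E_{n+1})$; since $E_{n+1}$ is $KK$-equivalent to $\mathbb{C}$ this equals $K_1(C(X))=K^1(X)$, and pointwise multiplication of unitaries corresponds to the (abelian) addition in $K_1$. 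For bijectivity I would invoke Remark \ref{Ae}, which states precisely that $\Phi$ is a weak homotopy equivalence; hence the induced map $\Phi_*$ on homotopy sets is a bijection.

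The heart of the argument, and the main obstacle, is that $\Phi$ is \emph{not} a strict homomorphism, so I must show that $\Phi_*$ nonetheless respects the two group structures. The key is the computation that, using $\gamma(1\otimes T_i)=u_\gamma(1\otimes T_i)$ and $u_\gamma(1\otimes e)=1\otimes e$ for every $\gamma\in\operatorname{Aut}_eE_{n+1}$, one obtains the exact identity of unitaries
\[
u_{\alpha\beta}=\alpha(u_\beta)\,u_\alpha .
\]
Passing to $K_1(C(X)\otimes E_{n+1})$ gives $[u_{\alpha\beta}]_1=[\alpha(u_\beta)]_1+[u_\alpha]_1$. Now I would apply Lemma \ref{tri}: since $\alpha$ is a $C(X)$-linear automorphism arising from the path-connected group $\operatorname{Aut}E_{n+1}$ (Proposition \ref{pac}), it induces the identity on $K_1(C(X)\otimes E_{n+1})$, so $[\alpha(u_\beta)]_1=[u_\beta]_1$. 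Hence $[u_{\alpha\beta}]_1=[u_\beta]_1+[u_\alpha]_1=[u_\alpha]_1+[u_\beta]_1$, the last step by commutativity of $K_1$, and this is exactly $\Phi_*([\alpha])+\Phi_*([\beta])$.

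Combining the three parts, $\Phi_*$ is a bijective group homomorphism, hence a group isomorphism, which is the assertion. I expect the delicate point to be the homomorphism property alone: the composition law for automorphisms produces the twisted factor $\alpha(u_\beta)$ rather than $u_\beta$, and it is precisely the triviality of the $K_1$-action (Lemma \ref{tri}) together with the commutativity of $K_1$ that removes this twist and makes $\Phi_*$ compatible with both group structures; bijectivity, by contrast, is read off directly from the weak homotopy equivalence in Remark \ref{Ae}.
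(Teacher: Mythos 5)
Your proposal is correct and follows essentially the same route as the paper: bijectivity is read off from Remark \ref{Ae} (which rests on Theorem \ref{main}), and the homomorphism property comes from the identity $u_{\alpha\beta}=\alpha(u_\beta)u_\alpha$ combined with Lemma \ref{tri} to untwist $[\alpha(u_\beta)]_1=[u_\beta]_1$. The only cosmetic difference is that you spell out the identification $[X,U_{E_{n+1}}]\cong K^1(X)$ via Proposition \ref{k1}, which the paper leaves implicit; note also that Lemma \ref{tri} needs only $C(X)$-linearity, not path-connectedness of $\operatorname{Aut}E_{n+1}$.
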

\begin{proof}
By Remark \ref{Ae} and Theorem \ref{main}, the map is bijective. 
So we show that it is a group homomorphism.
Let $\alpha$ and $\beta$ be elements of $\operatorname{Map}(X, \operatorname{Aut}_eE_{n+1})$, and we denote $u_\alpha \colon=1_{C(X)}\otimes e+\sum_{i=1}^{n+1}\alpha(1_{C(X)}\otimes T_i)1_{C(X)}\otimes T_i^*\in U_{C(X)\otimes E_{n+1}}$. We show $[u_{\alpha\beta}]_1=[u_{\alpha}]_1+[u_{\beta}]_1$.
Since $\alpha$ and $\beta$ fix $e$, direct computation yields
\begin{align*}
u_{\alpha\beta}=&\alpha(e+\sum_i\beta(1_{C(X)}\otimes T_i)(1_{C(X)}\otimes T_i^*))(e+\sum_i\alpha(1_{C(X)}\otimes T_i)(1_{C(X)}\otimes T_i^*))\\
=&\alpha(u_{\beta})u_{\alpha}.
\end{align*}
By Lemma \ref{tri}, we have $[\alpha(u_{\beta})]_1=K_1(\alpha)([u_{\beta}]_1)=[u_{\beta}]_1$.
\end{proof}
We need the following fact to determine the second cohomology group of $\operatorname{Aut}E_{n+1}$. See Allen Hatcher's unpublished book \cite[Proposition 5.11]{Hat}.
\begin{prop}
Let $X$ be a path connected space with finite homotopy groups. Then its homology group $H_n(X)$ is finite for all $n>0$.
\end{prop}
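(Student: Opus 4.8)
The plan is to deduce the statement from Serre's theory of classes of abelian groups (mod-$\mathcal{C}$ theory) with $\mathcal{C}$ the class of finite abelian groups, establishing it first for simply connected spaces and then descending through the universal cover to remove that restriction.

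First I would record that $\mathcal{C}$, the class of finite abelian groups, is a Serre class: it is closed under passage to subgroups and quotients, under extensions, and under the operations $\otimes$ and $\operatorname{Tor}$. Consequently the mod-$\mathcal{C}$ Hurewicz theorem applies: if $Y$ is simply connected and $\pi_i(Y)\in\mathcal{C}$ for $i<m$, then $\widetilde{H}_i(Y)\in\mathcal{C}$ for $0<i<m$ and the Hurewicz homomorphism $\pi_m(Y)\to H_m(Y)$ is a $\mathcal{C}$-isomorphism, meaning its kernel and cokernel lie in $\mathcal{C}$. For a simply connected $Y$ with every $\pi_i(Y)$ finite I would run this inductively in $m$: since $\pi_m(Y)$ is finite, its image under the Hurewicz map is finite, and the cokernel lies in $\mathcal{C}$ by the $\mathcal{C}$-isomorphism statement, so $H_m(Y)$ is an extension of two finite groups and is finite. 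Letting $m\to\infty$ yields $H_i(Y)\in\mathcal{C}$ for all $i>0$.

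Next I would reduce the general case to this one. Let $p\colon\widetilde{X}\to X$ be the universal covering. The space $\widetilde{X}$ is simply connected with $\pi_i(\widetilde{X})=\pi_i(X)$ for $i\ge 2$, all finite, so by the previous step $H_i(\widetilde{X})$ is finite for $i>0$ while $H_0(\widetilde{X})=\mathbb{Z}$. Writing $G=\pi_1(X)$, which is finite, the Cartan--Leray spectral sequence of the covering has $E^2_{p,q}=H_p(G;H_q(\widetilde{X}))\Rightarrow H_{p+q}(X)$. For $q>0$ the coefficient group $H_q(\widetilde{X})$ is finite, hence $H_p(G;H_q(\widetilde{X}))$ is finite for every $p$; for $q=0$ one has $H_p(G;\mathbb{Z})=H_p(BG)$, which is finite for $p>0$ because $G$ is finite. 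Thus every $E^2_{p,q}$ with $(p,q)\neq(0,0)$ is finite, and since for $n>0$ the group $H_n(X)$ is assembled from the finitely many subquotients $E^\infty_{p,q}$ with $p+q=n$ and $(p,q)\neq(0,0)$, it is finite.

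The routine verifications, namely that $\mathcal{C}$ satisfies the Serre-class axioms and that a finite group has finite integral homology in positive degrees, are standard. The one point to watch is that finiteness, rather than mere torsionness, is preserved throughout, but this is automatic here because membership in $\mathcal{C}$ already encodes finite generation in each degree. I expect the main structural obstacle to be the passage across $\pi_1$: the mod-$\mathcal{C}$ Hurewicz theorem requires simple connectivity, and it is precisely the universal-cover and Cartan--Leray argument that converts the simply connected conclusion into the stated one for an arbitrary finite fundamental group.
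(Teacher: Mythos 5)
Your argument is correct. The paper does not prove this proposition at all --- it is quoted without proof from Hatcher's unpublished spectral-sequence notes (cited there as Proposition 5.11) --- and your proof (the mod-$\mathcal{C}$ Hurewicz theorem for the Serre class of finite abelian groups applied to the universal cover, followed by the Cartan--Leray spectral sequence $H_p(\pi_1(X);H_q(\widetilde{X}))\Rightarrow H_{p+q}(X)$ over the finite fundamental group) is exactly the standard argument behind that reference. The only point worth making explicit is that for a bare path-connected space one should first replace $X$ by a CW approximation so that the universal cover and the Cartan--Leray spectral sequence are available; this costs nothing, since a weak homotopy equivalence induces isomorphisms on homology and, in the paper's application to $\operatorname{Aut}\mathcal{O}_{n+1}$, only the homotopy and homology groups are used.
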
\label{hat}
\begin{lem}\label{h}
We have the following cohomology groups : 
$$H^2(\operatorname{Aut}E_{n+1})=\mathbb{Z}_n,\; H^3(\operatorname{BAut}E_{n+1})=\mathbb{Z}_n.$$
\end{lem}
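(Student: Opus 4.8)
The plan is to reduce both computations to the universal coefficient theorem, feeding in the homotopy groups of $G := \operatorname{Aut}E_{n+1}$ recorded in Theorem \ref{MT} and using Proposition \ref{hat} to dispose of the free parts. The only homotopical input needed is that $G$ is path connected (Proposition \ref{pac}) with $\pi_1(G)=\mathbb{Z}_n$ and $\pi_2(G)=0$; since homology is a weak homotopy invariant, we may compute with a CW model throughout.

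For $H^2(G)$: as $G$ is path connected with abelian $\pi_1(G)=\mathbb{Z}_n$, we have $H_1(G)=\mathbb{Z}_n$, and the universal coefficient theorem gives
$$0 \to \operatorname{Ext}^1_{\mathbb{Z}}(H_1(G), \mathbb{Z}) \to H^2(G) \to \operatorname{Hom}(H_2(G), \mathbb{Z}) \to 0,$$
with $\operatorname{Ext}^1_{\mathbb{Z}}(\mathbb{Z}_n,\mathbb{Z})=\mathbb{Z}_n$. So it suffices to show $\operatorname{Hom}(H_2(G),\mathbb{Z})=0$, i.e. that $H_2(G)$ is finite. I would read this off the first Postnikov fibration $\tilde{G}\to G\to K(\mathbb{Z}_n,1)$, whose fibre is the universal cover $\tilde{G}$. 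Because $\pi_2(G)=0$, the space $\tilde{G}$ is $2$-connected, so $H_1(\tilde{G})=H_2(\tilde{G})=0$. In the associated Serre spectral sequence the only possibly nonzero contribution in total degree $2$ is a subquotient of $H_2(K(\mathbb{Z}_n,1))$, which is finite by Proposition \ref{hat} since $K(\mathbb{Z}_n,1)$ has finite homotopy groups. Hence $H_2(G)$ is finite and $H^2(G)=\mathbb{Z}_n$.

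For $H^3(\operatorname{B}G)$: the space $\operatorname{B}G$ is simply connected, since $\pi_1(\operatorname{B}G)=\pi_0(G)=0$, with $\pi_2(\operatorname{B}G)=\pi_1(G)=\mathbb{Z}_n$ and $\pi_3(\operatorname{B}G)=\pi_2(G)=0$. By Hurewicz, $H_2(\operatorname{B}G)=\mathbb{Z}_n$, so $\operatorname{Ext}^1_{\mathbb{Z}}(H_2(\operatorname{B}G),\mathbb{Z})=\mathbb{Z}_n$, and the universal coefficient theorem reduces the claim to $\operatorname{Hom}(H_3(\operatorname{B}G),\mathbb{Z})=0$. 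I would use the second Postnikov fibration $F\to \operatorname{B}G\to K(\mathbb{Z}_n,2)$, whose fibre $F$ is the $2$-connected cover of $\operatorname{B}G$; since $\pi_3(\operatorname{B}G)=0$, the fibre $F$ is $3$-connected, so $H_1(F)=H_2(F)=H_3(F)=0$. The Serre spectral sequence then presents $H_3(\operatorname{B}G)$ as a subquotient of $H_3(K(\mathbb{Z}_n,2))$, which is finite by Proposition \ref{hat}. Thus $\operatorname{Hom}(H_3(\operatorname{B}G),\mathbb{Z})=0$ and $H^3(\operatorname{B}G)=\mathbb{Z}_n$.

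The main obstacle is the bookkeeping that isolates the low-degree homology of $G$ and $\operatorname{B}G$ from that of the Eilenberg--Mac\,Lane bases: one must verify that the fibres $\tilde{G}$ and $F$ are respectively $2$- and $3$-connected, so that only the base row of the Serre spectral sequence contributes in the relevant total degree, and keep an eye on the possibly nontrivial $\pi_1$-action on fibre homology, which is harmless here precisely because those fibre homology groups vanish. Once the finiteness of $H_2(K(\mathbb{Z}_n,1))$ and $H_3(K(\mathbb{Z}_n,2))$ is supplied by Proposition \ref{hat}, the free summands in the universal coefficient sequences vanish and both groups collapse onto the $\operatorname{Ext}$ term $\mathbb{Z}_n$.
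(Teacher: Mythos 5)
Your argument is correct, and it reaches the same two universal–coefficient reductions as the paper ($H^2(G)\cong\operatorname{Ext}^1_{\mathbb{Z}}(H_1(G),\mathbb{Z})$ once $H_2(G)$ is finite, and likewise for $H^3(\operatorname{B}G)$ via $H_2(\operatorname{B}G)=\mathbb{Z}_n$), but you establish the crucial finiteness statements by a genuinely different route. The paper never touches the Postnikov tower of $G=\operatorname{Aut}E_{n+1}$: instead it uses the comparison map $\operatorname{Aut}E_{n+1}\to\operatorname{Aut}\mathcal{O}_{n+1}$, which is an isomorphism on $\pi_i$ for $i\le 2$ and a surjection on $\pi_3$, invokes the homology Whitehead theorem to transport this to $H_i$, and then applies Proposition~\ref{hat} to $\operatorname{Aut}\mathcal{O}_{n+1}$ (all of whose homotopy groups are finite) to conclude that $H_2(\operatorname{Aut}E_{n+1})\cong H_2(\operatorname{Aut}\mathcal{O}_{n+1})$ is finite; you instead run the Serre spectral sequence of the fibrations $\tilde{G}\to G\to K(\mathbb{Z}_n,1)$ and $F\to\operatorname{B}G\to K(\mathbb{Z}_n,2)$, using $\pi_2(G)=0$ to kill the fibre contributions in the relevant total degree, and apply Proposition~\ref{hat} to the Eilenberg--MacLane bases. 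Your version has the advantage of being self-contained in $\operatorname{Aut}E_{n+1}$ --- it needs only Theorem~\ref{MT} and Proposition~\ref{pac}, not the comparison with $\operatorname{Aut}\mathcal{O}_{n+1}$ nor the homology Whitehead theorem (whose hypotheses on the $\pi_1$-action one would otherwise have to check) --- at the cost of invoking the Serre spectral sequence with possibly twisted coefficients; as you correctly note, the twisting is irrelevant because the fibre homology vanishes in the degrees that matter. Both arguments are sound, and each correctly identifies Proposition~\ref{hat} as the device that annihilates the free summand in the universal coefficient sequence.
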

\begin{proof}
Two spaces $\operatorname{Aut}E_{n+1}$ and $\operatorname{Aut}\mathcal{O}_{n+1}$ are path connected and the map $\operatorname{Aut}E_{n+1} \to\operatorname{Aut}\mathcal{O}_{n+1}$ gives 
\begin{align*}
\pi_i(\operatorname{Aut}E_{n+1})\cong&\pi_i(\operatorname{Aut}\mathcal{O}_{n+1})\;i=0,\;1,\;2\\
\pi_3(\operatorname{Aut}E_{n+1})\twoheadrightarrow& \pi_3(\operatorname{Aut}\mathcal{O}_{n+1}).
\end{align*}
So we have 
\begin{align*}
H_i(\operatorname{Aut}E_{n+1})\cong&H_i(\operatorname{Aut}\mathcal{O}_{n+1})\; i=0,\;1,\;2\\
H_3(\operatorname{Aut}E_{n+1})\twoheadrightarrow& H_3(\operatorname{Aut}\mathcal{O}_{n+1}).
\end{align*}
by Whitehead's theorem (see \cite[Corollary 6.69]{AT}). By the universal coefficient theorem, we have
$$H^2(\operatorname{Aut}E_{n+1})\cong free(H_2(\operatorname{Aut}E_{n+1}))\oplus \operatorname{Tor}(H_1(\operatorname{Aut}E_{n+1}))$$
where $free(H_2(\operatorname{Aut}E_{n+1}))$ is the free part of the homology group. By Proposition \ref{hat}, the homology groups of $\operatorname{Aut}\mathcal{O}_{n+1}$ are finite, and  $free(H_2(\operatorname{Aut}E_{n+1}))=0$. So we have $H^2(\operatorname{Aut}E_{n+1})\cong\operatorname{Tor}(H_1(\operatorname{Aut}\mathcal{O}_{n+1}))$.  Hurewicz' theorem ( \cite[Theorem 6.66]{AT}) yields $H_1(\operatorname{Aut}\mathcal{O}_{n+1})=\pi_1(\operatorname{Aut}\mathcal{O}_{n+1})=\mathbb{Z}_n$.
Similarly, we have $H^3(\operatorname{BAut}E_{n+1})=\mathbb{Z}_n$. 
\end{proof}

Now we prove Theorem \ref{seq}.
\begin{proof}[Proof of Theorem \ref{seq}]
By Lemma \ref{gen} and the long exact sequences of the principal bundle $\operatorname{Aut}_eE_{n+1}\xrightarrow{i}\operatorname{Aut}E_{n+1}\xrightarrow{\eta}\operatorname{B}S^1$, we have an exact sequence of pointed set where first $4$-terms gives the exact sequence of the groups : 
$$H^1(X)=[X, S^1]\to [X, \operatorname{Aut}_eE_{n+1}]\to [X, \operatorname{Aut}E_{n+1}]\xrightarrow{\eta_*} H^2(X)\xrightarrow{f_*}[X, \operatorname{BAut}_eE_{n+1}]\xrightarrow{Bi_*}[X, \operatorname{BAut}E_{n+1}].$$ 
By Lemma \ref{ho}, and Lemma \ref{thr}, we have the exact sequence :
$$H^1(X)\to K^1(X)\to [X, \operatorname{Aut}E_{n+1}]\xrightarrow{\eta_*} H^2(X)\xrightarrow{f_*}[X, \operatorname{BAut}_eE_{n+1}]\xrightarrow{Bi_*}[X, \operatorname{BAut}E_{n+1}]\xrightarrow{Br_*}H^3(X),$$
where we identify $H^3(X)$ with $[X, \operatorname{BAut}\mathbb{K}]$ because $\operatorname{BAut}\mathbb{K}$ is the $K(\mathbb{Z}, 3)$-space.
We identify $[\operatorname{Aut}E_{n+1}, \operatorname{B}S^1]$ with $H^2(\operatorname{Aut}E_{n+1})$.
For every $[\alpha] \in [X, \operatorname{Aut}E_{n+1}]$, it follows that $\eta_*([\alpha])=\alpha^*([\eta])$, and the element $\alpha^*([\eta])$ is in the image of the map $$\alpha^* \colon H^2(\operatorname{Aut}E_{n+1})=[\operatorname{Aut}E_{n+1}, \operatorname{B}S^1] \ni [\eta] \mapsto  [\eta\circ\alpha] \in [X, \operatorname{B}S^1]=H^2(X).$$ Therefore we have $\operatorname{Im}\eta_*\subset \operatorname{Tor}(H^2(X), \mathbb{Z}_n)$ from Lemma \ref{h}. 
Similar argument yields $\operatorname{Im}Br_*\subset \operatorname{Tor}(H^3(X), \mathbb{Z}_n)$.
\end{proof}
\section*{Acknowledgements}
The author would like to show his greatest appreciation to his supervisor Prof. Masaki Izumi who gave many insightful comments and suggestions, and patiently checked his arguments.

\end{document}